\numberwithin{thm}{section}
\numberwithin{equation}{section}
\author{Brian Clarke}
\date{April 1, 2009}
\title{The Completion of the Manifold of Riemannian Metrics}
\thanks{This research was supported by the International Max Planck
  Research School \emph{Mathematics in the Sciences} and the DFG
  Research Training Group \emph{Analysis, Geometry and their
    Interaction with the Natural Sciences}.}
\begin{document}

\begin{abstract}
  We give a description of the completion of the manifold of all
  smooth Riemannian metrics on a fixed smooth, closed,
  finite-dimensional, orientable manifold with respect to a natural
  metric called the $L^2$ metric.  The primary motivation for studying
  this problem comes from Teichmüller theory, where similar
  considerations lead to a completion of the well-known Weil-Petersson
  metric.  We give an application of the main theorem to the
  completions of Teichmüller space with respect to a class of metrics
  that generalize the Weil-Petersson metric.
\end{abstract}

\maketitle

\setcounter{tocdepth}{2}
\tableofcontents{}

\section{Introduction}\label{sec:introduction}

This is the second in a pair of papers studying the metric geometry of
the Fréchet manifold $\M$ of all smooth Riemannian metrics on a
smooth, closed, finite-dimensional, orientable manifold $M$.  The
manifold $\M$ carries a natural weak Riemannian metric called the
$L^2$ metric, defined in the next section.  In the first paper
\cite{clarke:_metric_geomet_of_manif_of_rieman_metric}, we showed that
the $L^2$ metric induces a metric space structure on $\M$ (a
nontrivial statement for weak Riemannian metrics; see Section
\ref{sec:m-cdot-cdot}).  In this paper, we will give the following
description of the metric completion $\overline{\M}$ of $\M$ with
respect to the $L^2$ metric:

\begin{thm*}
  Let $\Mf$ denote the space of all measurable, symmetric,
  finite-volume $(0,2)$-tensor fields on $M$ that induce a positive
  semidefinite scalar product on each tangent space of $M$.  For $g_0,
  g_1 \in \Mf$ and $x \in M$, we say $g_0 \sim g_1$ if the following
  statement holds almost surely:
  \begin{equation*}
    g_0(x) \neq g_1(x) \Longleftrightarrow g_0(x), g_1(x)\ \textnormal{are not positive definite.}
  \end{equation*}
  Then there is a natural identification $\overline{\M} \cong \Mf /
  {\sim}$.
\end{thm*}

Note that while $\M$ is a space of smooth objects, we must add in
points corresponding to extremely degenerate objects in order to
complete it.  This is a reflection of the fact that the $L^2$ metric
is a \emph{weak} rather than a \emph{strong} Riemannian metric.  That
is, the topology it induces on the tangent spaces---the $L^2$
topology---is weaker than the $C^\infty$ topology coming from the
manifold structure.  In essence, the incompleteness of the tangent
spaces then carries over to the manifold itself.

The manifold of Riemannian metrics---along with geometric structures
on it---has been considered in several contexts.  It originally arose
in general relativity \cite{dewitt67:_quant_theor_of_gravit}, and was
subsequently studied by mathematicians
\cite{ebin70:_manif_of_rieman_metric,freed89:_basic_geomet_of_manif_of,gil-medrano91:_rieman_manif_of_all_rieman_metric}.
In particular, the Riemannian geometry of the $L^2$ metric is well
understood---its curvature, geodesics, and Jacobi fields are
explicitly known.  The metric geometry of the $L^2$ metric, though,
was not as clear up to this point, and this paper seeks to illuminate
one aspect of that.

Our motivation for studying the completion of $\M$---besides the
intrinsic interest to Riemannian geometers of studying this important
deformation space---came largely from Teichmüller theory.  If the base
manifold $M$ is a closed Riemann surface of genus larger than one, the
work of Fischer and Tromba \cite{tromba-teichmueller} gives an
identification of the Teichmüller space of $M$ with $\Mhyp / \DO$,
where $\Mhyp \subset \M$ is the submanifold of hyperbolic metrics and
$\DO$ is the group of diffeomorphisms of $M$ that are homotopic to the
identity, acting on $\Mhyp$ by pull-back.  The $L^2$ metric restricted
to $\Mhyp$ descends to the Weil-Petersson metric on Teichmüller space,
and its completion consists of adding in points corresponding to
certain cusped hyperbolic metrics.  The action of the mapping class
group on Teichmüller space extends to this completion, and the
quotient is homeomorphic to the Deligne-Mumford compactification of
the moduli space of $M$.  In Section \ref{cha:appl-teichm-space},
inspired by
\cite{hj-riemannian,habermann98:_metric_rieman_surfac_and_geomet}, we
generalize the Weil-Petersson metric and use the above theorem to
formulate a condition on the completion of these generalized
Weil-Petersson metrics.

The paper is organized as follows:

In Section \ref{sec:preliminaries}, we recall the necessary background
on the manifold of metrics, the $L^2$ metric, and completions of
metric spaces.  We also review some nonstandard geometric notions and
fix notation and conventions for the paper.

In Section \ref{sec:compl-an-amen}, we complete what we call amenable
subsets of $\M$.  They are defined in such a way that we can show that
the completion of these subsets with respect to the $L^2$
\emph{metric} on the subset is the same as with respect to the $L^2$
\emph{norm} on $\M$ (this will be made precise below).  This
completion is the first step in a bootstrapping process of
understanding the full completion.

In Section \ref{cha:almost-everywh-conv}, we introduce a notion called
\emph{$\omega$-convergence} for Cauchy sequences in $\M$ that
describes how a Cauchy sequence converges to an element of $\Mf /
{\sim}$.  It is a kind of pointwise a.e.-convergence---except on a
subset where the sequence degenerates in a certain way, where no
convergence can be demanded of Cauchy sequences.  We then use the
results of Section \ref{sec:compl-an-amen} to show that this
convergence notion gives an injective map from the completion of $\M$
into $\Mf / {\sim}$.  To do this, we need to show two things.  First,
we prove that every Cauchy sequence in $\M$ has an $\omega$-convergent
subsequence.  Second, we show in two theorems that two
Cauchy sequences are equivalent (in the sense of the completion of a
metric space) if and only if they $\omega$-subconverge to the same
limit.  This section comprises the most technically challenging
portion of the paper.  It also contains the following result, which is
in our eyes one of the most unexpected and striking of the paper:

\begin{prop*}
  Suppose that $g_0, g_1 \in \M$, and let $E := \carr (g_1 - g_0) = \{
  x \in M \mid g_0(x) \neq g_1(x) \}$.  Let $d$ be the Riemannian
  distance function of the $L^2$ metric $(\cdot, \cdot)$.  Then there
  exists a constant $C(n)$ depending only on $n := \dim M$ such that
  \begin{equation*}
    d(g_0, g_1) \leq C(n) \left( \sqrt{\Vol(E, g_0)} +
      \sqrt{\Vol(E,g_1)} \right).
  \end{equation*}
  In particular, we have
  \begin{equation*}
    \diam \left( \{ \tilde{g} \in \M \mid \Vol(M, \tilde{g}) \leq
      \delta \} \right) \leq 2 C(n) \sqrt{\delta}.
  \end{equation*}
\end{prop*}

The surprising thing about this proposition is that it says that two
metrics can vary wildly, but as long as they do so on a set that has
small volume with respect to each, they are close together in the
$L^2$ metric.

In Section \ref{chap:sing-metrics}, we complete the proof of the main
result.  This is done by continuing the bootstrapping process begun in
Section \ref{sec:compl-an-amen} to see that the map defined in Section
\ref{cha:almost-everywh-conv} is in fact surjective.  That is, we
prove in stages that there are Cauchy sequences $\omega$-converging to
elements in ever larger subsets of $\Mf / {\sim}$.

In Section \ref{cha:appl-teichm-space}, we give the application to the
geometry of Teichmüller space that was mentioned above.

Several different types of sequences and convergence notions enter
into this work.  For the reader's convenience, we have included an
appendix which summarizes the relationships between these different
concepts.

\subsection*{Acknowledgements}

The results of this paper formed a portion of my Ph.D.~thesis
(\cite{clarked):_compl_of_manif_of_rieman}, where the reader may find
the facts here presented in significantly greater detail), written at
the Max Planck Institute for Mathematics in the Sciences in Leipzig
and submitted to the University of Leipzig.  First and foremost, I
would like to thank my advisor Jürgen Jost for his many years of
patient assistance.  I am also grateful to Guy Buss, Christoph Sachse,
and Nadine Große for many fruitful discussions and their careful
proofreading.  Portions of the research were also carried out while I
was visiting Stanford University, and I would like to thank Rafe
Mazzeo and Larry Guth for helpful conversations.  Special thanks to
Yurii Savchuk for discussions related to Section
\ref{sec:pointwise-estimates}.

\section{Preliminaries}\label{sec:preliminaries}

\subsection{The Manifold of Metrics}\label{sec:manif-riem-metr}

For the entirety of the paper, let $M$ denote a fixed closed,
orientable, $n$-dimensional $C^\infty$ manifold.

The basic facts about the manifold of Riemannian metrics given in this
section can be found in \cite[\S
2.5]{clarked):_compl_of_manif_of_rieman}

We denote by $S^2 T^*M$ the vector bundle of symmetric $(0,2)$ tensors
over $M$, and by $\s$ the Fréchet space of $C^\infty$ sections of $S^2
T^* M$.  The space $\M$ of Riemannian metrics on $M$ is an open subset
of $\s$, and hence it is trivially a Fréchet manifold, with tangent
space at each point canonically identified with $\s$.  (For a detailed
treatment of Fréchet manifolds, see, for example,
\cite{hamilton82:_inver_funct_theor_of_nash_and_moser}.  For a more
thorough treatment of the differential topology and geometry of $\M$,
see \cite{ebin70:_manif_of_rieman_metric}.)

\subsubsection{The $L^2$ Metric}\label{sec:l2-metric}

$\M$ carries a natural Riemannian metric $(\cdot, \cdot)$, called the
\emph{$L^2$ metric}, induced by integration from the natural scalar product
on $S^2 T^* M$.  Given any $g \in \M$ and $h, k \in \s \cong T_g \M$,
we define
\begin{equation*}
  (h, k)_g := \integral{M}{}{\tr_g(hk)}{d \mu_g}.
\end{equation*}
Here, $\tr_g(hk)$ is given in local coordinates by $\tr(g^{-1} h
g^{-1} k) = g^{ij} h_{il} g^{lm} k_{jm}$, and $\mu_g$ denotes the
volume form induced by $g$.

Throughout the paper, we use the notation $d$ for the distance
function induced from $(\cdot, \cdot)$ by taking the infimum of the
lengths of paths between two given points.

The $L^2$ metric is a weak Riemannian metric, which means that its
induced topology on the tangent spaces of $\M$ is weaker than the
manifold topology.  This leads to some phenomena that are unfamiliar
from the world of finite-dimensional Riemannian geometry, or even
strong Riemannian metrics on Hilbert manifolds.  For instance, the
$L^2$ metric does not \emph{a priori} induce a metric space structure
on $\M$.  In \cite{clarke:_metric_geomet_of_manif_of_rieman_metric},
we nevertheless showed directly that $(\M, d)$ is a metric space, but
other strange phenomena occur---for instance, the metric space
topology of $(\M, d)$ is weaker than the manifold topology of $\M$.
Indeed, in Lemma \ref{lem:9} we will see the following: When
considered as a subset of its completion, $\M$ contains no open
$d$-ball around \emph{any} point!  For more information on weak
Riemannian metrics, see \cite[\S
2.4]{clarked):_compl_of_manif_of_rieman}, \cite[\S
3]{clarke:_metric_geomet_of_manif_of_rieman_metric}

The basic Riemannian geometry of $(\M, (\cdot, \cdot))$ is relatively
well understood.  For example, it is known that the sectional
curvature of $\M$ is nonpositive
\cite[Cor.~1.17]{freed89:_basic_geomet_of_manif_of}, and the geodesics
of $\M$ are known explicitly
\cite[Thm.~2.3]{freed89:_basic_geomet_of_manif_of},
\cite[Thm.~3.2]{gil-medrano91:_rieman_manif_of_all_rieman_metric}.

We will also consider related structures restricted to a point $x \in
M$.  Let $\satx := S^2 T^*_x M$ denote the vector space of symmetric
$(0,2)$-tensors at $x$, and let $\Matx \subset \satx$ denote the open
subset of tensors inducing a positive definite scalar product on $T_x
M$.  Then $\Matx$ is an open submanifold of $\satx$, and its tangent
space at each point is canonically identified with $\satx$. For each
$g \in \Matx$, we define a scalar product $\langle \cdot , \cdot
\rangle_g$ on $T_g \Matx \cong \satx$ by setting, for all $h, k \in
\satx$,
\begin{equation*}
  \langle h, k \rangle_g := \tr_g(hk).
\end{equation*}
Then $\langle \cdot , \cdot \rangle$ defines a Riemannian metric on
the finite-dimensional manifold $\Matx$.

For each $g \in \M$, we denote the $L^2$ norm induced by $g$ on $\s$
with $\normdot_g$, that is, $\norm{h}_g := \sqrt{(h,h)_g}$.  For
any $g_0, g_1 \in \M$, the norms $\normdot_{g_0}$ and
$\normdot_{g_1}$ are equivalent \cite[\S
IX.2]{palais65:_semin_atiyah_singer_index_theor}.  (As Ebin pointed
out \cite[\S 4]{ebin70:_manif_of_rieman_metric}, this statement even
holds if $g_0$ and $g_1$ are only required to be continuous.)

\subsubsection{Geodesics}\label{sec:geodesics}

As noted above, the geodesic equation of $\M$ can be solved
explicitly.  We will not need the full expression for an arbitrary
geodesic for our purposes, but rather only for very special geodesics.

We denote by $\pos \subset C^\infty(M)$ the group of strictly positive
smooth functions on $M$.  This is a Fréchet Lie group that acts on
$\M$ by pointwise multiplication.  For any $g_0 \in \M$, the next
proposition gives the geodesics of the orbit $\pos \cdot g_0$.

\begin{prop}[{\cite[Prop.~2.1]{freed89:_basic_geomet_of_manif_of}}]\label{prop:5}
  The geodesic in $\M$ starting at $g_0 \in \M$ with initial tangent
  vector $\rho g_0$, where $\rho \in C^\infty(M)$, is given by
  \begin{equation}\label{eq:3}
    g_t = \left( 1 + n \frac{t}{4} \rho \right)^{4/n} g_0.
  \end{equation}
  In particular, $\pos \cdot g_0$ is a totally geodesic submanifold,
  and the exponential mapping $\exp_{g_0}$ is a diffeomorphism from
  the open set $U \cdot g_0 \subset T_{g_0} (\pos \cdot g_0)$---where
  $U$ is the set of functions satisfying $\rho > -4/n$---onto $\pos
  \cdot g_0$.
\end{prop}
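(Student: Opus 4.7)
The plan is to verify the formula by direct substitution into the geodesic equation of $(\M, (\cdot,\cdot))$, then derive the remaining structural claims as consequences of the explicit solution. The geodesic equation of the $L^2$ metric is known in closed form (Freed--Groisser, Gil-Medrano--Michor): it has the shape
\begin{equation*}
  \ddot g = \dot g\, g^{-1}\, \dot g + \tfrac{1}{4}\tr_g(g^{-1}\dot g\, g^{-1}\dot g)\, g - \tfrac{1}{2}\tr_g(g^{-1}\dot g)\,\dot g.
\end{equation*}
I would first remind the reader of this formula (citing the same Freed--Groisser reference), since every subsequent step is a mechanical check against it.

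Next I would plug in the ansatz $g_t = f(t,x)^{4/n} g_0$ with $f(t,x) := 1 + \tfrac{nt}{4}\rho(x)$ and compute the algebraic pieces pointwise. Writing everything in terms of $f$ and $\rho$ one obtains $\dot g_t = f^{4/n-1}\rho\, g_0$ and $\ddot g_t = \tfrac{4-n}{4} f^{4/n-2}\rho^2 g_0$. Because $g_t$ is conformal to $g_0$, one finds $g_t^{-1}\dot g_t = f^{-1}\rho\, I$ as a $(1,1)$-tensor, so all the trace terms collapse to scalar factors: $\dot g\, g^{-1}\,\dot g = f^{4/n-2}\rho^2 g_0$, $\tr_g(g^{-1}\dot g\, g^{-1}\dot g) = n f^{-2}\rho^2$, and $\tr_g(g^{-1}\dot g) = n f^{-1}\rho$. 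Plugging these into the right-hand side of the geodesic equation gives $f^{4/n-2}\rho^2 g_0 \bigl(1 + \tfrac{n}{4} - \tfrac{n}{2}\bigr) = \tfrac{4-n}{4} f^{4/n-2}\rho^2 g_0 = \ddot g_t$, matching the left-hand side. A trivial check of initial conditions ($g_{t=0} = g_0$, $\dot g_{t=0} = \rho g_0$) then completes the verification that \eqref{eq:3} defines a geodesic.

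For the ``totally geodesic'' claim, I would invoke uniqueness of geodesics with prescribed initial data: for any $\phi g_0 \in \pos \cdot g_0$ and any tangent vector $\sigma (\phi g_0) \in T_{\phi g_0}(\pos \cdot g_0)$, the formula \eqref{eq:3} (applied with base point $\phi g_0$ and initial velocity $\sigma(\phi g_0)$) produces a geodesic lying entirely in $\pos \cdot g_0$, so every geodesic starting tangent to the orbit stays in the orbit. For the exponential map, evaluating at $t=1$ yields $\exp_{g_0}(\rho g_0) = (1 + \tfrac{n}{4}\rho)^{4/n} g_0$, which is well-defined and lies in $\pos \cdot g_0$ exactly when $\rho > -4/n$, i.e.\ when $\rho \in U$. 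The map $\rho \mapsto (1+\tfrac{n}{4}\rho)^{4/n}$ has the explicit smooth inverse $\phi \mapsto \tfrac{4}{n}(\phi^{n/4}-1)$, so it is a diffeomorphism $U \to \pos$, and multiplication by $g_0$ identifies this with $\exp_{g_0}$.

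The only real obstacle is the tensorial bookkeeping in the middle step; once one is careful about the difference between multiplication of $(0,2)$-tensors with $g^{-1}$ inserted and the resulting $(1,1)$-tensor $g^{-1}\dot g$ whose trace is scalar, the three terms combine cleanly with the prefactor $1 + n/4 - n/2 = (4-n)/4$ exactly canceling $\ddot f \cdot 4/(n f^{4/n-2}\rho^2)$. Because the ansatz is pointwise conformal to $g_0$, no derivatives of $\rho$ on $M$ enter the computation, which is what makes the explicit formula possible.
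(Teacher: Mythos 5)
Your verification is correct, and the computation checks out line by line: with $f = 1 + \tfrac{nt}{4}\rho$ one indeed gets $\dot g_t = \rho f^{4/n-1}g_0$, $\ddot g_t = \tfrac{4-n}{4}\rho^2 f^{4/n-2}g_0$, and the three terms on the right of the geodesic equation sum to $\bigl(1+\tfrac{n}{4}-\tfrac{n}{2}\bigr)\rho^2 f^{4/n-2}g_0$, as claimed. The paper offers no proof of its own (it imports the result from Freed--Groisser), and your argument is essentially the standard one found there, so there is nothing to contrast. The only point worth making explicit is your appeal to uniqueness of geodesics in the "totally geodesic" step: in a weak Riemannian metric on a Fr\'echet manifold this is not automatic from general ODE theory, but here it is justified because the geodesic equation contains no spatial derivatives and is therefore a smooth ODE on the finite-dimensional manifold $\M_x$ at each point $x \in M$, where the usual uniqueness theorem applies.
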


\subsubsection{Metric Space Structures on $\M$}\label{sec:m-cdot-cdot}

In \cite{clarke:_metric_geomet_of_manif_of_rieman_metric}, we proved
the following theorem:

\begin{thm}\label{thm:6}
  $(\M, d)$, where $d$ is the distance function of the $L^2$ metric,
  is a metric space.
\end{thm}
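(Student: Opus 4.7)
The plan is to verify the metric axioms for $d$. Non-negativity, symmetry, and the triangle inequality are immediate from the definition of $d$ as the infimum of lengths of smooth paths in $\M$: lengths are non-negative, reversing a path exchanges the endpoints, and concatenation of paths gives the triangle inequality. All the content of the theorem lies in the positive-definiteness statement, namely that $d(g_0,g_1) > 0$ whenever $g_0 \neq g_1$. This is exactly the axiom that can fail for weak Riemannian metrics (e.g.\ the Michor--Mumford $L^2$ metric on immersions), so the proof must use the specific structure of the $L^2$ metric in an essential way, rather than purely formal considerations about tangent vectors: a tangent vector supported on a set of small volume may have small $L^2$ norm while changing $g$ by a large amount pointwise, so naive pointwise-to-integral bounds will not work.

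My strategy would be to produce a family of scalar functionals $F\colon \M \to \mathbb{R}$ that are Lipschitz with respect to $d$ and together separate the points of $\M$. The prototype is the volume of a measurable set $U \subset M$. Differentiating along a smooth path $g_t$ and using the pointwise identity $\frac{d}{dt}\mu_{g_t} = \frac{1}{2}\tr_{g_t}(\dot g_t)\,\mu_{g_t}$, combined with the matrix Cauchy--Schwarz inequality $\tr_{g_t}(\dot g_t)^2 \leq n\,\tr_{g_t}(\dot g_t^2)$ and the integral Cauchy--Schwarz inequality, one obtains
\begin{equation*}
  \left| \frac{d}{dt}\sqrt{\Vol(U,g_t)} \right| \;\leq\; \frac{\sqrt{n}}{4}\,\|\dot g_t\|_{g_t}.
\end{equation*}
Integrating in $t$ and taking the infimum over paths yields the Lipschitz estimate $\bigl|\sqrt{\Vol(U,g_1)}-\sqrt{\Vol(U,g_0)}\bigr| \leq \frac{\sqrt{n}}{4}\,d(g_0,g_1)$. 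In particular $d(g_0,g_1)=0$ already forces the volume densities $\sqrt{\det g_0}$ and $\sqrt{\det g_1}$ to agree almost everywhere on $M$.

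The main obstacle, and the step I expect to require the most care, is upgrading equality of volume densities to equality of tensors, since two distinct smooth metrics can have the same $\sqrt{\det g}$. I would attempt to extend the scheme above to a richer family of Lipschitz functionals sensitive to the non-volumetric components of $g$. The most geometric candidate is the intrinsic distance function $\theta_x$ on the finite-dimensional symmetric space $(\Matx, \langle\cdot,\cdot\rangle_x) \cong \mathrm{GL}(n)/O(n)$, whose geometry is explicit. Exchanging the spatial and temporal integrals appearing in the length functional, and applying Cauchy--Schwarz on the set where $g_0$ and $g_1$ differ, the goal is to produce an estimate of the form
\begin{equation*}
  d(g_0,g_1)^2 \;\geq\; C \int_M \theta_x(g_0(x),g_1(x))^2\,\psi(x)\,dx
\end{equation*}
for some fixed positive weight $\psi$. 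Any such estimate forces $\theta_x(g_0(x),g_1(x)) = 0$ almost everywhere whenever $d(g_0,g_1)=0$, and hence $g_0=g_1$ by continuity. The delicate technical point is that $\psi$ must be chosen independently of the path, which is precisely where the interchange-of-integrals argument and the trace/Cauchy--Schwarz inequality above have to be combined with care.
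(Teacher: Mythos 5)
Your reduction to positive definiteness and your first Lipschitz functional are exactly on target: the estimate $\bigl|\sqrt{\Vol(U,g_1)}-\sqrt{\Vol(U,g_0)}\bigr|\leq\frac{\sqrt{n}}{4}\,d(g_0,g_1)$ is precisely Lemma \ref{lem:13} (proved in the companion paper), with the same constant. The gap lies in the second, decisive step. The inequality you propose,
\begin{equation*}
  d(g_0,g_1)^2 \;\geq\; C\int_M \theta_x\bigl(g_0(x),g_1(x)\bigr)^2\,\psi(x)\,dx,
\end{equation*}
with $\theta_x$ the distance function of the symmetric-space metric $\langle h,k\rangle_a=\tr(a^{-1}ha^{-1}k)$ on $\Matx$ and $\psi$ a fixed positive weight, is false, and it fails by exactly the small-volume phenomenon you flag at the outset. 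Fix $g_0$, let $E_\delta$ be a ball with $\Vol(E_\delta,g_0)=\delta$, and let $g_1$ equal $e^{-2/\delta}g_0$ on the bulk of $E_\delta$ (smoothed near the boundary) and $g_0$ outside. Then $\Vol(E_\delta,g_1)\leq\delta$, so by Proposition \ref{prop:18} one has $d(g_0,g_1)\leq 2C(n)\sqrt{\delta}\rightarrow 0$; but the $\theta_x$-distance from $a$ to $\lambda a$ is $\sqrt{n}\,|\log\lambda|$, so $\theta_x(g_0(x),g_1(x))$ is of order $\sqrt{n}/\delta$ on $E_\delta$ and the right-hand side above is at least of order $1/\delta\rightarrow\infty$. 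Letting the conformal factor decay faster defeats any fixed power of $\theta_x$ as well: the unweighted fiberwise distance is complete on $\Matx$ and blows up near $\partial\Matx$, whereas $d$ simply does not see degenerations supported on sets of small volume.

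The device used in the paper (Definition \ref{dfn:15}, Lemma \ref{dfn:16}, Proposition \ref{prop:20}) is to weight the fiber metric by the volume density of the \emph{varying} point of $\Matx$: one takes $\langle h,k\rangle^0_{\tilde g}=\tr_{\tilde g}(hk)\det\bigl(g(x)^{-1}\tilde g\bigr)$, whose distance function $\theta^g_x$ satisfies $\theta^g_x(\tilde g,\hat g)\leq C'(n)\bigl(\sqrt{\det\tilde G}+\sqrt{\det\hat G}\bigr)$ (Proposition \ref{prop:25}) and hence degenerates toward $\partial\Matx$ at the same rate as $d$ itself. This weight is what survives the interchange of the $t$- and $x$-integrals, since $\norm{\dot g_t}_{g_t}^2=\int_M\tr_{g_t}(\dot g_t^2)\det G_t\,d\mu_g$ already carries the factor $\det G_t$. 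The resulting quantity $\Theta_M(g_0,g_1)=\int_M\theta^g_x(g_0(x),g_1(x))\,d\mu_g$ is still a genuine metric (each $\theta^g_x$ is a finite-dimensional Riemannian distance), and the bound takes the form $\Theta_M\leq d\,\bigl(\sqrt{n}\,d+2\sqrt{\Vol(M,g_0)}\bigr)$ --- linear rather than quadratic in the pointwise distance, with the volume density absorbed into the fiber metric rather than appearing as an external weight $\psi$. That weighting is the missing idea; without it the estimate you are aiming for cannot hold, and with it the proof closes exactly as you intend ($d=0$ forces $\Theta_M=0$, hence $g_0=g_1$).
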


As mentioned above, the fact that the $L^2$ metric is a weak
Riemannian metric means that general theorems imply only that $d$ is a
pseudometric.  In fact, there are examples
\cite{michor05:_vanis_geodes_distan_spaces_of,michor06:_rieman_geomet_spaces_of_plane_curves}
of weak Riemannian metrics where the induced distance between any two
points is always zero!

To prove Theorem \ref{thm:6}, we defined a function on $\M \times \M$
that was manifestly a metric (in the sense of metric spaces) and
showed that this metric bounded $d$ from below in some way.  We also
showed that the function $\M \rightarrow \R$ sending a metric to the
square root of its total volume is Lipschitz with respect to $d$.
These results will play a role in what is to come, so we review them
here, along with the relevant definitions.

First, we have the lemma on Lipschitz continuity of the square root of
the volume function.

\begin{lem}[{\cite[Lemma~17]{clarke:_metric_geomet_of_manif_of_rieman_metric}}]\label{lem:13}
  Let $g_0, g_1 \in \M$.  Then for any measurable subset $Y \subseteq
  M$,
  \begin{equation*}
    \left| \sqrt{\Vol(Y,g_1)} - \sqrt{\Vol(Y,g_0)} \right| \leq \frac{\sqrt{n}}{4} d(g_0,g_1).
  \end{equation*}
\end{lem}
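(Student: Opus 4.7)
The plan is to prove the estimate along any smooth path from $g_0$ to $g_1$ and then pass to the infimum to recover $d$. Since $d(g_0,g_1)$ is by definition the infimum of lengths $L(g_t) = \int_0^1 \norm{\dot{g}_t}_{g_t}\,dt$ of piecewise smooth paths $g_t$ joining $g_0$ to $g_1$ in $\M$, it suffices to show that for every such path,
\begin{equation*}
  \left| \sqrt{\Vol(Y,g_1)} - \sqrt{\Vol(Y,g_0)} \right| \leq \frac{\sqrt{n}}{4} L(g_t).
\end{equation*}
We may assume $Y$ has positive Lebesgue measure (otherwise both sides are zero), so that $\Vol(Y,g_t) > 0$ for all $t$ and $\sqrt{\Vol(Y,g_t)}$ is smooth in $t$.

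Next I would compute the derivative along the path. The first variation formula for the volume form gives $\partial_t d\mu_{g_t} = \tfrac{1}{2} \tr_{g_t}(\dot{g}_t)\, d\mu_{g_t}$, where $\tr_{g_t}(\dot{g}_t) = g_t^{ij} (\dot{g}_t)_{ij}$. Hence
\begin{equation*}
  \frac{d}{dt} \sqrt{\Vol(Y,g_t)} = \frac{1}{4\sqrt{\Vol(Y,g_t)}} \int_Y \tr_{g_t}(\dot{g}_t)\, d\mu_{g_t}.
\end{equation*}
Now apply two Cauchy--Schwarz inequalities. First, pointwise: if $\lambda_1,\dots,\lambda_n$ are the eigenvalues of the endomorphism $g_t^{-1}\dot{g}_t$, then $\tr_{g_t}(\dot{g}_t) = \sum_i \lambda_i$ and $\tr_{g_t}(\dot{g}_t \dot{g}_t) = \sum_i \lambda_i^2$, so
\begin{equation*}
  \bigl(\tr_{g_t}(\dot{g}_t)\bigr)^2 \leq n\, \tr_{g_t}(\dot{g}_t \dot{g}_t).
\end{equation*}
Second, in integral form:
\begin{equation*}
  \int_Y \tr_{g_t}(\dot{g}_t)\, d\mu_{g_t} \leq \sqrt{\Vol(Y,g_t)} \left( \int_Y \bigl(\tr_{g_t}(\dot{g}_t)\bigr)^2 d\mu_{g_t} \right)^{1/2} \leq \sqrt{n}\, \sqrt{\Vol(Y,g_t)}\, \norm{\dot{g}_t}_{g_t},
\end{equation*}
where in the last step I used the pointwise bound and extended the integration domain from $Y$ to $M$ when forming the norm $\norm{\dot{g}_t}_{g_t}$.

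Combining these gives $\left| \tfrac{d}{dt}\sqrt{\Vol(Y,g_t)} \right| \leq \tfrac{\sqrt{n}}{4} \norm{\dot{g}_t}_{g_t}$. Integrating from $0$ to $1$ yields the desired bound by $\tfrac{\sqrt{n}}{4} L(g_t)$, and taking the infimum over paths produces the bound by $\tfrac{\sqrt{n}}{4} d(g_0,g_1)$. I do not foresee a serious obstacle here: the only subtlety is keeping track of where $\Vol(Y,g_t)$ can vanish (handled by the measure-zero case) and remembering that restricting the norm to $Y$ rather than $M$ only decreases it, which is what allows a localized volume estimate to be controlled by the global $L^2$ length.
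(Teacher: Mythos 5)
Your proof is correct and is essentially the argument given in the cited companion paper (the lemma is only quoted here, not reproved): differentiate $\sqrt{\Vol(Y,g_t)}$ along a path via the first variation of the volume form, apply the pointwise Cauchy--Schwarz bound $(\tr_{g_t}\dot g_t)^2 \leq n\,\tr_{g_t}(\dot g_t\dot g_t)$ together with the integral Cauchy--Schwarz inequality, and integrate. The handling of the degenerate case $\Vol(Y,g_t)=0$ and the observation that enlarging the integration domain from $Y$ to $M$ only helps are both correctly accounted for.
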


Next, we define the metric on $\M$ that was mentioned above and state
the lower bound it provides on $d$.

\begin{dfn}\label{dfn:15}
  For each $x \in M$, consider $\M_x = \{ \tilde{g} \in \satx \mid
  \tilde{g} > 0 \}$ (cf.~Section \ref{sec:l2-metric}).  For any fixed
  $g \in \M$, define a Riemannian metric $\langle \cdot , \cdot
  \rangle^0$ on $\M_x$ by
  \begin{equation*}
    \langle h , k \rangle^0_{\tilde{g}} = \tr_{\tilde{g}} (h k) \det
    g(x)^{-1} \tilde{g} \qquad \forall h, k \in T_{\tilde{g}} \M_x \cong
    \satx.
  \end{equation*}
  We denote by $\theta^g_x$ the Riemannian distance function of
  $\langle \cdot , \cdot \rangle^0$.
\end{dfn}

Note that $\theta^g_x$ is automatically positive definite, since it is
the distance function of a Riemannian metric on a finite-dimensional
manifold.  By integrating it in $x$, we can pass from a metric on
$\M_x$ to a function on $\M \times \M$ as follows:

\begin{lem}[{\cite[Lemma~20, 21]{clarke:_metric_geomet_of_manif_of_rieman_metric}}]\label{dfn:16}
  For any measurable $Y \subseteq M$, define a function $\Theta_Y : \M
  \times \M \rightarrow \R$ by
  \begin{equation*}
    \Theta_Y(g_0, g_1) = \integral{Y}{}{\theta^g_x(g_0(x), g_1(x))}{d \mu_g(x)}.
  \end{equation*}

  Then $\Theta_Y$ does not depend upon the choice of metric $g$ used
  to define $\theta^g_x$.  Furthermore, $\Theta_Y$ is a pseudometric
  on $\M$, and $\Theta_M$ is a metric.  Finally, if $Y_0 \subseteq
  Y_1$, then $\Theta_{Y_0}(g_0, g_1) \leq \Theta_{Y_1}(g_0, g_1)$ for
  all $g_0, g_1 \in \M$.
\end{lem}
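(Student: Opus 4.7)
The four assertions divide into one substantive computation (the independence of the choice of $g$) and three essentially formal arguments (pseudometric, positivity on $M$, monotonicity). I would dispose of the formal items quickly and concentrate on the scaling calculation.

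\textbf{Independence of $g$ (the main step).} I would fix $x$ and compare the Riemannian metrics $\langle\cdot,\cdot\rangle^0$ on $\M_x$ defined via two different background metrics $g, g' \in \M$. At every $\tilde g \in \M_x$ and every $h, k \in \satx$,
\begin{equation*}
\frac{\tr_{\tilde g}(hk)\,\det(g'(x)^{-1}\tilde g)}{\tr_{\tilde g}(hk)\,\det(g(x)^{-1}\tilde g)} = \frac{\det g(x)}{\det g'(x)},
\end{equation*}
which is a positive constant independent of $\tilde g$. Hence the two Riemannian metrics on $\M_x$ differ by a global rescaling factor $c(x) := \det g(x)/\det g'(x)$, so their distance functions are related by $\theta^{g'}_x = \sqrt{c(x)}\,\theta^g_x$. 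On the other hand, the volume forms satisfy $d\mu_{g'}(x) = \sqrt{\det g'(x)/\det g(x)}\,d\mu_g(x) = \sqrt{c(x)}^{-1}\,d\mu_g(x)$, so the two factors cancel inside the integrand and $\Theta_Y(g_0, g_1)$ is independent of the choice. This is the only place where a genuine check is required; everything else is bookkeeping.

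\textbf{Pseudometric, monotonicity, and positivity on $M$.} Nonnegativity, symmetry, and the triangle inequality for $\Theta_Y$ follow by integrating the corresponding pointwise properties of $\theta^g_x$, which is a Riemannian distance function on the finite-dimensional manifold $\M_x$. Monotonicity in $Y$ is immediate from nonnegativity of the integrand. For the fact that $\Theta_M$ separates points, suppose $\Theta_M(g_0, g_1) = 0$. Since $\theta^g_x(g_0(x), g_1(x)) \geq 0$ and is continuous in $x$ (as $g$, $g_0$, $g_1$ are smooth and $\theta^g_x$ depends continuously on its arguments and on the background $g(x)$), the vanishing of the integral forces $\theta^g_x(g_0(x), g_1(x)) = 0$ for every $x \in M$; because $\theta^g_x$ is positive definite, this gives $g_0(x) = g_1(x)$ pointwise.

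\textbf{Expected difficulty.} The one place to be careful is the scaling calculation: one must keep track of how both the metric on $\M_x$ and the volume form on $M$ transform under a change of $g$, and verify that the ratios combine to be coordinate-independent (so that $\det(g(x)^{-1}\tilde g)$ is an honest ratio of volume densities rather than a chart-dependent quantity). Once that is in hand, the remainder of the lemma is routine.
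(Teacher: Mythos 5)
Your proposal is correct. The paper itself does not reprove this lemma (it is imported from the companion paper \cite{clarke:_metric_geomet_of_manif_of_rieman_metric}), but your argument is the expected one: the key scaling computation is right --- $\langle\cdot,\cdot\rangle^0$ built from $g'$ equals $c(x)=\det g(x)/\det g'(x)$ times the one built from $g$, so $\theta^{g'}_x=\sqrt{c(x)}\,\theta^g_x$ while $\mu_{g'}=c(x)^{-1/2}\mu_g$, and the factors cancel in the integrand --- and the remaining assertions do follow by integrating the pointwise properties of the fiberwise distance function together with continuity of $x\mapsto\theta^g_x(g_0(x),g_1(x))$ for smooth $g_0,g_1$.
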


The lower bound on $d$ is the following:

\begin{prop}[{\cite[Prop.~22]{clarke:_metric_geomet_of_manif_of_rieman_metric}}]\label{prop:20}
  For any $Y \subseteq M$ and $g_0, g_1 \in \M$, we have the following
  inequality:
  \begin{equation*}
    \Theta_Y(g_0, g_1) \leq d(g_0, g_1) \left( \sqrt{n}\, d(g_0, g_1) +
      2 \sqrt{\Vol(M, g_0)} \right).
  \end{equation*}
  In particular, $\Theta_Y$ is a continuous pseudometric (w.r.t.~$d$).
\end{prop}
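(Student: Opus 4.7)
The plan is to bound $\Theta_Y(g_0,g_1)$ by constructing, from any nearly-length-minimizing path in $\M$, pointwise paths in the finite-dimensional manifolds $\M_x$ whose lengths control $\theta^g_x(g_0(x),g_1(x))$, and then swapping the order of integration. By Lemma \ref{dfn:16}, $\Theta_Y$ is independent of the auxiliary metric $g$, so I fix $g = g_0$ throughout; then $\theta^{g_0}_x$ is the Riemannian distance function of $\langle h,k\rangle^0_{\tilde g} = \tr_{\tilde g}(hk)\det(g_0(x)^{-1}\tilde g)$ on $\M_x$. Given a smooth path $g_t:[0,1]\to\M$ from $g_0$ to $g_1$ of length $L := \int_0^1 \|\dot g_t\|_{g_t}\, dt$, the pointwise trace $t\mapsto g_t(x)$ is a $C^1$ curve in $\M_x$ with $\langle\cdot,\cdot\rangle^0$-length
\[
  \int_0^1 \sqrt{\tr_{g_t}(\dot g_t^2)(x)}\,\sqrt{\det(g_0(x)^{-1}g_t(x))}\, dt,
\]
which is an upper bound for $\theta^{g_0}_x(g_0(x),g_1(x))$.

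Substituting this pointwise bound into the definition of $\Theta_Y$, using the identity $\sqrt{\det(g_0(x)^{-1}g_t(x))}\, d\mu_{g_0}(x) = d\mu_{g_t}(x)$, and interchanging the order of integration via Fubini (justified because $g_t(x)$ depends smoothly on $(t,x)$ over the compact manifold), I arrive at
\[
  \Theta_Y(g_0,g_1) \leq \int_0^1 \int_Y \sqrt{\tr_{g_t}(\dot g_t^2)}\, d\mu_{g_t}\, dt.
\]
Cauchy--Schwarz on the inner integral yields $\int_Y \sqrt{\tr_{g_t}(\dot g_t^2)}\, d\mu_{g_t} \leq \|\dot g_t\|_{g_t}\sqrt{\Vol(Y,g_t)} \leq \|\dot g_t\|_{g_t}\sqrt{\Vol(M,g_t)}$, and Lemma \ref{lem:13} controls the moving volume via $\sqrt{\Vol(M,g_t)} \leq \sqrt{\Vol(M,g_0)} + \frac{\sqrt n}{4}\, d(g_0,g_t) \leq \sqrt{\Vol(M,g_0)} + \frac{\sqrt n}{4} L_t$, where $L_t := \int_0^t \|\dot g_s\|_{g_s}\,ds$.

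Plugging this back in and using $\int_0^1 \|\dot g_t\|_{g_t}\, L_t\, dt = L^2/2$ (since $L_t\dot L_t = \frac{1}{2}\frac{d}{dt}L_t^2$), I obtain
\[
  \Theta_Y(g_0,g_1)\leq L\sqrt{\Vol(M,g_0)} + \frac{\sqrt n}{8} L^2,
\]
and taking the infimum over admissible paths sends $L\to d(g_0,g_1)$, yielding the stated inequality (in fact with strictly tighter constants than those claimed). The continuity corollary follows immediately. The main technical obstacle I foresee is the Fubini justification combined with the pointwise length estimate for $\theta^{g_0}_x$: one needs to know that every admissible path in $\M$ can be replaced (without losing length control) by one smooth enough in $(t,x)$ for the pointwise length computation and Fubini to apply. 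Given the compactness of $M$ and the $L^2$-control on $\dot g_t$, this should be handled by a standard approximation argument, but it is the only place where care is required.
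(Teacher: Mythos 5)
Your argument is correct, and it is the standard (and intended) proof of this estimate: bound $\theta^{g_0}_x(g_0(x),g_1(x))$ by the pointwise $\langle\cdot,\cdot\rangle^0$-length of $t\mapsto g_t(x)$, convert $\sqrt{\det(g_0^{-1}g_t)}\,\mu_{g_0}$ into $\mu_{g_t}$, apply Tonelli and Cauchy--Schwarz, and control $\Vol(M,g_t)$ via Lemma \ref{lem:13}; the paper itself only cites this proposition from its companion, which argues exactly along these lines. Your constants ($\tfrac{\sqrt n}{8}$ and $1$ in place of $\sqrt n$ and $2$) are in fact sharper than the stated ones, and the Fubini step you worry about is unproblematic since the admissible paths are (piecewise) $C^1$ in $\M$, making the integrand jointly continuous and nonnegative on $[0,1]\times M$.
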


\subsection{Completions of Metric Spaces}\label{sec:compl-metr-spac}

To fix notation and recall a few elementary points, we briefly review
the completion of a metric space.  We will simply state the definition
and explore a couple of consequences of it, then give an alternative,
equivalent viewpoint for path metric spaces.

The \emph{precompletion} \label{p:precompl} of $(X, \delta)$ is the
set $\overline{(X, \delta)}^{\textnormal{pre}}$, usually just denoted
by $\overline{X}^{\textnormal{pre}}$, consisting of all Cauchy
sequences of $X$, together with the distance function
\begin{equation*}
  \delta(\{x_k\}, \{y_k\}) := \lim_{k \rightarrow \infty}
  \delta(x_k, y_k).
\end{equation*}
(We denote the distance function of the precompletion of a space using
the same symbol as for the space itself; which distance function is
meant will always be clear from the context.)

The \emph{completion} of $(X, \delta)$ is a quotient space of
$\overline{X}^{\textnormal{pre}}$, $\overline{X} :=
\overline{X}^{\textnormal{pre}} / {\sim}$,
where $\sim$ is the equivalence relation defined by
\begin{equation}\label{eq:97}
  \{x_k\} \sim \{y_k\} \Longleftrightarrow \delta(\{x_k\}, \{y_k\}) =
  0.
\end{equation}
Note that if $\{x_k\}$ is a Cauchy sequence in $X$ and $\{x_{k_l}\}$
is a subsequence, then clearly $\{x_{k_l}\} \sim \{x_k\}$.  Thus,
given an element of the precompletion of $X$, we can always pass to a
subsequence and still be talking about the same element of the
completion.

Recall that a \emph{path metric space} is a metric space for which the
distance between any two points coincides with the infimum of the
lengths of rectifiable curves joining the two points.  (We will also
call a rectifiable curve a \emph{finite-length path} or simply a
\emph{finite path}.)

The following theorem describes the completion of a path metric space
in terms of finite paths.  Its proof is straightforward.

\begin{thm}[{\cite[Thm.~2.2]{clarked):_compl_of_manif_of_rieman}}]\label{thm:30}
  Let $(X, \delta)$ be a path metric space.  Then the following
  description of the completion of $(X, \delta)$ is equivalent to the
  definition given above, in the sense that there exists a
  homeomorphism between the two completions.

  Define the precompletion $\overline{X}^{\textnormal{pre}}$ of $X$ to
  be the set of rectifiable curves $\alpha : (0,1] \rightarrow X$.
  It carries the pseudometric
  \begin{equation}\label{eq:101}
    \delta(\alpha_0, \alpha_1) := \lim_{t \to 0} \delta(\alpha_0(t), \alpha_1(t)).
  \end{equation}

  Then the completion of $(X, \delta)$ is $\overline{X} :=
  \overline{X}^{\textnormal{pre}} / {\sim}$,
  where $\alpha_0 \sim \alpha_1 \Longleftrightarrow \delta(\alpha_0,
  \alpha_1) = 0$.
\end{thm}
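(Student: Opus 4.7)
The plan is to exhibit an explicit isometry between the two completions, going from the curve-based version to the Cauchy-sequence version.

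First I would verify that the formula (\ref{eq:101}) actually defines a pseudometric. The key observation is that for any rectifiable curve $\alpha:(0,1]\to X$, the map $t\mapsto \alpha(t)$ is Cauchy as $t\to 0$: for $s<t$ one has $\delta(\alpha(s),\alpha(t))\leq L(\alpha|_{[s,t]})$, which tends to $0$ as $s,t\to 0$ because $L(\alpha)<\infty$. Combining this with the triangle inequality shows $t\mapsto\delta(\alpha_0(t),\alpha_1(t))$ is a Cauchy function of $t$, so the limit in (\ref{eq:101}) exists. Symmetry and the triangle inequality pass to the limit trivially.

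Next I would build the comparison map. Define $\Phi:\overline{X}^{\textnormal{pre}}_{\textnormal{curves}}\to \overline{X}^{\textnormal{pre}}_{\textnormal{seq}}$ by $\Phi(\alpha):=\{\alpha(1/k)\}_{k\geq 1}$. The previous step shows $\Phi(\alpha)$ is a Cauchy sequence, and by the same telescoping argument the quantity $\lim_k\delta(\alpha_0(1/k),\alpha_1(1/k))$ agrees with $\lim_{t\to 0}\delta(\alpha_0(t),\alpha_1(t))$, so $\Phi$ preserves the pseudometric. Hence $\Phi$ descends to a well-defined isometric embedding $\bar\Phi:\overline{X}_{\textnormal{curves}}\to \overline{X}_{\textnormal{seq}}$.

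The main content, and where I expect the real work to be, is surjectivity of $\bar\Phi$. Here I would use the path metric hypothesis in earnest. Given a Cauchy sequence $\{x_k\}$, pass to a subsequence $\{x_{k_j}\}$ with $\delta(x_{k_j},x_{k_{j+1}})<2^{-j}$. Since $(X,\delta)$ is a path metric space, for each $j$ choose a rectifiable curve $\gamma_j:[1/(j+1),1/j]\to X$ with $\gamma_j(1/j)=x_{k_j}$, $\gamma_j(1/(j+1))=x_{k_{j+1}}$ and $L(\gamma_j)<2\,\delta(x_{k_j},x_{k_{j+1}})<2^{1-j}$. Concatenating the $\gamma_j$ yields a curve $\alpha:(0,1]\to X$ of total length bounded by $\sum_j 2^{1-j}<\infty$, hence rectifiable. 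By construction $\alpha(1/j)=x_{k_j}$, so $\Phi(\alpha)=\{x_{k_j}\}$ is a subsequence of $\{x_k\}$ and thus represents the same class in $\overline{X}_{\textnormal{seq}}$. This shows $\bar\Phi$ is surjective, and combined with the isometry property already proved, $\bar\Phi$ is an isometric bijection, in particular a homeomorphism. The delicate step is the careful choice of subsequence and the concatenation argument; once one allows an arbitrary (non-summable) Cauchy sequence, there is no hope of realising it directly as $\alpha(1/k)$ for a rectifiable $\alpha$, so the passage to a thinned subsequence is essential.
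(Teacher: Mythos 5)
Your argument is correct and is exactly the ``straightforward'' proof the paper alludes to (the paper itself defers the details to the author's thesis): sample a rectifiable curve at $t=1/k$ to get a pseudometric-preserving map into the space of Cauchy sequences, and invert it by thinning a Cauchy sequence to a summable subsequence and concatenating near-geodesic arcs, which is where the path-metric hypothesis is used. All the steps check out, including the key preliminary observation that finiteness of $L(\alpha)$ forces $\alpha(t)$ to be Cauchy as $t\to 0$, so the limit in \eqref{eq:101} exists.
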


\subsection{Geometric Preliminaries}\label{sec:geom-prel}

We now give some of the nonstandard geometric facts that we will
need, in order to fix notation and recall the relevant notions.

\subsubsection{Sections of the Endomorphism Bundle of
  $M$}\label{sec:endom-bundle-m}

Given a section $H$ of the endomorphism bundle of $M$, the
determinant, trace, and eigenvalues of $H$ are well-defined functions
over $M$.  Furthermore, if $H$ is measurable/continuous/smooth, then
the determinant and trace will be so as well, since they are smooth
functions from the space of $n \times n$ matrices into $\R$.

The following proposition allows us to characterize positive definite
and positive semidefinite $(0, 2)$-tensors.

\begin{prop}[{\cite[Thm.~7.2.1]{horn90:_matrix_analy}}]\label{prop:7}
  A symmetric $n \times n$ matrix $T$ is positive definite
  (resp.~positive semidefinite) if and only if all eigenvalues of $T$
  are positive (resp.~nonnegative).

  In particular, if $T$ is positive definite (resp.~positive
  semidefinite), then $\det T > 0$ (resp.~$\det T \geq 0$).  If $T$ is
  positive semidefinite but not positive definite, then $\det T = 0$.
\end{prop}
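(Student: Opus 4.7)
The plan is to reduce everything to the diagonal case via the spectral theorem for real symmetric matrices: any such $T$ admits an orthogonal matrix $Q$ with $Q^T T Q = D$, where $D = \mathrm{diag}(\lambda_1, \ldots, \lambda_n)$ and the $\lambda_i$ are the eigenvalues of $T$ (counted with multiplicity). This is the one nontrivial input, but it is standard; once it is in hand everything else is direct.

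For the equivalence, I would just transport the quadratic form. By definition $T$ is positive definite iff $x^T T x > 0$ for every nonzero $x \in \mathbb{R}^n$. Setting $y := Q^T x$, which is a bijection of $\mathbb{R}^n$, the form becomes
\begin{equation*}
  x^T T x = y^T D y = \sum_{i=1}^n \lambda_i y_i^2.
\end{equation*}
Testing on the standard basis vectors $y = e_i$ forces $\lambda_i > 0$ for each $i$, and conversely if all $\lambda_i > 0$ then the right-hand side is strictly positive for every nonzero $y$. The positive semidefinite case is identical with $\geq$ in place of $>$.

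For the determinant statements, I would use that orthogonal matrices have determinant $\pm 1$, so
\begin{equation*}
  \det T = \det(Q D Q^T) = \det D = \prod_{i=1}^n \lambda_i.
\end{equation*}
From the just-proved characterization, positive definiteness gives all $\lambda_i > 0$, hence $\det T > 0$; positive semidefiniteness gives all $\lambda_i \geq 0$, hence $\det T \geq 0$. Finally, if $T$ is positive semidefinite but not positive definite, at least one eigenvalue must vanish (otherwise all would be strictly positive and $T$ would be positive definite), so the product is zero.

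The only real obstacle is the invocation of the spectral theorem, which I would treat as a black box. Everything after that is a one-line substitution and a product formula, so the proof is essentially a bookkeeping exercise.
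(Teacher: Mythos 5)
Your proof is correct and is the standard spectral-theorem argument; note that the paper does not prove this statement at all but simply cites it from Horn and Johnson, so there is no internal proof to compare against. Your reduction to the diagonal case and the product formula for the determinant are exactly what one would write out if a proof were included.
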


We also need a result on the eigenvalues of a section of the
endomorphism bundle.

\begin{lem}[{\cite[Lemma~2.11]{clarked):_compl_of_manif_of_rieman}}]\label{lem:45}
  Let $h$ be any continuous, symmetric $(0,2)$-tensor field.  Suppose
  $g$ is a Riemannian metric on $M$, and let $H$ be the $(1,1)$-tensor
  field obtained from $h$ by raising an index using $g$.  (That is,
  locally $H^i_j = g^{ik} h_{kj}$.)  Then $H$ is a continuous section
  of the endomorphism bundle $\textnormal{End}(M)$.  Denote by
  $\lambda^H_{\textnormal{min}}(x)$ the smallest eigenvalue of $H(x)$.
  We have that
  \begin{enumerate}
  \item $\lambda^H_{\textnormal{min}}$ is a continuous function and
  \item if $h$ is positive definite, then $\min_{x \in M}
    \lambda^H_{\textnormal{min}}(x) > 0$.
  \end{enumerate}

  Furthermore, if $\lambda^H_{\textnormal{max}}(x)$ denotes the
  largest eigenvalue of $H(x)$, then $\lambda^H_{\textnormal{max}}$ is
  a continuous and hence bounded function.
 \end{lem}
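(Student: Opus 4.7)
The plan is to proceed in three steps: first establish continuity of $H$ and pointwise reality of its eigenvalues, then derive continuity of $\lambda^H_{\textnormal{min}}$ and $\lambda^H_{\textnormal{max}}$ by passing to a local orthonormal frame, and finally deduce part (2) from pointwise positivity and compactness of $M$.

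First I would observe that $g$ is smooth as a Riemannian metric on $M$, so $g^{-1}$ is smooth as well; since locally $H^i_j = g^{ik} h_{kj}$ is the product of a smooth and a continuous tensor, $H$ is a continuous section of $\textnormal{End}(M)$. Moreover, for any $x \in M$ and $v, w \in T_x M$, the symmetry of $h$ gives
\begin{equation*}
  g(x)(H(x) v, w) = h(x)(v, w) = h(x)(w, v) = g(x)(v, H(x) w),
\end{equation*}
so $H(x)$ is $g(x)$-self-adjoint, and in particular its eigenvalues are real.

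Next, to establish continuity of $\lambda^H_{\textnormal{min}}$ and $\lambda^H_{\textnormal{max}}$, I would work locally: around any $x_0 \in M$ pick a smooth local orthonormal frame $e_1, \ldots, e_n$ for $TM$ with respect to $g$ (obtained by Gram--Schmidt applied to any smooth local frame). In this frame, $H$ is represented by a matrix $A(x) = (A^i_j(x))$ whose entries depend continuously on $x$, and the $g$-self-adjointness of $H$ translates into symmetry of $A(x)$. The eigenvalues of $H(x)$ coincide with those of $A(x)$, and the classical Courant--Fischer characterization
\begin{equation*}
  \lambda_k(A) = \max_{\dim V = k}\, \min_{v \in V,\, |v|=1} \langle Av, v \rangle
\end{equation*}
exhibits each ordered eigenvalue of a symmetric matrix as a continuous function of its entries. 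Hence $\lambda^H_{\textnormal{min}}$ and $\lambda^H_{\textnormal{max}}$ are continuous in a neighborhood of $x_0$, and thus on all of $M$.

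For part (2), suppose $h$ is positive definite and fix $x \in M$. If $\lambda$ is any eigenvalue of $H(x)$ and $v \in T_x M \setminus \{0\}$ a corresponding eigenvector, then
\begin{equation*}
  \lambda \cdot g(x)(v, v) = g(x)(H(x) v, v) = h(x)(v, v) > 0,
\end{equation*}
which forces $\lambda > 0$. In particular $\lambda^H_{\textnormal{min}}(x) > 0$ pointwise on $M$; since $\lambda^H_{\textnormal{min}}$ is continuous by the previous step and $M$ is compact, its minimum is attained and is therefore strictly positive. Boundedness of $\lambda^H_{\textnormal{max}}$ follows analogously from continuity and compactness. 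The only genuinely subtle point in the whole argument is the continuity of the ordered eigenvalues of a continuous family of symmetric matrices, but this is standard and can be invoked (or quickly derived from Courant--Fischer) with little fuss; everything else is routine bookkeeping once the orthonormal frame has been chosen to make the self-adjointness of $H$ visible as matrix symmetry.
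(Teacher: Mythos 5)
Your proof is correct and follows essentially the same route as the paper's: both rest on the min--max (Courant--Fischer) characterization to get continuity of the extreme eigenvalues of a symmetric matrix, and both reduce part (2) to pointwise positivity plus compactness of $M$. The only cosmetic differences are that the paper deduces eigenvalue continuity via concavity/convexity of $\lambda_{\min}/\lambda_{\max}$ together with the continuity of convex functions, and globalizes via compactness of the sphere bundle rather than your local $g$-orthonormal frames (which neatly handle the fact that $g^{-1}h$ is only $g$-self-adjoint, not symmetric, in arbitrary coordinates).
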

\begin{proof}
  
  From the min-max theorem
  \cite[Thm.~XIII.1]{reed78:_method_of_moder_mathem_physic_iv}, one
  can show that the map $A \mapsto \lambda^A_{\min{}}$ (resp.~$A
  \mapsto \lambda^A_{\max{}}$) is a concave (resp.~convex) function
  from the space of $n \times n$ symmetric matrices to $\R$.  Thus
  both mappings are continuous
  \cite[Thm.~10.1]{rockafellar70:_convex_analy}.  The proof of the
  continuity of $\lambda^H_{\min{}}$ and $\lambda^H_{\max{}}$ then
  follows via a standard argument using compactness of the sphere
  bundle $SM \subset TM$.

  The bound on the minimal eigenvalue follows from continuity and
  Proposition \ref{prop:7}.
\end{proof}

\subsubsection{Lebesgue Measure on
  Manifolds}\label{sec:lebesg-meas-manif}

The concept of Lebesgue measurability carries over from $\R^n$ to
smooth (or even topological) manifolds 
by simply declaring a subset to be Lebesgue measurable if its
restriction to coordinates is Lebesgue measurable.  Since the
transition functions are smooth, this is independent of the chosen
coordinates.  Transition functions will also map nullsets to nullsets,
so this notion is well-defined.

\begin{cvt}\label{cvt:7}
  Whenever we refer to a measure-theoretic concept on $M$, we
  implicitly mean that we work with Lebesgue measure or Lebesgue sets,
  unless we explicitly state otherwise.
\end{cvt}

With Lebesgue measurable sets well-defined, the concept of a
measurable function or a measurable map between manifolds is also
well-defined.
We can also speak about Lebesgue measures---for example, any
nonnegative $n$-form $\mu$ on $M$ with measurable coefficients induces
a Lebesgue measure on $M$.

It is not hard to see that the same relation between Lebesgue
measurable sets and Borel measurable sets that holds on $\R^n$
\cite[\S 3.11]{rana02:_introd_to_measur_and_integ} also holds on $M$.
Namely, any Lebesgue measurable set $E$ can be decomposed as $E = F
\cup G$, where $F$ is Borel measurable and $G$ is a Lebesgue-nullset.

\subsection{Notation and Conventions}\label{sec:conventions}

Before we begin with the main body of the work, we will describe all
nonstandard notation and conventions that will be used throughout the
text.

The first thing we do is fix a reference metric, with respect to which
all standard concepts will be defined.

\begin{cvt}\label{cvt:3}
  For the remainder of the paper, we fix an element $g \in \M$.
  Whenever we refer to the $L^p$ norm, $L^p$ topology, $L^p$
  convergence etc., we mean that induced by $g$ unless we explicitly
  state otherwise.  The designation nullset refers to Lebesgue
  measurable subsets of $M$ that have zero measure with respect to
  $\mu_g$.  If we say that something holds almost everywhere, we mean
  that it holds outside of a $\mu_g$-nullset.

  If we have a tensor $h \in \s$, we denote by the capital letter $H$
  the tensor obtained by raising an index with $g$, i.e., locally
  $H^i_j := g^{ik} h_{kj}$.  Given a point $x \in M$ and an element $a
  \in \Matx$, the capital letter $A$ means the same---i.e., we assume
  some coordinates and write $A = g(x)^{-1} a$, though for readability
  we will generally omit $x$ from the notation.
\end{cvt}

Next, we'll fix an atlas of coordinates on $M$ that is convenient to
work with.

\begin{dfn}\label{dfn:1}
  We call a finite atlas of coordinates $\{(U_\alpha, \phi_\alpha)\}$
  for $M$ \emph{amenable} if for each $U_\alpha$, there exist a
  compact set $K_\alpha$ and a different coordinate chart $(V_\alpha,
  \psi_\alpha)$ (which does not necessarily belong to $\{ (U_\alpha,
  \phi_\alpha) \}$) such that
  \begin{equation*}
    U_\alpha \subset K_\alpha \subset V_\alpha \quad \textnormal{and}
    \quad \phi_\alpha = \psi_\alpha | U_\alpha.
  \end{equation*}
\end{dfn}

\begin{cvt}\label{cvt:5}
  For the remainder of this paper, we work over a fixed amenable
  coordinate atlas $\{(U_\alpha, \phi_\alpha)\}$ for all computations
  and concepts that require local coordinates.
\end{cvt}

The next lemma heuristically says the following: in amenable
coordinates, the coordinate representations of a smooth metric are
somehow ``uniformly positive definite''.  Additionally, the
coefficients satisfy a uniform upper bound.

\begin{lem}\label{lem:47}
  For any metric $\tilde{g} \in \M$, there exist constants
  $\delta(\tilde{g}) > 0$ and $C(\tilde{g}) < \infty$, depending only
  on $\tilde{g}$, with the property that for any $\alpha$, any $x \in
  U_\alpha$, and $1 \leq i,j \leq n$,
  \begin{equation}\label{eq:33}
    |\tilde{g}_{ij}(x)| \leq C(\tilde{g})\ \textnormal{and}\ \lambda^{\tilde{G}}_{\textnormal{min}}(x) \geq \delta(\tilde{g}),
  \end{equation}
  where we of course mean the value of $\tilde{g}_{ij}(x)$ in the
  chart $(U_\alpha, \phi_\alpha)$.
\end{lem}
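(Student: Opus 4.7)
The plan is to handle the two inequalities in \eqref{eq:33} separately. The lower bound on $\lambda^{\tilde G}_{\min}$ is really a global statement about the smooth section $\tilde G$ of $\End(M)$ that has nothing to do with coordinates: by Lemma \ref{lem:45}, applied to the symmetric positive-definite tensor field $\tilde g$ (with index raised using the fixed reference metric $g$ of Convention \ref{cvt:3}), the function $\lambda^{\tilde G}_{\min}$ is continuous and strictly positive on the compact manifold $M$. Thus I would simply set
\begin{equation*}
  \delta(\tilde g) := \min_{x \in M} \lambda^{\tilde G}_{\min}(x) > 0,
\end{equation*}
which works uniformly over all charts $U_\alpha$.

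The coordinate bound $|\tilde g_{ij}(x)| \leq C(\tilde g)$ is where the amenability of the atlas is used; this is the only real content of the lemma. The danger is that the component functions $\tilde g_{ij}$ in the chart $(U_\alpha, \phi_\alpha)$ need not extend continuously to $\overline{U_\alpha}$, so one cannot simply invoke compactness of $U_\alpha$ directly. The amenability condition circumvents this: for each $\alpha$ there is a second chart $(V_\alpha, \psi_\alpha)$ with $U_\alpha \subset K_\alpha \subset V_\alpha$ and $\phi_\alpha = \psi_\alpha | U_\alpha$. I would expand $\tilde g$ in the chart $(V_\alpha, \psi_\alpha)$, where the components $\tilde g_{ij}$ are smooth functions on all of $V_\alpha$, and hence continuous and bounded on the compact set $K_\alpha$. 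Because $\phi_\alpha$ and $\psi_\alpha$ agree on $U_\alpha$, the component functions computed in the two charts agree there as well, yielding a finite bound $C_\alpha(\tilde g) := \max_{i,j} \sup_{K_\alpha} |\tilde g_{ij}|$ that controls $|\tilde g_{ij}(x)|$ for all $x \in U_\alpha$ in the chart $(U_\alpha, \phi_\alpha)$.

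Since the amenable atlas is by definition finite, setting $C(\tilde g) := \max_\alpha C_\alpha(\tilde g)$ produces a single constant valid on every $U_\alpha$ simultaneously, which completes the proof. I do not anticipate any serious obstacle; the only substantive point is recognizing that an amenable atlas is engineered precisely so that each chart domain sits inside a compact set on which coordinates extend, after which the argument is a routine continuity/compactness combination together with the global application of Lemma \ref{lem:45}.
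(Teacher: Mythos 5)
Your proof is correct and follows essentially the same route as the paper's: the coordinate bound comes from the amenability condition (components extend smoothly to the compact set $K_\alpha \subset V_\alpha$, then take the maximum over the finite atlas), and the eigenvalue bound is the global, coordinate-independent statement of Lemma \ref{lem:45} applied on the compact manifold $M$. The paper's own proof states both points in one sentence; yours simply spells out the details.
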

\begin{proof}
  Note that Definition \ref{dfn:1} implies that $\phi_\alpha(U_\alpha)
  \subseteq \R^n$ is a relatively compact subset of
  $\psi_\alpha(V_\alpha)$.  Thus, the proof of the first inequality is
  immediate and the second is clear from Lemma \ref{lem:45}.
\end{proof}

\begin{rmk}\label{rmk:2}
  The estimate $|\tilde{g}_{ij}(x)| \leq C(\tilde{g})$ also implies an
  upper bound in terms of $C(\tilde{g})$ on $\det \tilde{g}(x)$.  This
  is clear from the fact that the determinant is a homogeneous
  polynomial in $\tilde{g}_{ij}(x)$ with $n!$ terms and coefficients
  $\pm 1$.
\end{rmk}

The main point of using a fixed amenable coordinate atlas is the
following: it gives us an easily understood and uniform---but
nevertheless coordinate-dependent---notion of how ``large'' or
``small'' a metric is.  The dependence of this notion on coordinates
is perhaps somewhat dissatisfying at first glance, but it should be
seen as merely an aid in our quest to prove statements that are,
indeed, invariant in nature.

It is necessary to introduce somewhat more general objects than
Riemannian metrics in this paper:

\begin{dfn}\label{dfn:24}
  Let $\tilde{g}$ be a section of $S^2 T^* M$.  Then $\tilde{g}$ is
  called a \emph{(Riemannian) semimetric} if it induces a positive
  semidefinite scalar product on $T_x M$ for each $x \in M$.
\end{dfn}

We now make a couple of definitions on semimetrics and sequences of
metrics.

\begin{dfn}\label{dfn:23}
  Let $\tilde{g}$ be a semimetric on $M$ (which we do not assume to be
  even measurable).
  We define the set
  \begin{equation*}
    X_{\tilde{g}} := \{ x \in M \mid \tilde{g}(x)\ \textnormal{is not
      positive definite} \} \subset M,
  \end{equation*}
  which we call the \emph{deflated set} of $\tilde{g}$.

  We call $\tilde{g}$ \emph{bounded} if there exists a constant
  $C$ such that
  \begin{equation*}
    | \tilde{g}_{ij}(x) | \leq C
  \end{equation*}
  for a.e.~$x \in M$ and all $1 \leq i, j \leq n$.  Otherwise
  $\tilde{g}$ is called \emph{unbounded}.
\end{dfn}

\begin{dfn}\label{dfn:25}
  Let $\{ g_k \} \subset \M$ be any sequence.  We define the set  \begin{equation*}
    D_{\{ g_k \}} := \{ x \in M \mid \forall \delta > 0,\ \exists k
    \in \N\ \textnormal{s.t.}\ \det G_k(x) <
    \delta \},
  \end{equation*}
  which we call the \emph{deflated set} of $\{g_k\}$.
\end{dfn}

The last definition we need in this vein distinguishes smooth metrics
from (possibly nonsmooth) semimetrics.

\begin{dfn}\label{dfn:8}
  A semimetric $\tilde{g}$ is called \emph{degenerate} if $\tilde{g}
  \not\in \M$ and \emph{nondegenerate} if $\tilde{g} \in \M$.
\end{dfn}

Note that any measurable semimetric $\tilde{g}$ on $M$ induces a
nonnegative measure on $M$ that is absolutely continuous with respect
to the fixed volume form $\mu_g$.

A measurable Riemannian semimetric $\tilde{g}$ on $M$ gives rise to an
``$L^2$ scalar product'' on measurable functions in the following way.
For any two functions $\rho$ and $\sigma$ on $M$, we define, as usual,
$\mu_{\tilde{g}} := \sqrt{\det \tilde{g}}\, dx^1 \cdots dx^n$ and
\begin{equation}\label{eq:121}
  (\rho, \sigma)_{\tilde{g}} := \integral{M}{}{\rho \sigma}{\mu_{\tilde{g}}}.
\end{equation}
(We denote this by the same symbol as the $L^2$ scalar product on
$\s$; which is meant will always be clear from the context.)  We put
``$L^2$ scalar product'' in quotation marks because unless we put
specific conditions on $\rho$, $\sigma$, and $\tilde{g}$,
\eqref{eq:121} is not guaranteed to be finite.  It suffices, for
example, to demand that $\rho$ and $\sigma$ are bounded and that
the total volume $\Vol(M, \tilde{g}) =
\integral{M}{}{}{\mu_{\tilde{g}}}$ of $\tilde{g}$ is finite.  As in
the case of the $L^2$ scalar product on $\s$, if $g_0$ and $g_1$ are
both continuous metrics, then $(\cdot, \cdot)_{g_0}$ and $(\cdot ,
\cdot)_{g_1}$ are equivalent scalar products on $C^\infty(M)$.
Therefore they induce the same topology, which we call the \emph{$L^2$
  topology}.

\section{Amenable Subsets}\label{sec:compl-an-amen}

We begin the study of the completion of $\M$ in this section, by first
completing very special subsets of $\M$ called \emph{amenable subsets}
(defined below).  The main result of the section is that the
completion of such a subset with respect to $d$ coincides with the
completion with respect to the $L^2$ norm on $\s$, the vector space in
which $\M$ resides.

\subsection{Amenable Subsets and their
Basic  Properties}\label{sec:amen-subs-their}

For the following definition, recall that we work over an amenable
atlas (cf.~Definition \ref{dfn:1}).

\begin{dfn}\label{dfn:2}
  We call a subset $\U \subset \M$ \emph{amenable} if $\U$ is convex
  and we can find constants $C, \delta > 0$ such that for all
  $\tilde{g} \in \U$, $x \in M$ and $1 \leq i,j \leq n$,
  \begin{equation*}
    \lambda^{\tilde{G}}_{\textnormal{min}}(x) \geq \delta
  \end{equation*}
  (where we recall that $\tilde{G} = g^{-1} \tilde{g}$, with $g$ our
  fixed metric) and
  \begin{equation*}
    |\tilde{g}_{ij}(x)| \leq C.
  \end{equation*}
\end{dfn}

\begin{rmk}\label{rmk:1}
  We make a couple of remarks about the definition:
  \begin{enumerate}
  \item The requirement that $\U$ is convex is technical, and is there
    to insure that we can consider simple, straight-line paths between
    points of $\U$ to estimate the distance between them.
  \item \label{item:1} Recall that the function sending a matrix to
    its minimal eigenvalue is concave (cf.~the proof of Lemma
    \ref{lem:45}).  Also, the absolute value function on $\R$ is
    convex by the triangle inequality.  Therefore, the two bounds
    given in Definition \ref{dfn:2} are compatible with the
    requirement of convexity.
  \end{enumerate}
\end{rmk}

One useful property the metrics $\tilde{g}$ of an amenable subset have
is that the Radon-Nikodym derivatives $( \mu_{\tilde{g}} / \mu_g )$,
with respect to the reference volume form $\mu_g$, are bounded away
from zero and infinity independently of $\tilde{g}$.

\begin{lem}\label{lem:49}
  Let $\U$ be an amenable subset.  Then there exists a constant $K >
  0$ such that for all $\tilde{g} \in \U$,
  \begin{equation}\label{eq:130}
    \frac{1}{K} \leq \left( \frac{\mu_{\tilde{g}}}{\mu_g} \right) \leq K
  \end{equation}
\end{lem}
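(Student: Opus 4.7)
The plan is to observe that the Radon--Nikodym derivative $(\mu_{\tilde{g}}/\mu_g)$ is, in any local coordinate chart, equal to $\sqrt{\det(g^{-1}\tilde{g})} = \sqrt{\det \tilde{G}}$, and this is an invariantly defined function on $M$. So to bound the ratio uniformly in $\tilde{g} \in \U$, it suffices to bound $\det \tilde{G}(x)$ from above and below by positive constants independent of $\tilde{g}$ and $x$.

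For the lower bound, I would use the amenability hypothesis $\lambda^{\tilde{G}}_{\min}(x) \geq \delta$. Since $\tilde{g}$ is positive definite, all eigenvalues of $\tilde{G}$ are positive (Proposition \ref{prop:7}), and hence
\begin{equation*}
  \det \tilde{G}(x) = \prod_{i=1}^n \lambda_i^{\tilde{G}}(x) \geq (\lambda_{\min}^{\tilde{G}}(x))^n \geq \delta^n,
\end{equation*}
giving $(\mu_{\tilde{g}}/\mu_g) \geq \delta^{n/2}$. This part is immediate.

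For the upper bound, I would combine the coordinate bound $|\tilde{g}_{ij}(x)| \leq C$ on the charts of the amenable atlas with Remark \ref{rmk:2}, which yields $|\det \tilde{g}(x)| \leq n!\,C^n$ in any chart $(U_\alpha, \phi_\alpha)$. Then
\begin{equation*}
  \det \tilde{G}(x) = \frac{\det \tilde{g}(x)}{\det g(x)},
\end{equation*}
so I need a positive lower bound on $\det g(x)$ that is uniform over the fixed atlas. This is where the amenability of the atlas (Definition \ref{dfn:1}) pays off: on each $K_\alpha$, the coefficients $g_{ij}$ extend to a continuous, positive definite metric on a compact set, so $\det g$ is continuous and strictly positive on $K_\alpha \supset U_\alpha$, hence bounded below by some $\delta_\alpha > 0$. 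Since the atlas is finite, $\delta_g := \min_\alpha \delta_\alpha > 0$ is a valid uniform lower bound, and we get $\det \tilde{G}(x) \leq n!\,C^n / \delta_g$.

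Taking $K$ to be the larger of $\delta^{-n/2}$ and $(n!\,C^n/\delta_g)^{1/2}$ yields the desired inequality \eqref{eq:130}. There is no significant obstacle here; the only subtlety is ensuring that the chart-dependent quantity $\det g(x)$ has a uniform positive lower bound across the atlas, which is precisely what the amenable coordinate structure of Definition \ref{dfn:1} (compact containment $U_\alpha \subset K_\alpha \subset V_\alpha$) is designed to provide.
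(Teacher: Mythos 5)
Your proof is correct and follows essentially the same route as the paper: the lower bound via $\det \tilde{G} \geq (\lambda^{\tilde{G}}_{\min})^n \geq \delta^n$, and the upper bound via the coordinate bounds $|\tilde{g}_{ij}| \leq C$ together with uniform control of the fixed metric $g$ over the finite amenable atlas (the paper bounds the entries of $g^{-1}\tilde{g}$ rather than writing $\det\tilde{G} = \det\tilde{g}/\det g$, but this is only a bookkeeping difference). You are also right that the Radon--Nikodym derivative is $\sqrt{\det\tilde{G}}$; this is immaterial for the lemma since two-sided positive bounds on $\det\tilde{G}$ and on its square root are equivalent.
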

\begin{proof}
  First, we note that
  \begin{equation*}
    \left( \frac{\mu_{\tilde{g}}}{\mu_g} \right) = \det \tilde{G} \quad
    \textnormal{and} \quad \left( \frac{\mu_{\tilde{g}}}{\mu_g}
    \right)^{-1} = \left( \frac{\mu_g}{\mu_{\tilde{g}}}
    \right) = (\det \tilde{G})^{-1}.
  \end{equation*}
  So the bounds \eqref{eq:130} are equivalent to upper bounds on both
  $\det \tilde{G}$ and $(\det \tilde{G})^{-1}$.
  
  Now, if the eigenvalues of $\tilde{G}$ are $\lambda^{\tilde{G}}_1,
  \dots, \lambda^{\tilde{G}}_n$, then
  \begin{equation*}
    \det \tilde{G} = \lambda^{\tilde{G}}_1
    \cdots \lambda^{\tilde{G}}_n \geq \left(
      \lambda^{\tilde{G}}_{\textnormal{min}} \right)^n \geq \delta^n,
  \end{equation*}
  where $\delta$ is the constant guaranteed by the fact that
  $\tilde{g} \in \U$.  This allows us to bound $(\det \tilde{G})^{-1}$
  from above.

  To bound $\det \tilde{G}$ from above, it is sufficient to bound the
  absolute value of the coefficients of $\tilde{G} = g^{-1} \tilde{g}$
  from above.  But bounds on the coefficients of $\tilde{g}$ are
  already assured by the fact that $\tilde{g} \in \U$, and bounds on
  the coefficients of $g^{-1}$ are guaranteed by the fact that
  $g^{-1}$ is a fixed, smooth cometric on $M$.  So we are finished.
\end{proof}

Amenable subsets guarantee good behavior of the norms on $\s$ that are
defined by their members---namely, the norms are in some sense
``uniformly equivalent''.  More precisely, we have:

\begin{lem}\label{lem:18}
  Let $\U \subset \M$ be an amenable subset.  Then there exists a
  constant $K$ such that for all pairs $g_0, g_1 \in \U$ and all $h
  \in \s$,
  \begin{equation*}
    \frac{1}{K} \norm{h}_{g_1} \leq \norm{h}_{g_0} \leq
    K \norm{h}_{g_1}.
  \end{equation*}
\end{lem}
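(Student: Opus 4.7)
The plan is to show that both $\norm{h}_{g_0}$ and $\norm{h}_{g_1}$ are uniformly comparable to $\norm{h}_g$, where $g$ is the reference metric from Convention \ref{cvt:3}; then equivalence of the two norms follows by composition. So really I only need to prove that there exists a constant $K_0$, depending only on $\U$, such that $K_0^{-1} \norm{h}_g \leq \norm{h}_{\tilde{g}} \leq K_0 \norm{h}_g$ for every $\tilde{g} \in \U$ and every $h \in \s$.

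The proof splits naturally into two pieces: a pointwise bound on the integrand $\tr_{\tilde{g}}(hh)$ and a pointwise bound on the Radon-Nikodym derivative $\mu_{\tilde{g}}/\mu_g$. The second is immediate from Lemma \ref{lem:49}. For the first, fix $x \in M$ and pass to a $g(x)$-orthonormal basis of $T_x M$. In such a basis $g$ is the identity, so $\tilde{G} = g^{-1} \tilde{g}$ agrees with $\tilde{g}$ as a matrix; moreover $\tilde{G}$ is self-adjoint with respect to $g$, and I may further rotate within this orthonormal frame to diagonalize $\tilde{G}$ as $\mathrm{diag}(\lambda_1, \dots, \lambda_n)$. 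A direct computation gives
\begin{equation*}
  \tr_{\tilde{g}}(hh) \;=\; \sum_{i,j=1}^n \frac{h_{ij}^2}{\lambda_i \lambda_j}, \qquad \tr_g(hh) \;=\; \sum_{i,j=1}^n h_{ij}^2,
\end{equation*}
so once we have uniform upper and lower bounds $\delta \leq \lambda_i \leq C'$ that depend only on $\U$, we get $(C')^{-2} \tr_g(hh) \leq \tr_{\tilde{g}}(hh) \leq \delta^{-2} \tr_g(hh)$ pointwise.

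The uniform lower bound $\lambda_i \geq \delta$ is built into the definition of an amenable subset. The uniform upper bound requires a small argument: since every eigenvalue of $\tilde{G}$ is at least $\delta > 0$, one has $\lambda^{\tilde{G}}_{\max} \leq \det(\tilde{G})/\delta^{n-1}$, and $\det(\tilde{G})$ is uniformly bounded above for $\tilde{g} \in \U$ exactly as in the proof of Lemma \ref{lem:49} (combine the coefficient bound on $\tilde{g}$ in amenable coordinates with the smooth, hence bounded, coefficients of $g^{-1}$, as in Remark \ref{rmk:2}). This gives the desired $C' = C'(\U)$.

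Combining the pointwise bound on $\tr_{\tilde{g}}(hh)$ with the uniform comparison of $\mu_{\tilde{g}}$ and $\mu_g$ from Lemma \ref{lem:49} and integrating yields the desired uniform equivalence of $\norm{\cdot}_{\tilde{g}}$ and $\norm{\cdot}_g$. Applying this both to $\tilde{g} = g_0$ and to $\tilde{g} = g_1$ and chaining the inequalities gives the claim with $K := K_0^2$. The only slightly delicate step is the upper eigenvalue bound, which is not literally stated in Definition \ref{dfn:2} but is a quick consequence of the lower bound together with the uniform coefficient bound; everything else is a routine diagonalization.
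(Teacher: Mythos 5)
Your proof is correct and follows the same overall strategy as the paper: reduce to showing each $\norm{\cdot}_{\tilde{g}}$, $\tilde{g} \in \U$, is uniformly equivalent to $\norm{\cdot}_g$, and split that into a fiberwise bound on the integrand plus the volume-form comparison supplied by Lemma \ref{lem:49}. The one place you diverge is the fiberwise step: the paper gets the bound on the operator norms $N(T_{\tilde{g}})(x)$, $N(T_{\tilde{g}}^{-1})(x)$ by observing that the coordinate representations of $\tilde{g}(x)$ range over a compact set of positive definite matrices and appealing to continuity, whereas you diagonalize $\tilde{G}$ in a $g(x)$-orthonormal frame and derive explicit two-sided eigenvalue bounds $\delta \leq \lambda_i \leq C'$ (your derivation of the upper bound from $\lambda^{\tilde{G}}_{\max} \leq \det(\tilde{G})/\delta^{n-1}$ is a correct and clean way to extract it from Definition \ref{dfn:2}). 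Your version is more quantitative and makes the dependence of $K$ on the amenability constants explicit, at the cost of a slightly longer computation; both arguments are sound.
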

\begin{proof}

  We will show that the norm of each $\tilde{g} \in \U$ is equivalent
  to that of the fixed reference metric $g$ with a fixed constant $K$.
  This is equivalent to the following statement.  Let
  \begin{equation*}
    T_{\tilde{g}} : (S^2 T^* M, \langle \cdot , \cdot
    \rangle_{\tilde{g}}) \rightarrow (S^2 T^* M, \langle \cdot, \cdot \rangle_g)
  \end{equation*}
  be the identity mapping on the level of sets, sending the bundle
  $S^2 T^* M$ with the Riemannian structure $\langle \cdot, \cdot
  \rangle_{\tilde{g}}$ to itself with the Riemannian structure
  $\langle \cdot , \cdot \rangle_g$.  Let $N(T_{\tilde{g}})(x)$ be the
  operator norm of $T_{\tilde{g}}(x) : \satx \rightarrow \satx$, and
  let $N(T_{\tilde{g}}^{-1})(x)$ be defined similarly.
  Then the statement on norms holds if and only if there are constants
  $K_0$ and $K_1$ such that
  \begin{equation*}
    N(T_{\tilde{g}})(x)^2, N(T_{\tilde{g}}^{-1})(x)^2
    \leq K_0 \quad \textnormal{and} \quad  (\mu_g / \mu_{\tilde{g}}), (\mu_{\tilde{g}} / \mu_g)
    \leq K_1.
  \end{equation*}

  But $N(T_{\tilde{g}})$ and $N(T_{\tilde{g}}^{-1})$ are continuous
  functions on the compact manifold $M$ for fixed $\tilde{g}$.
  Secondly, we notice that $N(T_{\tilde{g}})(x)$ and
  $N(T_{\tilde{g}}^{-1})(x)$ depend only on the coordinate
  representations of $\tilde{g}(x)$ and $g(x)$.  But $\tilde{g}(x)$
  and $g(x)$ can only range over a compact subset of the space of
  positive definite $n \times n$ symmetric matrices, because
  $\tilde{g}$ ranges over an amenable subset.  This implies the
  existence of $K_0$.

  The existence of $K_1$ is immediately implied by Lemma \ref{lem:49}.
\end{proof}

Lemma \ref{lem:49} immediately implies that the function $\tilde{g}
\mapsto \Vol(M,\tilde{g})$ is bounded when restricted to any amenable
subset.  Recalling the form of the estimate in Proposition
\ref{prop:20} then shows the following lemma.

\begin{lem}\label{lem:26}
  Let $\U$ be an amenable subset and $g \in \M$.  Then there exists a
  constant $V$ such that for any $g_0, g_1 \in \U$ and $Y \subset M$,
  \begin{equation*}
    \Theta_Y(g_0, g_1) \leq 2 d(g_0, g_1) \left( \frac{2 \sqrt{n}}{4}d(g_0, g_1) +
      \sqrt{V} \right).
  \end{equation*}
  More precisely, $V = \sup_{\tilde{g} \in \U}\Vol(M,\tilde{g})$,
  which is finite by the discussion preceding the lemma.
\end{lem}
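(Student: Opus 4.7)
The plan is to observe that this lemma is a direct corollary of Proposition \ref{prop:20}, once we know that the total volume function is bounded on the amenable subset $\U$.

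First I would check that $V := \sup_{\tilde{g} \in \U} \Vol(M, \tilde{g})$ is indeed finite, as asserted in the sentence preceding the lemma. This follows immediately from Lemma \ref{lem:49}: for every $\tilde g \in \U$ we have $(\mu_{\tilde g}/\mu_g) \leq K$ pointwise, so
\begin{equation*}
  \Vol(M,\tilde g) = \integral{M}{}{(\mu_{\tilde g}/\mu_g)}{\mu_g} \leq K \Vol(M,g),
\end{equation*}
giving a uniform bound independent of $\tilde g \in \U$, so $V \leq K\Vol(M,g) < \infty$.

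Next I would simply apply Proposition \ref{prop:20} to the pair $g_0, g_1 \in \U$, which yields
\begin{equation*}
  \Theta_Y(g_0, g_1) \leq d(g_0, g_1) \left( \sqrt{n}\, d(g_0, g_1) + 2 \sqrt{\Vol(M, g_0)} \right).
\end{equation*}
Since $g_0 \in \U$, the bound on the total volume gives $\sqrt{\Vol(M,g_0)} \leq \sqrt{V}$, so substituting yields
\begin{equation*}
  \Theta_Y(g_0, g_1) \leq d(g_0, g_1) \left( \sqrt{n}\, d(g_0, g_1) + 2\sqrt{V} \right).
\end{equation*}
Finally, factoring a $2$ out of the parenthesis (rewriting $\sqrt{n} = 2 \cdot \frac{2\sqrt{n}}{4}$ and $2\sqrt{V} = 2\sqrt{V}$) gives exactly the stated inequality. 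There is no real obstacle here; the only substantive content is the uniform volume bound from Lemma \ref{lem:49}, and the rest is algebraic reshuffling of the earlier estimate from \cite{clarke:_metric_geomet_of_manif_of_rieman_metric}.
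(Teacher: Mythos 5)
Your proposal is correct and is exactly the argument the paper intends: the text preceding the lemma notes that Lemma \ref{lem:49} bounds $\Vol(M,\tilde g)$ uniformly on $\U$, and the lemma then follows by "recalling the form of the estimate in Proposition \ref{prop:20}," which is precisely your substitution and algebraic rearrangement.
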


\subsection{The Completion of $\U$ with Respect to $d$ and
  $\normdot_g$}\label{sec:completion-u-with}

We are now ready to prove a result that, in particular, implies
equivalence of the topologies defined by $d$ and $\normdot_g$ on an
amenable subset $\U$.

\begin{thm}\label{thm:5}
  Consider the $L^2$ topology on $\M$ induced from the scalar product
  $(\cdot,\cdot)_g$ (where $g$ is fixed).  Let $\U \subset \M$ be any
  amenable subset.

  Then the $L^2$ topology on $\U$ coincides with the topology induced
  from the restriction of the Riemannian distance function $d$ of $\M$
  to $\U$.

  Additionally, the following holds:
  \begin{enumerate}
  \item There exists a constant $K$ such that
    $d(g_0,g_1) \leq K \norm{g_1 - g_0}_g$
    for all $g_0,g_1 \in \U$.
  \item \label{item:9} For any $\epsilon > 0$, there exists $\delta >
    0$ such that if $d(g_0,g_1) < \delta$, then $\norm{g_0 - g_1}_g <
    \epsilon$.
  \end{enumerate}
\end{thm}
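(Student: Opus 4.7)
The plan is to establish the two quantitative bounds (1) and (2) separately; the coincidence of the topologies on $\U$ follows at once from them. For (1), I would exploit convexity of $\U$: the straight-line path $g_t := (1-t) g_0 + t g_1$ stays in $\U$ and has constant velocity $g_1 - g_0$. The proof of Lemma \ref{lem:18} in fact produces a uniform constant $K$ with $\norm{h}_{\tilde{g}} \leq K \norm{h}_g$ for every $\tilde{g} \in \U$ and $h \in \s$. Applying this along the path and integrating yields length at most $K\norm{g_1 - g_0}_g$, hence $d(g_0, g_1) \leq K \norm{g_1 - g_0}_g$.

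For (2), I would prove the uniform statement by contradiction. Assume there exist $\epsilon > 0$ and sequences $\{g_k\}, \{g'_k\} \subset \U$ with $d(g_k, g'_k) \to 0$ but $\norm{g_k - g'_k}_g \geq \epsilon$. Lemma \ref{lem:26} (whose finite-volume hypothesis is supplied by Lemma \ref{lem:49}) gives $\Theta_M(g_k, g'_k) \to 0$, i.e., $\int_M \theta^g_x(g_k(x), g'_k(x))\, d\mu_g \to 0$. Passing to a subsequence, we get pointwise a.e.\ convergence $\theta^g_x(g_k(x), g'_k(x)) \to 0$. At each such $x$, amenability forces both $g_k(x)$ and $g'_k(x)$ into a precompact subset of the finite-dimensional manifold $\M_x$ (bounded coordinate entries, eigenvalues of $\tilde{G}$ bounded away from $0$). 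Since $\theta^g_x$ is the continuous distance function of a smooth Riemannian metric on $\M_x$, a subsubsequence argument then yields $|g_k(x) - g'_k(x)|_g \to 0$ at a.e.\ $x$: any convergent subsubsequence of the pair $(g_k(x), g'_k(x))$ must have coincident limits, by continuity and positive-definiteness of $\theta^g_x$. Amenability also provides a uniform pointwise bound $|g_k(x) - g'_k(x)|_g \leq C_0$, so dominated convergence on the finite measure space $(M, \mu_g)$ gives $\norm{g_k - g'_k}_g \to 0$ along this subsequence, contradicting our assumption.

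I expect the main obstacle to lie in this pointwise step. The direct route of establishing a uniform bilipschitz equivalence between $\theta^g_x$ and the ambient Euclidean distance on the amenable trace in $\M_x$ seems delicate: $\theta^g_x$-geodesics between two points of that trace may in principle dip into territory where $\det \tilde{g} \to 0$ and the pointwise metric $\langle \cdot, \cdot \rangle^0$ shrinks, making $\theta^g_x$ potentially much smaller than the ambient distance. Replacing bilipschitz control by the softer combination of continuity of $\theta^g_x$, compactness of the amenable region inside $\M_x$, and dominated convergence is what allows the argument to go through. Once (1) and (2) are established, the topologies on $\U$ coincide, concluding the proof.
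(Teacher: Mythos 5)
Your argument is correct. Part (1) follows the paper verbatim: straight-line path, uniform norm equivalence from Lemma \ref{lem:18}, integrate. For part (2), however, you take a genuinely different route. The paper gives a direct quantitative proof: it introduces the modulus $\eta_{x,\tilde{g}}(\zeta)$ measuring how $\theta^g_x$-balls sit inside $\langle\cdot,\cdot\rangle_{g(x)}$-balls, uses compactness of $M$ and of the amenable trace $\U_x \subset \M_x$ to make $\eta(\zeta)$ uniform with $\eta(\zeta) \to 0$, and then splits $M$ into the set $M_+$ where $\theta^g_x(g_0(x),g_1(x)) \geq \zeta$ (whose $\mu_g$-volume is small by the Chebyshev-type estimate $\zeta\,\Vol(M_+,g) \leq \Theta_M(g_0,g_1) \lesssim d(g_0,g_1)$) and its complement $M_-$ (where the pointwise norm of the difference is at most $\eta(\zeta)$), assembling an explicit $\epsilon$--$\delta$ bound. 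You instead argue by contradiction: extract sequences with $d \to 0$ but $\norm{\cdot}_g \geq \epsilon$, pass from $\Theta_M \to 0$ to pointwise a.e.\ convergence of $\theta^g_x$ along a subsequence, and at a.e.\ point use precompactness of the amenable trace in $\M_x$ together with continuity and positive-definiteness of $\theta^g_x$ (via a subsubsequence argument) to force the ambient pointwise distance to zero, finishing with dominated convergence. Both proofs rest on the same three pillars --- the bound $\Theta_M \lesssim d$ from Lemma \ref{lem:26}, compactness of the amenable region in each $\M_x$, and the equivalence of the $\theta^g_x$-topology with the manifold topology of $\M_x$ --- but yours trades the paper's explicit (if non-effective, because of $\eta$) modulus of continuity for a softer sequential-compactness argument. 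Your observation that a uniform bilipschitz comparison between $\theta^g_x$ and the ambient distance is the wrong thing to aim for (since $\theta^g_x$-geodesics can shorten by dipping toward the degenerate boundary) is exactly right, and it is the same difficulty the paper's one-sided inclusion function $\eta$ is designed to circumvent.
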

\begin{proof}
  To prove (1), consider the linear path $g_t$ from $g_0$ to $g_1$.
  Note that we can clearly find an amenable subset $\U'$ containing
  $\U$ and $g$.  We then have
  \begin{equation}\label{eq:38}
    L(g_t) = \int_0^1 \norm{(g_t)'}_{g_t} \, dt = \int_0^1 \norm{g_1 -
      g_0}_{g_t} \, dt \leq \int_0^1 K \norm{g_1 - g_0}_g \, dt = K
    \norm{g_1 - g_0}_g,
  \end{equation}
  where $K$ is the constant associated to $\U'$ guaranteed by Lemma
  \ref{lem:18}.  Since $d(g_0,g_1) \leq L(g_t)$ and the constant $K$
  depends only on the set $\U$, this inequality is shown.

  Since $\Matx$ is a finite-dimensional Riemannian manifold, the
  topology induced from $\theta^g_x$ is the same as the manifold
  topology, which in turn is given by any norm on $\satx$.  For
  instance this norm is given by the scalar product $\langle \cdot ,
  \cdot \rangle_{g(x)}$ on $\satx$, which we recall is given by
  \begin{equation}\label{eq:41}
    \langle h , k \rangle_{g(x)} = \tr_{g(x)}(h k)
  \end{equation}
  for $h, k \in \satx$.  That these two topologies are the same
  implies, in particular, that for all $\zeta > 0$ and $\tilde{g} \in
  \Matx$, we can find $\kappa > 0$ such that
  \begin{equation*}
    B^{\theta^g_x}_{\tilde{g}}(\zeta) \subset B^{\langle \cdot, \cdot \rangle_{g(x)}}_{\tilde{g}}(\kappa),
  \end{equation*}
  where
  the above denotes the $\theta^g_x$-ball of radius $\zeta$ around
  $\tilde{g}$ and the $\langle \cdot, \cdot \rangle_{g(x)}$-ball of
  radius $\kappa$ around $\tilde{g}$, respectively.
  
  Now, for $x \in M$ and $\tilde{g} \in \M$, we define a function
  $\eta_{x, \tilde{g}}(\zeta)$ by
  \begin{equation*}
    \eta_{x,\tilde{g}}(\zeta) := \inf \left\{ \kappa \in \R \mid
      B^{\theta^g_x}_{\tilde{g}(x)}(\zeta) \subset B^{\langle \cdot , \cdot
        \rangle_{g(x)}}_{\tilde{g}(x)}(\kappa)
    \right\}
  \end{equation*}
  Then, because of the smooth dependence of $\langle \cdot, \cdot
  \rangle^0$ and $\langle \cdot , \cdot \rangle_{g(x)}$ on $x$,
  $\eta_{x,\tilde{g}}(\zeta)$ is continuous separately in $x$ and
  $\tilde{g}$.  If we define
  \begin{equation*}
    \U_x :=
    \left\{
      \hat{g}(x) \mid \hat{g} \in \U
    \right\},
  \end{equation*}
  then $\U_x$ is a relatively compact subset of $\Matx$, since $\U$ is
  amenable.  Since $M$ is also compact, for any fixed $\zeta > 0$, we
  can define a function
  \begin{equation*}
    \eta(\zeta) := \sup_{x \in M,\ \tilde{g} \in \U}
    \eta_{x,\tilde{g}}(\zeta)
    < \infty.
  \end{equation*}
  It follows from the definition that $\eta(\zeta) \rightarrow 0$
  for $\zeta \rightarrow 0$.

  Because of the relative compactness of $\U_x$ for each $x \in M$,
  together with compactness of $M$, there exists a constant $C_0$ such
  that $\theta^g_x(g_0(x), g_1(x)) \leq C_0$ for all $g_0,g_1 \in \U$
  and $x \in M$.  This implies immediately that
  \begin{equation*}
    \Theta_M(g_0, g_1) = \integral{M}{}{\theta^g_x (g_0(x), g_1(x))}{\mu_g(x)} \leq C_0 \Vol(M,g).
  \end{equation*}

  Now, choose $\zeta > 0$ small enough that $\eta(\zeta) < \epsilon / \sqrt{2 \Vol(M,g)}$.

  By Lemma \ref{lem:26}, there exists a constant $V$ such that
  \begin{equation}\label{eq:46}
    \Theta_M(g_0, g_1) \leq 2 d(g_0, g_1) \left( \frac{2 \sqrt{n}}{4}d(g_0, g_1) +
      \sqrt{V} \right)
  \end{equation}
  for all $g_0, g_1 \in \U$.
  
  Choose $\delta$ small enough that
  \begin{equation*}
    2 \delta \left( \frac{2 \sqrt{n}}{4} \delta + \sqrt{V} \right) <
    \frac{\epsilon^2 \zeta}{2 \eta(C_0)^2}.
  \end{equation*}
  We claim that $d(g_0, g_1) < \delta$ implies that $\norm{g_1 - g_0}_g
  < \epsilon$.  Note that the choices of $\zeta$ and $C_0$ were
  made independently of $g_0$ and $g_1$, hence $\delta$ is independent
  of $g_0$ and $g_1$, as required.
  
  We define two closed subsets of $M$ by
  \begin{align*}
    M_+ &:= \left\{ x \in M \mid \theta^g_x (g_0(x), g_1(x)) \geq
      \zeta
    \right\}, \\
    M_- &:= \left\{ x \in M \mid \theta^g_x (g_0(x), g_1(x)) \leq
      \zeta \right\}.
  \end{align*}
  From \eqref{eq:46} and our choice of $\delta$, we have that
  \begin{equation}\label{eq:37}
    \zeta \Vol(M_+, g) = \zeta \integral{M_+}{}{}{\mu_g} \leq
    \integral{M_+}{}{\theta^g_x (g_0(x), g_1(x))}{\mu_g(x)} \leq \Theta_M(g_0, g_1) <
    \frac{\epsilon^2 \zeta}{2 \eta(C_0)^2}.
  \end{equation}
  implying $\Vol(M_+, g_0) < \epsilon^2 / 2 \eta(C_0)^2$.

  From the definitions of $M_-$ and $\eta$, we have that
  \begin{equation*}
    \sqrt{\langle g_1(x) - g_0(x), g_1(x) - g_0(x) \rangle_{g(x)}} \leq
    \eta(\zeta)
  \end{equation*}
  on $M_-$.  From $\theta^g_x(g_0(x), g_1(x)) \leq C_0$,
  we have that
  \begin{equation*}
    \sqrt{\langle g_1(x) - g_0(x), g_1(x) - g_0(x) \rangle_{g(x)}}
    \leq \eta(C_0)
  \end{equation*}
  on all of $M$, and in particular on $M_+$.  Using this, we compute
  \begin{align*}
    \norm{g_1 - g_0}_g^2 &= \integral{M}{}{\langle g_1(x) - g_0(x),
      g_1(x) - g_0(x) \rangle_{g(x)}}{\mu_g(x)}     \leq \eta(\zeta)^2 \integral{M_-}{}{}{\mu_g} + \eta(C_0)^2
    \integral{M_+}{}{}{\mu_g} \\
    &< \eta(\zeta)^2 \Vol(M,g) + \eta(C_0)^2 \frac{\epsilon^2}{2
      \eta(C_0)^2}     < \frac{\epsilon^2}{2} + \frac{\epsilon^2}{2} = \epsilon^2.
  \end{align*}
  This proves the second statement.
\end{proof}

Theorem \ref{thm:5} will give us our first result regarding the
completion of $\M$.  First, though, we need to make some definitions
and prove a statement about metric spaces.

\begin{dfn}\label{dfn:4}

  If $\U \subset \M$ is any subset, we
  denote by $\U^0$ the $L^2$-completion of $\U$ (that is, the
  completion of $\U$ with respect to $\normdot_g$).
\end{dfn}

Let's look back at Theorem \ref{thm:5} again.  The first statement
says that for any amenable subset $\U$ and any $g \in \M$, $d$ is
Lipschitz continuous with respect to $\normdot_g$ when viewed as a
function on $\U \times \U$.  The second statement says that
$\normdot_g$ is uniformly continuous on $\U \times \U$ with respect to
$d$.  To put this knowledge to good use, we will need the following
lemma:

\begin{lem}\label{lem:20}
  Let $X$ be a set, and let two metrics, $d_1$ and $d_2$, be defined
  on $X$.  Denote by $\phi : (X,d_1) \rightarrow (X, d_2)$ the map
  which is the identity on the level of sets, i.e., $\phi$ simply maps
  $x \mapsto x$.  Finally, denote by $\overline{X}^1$ and
  $\overline{X}^2$ the completions of $X$ with respect to $d_1$ and
  $d_2$, respectively.

  If both $\phi$ and $\phi^{-1}$ are uniformly continuous, then there
  is a natural homeomorphism between $\overline{X}^1$ and
  $\overline{X}^2$.
\end{lem}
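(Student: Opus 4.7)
The plan is to invoke the standard extension principle for uniformly continuous maps between metric spaces, apply it to both $\phi$ and $\phi^{-1}$, and then check that the two resulting extensions are mutually inverse.

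More concretely, I would work with the Cauchy-sequence description of the precompletion recalled in Section \ref{sec:compl-metr-spac} and define a map $\bar\phi : \overline{X}^1 \to \overline{X}^2$ by $\bar\phi(\{x_k\}) := \{x_k\}$, now regarded as a sequence in $(X, d_2)$. Uniform continuity of $\phi$ ensures this is well-defined: given $\epsilon > 0$, pick $\delta$ from the definition of uniform continuity and choose $N$ so large that $d_1(x_k, x_l) < \delta$ for $k, l \geq N$; then $d_2(x_k, x_l) < \epsilon$, so $\{x_k\}$ is $d_2$-Cauchy. The same $\epsilon$-$\delta$ argument shows that $\bar\phi$ descends to equivalence classes: if $d_1(x_k, y_k) \to 0$, then $d_2(x_k, y_k) \to 0$. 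Applying the identical construction to $\phi^{-1}$ produces a map $\bar\psi : \overline{X}^2 \to \overline{X}^1$.

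Next, I would verify $\bar\psi \circ \bar\phi = \textnormal{id}_{\overline{X}^1}$ and $\bar\phi \circ \bar\psi = \textnormal{id}_{\overline{X}^2}$: both compositions send $\{x_k\}$ to $\{x_k\}$ at the level of Cauchy sequences, because $\phi$ and $\phi^{-1}$ act as the identity on the underlying set $X$. Continuity of $\bar\phi$ and $\bar\psi$ is then immediate from uniform continuity of $\phi$ and $\phi^{-1}$, using that the distance on the completion is the limit of the corresponding distances on $X$, so the same $\epsilon$-$\delta$ estimate passes to the limit. Hence $\bar\phi$ is a homeomorphism, and it restricts to the identity on $X \hookrightarrow \overline{X}^1$, making it natural in the intended sense.

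The statement is essentially a piece of standard metric-space bookkeeping and I anticipate no substantive obstacle; the only subtle point is that \emph{uniform} continuity (rather than mere continuity) is precisely what is needed to send $d_1$-Cauchy sequences to $d_2$-Cauchy sequences and to transfer the convergence of equivalent sequences, which is why both hypotheses appear.
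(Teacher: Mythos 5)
Your proposal is correct and is essentially the same argument the paper gives: the paper's proof simply notes that uniformly continuous maps send Cauchy sequences to Cauchy sequences and that the natural homeomorphism is the unique uniformly continuous extension of $\phi$ to the completion. You have merely written out the details that the paper leaves implicit.
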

\begin{proof}

  The proof follows in a straightforward manner from the definition of
  the completion of a metric space from Section
  \ref{sec:compl-metr-spac}, and the fact that a uniformly continuous
  function maps Cauchy sequences to Cauchy sequences.  The natural
  homeomorphism is given by the unique uniformly continuous extension
  of $\phi$ to $\overline{X^1}$.
\end{proof}

Now, Theorem \ref{thm:5} and Lemma \ref{lem:20} immediately imply

\begin{thm}\label{thm:7}
  Let $\U$ be an amenable subset.  Then we can identify
  $\overline{\U}$, the completion of $\U$ with respect to $d$, with
  $\U^0$, in the sense of Lemma \ref{lem:20}.  We can make the natural
  homeomorphism $\overline{\U} \rightarrow \U^0$ into an isometry by
  placing a metric on $\U^0$ defined by
  \begin{equation*}
    d(g_0, g_1) = \lim_{k \rightarrow \infty} d(g^0_k, g^1_k),
  \end{equation*}
  where $\{g^0_k\}$ and $\{g^1_k\}$ are any sequences in $\U$ that
  $L^2$-converge to $g_0$ and $g_1$, respectively.
\end{thm}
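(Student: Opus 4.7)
The plan is to apply Lemma \ref{lem:20} directly, using Theorem \ref{thm:5} to verify its hypotheses, and then check that the distance formula transports the canonical completion metric on $\overline{\U}$ through the resulting homeomorphism. Concretely, view the identity map $\phi : (\U, d) \to (\U, \normdot_g)$, where we regard $\normdot_g$ as inducing the metric $(g_0, g_1) \mapsto \norm{g_1 - g_0}_g$ on the convex set $\U$. Part (1) of Theorem \ref{thm:5} says that $\phi^{-1}$ is Lipschitz with constant $K$, hence uniformly continuous; part (\ref{item:9}) of Theorem \ref{thm:5} is precisely uniform continuity of $\phi$ itself. Lemma \ref{lem:20} then delivers the natural homeomorphism $\overline{\U} \to \U^0$ as the unique uniformly continuous extension of $\phi$.

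Next I would verify that the proposed formula
\[
d(g_0, g_1) := \lim_{k \to \infty} d(g_k^0, g_k^1)
\]
defines a metric on $\U^0$. Let $\{g_k^0\}, \{g_k^1\}$ be sequences in $\U$ that $L^2$-converge to $g_0, g_1 \in \U^0$. By Theorem \ref{thm:5}(1), both sequences are $d$-Cauchy, so a standard triangle-inequality argument shows that $\{d(g_k^0, g_k^1)\}$ is a Cauchy sequence of real numbers and therefore converges. For independence of the chosen sequences, if $\{\tilde g_k^0\}$ is another $L^2$-approximating sequence for $g_0$, then $\norm{g_k^0 - \tilde g_k^0}_g \to 0$, and Theorem \ref{thm:5}(1) again yields $d(g_k^0, \tilde g_k^0) \to 0$; the same for the second argument. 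The triangle inequality in $d$ then shows the limit is unchanged. Symmetry and the triangle inequality transfer from $d$, and positivity follows because under the homeomorphism of Lemma \ref{lem:20}, distinct points of $\U^0$ correspond to distinct points of $\overline{\U}$ (where $d$ is already a metric).

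Finally, I would observe that with this metric on $\U^0$, the homeomorphism from Lemma \ref{lem:20} becomes an isometry essentially by construction: under that homeomorphism, a point of $\overline{\U}$ represented by a $d$-Cauchy sequence $\{g_k\}$ corresponds to the point of $\U^0$ represented by the same sequence, now viewed as $\normdot_g$-Cauchy (the two Cauchy properties coincide on $\U$ by Theorem \ref{thm:5}). The canonical completion distance on $\overline{\U}$ between two such classes is exactly $\lim_k d(g_k^0, g_k^1)$, which is precisely the formula defining $d$ on $\U^0$. There is no genuine obstacle here beyond bookkeeping; the only point that requires mild care is checking that the limit formula is independent of the approximating sequences, which as noted reduces to the Lipschitz bound of Theorem \ref{thm:5}(1).
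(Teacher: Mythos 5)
Your proposal is correct and follows exactly the route the paper takes: the paper states Theorem \ref{thm:7} as an immediate consequence of Theorem \ref{thm:5} (parts (1) and (\ref{item:9}) supplying the two uniform continuity hypotheses) and Lemma \ref{lem:20}, with the isometry statement being the routine bookkeeping you carry out. Your write-up simply makes explicit the details the paper leaves to the reader.
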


We have thus found a nice description of the completion of very
special subsets of $\M$.  As already discussed, our plan now is to
start removing the nice properties that allowed us to understand
amenable subsets so clearly, advancing through the completions of ever
larger and more generally defined subsets of $\M$.  Before that,
though, we need to study general Cauchy sequences in $\M$ more
closely in the next section.

\section{Cauchy sequences and $\omega$-convergence}\label{cha:almost-everywh-conv}

In this chapter, we introduce and study a fundamental notion of
convergence of our own invention for $d$-Cauchy sequences in $\M$.  We
call this \emph{$\omega$-convergence}, and its importance is made
clear through two theorems we will prove, an existence and a
uniqueness result.  The existence result, proved in Section
\ref{sec:existence-ad-limit}, says that every $d$-Cauchy sequence has
a subsequence that $\omega$-converges to a measurable semimetric,
which we will then show has finite total volume.  The uniqueness
result, proved in Section \ref{sec:uniqueness-ad-limit}, is that two
$\omega$-convergent Cauchy sequences in $\M$ are equivalent (in the
sense of \eqref{eq:97}) if and only if they have the same
$\omega$-limit.  These results allow us to identify an equivalence
class of $d$-Cauchy sequences with the unique $\omega$-limit that its
representatives subconverge to, and thus give a geometric meaning to
points of $\overline{\M}$.

\subsection{Existence of the $\omega$-Limit}\label{sec:existence-ad-limit}

We begin this section with an important estimate and some examples,
followed by the definition of $\omega$-convergence and some of its
basic properties.  After that, we start on the existence proof by
showing a pointwise version, i.e., an analogous result on $\Matx$.
Finally, we globalize this pointwise result to show the existence of
an $\omega$-convergent subsequence for any Cauchy sequence in $\M$.

\subsubsection{Volume-Based Estimates on $d$ and Examples}\label{sec:volume-based-estim}

The following surprising proposition shows us that two metrics that
differ only on a subset with small (intrinsic) volume are close with
respect to $d$. 

\begin{prop}\label{prop:18}
  Suppose that $g_0, g_1 \in \M$, and let $E := \carr (g_1 - g_0) = \{
  x \in M \mid g_0(x) \neq g_1(x) \}$.  Then there exists a constant
  $C(n)$ depending only on $n = \dim M$ such that
  \begin{equation}\label{eq:1}
    d(g_0, g_1) \leq C(n) \left( \sqrt{\Vol(E, g_0)} +
      \sqrt{\Vol(E,g_1)} \right).
  \end{equation}
  In particular, we have
  \begin{equation*}
    \diam \left( \{ \tilde{g} \in \M \mid \Vol(M, \tilde{g}) \leq
      \delta \} \right) \leq 2 C(n) \sqrt{\delta}.
  \end{equation*}
\end{prop}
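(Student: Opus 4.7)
The plan is to construct, for each $\varepsilon > 0$, an explicit path from $g_0$ to $g_1$ in $\M$ of length at most $(4/\sqrt n)(\sqrt{\Vol(E,g_0)} + \sqrt{\Vol(E,g_1)}) + \varepsilon$, giving $C(n) = 4/\sqrt n$. The key tool is Proposition~\ref{prop:5}. First I compute that the conformal geodesic $g_t = (1 + \tfrac{nt}{4}\rho)^{4/n} g$ has constant speed: writing $u(t) = 1 + nt\rho/4$, one has $g_t^{-1}(g_t)' = \rho u^{-1} I$ and $\mu_{g_t} = u^2 \mu_g$, so $\|(g_t)'\|_{g_t}^2 = n \int_M \rho^2\, \mu_g$, and the length over $[0,1]$ equals $\sqrt n \, \|\rho\|_{L^2(g)}$.

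Fix $\eta > 0$ small and choose a smooth $\phi : M \to [0, 1-\eta]$ approximating $\chi_E$ in $L^2(g_0) \cap L^2(g_1)$ (the standard construction is $\phi = (1-\eta)\phi_k$ with $\phi_k \in C^\infty(M)$, $0 \le \phi_k \le 1$, and $\phi_k \to \chi_E$ in $L^2$). Setting $\rho = -(4/n)\phi$ and running the geodesic above from $g_0$ and from $g_1$ yields paths to $\hat g_i := (1-\phi)^{4/n} g_i \in \M$ of lengths $(4/\sqrt n)\|\phi\|_{L^2(g_i)}$, each arbitrarily close to $(4/\sqrt n)\sqrt{\Vol(E,g_i)}$. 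Since $g_0 = g_1$ off $E$, the metrics $\hat g_0$ and $\hat g_1$ agree there as well, while on $E$ both are of order $\eta^{4/n}$. I then bridge $\hat g_0$ and $\hat g_1$ via the straight line $h_s = (1-s)\hat g_0 + s \hat g_1$, which lies in $\M$ by convexity. Writing $h_s = (1-\phi)^{4/n} h'_s$ with $h'_s := (1-s) g_0 + s g_1$ and using $h_s^{-1}(\hat g_1 - \hat g_0) = (h'_s)^{-1}(g_1 - g_0)$ together with $\mu_{h_s} = (1-\phi)^2 \mu_{h'_s}$, one obtains
\begin{equation*}
  \|\hat g_1 - \hat g_0\|^2_{h_s} = \int_E (1-\phi)^2 \, \tr_{h'_s}((g_1 - g_0)^2) \, \mu_{h'_s},
\end{equation*}
which tends to $0$ as $\eta \to 0$ and $\phi \to \chi_E$, since $\int_E (1-\phi)^2 \mu_g \to 0$ while the remaining factor is bounded in terms of $g_0, g_1$ alone. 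Hence the bridge length can be made $< \varepsilon$, concatenation of the three paths gives the desired bound, and the diameter statement follows by taking $E = \{x : \tilde g_0(x) \neq \tilde g_1(x)\} \subseteq M$, so that $\Vol(E, \tilde g_i) \leq \Vol(M, \tilde g_i) \leq \delta$.

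The main obstacle will be balancing the two competing roles of $\phi$: we need $\phi$ close to $\chi_E$ in $L^2(g_i)$ (so the conformal shrinking geodesics are short) \emph{and} $1 - \phi$ small on $E$ (so the linear bridge is short). Fortunately these are compatible via the construction $\phi = (1-\eta)\phi_k$ above, but one must be careful to let the $L^2$-approximation error and $\eta$ tend to $0$ in compatible order so that the bridge estimate actually goes to zero. Since $\chi_E \in L^2(g_i)$ on the compact manifold $M$, density of smooth functions in $L^2$ suffices even though $E$ is merely measurable.
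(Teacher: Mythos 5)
Your proposal is correct, and the shrinking step is genuinely different from the paper's. The paper also uses a three-leg concatenation (shrink $g_0$ on a neighborhood of $E$, bridge linearly to the shrunken $g_1$, unshrink), but its outer legs are the \emph{straight-line} paths $((1-t)+t f_{k,s})g_i$ for a cutoff $f_{k,s}$ equal to $s$ on a compact subset of $E$ and to $1$ off an open neighborhood; estimating their length forces a case distinction between $n\geq 4$ and $1\leq n\leq 3$ (the integrand $((1-t)+tf_{k,s})^{\frac n2-2}$ must be handled differently depending on the sign of the exponent), and the resulting $C(n)$ is not explicit. You instead run the explicit conformal geodesics of Proposition~\ref{prop:5}, whose speed you correctly compute to be constant and equal to $\sqrt n\,\norm{\rho}_{L^2(g_i)}$; this removes the case split, gives the clean explicit constant $4/\sqrt n$, and lands at $(1-\phi)^{4/n}g_i$ with length exactly $(4/\sqrt n)\norm{\phi}_{L^2(g_i)}$. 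Your bridge leg and its estimate
\begin{equation*}
  \norm{\hat g_1-\hat g_0}_{h_s}^2=\integral{E}{}{(1-\phi)^2\,\tr_{h'_s}\bigl((g_1-g_0)^2\bigr)}{\mu_{h'_s}}
\end{equation*}
are the same in spirit as the paper's middle leg (there the factor is $f_{k,s}^{n/2}$ on $F_k$ plus a remainder on $U_k\setminus F_k$, killed by $s\to 0$ then $k\to\infty$); your limiting order (fix $\eta$, let $\phi_k\to\chi_E$ in $L^2$, then $\eta\to 0$) works because $\int_E(1-\phi)^2\mu_g\to\eta^2\Vol(E,g)$ while the remaining factor is uniformly bounded over the compact family $\{h'_s\}$, and the uniform $L^2$-equivalence of the norms of $g$, $g_0$, $g_1$ lets one $\phi$ serve both metrics. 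The only point worth making explicit in a final write-up is the uniform positive-definiteness bound on $h'_s=(1-s)g_0+sg_1$ justifying ``the remaining factor is bounded'' (cf.\ Lemma~\ref{lem:18}); otherwise the argument is complete.
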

\begin{proof}
  The second statement follows immediately from the first, so we only
  prove the first.
  
  The heuristic idea is the following.  We want to construct a family
  of paths with three pieces, depending on a real parameter $s$, such
  that the metrics do not change on $M \setminus E$ as we travel along
  the paths.  Therefore, we pretend that we can restrict all
  calculations to $E$.  On $E$, the first piece of the path is the
  straight line from $g_0$ to $s g_0$ for some small positive number
  $s$.  It is easy to compute a bound for the length of this path
  based on $\Vol(E, g_0)$.  The second piece is the straight line from
  $s g_0$ to $s g_1$, which, as we will see, has length approaching
  zero for $s \rightarrow 0$.  The last piece is the straight line
  from $s g_1$ to $g_1$, which again has length bounded from above by
  an expression involving $\Vol(E, g_1)$.
  
  Our job is to now take this heuristic picture, which uses paths of
  $L^2$ metrics, and construct a family of paths of \emph{smooth}
  metrics that captures the essential properties.
  
  For each $k \in \N$ and $s \in (0,1]$, we define three families of
  metrics as follows.  Choose closed sets $F_k \subseteq E$ and open
  sets $U_k$ containing $E$ such that $\Vol(U_k, g_i) - \Vol(F_k, g_i)
  \leq 1/k$ for $i = 0,1$.  (This is possible because the Lebesgue
  measure is regular.)  Let $f_{k,s} \in C^\infty(M)$ be functions
  with the following properties:
  \begin{enumerate}
  \item $f_{k,s}(x) = s$ if $x \in F_k$,
  \item $f_{k,s}(x) = 1$ if $x \not\in U_k$ and
  \item $s \leq f_{k,s}(x) \leq 1$ for all $x \in M$.
  \end{enumerate}
  Now, for $t \in [0,1]$, define
  \begin{equation*}
    \hat{g}^{k,s}_t := ((1-t) + t f_{k,s}) g_0, \qquad
    \bar{g}^{k,s}_t := f_{k,s} ((1-t) g_0 + t g_1), \qquad
    \tilde{g}^{k,s}_t := ((1-t) + t f_{k,s}) g_1.
  \end{equation*}
  We view these as paths in $t$ depending on the family parameter
  $s$.  Furthermore, we define a concatenated path
  \begin{equation*}
    g^{k,s}_t := \hat{g}^{k,s}_t * \bar{g}^{k,s}_t * (\tilde{g}^{k,s}_t)^{-1},
  \end{equation*}
  where of course the inverse means we run through the path backwards.
  It is easy to see that $g^{k,s}_0 = g_0$ and $g^{k,s}_1 = g_1$ for
  all $s$.

  We now investigate the lengths of each piece of $g^{k,s}_t$
  separately, starting with that of $\hat{g}^{k,s}_t$.  Recalling that
  by Convention \ref{cvt:3}, $G_0 = g^{-1} g_0$, we compute
  \begin{align*}
    L(\hat{g}^{k,s}_t)     &= \integral{0}{1}{\left( \integral{M}{}{\tr_{((1 - t) + t
            f_{k,s}) g_0} \left( ((f_{k,s} - 1)g_0)^2 \right)
          \sqrt{\det \left( ((1 - t) + t f_{k,s}) G_0 \right)}}{\mu_{g}} \right)^{1/2}}{d t} \\
    &= \integral{0}{1}{\left( \integral{U_k}{}{((1 - t) + t
          f_{k,s})^{\frac{n}{2} - 2} \tr_{g_0} \left( ((1 -
            f_{k,s})g_0)^2 \right) \sqrt{\det G_0}}{\mu_{g}} \right)^{1/2}}{d t}.
  \end{align*}
  since $\det (\lambda A) = \lambda^{n/2} \det A$ for any $n \times
  n$-matrix $A$ and $\lambda \in \R$.  Note that in the last line, we
  only integrate over $U_k$, which is justified by the fact that $1 -
  f_{k,s} = 0$ on $M \setminus U_k$.  Since $s > 0$, it is easy to see
  that
  \begin{equation*}
    (1 - f_{k,s})^2 \leq (1 - s)^2 < 1,
  \end{equation*}
  from which we can compute
  the estimate
  \begin{equation*}
    L(\hat{g}^{k,s}_t) < \integral{0}{1}{\left( n \integral{U_k}{}{((1
          - t) + t f_{k,s})^{\frac{n}{2} - 2}}{\mu_{g_0}}
      \right)^{1/2}}{d t}.
  \end{equation*}
  Now, to estimate this, we note that for $n \geq 4$, $\frac{n}{2} - 2
  \geq 0$ and therefore $f_{k,s} \leq 1$ implies that
  \begin{equation}\label{eq:72}
    L(\hat{g}_t^{k,s}) < \sqrt{n \Vol(U_k, g_0)}.
  \end{equation}
  For $1 \leq n \leq 3$, $\frac{n}{2} - 2 < 0$ and therefore one can
  compute that $f_{k,s} \geq s > 0$ implies
  \begin{equation*}
    ((1 - t) + t f_{k,s})^{\frac{n}{2} - 2} \leq (1 - t)^{\frac{n}{2}
      - 2}.
  \end{equation*}
  In this case, then,
  \begin{equation}\label{eq:71}
    L(\hat{g}_t^{k,s}) < \sqrt{n \Vol(U_k, g_0)} \integral{0}{1}{(1 -
      t)^{\frac{n}{4} - 1}}{dt},
  \end{equation}
  and the integral term is finite since $\frac{n}{4} - 1 > -1$.
  Furthermore, the value of this integral depends only on $n$.
  Putting together \eqref{eq:72} and \eqref{eq:71} therefore gives
  \begin{equation}\label{eq:75}
    L(\hat{g}^{k,s}_t) \leq C(n) \sqrt{\Vol(U_k, g_0)},
  \end{equation}
  where $C(n)$ is a constant depending only on $n$.

  In exact analogy, we can show that
  the same estimate holds with $g_1$ in place of $g_0$.
  
  Next, we look at the second piece of $g^{k,s}_t$.  Here we have,
  using that $g_1 - g_0 = 0$ on $M \setminus E$,
  \begin{align*}
    \norm{(\bar{g}^{k,s}_t)'}_{\bar{g}^{k,s}_t}^2 &= \integral{M}{}{
      \tr_{f_{k,s} ((1 - t) g_0 + t g_1)} \left( (f_{k,s} (g_1 -
        g_0))^2\right) 
      \sqrt{\det \left( f_{k,s} ((1 - t) G_0 + t G_1)
        \right)}}{\mu_g} \\
    &= \integral{E}{}{f_{k,s}^{n/2} \tr_{(1 -
        t) g_0 + t g_1} \left( (g_1 - g_0)^2 \right)
      \sqrt{\det ((1 - t) G_0 + t G_1)}}{\mu_g}.
  \end{align*}
  Since $f_{k,s}(x) = s$ if $x \in F_k$, and $f_{k,s}(x) \leq 1$ for
  all $x \in M$, it follows from the above that
  \begin{align*}
    \norm{(\bar{g}^{k,s}_t)'}_{\bar{g}^{k,s}_t}^2 &\leq s^{n/2}
    \integral{F_k}{}{\tr_{(1 - t) g_0 + t g_1} \left( (g_1 - g_0)^2
      \right) \sqrt{\det ((1 - t) G_0 + t G_1)}}{\mu_g} \\
    &\quad + \integral{E \setminus F_k}{}{\tr_{(1 - t) g_0 + t g_1}
      \left( (g_1 - g_0)^2 \right) \sqrt{\det ((1 - t) G_0 + t
        G_1)}}{\mu_g}
  \end{align*}
  For each fixed $t$ and $k$, the first term in the above clearly goes
  to zero as $s \rightarrow 0$.  By our assumption on the sets $F_k$,
  the second term goes to zero as $k \rightarrow \infty$ for each
  fixed $t$ (it does not depend on $s$ at all).  But since $t$ only
  ranges over the compact interval $[0,1]$ and all terms in the
  integrals depend smoothly on $t$, both of these convergences are
  uniform in $t$.  From this, it is easy to see that
  \begin{equation}\label{eq:135}
    \lim_{k \rightarrow \infty} \lim_{s \rightarrow 0}
    L(\bar{g}^{k,s}_t) = 0.
  \end{equation}
  Combining these considerations gives the desired estimate.
\end{proof}

As the following examples show, Proposition \ref{prop:18} implies that
we cannot expect a Cauchy sequence in $\M$ to converge pointwise over
subsets of $M$ with volume that vanishes in the limit.  Indeed, we
cannot control its behavior at all.

\begin{eg}
  Consider the case where $M$ is a two-dimensional torus, $M = T^2$.
  On the standard chart for $T^2$ ($[0,1] \times [0,1]$ with opposite
  edges identified), define the following sequences of flat metrics:
  \begin{equation*}
    g^1_k :=
    \begin{pmatrix}
      1 & 0 \\
      0 & k^{-1}
    \end{pmatrix}, \qquad
    g^2_k :=
    \begin{pmatrix}
      k^{-1} & 0 \\
      0 & k^{-1}
    \end{pmatrix}, \qquad
    g^3_k :=
    \begin{pmatrix}
      e^{k t} & 0 \\
      0 & e^{-2k t}
    \end{pmatrix}, \qquad
    g^4_k :=
    \begin{pmatrix}
      \abs{\cos k} & 0 \\
      0 & k^{-1}
    \end{pmatrix}.
  \end{equation*}
  Since $\Vol(T^2, g^i_k) \rightarrow 0$ for all $i = 1,2,3,4$,
  Proposition \ref{prop:18} implies that each of these sequences is
  $d$-Cauchy, and all are equivalent.  Yet in terms of pointwise
  convergence, $\{g^1_k\}$ converges to a circle (one dimension of
  $T^2$ collapses), $\{g^2_k\}$ converges to a point (both dimensions
  collapse), $\{g^3_k\}$ ``converges'' to a circle of infinite radius
  (in terms of Gromov-Hausdorff convergence, for example, it converges
  to the real line), and $\{g^4_k\}$ does not converge at all.
\end{eg}

\subsubsection{$\omega$-Convergence and its Basic Properties}\label{sec:ad-convergence-its}

In this subsection, we give a convergence notion suited to the
completion of $\M$, in that it allows sequences to behave badly on
sets that collapse in the limit.

First, though, recall that we define general measure-theoretic notions
(e.g., the notion of something holding almost everywhere, or a.e.)
using the fixed reference metric $g$ (cf.~Convention \ref{cvt:3}).
Furthermore, we need one definition before that of
$\omega$-convergence.

\begin{dfn}\label{dfn:7}
  We denote by $\Mm$ the set of all measurable semimetrics on $M$.
  That is, $\Mm$ is the set of all sections of $S^2 T^* M$ that have
  measurable coefficients and that induce a positive semidefinite
  scalar product on $T_x M$ for each $x \in M$.
  
  Define an equivalence relation ``$\sim$'' on $\Mm$ by $g_0 \sim
  g_1$ if and only if
  \begin{enumerate}
  \item their deflated sets $X_{g_0}$ and $X_{g_1}$ differ at most
    by a nullset, and
  \item $g_0(x) = g_1(x)$ for a.e.~$x \in M \setminus (X_{g_0} \cup X_{g_1})$.
  \end{enumerate}
  We denote the quotient space of $\Mm$ by
  \begin{equation*}
    \Mmhat := \Mm / {\sim}.
  \end{equation*}
\end{dfn}

\begin{dfn}\label{dfn:13}
  Let $\{g_k\}$ be a sequence in $\M$, and let $[g_\infty] \in
  \Mmhat$.  Recall that we denote the deflated set of the sequence
  $\{g_k\}$ by $D_{\{g_k\}}$ and the deflated set of an individual
  semimetric $\tilde{g}$ by $X_{\tilde{g}}$ (cf.~Definitions
  \ref{dfn:23} and \ref{dfn:25}).  We say that $\{g_k\}$
  \emph{$\omega$-converges} to $[g_\infty]$ if for every
  representative $\tilde{g}_\infty \in [g_\infty]$, the following
  holds:
  \begin{enumerate}
  \item \label{item:4} $\{g_k\}$ is $d$-Cauchy,
  \item \label{item:5} $X_{g_\infty}$ and $D_{\{g_k\}}$ differ at most
    by a nullset,
  \item \label{item:6} $g_k(x) \rightarrow g_\infty(x)$ for a.e.~$x
    \in M \setminus D_{\{g_k\}}$, and
  \item \label{item:7} $\sum_{k=1}^\infty d(g_k, g_{k+1}) < \infty$.
  \end{enumerate}
  We call $[g_\infty]$ the \emph{$\omega$-limit} of the sequence
  $\{g_k\}$ and write $g_k \overarrow{\omega} [g_\infty]$.

  More generally, if $\{g_k\}$ is a $d$-Cauchy sequence containing a
  subsequence that $\omega$-converges to $[g_\infty]$, then we say
  that $\{g_k\}$ \emph{$\omega$-subconverges} to $[g_\infty]$.
\end{dfn}

Condition (\ref{item:4}) in the definition is simply there for
convenience, so we don't have to repeatedly assume that a sequence is
$\omega$-convergent \emph{and} Cauchy.  Condition (\ref{item:7}) is
technical and will aid us in proofs.  Conceptually, it means that we
can find paths $\alpha_k$ connecting $g_k$ to $g_{k+1}$ such that the
concatenated path $\alpha_1 * \alpha_2 * \cdots$ has finite length.
If $\{g_k\}$ is $d$-Cauchy, then this can always be achieved by
passing to a subsequence.  (We remark here, however, that these two
conditions are not independent.  In fact, (\ref{item:7}) implies
(\ref{item:4}).)

Note that condition (\ref{item:6}) implies that if $g_k
\overarrow{\omega} [g_\infty]$, then for all $x \in M \setminus
D_{\{g_k\}}$ and all $\tilde{g}_\infty \in [g_\infty]$, there exists
some $\delta(x) > 0$ such that
\begin{equation*}
  \det g_k(x) \geq \delta(x)
\end{equation*}
for all $k \in \N$ and in every chart of an amenable atlas that
contains $x$.

We now move on to proving some properties of $\omega$-convergence.  We
first state an entirely trivial consequence of Definitions \ref{dfn:7}
and \ref{dfn:13}.

\begin{lem}\label{lem:29}
  Let $[g_\infty] \in \M$, and let $\{g_k\}$ be a sequence in $\M$.
  Suppose that for one given representative $g_\infty \in [g_\infty]$,
  $\{g_k\}$ together with $g_\infty$ satisfies conditions
  \eqref{item:4}--\eqref{item:7} of Definition \ref{dfn:13}.  Then
  these conditions are also satisfied for $\{g_k\}$ together with
  every other representative of $[g_\infty]$.

  Therefore, if can we verify these conditions for one representative
  of an equivalence class, this already implies $\{g_k\}
  \overarrow{\omega} [g_\infty]$.
\end{lem}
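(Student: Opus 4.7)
The plan is to observe that the lemma is essentially a compatibility check of Definition~\ref{dfn:13} with the equivalence relation of Definition~\ref{dfn:7}, so the proof reduces to unwinding definitions and applying subadditivity of nullsets. I first note that conditions~\eqref{item:4} and~\eqref{item:7} concern only the sequence $\{g_k\}$ and make no reference to the limit, so they transfer trivially from one representative to another. The real content is therefore to verify that~\eqref{item:5} and~\eqref{item:6} are invariant under the relation~$\sim$.

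For~\eqref{item:5}, let $g_\infty, g_\infty' \in [g_\infty]$ be two representatives. By Definition~\ref{dfn:7}(1), the symmetric difference $X_{g_\infty} \triangle X_{g_\infty'}$ is a nullset, and by hypothesis $X_{g_\infty} \triangle D_{\{g_k\}}$ is a nullset. Since the symmetric difference satisfies $A \triangle C \subseteq (A \triangle B) \cup (B \triangle C)$, the union of two nullsets is a nullset, so $X_{g_\infty'} \triangle D_{\{g_k\}}$ is also a nullset. Thus~\eqref{item:5} holds for $g_\infty'$.

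For~\eqref{item:6}, I start from the hypothesis that $g_k(x) \to g_\infty(x)$ for a.e.\ $x \in M \setminus D_{\{g_k\}}$. By Definition~\ref{dfn:7}(2), $g_\infty(x) = g_\infty'(x)$ for a.e.\ $x \in M \setminus (X_{g_\infty} \cup X_{g_\infty'})$. Combining the two nullset equalities from the previous paragraph, $M \setminus D_{\{g_k\}}$ and $M \setminus (X_{g_\infty} \cup X_{g_\infty'})$ differ by a nullset. Hence $g_\infty = g_\infty'$ a.e.\ on $M \setminus D_{\{g_k\}}$, and so $g_k(x) \to g_\infty(x) = g_\infty'(x)$ for a.e.\ $x$ there, which is exactly~\eqref{item:6} for $g_\infty'$.

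There is no real obstacle here; the only thing to be careful about is to avoid circularity when invoking~\eqref{item:5} for $g_\infty'$ in the course of verifying~\eqref{item:6}. I avoid it by using only the \emph{hypothesis} that $X_{g_\infty}$ and $D_{\{g_k\}}$ coincide up to a nullset, together with the definitional fact that $X_{g_\infty}$ and $X_{g_\infty'}$ coincide up to a nullset, to deduce that $M \setminus D_{\{g_k\}}$ differs from $M \setminus (X_{g_\infty} \cup X_{g_\infty'})$ by a nullset. The second assertion of the lemma is then immediate: if all four conditions hold for some representative, the above argument shows they hold for every representative, which is precisely the content of $\{g_k\} \overarrow{\omega} [g_\infty]$.
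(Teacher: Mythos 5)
Your proof is correct and is precisely the routine unwinding of Definitions \ref{dfn:7} and \ref{dfn:13} that the paper has in mind --- indeed, the paper states this lemma without proof, calling it an ``entirely trivial consequence'' of those definitions. Your handling of the symmetric-difference bookkeeping for conditions \eqref{item:5} and \eqref{item:6} is the natural (and only) argument, so there is nothing to compare beyond noting that you have supplied the omitted details correctly.
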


We can thus consistently say that $\{g_k\}$ $\omega$-converges to an
individual semimetric $g_\infty \in \Mm$ if the two together satisfy
conditions \eqref{item:4}--\eqref{item:7} of Definition \ref{dfn:13}.

The next property of $\omega$-convergence is obvious from property
(\ref{item:5}) of Definition \ref{dfn:13}.

\begin{lem}\label{lem:41}
  If $\{g^0_k\}$ and $\{g^1_k\}$ both $\omega$-converge to the same
  element $[g_\infty] \in \Mmhat$, then $\{g^0_k\}$ and $\{g^1_k\}$
  have the same deflated set, up to a nullset.
\end{lem}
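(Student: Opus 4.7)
The plan is to observe that this is essentially immediate from condition~(\ref{item:5}) of Definition~\ref{dfn:13} combined with transitivity of ``differs by a nullset.''

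First I would pick any representative $g_\infty \in [g_\infty]$ (the existence of such a representative is guaranteed since $[g_\infty] \in \Mmhat$). Applying condition~(\ref{item:5}) of Definition~\ref{dfn:13} to the $\omega$-convergence $g^0_k \overarrow{\omega} [g_\infty]$ yields that the symmetric difference $D_{\{g^0_k\}} \bigtriangleup X_{g_\infty}$ is a nullset. Applying the same condition to $g^1_k \overarrow{\omega} [g_\infty]$ yields that $D_{\{g^1_k\}} \bigtriangleup X_{g_\infty}$ is a nullset as well.

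Since the collection of nullsets forms a $\sigma$-ideal, it is closed under finite unions, and one has the elementary inclusion
\begin{equation*}
  D_{\{g^0_k\}} \bigtriangleup D_{\{g^1_k\}} \subseteq \left( D_{\{g^0_k\}} \bigtriangleup X_{g_\infty} \right) \cup \left( X_{g_\infty} \bigtriangleup D_{\{g^1_k\}} \right).
\end{equation*}
The right-hand side is a nullset by the previous paragraph, hence so is the left-hand side, which is precisely the statement of the lemma.

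I do not anticipate any real obstacle here: the lemma is a direct bookkeeping consequence of Definition~\ref{dfn:13}, and it is already noted in the paper that Lemma~\ref{lem:29} ensures the condition does not depend on the chosen representative of $[g_\infty]$, so nothing further needs to be said about representative-independence.
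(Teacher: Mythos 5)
Your proof is correct and follows exactly the route the paper intends: the paper simply asserts the lemma is ``obvious from property (\ref{item:5}) of Definition \ref{dfn:13},'' and your argument via the triangle inequality for symmetric differences is precisely the bookkeeping being left implicit. Nothing is missing.
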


Recall that the main goal of this section is to show that each Cauchy
sequence in $\M$ has an $\omega$-convergent subsequence.  To do this,
we will first prove a pointwise result in the following subsection.

\subsubsection{(Riemannian) Metrics on $\Matx$ Revisited}\label{sec:metr-matx-revis}

In order to more closely study the metric $\Theta_M$ on $\M$, we now take
a closer look at the Riemannian metrics $\langle \cdot , \cdot
\rangle$ and $\langle \cdot , \cdot \rangle^0$ (see Section \ref{sec:l2-metric}
and Definition \ref{dfn:15}, respectively) that we have defined on the
finite-dimensional manifold $\Matx$.  The relationship between the two
is quite simple:
\begin{equation}\label{eq:50}
  \langle h, k \rangle_{\tilde{g}}^0 = \langle h, k
  \rangle_{\tilde{g}} \det \tilde{G} \quad \textnormal{for all}\
  \tilde{g} \in \Matx\ \textnormal{and}\ h,k \in T_{\tilde{g}} \Matx \cong \satx.
\end{equation}
Thus, we will first study the simpler Riemannian metric $\langle \cdot
, \cdot \rangle$ and find out what properties of $\langle \cdot ,
\cdot \rangle^0$ we can deduce in this way.  Despite their close
relationship, their qualitative properties are very different---in
particular, $\Matx$ is complete with respect to $\langle \cdot, \cdot
\rangle$.  We will show this using a simplified version of the
analogous computations for $( \cdot , \cdot )$ on $\M$ carried out in
\cite[Thm.~2.3]{freed89:_basic_geomet_of_manif_of}.

Before we start, let's clear up some notation.

\begin{dfn}\label{dfn:11}
  By $d_x$, we denote the distance function induced on $\Matx$ by
  $\langle \cdot , \cdot \rangle$.  We denote
  the $\langle \cdot , \cdot \rangle$-length of a path $a_t$ in
  $\Matx$ by $L^{\langle \cdot , \cdot \rangle}(a_t)$ and the $\langle
  \cdot , \cdot \rangle^0$-length by $L^{\langle \cdot, \cdot
    \rangle^0}(a_t)$.
\end{dfn}

Now we compute the Christoffel symbols.

\begin{prop}\label{prop:13}
  Let $h$ and $k$ be constant vector fields on $\Matx$, and denote the
  Levi-Civita connection of $\langle \cdot , \cdot \rangle$ by
  $\nabla$.  Then the Christoffel symbols of $\langle \cdot , \cdot
  \rangle$ are given by
  \begin{equation*}
    \Gamma(h,k) = \nabla_h k |_{\tilde{g}} = - \frac{1}{2} \left( h \tilde{g}^{-1} k +
      k \tilde{g}^{-1} h \right).
  \end{equation*}
\end{prop}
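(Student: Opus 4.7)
The plan is to compute $\nabla_h k$ directly from the Koszul formula, exploiting the fact that $\Matx$ is an open subset of the vector space $\satx$, so we may identify tangent vectors at each point with elements of $\satx$ and take $h$, $k$, and a third test field $\ell$ to be constant vector fields. For constant vector fields the Lie brackets $[h,k]$, $[h,\ell]$, $[k,\ell]$ all vanish, and the Koszul formula reduces to
\begin{equation*}
  2 \langle \nabla_h k, \ell \rangle_{\tilde{g}} = h \langle k, \ell \rangle + k \langle h, \ell \rangle - \ell \langle h, k \rangle.
\end{equation*}

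The key technical ingredient is the variation formula $D_h(\tilde{g}^{-1}) = -\tilde{g}^{-1} h \tilde{g}^{-1}$, which follows by differentiating $\tilde{g}\,\tilde{g}^{-1} = I$. Using this together with $\langle k, \ell \rangle_{\tilde{g}} = \tr(\tilde{g}^{-1} k \tilde{g}^{-1} \ell)$, a direct computation yields
\begin{equation*}
  h \langle k, \ell \rangle = -\tr(\tilde{g}^{-1} h \tilde{g}^{-1} k \tilde{g}^{-1} \ell) - \tr(\tilde{g}^{-1} k \tilde{g}^{-1} h \tilde{g}^{-1} \ell).
\end{equation*}
Cyclicity of the trace then rewrites each term in the ``standard'' form $\tr(\tilde{g}^{-1} X \tilde{g}^{-1} \ell) = \langle X, \ell \rangle_{\tilde{g}}$, giving
\begin{equation*}
  h \langle k, \ell \rangle = -\langle h \tilde{g}^{-1} k + k \tilde{g}^{-1} h,\, \ell \rangle_{\tilde{g}}.
\end{equation*}
(Note that $h \tilde{g}^{-1} k + k \tilde{g}^{-1} h$ is symmetric since $h$, $k$, $\tilde{g}$ are symmetric, so it genuinely lies in $\satx$.)

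The main point to check, which drives the formula, is that all three terms in the Koszul expression are actually equal. Repeating the calculation with the roles of $h$, $k$, $\ell$ permuted and again cycling the trace shows that $k \langle h, \ell \rangle$ and $\ell \langle h, k \rangle$ each reduce to the same expression $-\langle h \tilde{g}^{-1} k + k \tilde{g}^{-1} h,\, \ell \rangle_{\tilde{g}}$. Substituting into Koszul gives
\begin{equation*}
  2 \langle \nabla_h k, \ell \rangle_{\tilde{g}} = -\langle h \tilde{g}^{-1} k + k \tilde{g}^{-1} h,\, \ell \rangle_{\tilde{g}},
\end{equation*}
and since $\ell \in \satx$ is arbitrary and $\langle \cdot, \cdot \rangle_{\tilde{g}}$ is nondegenerate, the desired formula for $\Gamma(h,k)$ follows.

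The main obstacle is bookkeeping rather than ideas: one must consistently use cyclicity of the trace to recognize each differentiated quantity as a scalar product against $\ell$, and verify the somewhat surprising cancellation that makes all three Koszul terms coincide (rather than only two of them pair off, as in the usual Christoffel computation). Once the variation formula for $\tilde{g}^{-1}$ and the cyclic identifications are in hand, the rest is mechanical.
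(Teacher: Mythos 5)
Your proposal is correct and follows essentially the same route as the paper: the Koszul formula for constant vector fields on the open subset $\Matx \subset \satx$, the variation formula $D_h(\tilde{g}^{-1}) = -\tilde{g}^{-1} h \tilde{g}^{-1}$, and cyclicity of the trace to identify each term as a scalar product against $\ell$. Your explicit observation that all three Koszul terms coincide is exactly the computation the paper leaves to the reader with ``repeating the same computation for the other permutations.''
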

\begin{proof}
  All computations are done at the base point $\tilde{g}$, which we
  will omit from the notation for convenience.  Let $\ell$ be any
  other constant vector field on $\Matx$.
  Using the Koszul formula, we can compute that
  \begin{equation}\label{eq:66}
    2 \langle \nabla_h k, \ell \rangle = h \langle k , \ell \rangle +
    k \langle \ell , h \rangle - \ell \langle h , k \rangle.
  \end{equation}

  Using the fact that the derivative of the map $\hat{g} \mapsto
  \hat{g}^{-1}$ at the point $\tilde{g}$ is given by $a \mapsto
  -\tilde{g}^{-1} a \tilde{g}^{-1}$, one can compute that
  \begin{equation*}
    h \langle k , \ell \rangle = -\tr\left( (\tilde{g}^{-1} h \tilde{g}^{-1} k) (\tilde{g}^{-1}
      \ell) \right) - \tr\left( (\tilde{g}^{-1} k) (\tilde{g}^{-1} h
      \tilde{g}^{-1} \ell) \right).
  \end{equation*}
  Repeating the same computation for the other permutations and
  substituting the results into \eqref{eq:66} then gives the result.
\end{proof}

Using this, it is a relatively simple matter to solve the geodesic
equation of $\langle \cdot , \cdot \rangle$.

\begin{prop}\label{prop:12}
  The geodesic $g_t$ in $(\Matx, \langle \cdot , \cdot \rangle)$ with
  initial data $g_0$, $g'_0$ is given by
  \begin{equation*}
    g_t = g_0 e^{t g_0^{-1} g'_0}.
  \end{equation*}
  In particular, $(\Matx, d_x)$ is a complete metric space.
\end{prop}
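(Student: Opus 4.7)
The plan is to read off the geodesic equation from Proposition \ref{prop:13}, exhibit the candidate $g_t = g_0 e^{t g_0^{-1} g'_0}$, and verify it by direct substitution; then to check the candidate remains inside $\Matx$ for all $t \in \R$ and deduce completeness from Hopf--Rinow.

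\textbf{Step 1: Geodesic equation.} Specializing Proposition \ref{prop:13} to $h = k = g'_t$, the geodesic equation $g''_t + \Gamma(g'_t, g'_t) = 0$ becomes
\begin{equation*}
  g''_t = g'_t\, g_t^{-1}\, g'_t.
\end{equation*}

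\textbf{Step 2: Verification of the formula.} Set $A := g_0^{-1} g'_0$ and $g_t := g_0 e^{tA}$. Then $g'_t = g_0 A e^{tA}$ and $g''_t = g_0 A^2 e^{tA}$. Since $e^{tA}$ commutes with $A$, we compute
\begin{equation*}
  g'_t\, g_t^{-1}\, g'_t = (g_0 A e^{tA})(e^{-tA} g_0^{-1})(g_0 A e^{tA}) = g_0 A^2 e^{tA} = g''_t,
\end{equation*}
so $g_t$ solves the geodesic equation with the correct initial data $g_0, g'_0$.

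\textbf{Step 3: $g_t$ stays in $\Matx$ for all $t \in \R$.} The key observation is that $A$ is self-adjoint with respect to $g_0$: since both $g_0$ and $g'_0$ are symmetric, $(g_0 A)^T = g_0 A$ gives $A^T g_0 = g_0 A$, i.e.\ $A^T = g_0 A g_0^{-1}$. Writing $B := g_0^{-1/2} g'_0 g_0^{-1/2}$ (which is symmetric), we get $A = g_0^{-1/2} B g_0^{1/2}$ and hence
\begin{equation*}
  g_t = g_0\, e^{tA} = g_0^{1/2}\, e^{tB}\, g_0^{1/2}.
\end{equation*}
Because $B$ is symmetric and real, $e^{tB}$ is symmetric and positive definite for every $t \in \R$, so $g_t$ is symmetric and positive definite for every $t \in \R$. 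Thus the geodesic is defined for all time, and the exponential map $\exp_{g_0}$ is globally defined on $T_{g_0}\Matx$.

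\textbf{Step 4: Completeness.} Since $(\Matx, \langle \cdot, \cdot \rangle)$ is a finite-dimensional Riemannian manifold that is geodesically complete (every geodesic extends to all of $\R$ by Step 3), the Hopf--Rinow theorem implies that $(\Matx, d_x)$ is a complete metric space. The only substantive point is the positive-definiteness argument in Step 3; everything else is a one-line substitution or a standard appeal to Hopf--Rinow.
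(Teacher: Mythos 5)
Your proof is correct and follows essentially the same route as the paper: derive the geodesic equation from Proposition \ref{prop:13}, exhibit the explicit solution $g_0 e^{t g_0^{-1} g_0'}$, and conclude completeness from Hopf--Rinow. The one place you add value is Step 3: the paper simply asserts that $g_0 e^{t g_0^{-1} g_0'} \in \Matx$ for all $t \in \R$, whereas your congruence $g_t = g_0^{1/2} e^{tB} g_0^{1/2}$ with $B = g_0^{-1/2} g_0' g_0^{-1/2}$ symmetric actually justifies that claim.
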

\begin{proof}
  Let $a_t := g'_t$.  Since $g_t$ is a geodesic, we have
  \begin{equation}\label{eq:67}
    0 = \nabla_{a_t} a_t = a'_t + \Gamma(a_t, a_t) = a'_t - a_t
    g_t^{-1} a_t
  \end{equation}
  by Proposition \ref{prop:13}.  Now, multiplying \eqref{eq:67} on the
  left by $g_t^{-1}$ gives
  \begin{equation*}
    (g_t^{-1} a_t)' = 0.
  \end{equation*}
  Thus $g_t^{-1} g'_t$ is constant, or $\log(g_t)' = g_t^{-1} g'_t
  \equiv g_0^{-1} g'_0$.  The geodesic equation now follows, and since
  $g_0 e^{t g_0^{-1} g'_0} \in \Matx$ for all $t \in \R$, the Hopf-Rinow
  theorem implies that $(\M_x, d_x)$ is complete.
\end{proof}

We now want to use Proposition \ref{prop:12} and \eqref{eq:50} to
characterize $\theta^g_x$-Cauchy sequences in $\Matx$.  First, though,
we need a lemma that is the pointwise version of Lemma \ref{lem:13}.
The proof is completely analogous to that of Lemma \ref{lem:13}, and
so we omit it.

\begin{lem}\label{lem:34}
  Let $a_0, a_1 \in \Matx$.  Then
  \begin{equation*}
    \left|
      \sqrt{\det A_1} - \sqrt{\det A_0}
    \right| \leq \frac{\sqrt{n}}{2} \theta^g_x (a_0, a_1).
  \end{equation*}
  (Recall Convention \ref{cvt:3} for the definitions of $A_i$.)
\end{lem}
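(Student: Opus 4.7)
The plan is to mimic the proof of Lemma \ref{lem:13} (which bounds the variation of the square root of the volume along a path in $\M$), but in a finite-dimensional pointwise setting. So first I would pass from the metric space statement to an infinitesimal one: for any smooth path $a_t : [0,1] \to \Matx$ from $a_0$ to $a_1$, I will prove the pointwise derivative bound
\begin{equation*}
  \left| \frac{d}{dt} \sqrt{\det A_t} \right| \;\leq\; \frac{\sqrt{n}}{2}\, \bigl\| a'_t \bigr\|^0_{a_t},
\end{equation*}
where $\normdot^0_{a_t}$ denotes the norm induced by $\langle \cdot, \cdot \rangle^0$. Integrating this inequality in $t$ bounds $|\sqrt{\det A_1} - \sqrt{\det A_0}|$ by $\frac{\sqrt{n}}{2} L^{\langle \cdot , \cdot \rangle^0}(a_t)$, and taking the infimum over all such paths yields the desired estimate in terms of $\theta^g_x(a_0, a_1)$.

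To establish the derivative bound, I would apply the Jacobi formula for the determinant together with $A_t = g^{-1} a_t$, which gives
\begin{equation*}
  \frac{d}{dt} \det A_t \;=\; \det A_t \cdot \tr\bigl( A_t^{-1} A'_t \bigr) \;=\; \det A_t \cdot \tr\bigl( a_t^{-1} a'_t \bigr) \;=\; \det A_t \cdot \tr_{a_t}(a'_t),
\end{equation*}
and hence $\frac{d}{dt}\sqrt{\det A_t} = \tfrac{1}{2}\sqrt{\det A_t}\, \tr_{a_t}(a'_t)$. The key algebraic observation is then the identity $\tr_{a_t}(a'_t) = \langle a_t, a'_t\rangle_{a_t}$ (both sides equal $\tr(a_t^{-1} a'_t)$), combined with $\langle a_t, a_t\rangle_{a_t} = \tr_{a_t}(a_t\cdot a_t) = \tr(I) = n$. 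Cauchy--Schwarz for the scalar product $\langle \cdot, \cdot\rangle_{a_t}$ then yields
\begin{equation*}
  \bigl| \tr_{a_t}(a'_t) \bigr| \;=\; \bigl| \langle a_t, a'_t \rangle_{a_t} \bigr| \;\leq\; \sqrt{n}\, \bigl\| a'_t \bigr\|_{a_t}.
\end{equation*}

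Finally I would unwind \eqref{eq:50}: the very definition of $\langle \cdot , \cdot \rangle^0$ gives $\|a'_t\|^0_{a_t} = \sqrt{\det A_t}\,\|a'_t\|_{a_t}$, so the factor $\sqrt{\det A_t}$ from the derivative combines with $\|a'_t\|_{a_t}$ from Cauchy--Schwarz to produce exactly $\|a'_t\|^0_{a_t}$, yielding the claimed infinitesimal inequality with the correct constant $\sqrt{n}/2$. There is no real obstacle here; the only subtlety worth pausing on is keeping the two scalar products $\langle\cdot,\cdot\rangle$ and $\langle\cdot,\cdot\rangle^0$ cleanly separated and tracking the factor of $\sqrt{\det A_t}$ through the computation, which is precisely what makes the constant come out to $\sqrt{n}/2$ rather than $\sqrt{n}/4$ as in the global Lemma \ref{lem:13} (where the extra factor of $1/2$ comes from the reparametrization used to convert between the $L^2$ metric $(\cdot,\cdot)$ and its associated volume form).
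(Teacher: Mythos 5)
Your proof is correct, and it is exactly the argument the paper intends: the paper omits the proof of Lemma \ref{lem:34}, pointing to the proof of Lemma \ref{lem:13}, which is precisely this first-variation computation (Jacobi's formula for $\tfrac{d}{dt}\det A_t$, the identity $\langle a_t, a_t\rangle_{a_t}=n$, Cauchy--Schwarz, and then absorbing the factor $\sqrt{\det A_t}$ into the $\langle\cdot,\cdot\rangle^0$-norm via \eqref{eq:50}) carried out pointwise instead of under an integral. The bookkeeping of the constant $\sqrt{n}/2$ versus $\sqrt{n}/4$ is also handled correctly.
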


\begin{prop}\label{prop:14}
  Let $a_k$ be a $\theta^g_x$-Cauchy sequence.  Then either
  \begin{enumerate}
  \item $\det A_k \rightarrow 0$ for $k \rightarrow \infty$, or
  \item there exist constants $C,\eta > 0$ such that $|(a_k)_{ij}|
    \leq C$ and $\det A_k \geq \eta$ for all $1 \leq i,j \leq n$ and
    $k \in \N$.
  \end{enumerate}
\end{prop}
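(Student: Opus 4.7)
The plan is to use Lemma \ref{lem:34} to split into cases based on $\lim_k \sqrt{\det A_k}$, and in the nondegenerate case to upgrade $\theta^g_x$-Cauchy to $d_x$-Cauchy so that completeness (Proposition \ref{prop:12}) produces a limit and hence the coefficient bound.

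First I would observe that by Lemma \ref{lem:34}, $a \mapsto \sqrt{\det A}$ is Lipschitz with respect to $\theta^g_x$, so the real sequence $\{\sqrt{\det A_k}\}$ is Cauchy and converges to some $L \geq 0$. If $L = 0$ we are in case~(1) and are done; so assume $L > 0$. Then $\sqrt{\det A_k} \geq L/2$ for all sufficiently large $k$, and since each of the finitely many remaining terms has $\det A_k > 0$ individually, a uniform lower bound $\det A_k \geq \eta > 0$ follows, giving half of case~(2).

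The remaining task, and the main obstacle, is the coefficient bound $|(a_k)_{ij}| \leq C$. My strategy is to convert $\theta^g_x$-Cauchy into $d_x$-Cauchy via the conformal relation \eqref{eq:50}. For any path $\gamma(t)$ in $\Matx$, writing $\Gamma(t) := g(x)^{-1}\gamma(t)$ as in Convention \ref{cvt:3},
\[
L^{\langle \cdot, \cdot \rangle^0}(\gamma) = \int_0^1 \sqrt{\det \Gamma(t)}\,\sqrt{\langle \gamma'(t), \gamma'(t) \rangle_{\gamma(t)}}\, dt,
\]
so to bound $L^{\langle \cdot, \cdot \rangle}(\gamma)$ in terms of $L^{\langle \cdot, \cdot \rangle^0}(\gamma)$ I need a uniform positive lower bound on $\sqrt{\det \Gamma(t)}$ along $\gamma$. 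This is where Lemma \ref{lem:34} is crucial a second time: applied between $a_k$ and $\gamma(t)$ it gives $|\sqrt{\det \Gamma(t)} - \sqrt{\det A_k}| \leq \tfrac{\sqrt n}{2}\,L^{\langle \cdot, \cdot \rangle^0}(\gamma|_{[0,t]})$. Fixing $\epsilon > 0$ small in terms of $L$ and $n$, for $k,l$ large enough that $\sqrt{\det A_k}, \sqrt{\det A_l} \geq 3L/4$ and $\theta^g_x(a_k, a_l) < \epsilon$, I pick any path $\gamma$ from $a_k$ to $a_l$ with $L^{\langle \cdot, \cdot \rangle^0}(\gamma) < \epsilon$; the Lipschitz bound then forces $\sqrt{\det \Gamma(t)} \geq L/2$ along $\gamma$, and the displayed identity yields $d_x(a_k, a_l) \leq L^{\langle \cdot, \cdot \rangle}(\gamma) \leq (2/L)\,L^{\langle \cdot, \cdot \rangle^0}(\gamma) < 2\epsilon/L$. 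Hence $\{a_k\}$ is $d_x$-Cauchy.

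Completeness of $(\Matx, d_x)$ from Proposition \ref{prop:12} then yields $a_k \to a_\infty$ in $d_x$; since $d_x$ is the Riemannian distance on the finite-dimensional manifold $\Matx$ it metrizes the manifold topology, so this is coefficient convergence and the $(a_k)_{ij}$ are bounded. The essential subtlety, as indicated, is ruling out paths that dip toward the degenerate locus where $\det$ vanishes and the conversion factor $\sqrt{\det \Gamma(t)}$ collapses; the Lipschitz control of $\sqrt{\det}$ provided by Lemma \ref{lem:34} is precisely what prevents this and powers the whole argument.
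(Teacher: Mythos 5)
Your proof is correct, and it uses the same essential ingredients as the paper's --- Lemma \ref{lem:34} to handle the case split and to control $\sqrt{\det}$ along short paths, the conformal relation \eqref{eq:50} to compare $\theta^g_x$-lengths with $d_x$-lengths, and completeness of $(\Matx, d_x)$ from Proposition \ref{prop:12} --- but it organizes them differently. The paper argues by contradiction: assuming some coefficient $|(a_k)_{ij}|$ is unbounded, it splits the paths between two points with $\det \geq \delta$ into those that dip below $\delta/2$ and those that do not, obtaining the lower bound \eqref{eq:65}, $\theta^g_x(b_0,b_1) \geq \min\{\sqrt{n}(1-1/\sqrt{2})\sqrt{\delta},\ \sqrt{\delta/2}\,d_x(b_0,b_1)\}$, and then uses completeness only to conclude that unbounded coefficients force $d_x(a_K,a_k)\to\infty$, contradicting the Cauchy hypothesis. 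Your version runs the same comparison in the forward direction: the Lipschitz control of $\sqrt{\det}$ shows that sufficiently short $\langle\cdot,\cdot\rangle^0$-paths cannot dip (this is exactly the paper's $L_{-\delta}$ half, absorbed into the choice of $\epsilon$), and the conformal factor then gives $d_x(a_k,a_l)\leq (2/L)\,\theta^g_x(a_k,a_l)$ for large $k,l$ (the $L_{+\delta}$ half). What your route buys is a strictly stronger conclusion with no extra work: you get actual convergence $a_k \to a_\infty$ in $\Matx$, which is the content of Corollary \ref{cor:10}(2), whereas the paper first establishes only the coefficient and determinant bounds of (2) and then derives Corollary \ref{cor:10} separately via compactness of the set $\{\tilde g : |\tilde g_{ij}|\leq C,\ \det \tilde G \geq \eta\}$. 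The one small point to keep visible is that the admissible $\epsilon$ depends on $L$ (you need roughly $\tfrac{\sqrt n}{2}\epsilon \leq L/4$), but since $\epsilon$ may still be taken arbitrarily small below that threshold, the $d_x$-Cauchy conclusion is unaffected.
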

\begin{proof}
  Keeping Lemma \ref{lem:34} in mind, it is more convenient to work
  with the square root of the determinant.  This is, of course,
  completely equivalent for our purposes.

  Now, by Lemma \ref{lem:34}, the map $a \mapsto \sqrt{\det A}$ is
  $\theta^g_x$-Lipschitz.  Since $a_k$ is $\theta^g_x$-Cauchy, $L :=
  \lim_{k \to \infty} \sqrt{\det A_k}$ is well-defined.

  If for every $\eta > 0$, there exists $k$ such that $\sqrt{\det A_k}
  \leq \eta$, then clearly $L = 0$.

  It remains to show that if there exist $i$ and $j$ such that for all
  $C > 0$, there is a $k$ such that $|(a_k)_{ij}| > C$, then
  $\sqrt{\det A_k} \rightarrow 0$.  We will assume that $L > 0$ and
  show a contradiction.

  Let's say that we are given $b_0, b_1 \in \Matx$ with $\det B_0,
  \det B_1 \geq \delta$.  Let
  \begin{align*}
    L_{-\delta} &:=
    \inf \left\{
      L^{\langle \cdot, \cdot \rangle^0}(b_t) \mid b_t\ \textnormal{is a path from $b_0$ to $b_1$ with}\ \det
      B_t \leq \delta/2\ \textnormal{for some}\ t \in (0,1)
    \right\}, \\
    L_{+\delta} &:=
    \inf \left\{
      L^{\langle \cdot, \cdot \rangle^0}(b_t) \mid b_t\ \textnormal{is a path from $b_0$ to $b_1$ with}\ \det
      B_t \geq \delta/2\ \textnormal{for all}\ t \in [0,1]
    \right\}.
  \end{align*}
  It is easy to see that $\theta^g_x(b_0, b_1) = \min (L_{-\delta},
  L_{+\delta})$.  Now let $b_t$ be a path as in the definition of
  $L_{-\delta}$, and assume $\tau \in (0,1)$ is such that $\det B_\tau
  \leq \delta/2$.  Then using Lemma \ref{lem:34}, we have
  \begin{align*}
    L^{\langle \cdot, \cdot \rangle^0}(b_t)     &\geq \frac{\sqrt{n}}{2} \left| \sqrt{\det B_0} - \sqrt{\det
        B_\tau} \right| + \frac{\sqrt{n}}{2} \left| \sqrt{\det B_1} -
      \sqrt{\det B_\tau}
    \right| \\
    &\geq \sqrt{n} \left( \sqrt{\delta} - \sqrt{\frac{\delta}{2}}
    \right) = \sqrt{n} \left( 1 - \frac{1}{\sqrt{2}} \right)
    \sqrt{\delta}.
  \end{align*}
  Therefore $L_{-\delta} \geq \sqrt{n} (1 - 1/\sqrt{2})
  \sqrt{\delta}$.  Then, if $b_t$ is a path as in the definition of
  $L_{+\delta}$, we have
  \begin{equation*}
    L(b_t) = \integral{0}{1}{\sqrt{\langle b'_t , b'_t \rangle^0}}{d
      t} = \integral{0}{1}{\sqrt{\langle b'_t , b'_t \rangle} \det B_t}{d t}
    \geq \sqrt{\frac{\delta}{2}} \integral{0}{1}{\sqrt{\langle b'_t ,
        b'_t \rangle}}{d t} \geq \sqrt{\frac{\delta}{2}} \, d_x(b_0, b_1).
  \end{equation*}
  This gives $L_{+\delta} \geq \sqrt{\delta / 2} \, d_x(b_0, b_1)$.
  Putting this together, we get that
  \begin{equation}\label{eq:65}
    \theta^g_x(b_0, b_1) \geq \min \{ \sqrt{n} (1 - 1/\sqrt{2}) \sqrt{\delta} ,
    \sqrt{\delta / 2} \, d_x(b_0, b_1) \}
  \end{equation}
  whenever $\det B_0, \det B_1 \geq \delta$.

  Now, let's apply the considerations of the last paragraph to the
  problem at hand.  Let $i$ and $j$ be, as above, the indices for which
  $|(a_k)_{ij}|$ is unbounded, and choose a subsequence, which we
  again denote by $a_k$, such that $|(a_k)_{ij}| \geq k$ for all $k
  \in \N$.  Passing to this subsequence does not change the limit
  $\lim_{k \to \infty} \sqrt{\det A_k}$.

  Next, choose $K \in \N$ such that $k \geq K$ implies $\sqrt{\det
    A_k} \geq L/2$ and $k,l \geq K$ implies $\theta^g_x(a_k, a_l) \leq
  \frac{1}{2} \sqrt{n} (1 - 1/\sqrt{2}) \sqrt{L/2}$.  The latter
  assumption is possible since $a_k$ is Cauchy.  By \eqref{eq:65}, if
  $k \geq K$, we also have
  \begin{equation*}
    \theta^g_x(a_K, a_k) \geq \min \{ \sqrt{n} (1 - 1 / \sqrt{2}) \sqrt{L/2} ,
    \sqrt{L / 4} \, d_x(a_K, a_k) \}.
  \end{equation*}
  But $\theta^g_x(a_K, a_k) \geq \sqrt{n} (1 - 1 / \sqrt{2})
  \sqrt{L/2}$ violates our assumptions on $K$. Furthermore, $d_x(a_K,
  a_k) \rightarrow \infty$ since $|(a_k)_{ij}| \rightarrow \infty$ and
  $(\Matx, \langle \cdot , \cdot \rangle)$ is complete.  Therefore, if
  $\theta^g_x(a_K, a_k) \geq \sqrt{L / 4} \, d_x(a_K, a_k)$ for all
  $k$, then our assumptions on $K$ are violated as well.  Thus we have
  achieved the desired contradiction.
\end{proof}

Since for every pair of constants $C, \eta > 0$, the set of elements
$\tilde{g}$ of $\Matx$ with $|\tilde{g}_{ij}| \leq C$ and $\det
\tilde{G} \geq \eta$ for all $1 \leq i,j \leq n$ is compact, we
immediately get the following corollary of Proposition \ref{prop:14}:

\begin{cor}\label{cor:10}
  Let $\{g_k\}$ be a $\theta^g_x$-Cauchy sequence.  Then either
  \begin{enumerate}
  \item $\det G_k \rightarrow 0$ for $k \rightarrow \infty$, or
  \item there exists an element $g_\infty \in \Matx$ such that $g_k
    \rightarrow g_\infty$, with convergence in the manifold topology
    of $\M_x$.
  \end{enumerate}
\end{cor}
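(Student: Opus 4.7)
The plan is to apply Proposition \ref{prop:14} directly and then use the compactness statement preceding the corollary to produce the manifold-topology limit in the second case.

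First, I would invoke Proposition \ref{prop:14} on the $\theta^g_x$-Cauchy sequence $\{g_k\}$. This dichotomy immediately handles the first alternative: if $\det G_k \to 0$, there is nothing more to show. So I only need to treat the second case, in which there exist constants $C, \eta > 0$ such that $|(g_k)_{ij}| \leq C$ and $\det G_k \geq \eta$ for all $i,j,k$.

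In that second case, the hypothesis places the entire sequence inside the set
\begin{equation*}
  K_{C,\eta} := \{ \tilde{g} \in \Matx \mid |\tilde{g}_{ij}| \leq C,\ \det \tilde{G} \geq \eta \},
\end{equation*}
which, as noted in the paragraph preceding the statement, is a compact subset of $\Matx$. Hence, by standard compactness in the manifold topology of $\Matx$, some subsequence $g_{k_l}$ converges in the manifold topology to an element $g_\infty \in K_{C,\eta} \subset \Matx$. Since $g_\infty \in \Matx$ (i.e.\ it is positive definite, because $\det G_\infty \geq \eta > 0$), and since $\theta^g_x$ is the Riemannian distance function of a Riemannian metric on the finite-dimensional manifold $\Matx$, the $\theta^g_x$-topology agrees with the manifold topology; in particular $\theta^g_x(g_{k_l}, g_\infty) \to 0$.

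Finally, to promote subsequential convergence to full convergence, I would use that $\{g_k\}$ is $\theta^g_x$-Cauchy: given $\varepsilon > 0$, choose $N$ so large that $\theta^g_x(g_k, g_l) < \varepsilon/2$ for $k,l \geq N$, and choose $k_l \geq N$ large enough that $\theta^g_x(g_{k_l}, g_\infty) < \varepsilon/2$; then the triangle inequality gives $\theta^g_x(g_k, g_\infty) < \varepsilon$ for all $k \geq N$, so $g_k \to g_\infty$ in $\theta^g_x$, and hence in the manifold topology on $\Matx$. There is no real obstacle here---the only substantive ingredient is Proposition \ref{prop:14}, after which everything reduces to finite-dimensional compactness and the Cauchy-plus-subsequential-limit argument.
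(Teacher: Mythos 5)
Your proposal is correct and follows the paper's intended argument exactly: the paper derives the corollary from Proposition \ref{prop:14} together with the observation that the set $\{\tilde{g} \in \Matx \mid |\tilde{g}_{ij}| \leq C,\ \det\tilde{G} \geq \eta\}$ is compact, and your write-up simply supplies the standard details (subsequential limit by compactness, equivalence of the $\theta^g_x$- and manifold topologies on the finite-dimensional Riemannian manifold $\Matx$, and the Cauchy-plus-convergent-subsequence upgrade).
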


This is essentially the pointwise equivalent of $\omega$-convergence.  In
the next subsection, we will globalize this result.  Before we do
that, though, we use this opportune moment to prove two last pointwise
results, which will be useful in Section
\ref{sec:uniqueness-ad-limit}.  The first is the pointwise analog of
Proposition \ref{prop:18}.  Again, the proof is analogous to that of
Proposition \ref{prop:18}, so we omit it.

\begin{prop}\label{prop:25}
  Let $\tilde{g}, \hat{g} \in \M_x$.  Then there exists a constant
  $C'(n)$, depending only on $n$, such that
  \begin{equation*}
    \theta^g_x(\tilde{g}, \hat{g}) \leq C'(n) \left( \sqrt{\det \tilde{G}} +
      \sqrt{\det \hat{G}} \right).
  \end{equation*}
\end{prop}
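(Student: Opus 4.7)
The plan is to mirror the proof of Proposition \ref{prop:18} pointwise in $(\Matx, \langle \cdot, \cdot \rangle^0)$, with the volume $\Vol(E, g_i)$ replaced by its pointwise analog $\det \tilde{G}$, which is the Radon--Nikodym density appearing in the definition of $\langle \cdot, \cdot \rangle^0$. A great simplification compared to the global case is that we can use genuine straight-line paths in $\Matx$ and do not need any mollification by functions like $f_{k,s}$. Specifically, for each parameter $s \in (0,1]$, concatenate three straight-line paths: the path $a_t := ((1-t) + ts)\tilde{g}$ from $\tilde{g}$ to $s\tilde{g}$, the path $b_t := s((1-t)\tilde{g} + t\hat{g})$ from $s\tilde{g}$ to $s\hat{g}$, and the path $c_t := ((1-t)s + t)\hat{g}$ from $s\hat{g}$ to $\hat{g}$. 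Then bound each piece's $\langle \cdot, \cdot \rangle^0$-length separately and take $s \to 0$.

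For the first piece, write $a_t = \lambda(t)\tilde{g}$ with $\lambda(t) := 1 + t(s-1)$, so that $A_t = \lambda(t)\tilde{G}$ and a direct computation using $\tr_{a_t}((a'_t)^2) = n(s-1)^2/\lambda(t)^2$ together with $\det A_t = \lambda(t)^n \det \tilde{G}$ yields
\[
\langle a'_t, a'_t \rangle^0_{a_t} = n(s-1)^2 \lambda(t)^{n-2} \det \tilde{G}.
\]
Thus $L^{\langle \cdot, \cdot \rangle^0}(a_t) = \sqrt{n \det \tilde{G}}\, |s-1| \int_0^1 \lambda(t)^{n/2 - 1}\, dt$. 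For $n \geq 2$, $\lambda(t) \leq 1$ gives $\lambda(t)^{n/2-1} \leq 1$ and the length is bounded by $\sqrt{n \det \tilde{G}}$. For $n = 1$, a direct integration of $\int_0^1 \lambda(t)^{-1/2}\, dt = 2/(1 + \sqrt{s}) \leq 2$ gives a bound independent of $s$. Either way, $L^{\langle \cdot, \cdot \rangle^0}(a_t) \leq C(n)\sqrt{\det \tilde{G}}$ for a constant depending only on $n$. The third piece is handled identically with $\hat{g}$ in place of $\tilde{g}$.

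For the middle piece, note that $B_t = s((1-t)\tilde{G} + t\hat{G})$ gives $\det B_t = s^n \det((1-t)\tilde{G} + t\hat{G})$, while $\tr_{b_t}((b'_t)^2) = \tr\bigl(((1-t)\tilde{g} + t\hat{g})^{-1}(\hat{g}-\tilde{g})\bigr)^2$ is independent of $s$ (the $s$ in $b'_t = s(\hat{g}-\tilde{g})$ cancels with the $s^{-1}$ from $b_t^{-1}$). Hence $\langle b'_t, b'_t\rangle^0_{b_t} = s^n \cdot [\text{a quantity depending only on } \tilde{g}, \hat{g}, t]$, so the length is $s^{n/2}$ times a finite constant, which vanishes as $s \to 0$ for every $n \geq 1$. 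Combining the three contributions and passing to the infimum in $s$ yields
\[
\theta^g_x(\tilde{g}, \hat{g}) \leq C(n)\sqrt{\det \tilde{G}} + C(n)\sqrt{\det \hat{G}},
\]
as required. The only mildly delicate point, just as in Proposition \ref{prop:18}, is handling the small-$n$ cases where the exponent $n/2 - 1$ is negative; but here this is resolved by an elementary one-variable integral rather than by the more intricate cutoff argument needed in the global setting.
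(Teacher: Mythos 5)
Your proof is correct and is precisely the pointwise analogue of the proof of Proposition \ref{prop:18} that the paper intends when it omits the argument as ``analogous'': the three-piece path (shrink to $s\tilde{g}$, interpolate to $s\hat{g}$, grow to $\hat{g}$) followed by the limit $s \to 0$ mirrors the global construction, with the cutoff functions $f_{k,s}$ rightly dispensed with since no smoothness in $x$ is needed. The only point to double-check is the paper's convention for $\det \tilde{G}$ --- the proof of Proposition \ref{prop:18} uses $\det(\lambda A) = \lambda^{n/2} \det A$, i.e.\ $\det \tilde{G} = \mu_{\tilde{g}}/\mu_g$ --- which would shift your exponent from $n/2 - 1$ to $n/4 - 1$ and the case split from $n = 1$ to $n \leq 3$, but leaves the argument and the conclusion unchanged since the exponent stays above $-1$ in either convention.
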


The last pointwise result we need combines Corollary \ref{cor:10} and
Proposition \ref{prop:25} to give a description of the completion of
the metric space $(\Matx, \theta^g_x)$.

\begin{thm}\label{thm:35}
  For any given $x \in M$, let $\textnormal{cl}(\Matx)$ denote the
  closure of $\Matx \subset \satx$ with regard to the natural
  topology.  Then $\cl(\Matx)$ consists of all positive semidefinite
  $(0,2)$-tensors at $x$.  Let us denote the boundary of $\Matx$, as a
  subspace of $\satx$, by $\partial \Matx$.

  Then the completion of $(\Matx, \theta^g_x)$ can be identified with
  the quotient of the space $\textnormal{cl}(\Matx)$ where $\partial
  \Matx$ has been identified to a single point.  The distance function
  is given by
  \begin{equation*}
    \theta^g_x(g_0, g_1) = \lim_{k \rightarrow
      \infty}\theta^g_x(g^0_k, g^1_k),
  \end{equation*}
  where $\{g^0_k\}$ and $\{g^1_k\}$ are any sequences in $\Matx$
  converging (in the topology of $\satx$) to $g_0$ and $g_1$,
  respectively.
\end{thm}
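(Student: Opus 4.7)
The plan is to construct a natural map $\tilde\Phi : \cl(\Matx)/\partial\Matx \to \overline{(\Matx, \theta^g_x)}$ by sending a positive semidefinite tensor $g_\infty$ to the equivalence class of any sequence $\{g_k\} \subset \Matx$ converging to $g_\infty$ in the ambient $\satx$-topology, and then to verify that $\tilde\Phi$ is a bijection carrying the quotient to the completion together with the stated distance formula.

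First I would check that each such approximating sequence is $\theta^g_x$-Cauchy. If $g_\infty \in \Matx$, this is automatic, since the $\satx$-topology and the $\theta^g_x$-topology coincide on $\Matx$ (the latter is the Riemannian distance of a smooth finite-dimensional metric). If $g_\infty \in \partial\Matx$, then Proposition \ref{prop:7} forces $\det G_\infty = 0$, hence $\det G_k \to 0$, and Proposition \ref{prop:25} gives
\begin{equation*}
\theta^g_x(g_k, g_l) \leq C'(n)\bigl(\sqrt{\det G_k} + \sqrt{\det G_l}\bigr) \longrightarrow 0.
\end{equation*}
The same bound, applied between two approximating sequences associated to (possibly distinct) boundary points, shows they are Cauchy-equivalent. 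Hence $\tilde\Phi$ is independent of the choice of approximating sequence and genuinely factors through $\cl(\Matx)/\partial\Matx$.

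Injectivity of $\tilde\Phi$ splits into cases. Distinct interior points $g_\infty, g'_\infty \in \Matx$ are separated by continuity of $\theta^g_x$. For an interior $g_\infty$ and a boundary $g'_\infty$, Lemma \ref{lem:34} gives
\begin{equation*}
\theta^g_x(g_k, g'_k) \;\geq\; \tfrac{2}{\sqrt n}\bigl|\sqrt{\det G_k} - \sqrt{\det G'_k}\bigr| \;\longrightarrow\; \tfrac{2}{\sqrt n}\sqrt{\det G_\infty} \;>\; 0.
\end{equation*}
Surjectivity is exactly the content of Corollary \ref{cor:10}: every $\theta^g_x$-Cauchy sequence in $\Matx$ either has $\det G_k \to 0$ (and so represents the collapsed boundary point, again by Proposition \ref{prop:25}) or converges in the manifold topology to some $g_\infty \in \Matx$ (and so represents the interior point $g_\infty$). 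The distance formula in the statement is then simply the precompletion distance transported through $\tilde\Phi$, and the estimates above ensure the limit depends only on the endpoint classes and not on the chosen approximating sequences.

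The main obstacle has effectively been resolved in advance by Corollary \ref{cor:10} together with Proposition \ref{prop:25} and Lemma \ref{lem:34}: they encode, respectively, the dichotomy ``interior convergence versus determinant collapse,'' the mutual equivalence of all boundary-approaching sequences, and the separation of interior from boundary. Once these pointwise tools are in hand, what remains is a careful but mechanical bookkeeping of the three endpoint-type cases (interior/interior, interior/boundary, boundary/boundary) to verify well-definedness of $\tilde\Phi$, continuity in each variable, and the stated limit formula for the distance.
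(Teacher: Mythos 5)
Your proposal is correct and follows essentially the same route as the paper: the dichotomy of Corollary \ref{cor:10}, the mutual equivalence of all determinant-collapsing sequences via Proposition \ref{prop:25}, and the equivalence of the $\satx$- and $\theta^g_x$-topologies on the interior. You are in fact slightly more complete than the paper's own proof, which leaves the interior-versus-boundary separation implicit, whereas you supply it explicitly via Lemma \ref{lem:34}.
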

\begin{proof}
  Let $\{g_k\}$ be any sequence in $\Matx$.  By Corollary
  \ref{cor:10}, if $\{g_k\}$ is Cauchy then either $g_k \rightarrow
  g_\infty \in \Matx$ (with convergence in the topology of $\satx$),
  or $\det G_k \rightarrow 0$.  In fact, by the equivalence of the
  topologies on $\Matx$ inherited from $\satx$ and $\theta^g_x$, it is
  easy to see that the converse holds as well.  Furthermore, two
  Cauchy sequences in $\Matx$ that converge to distinct elements of
  $\Matx$ are inequivalent Cauchy sequences.

  By Proposition \ref{prop:25}, all
  sequences with $\det G_k \rightarrow 0$ are equivalent Cauchy
  sequences, and so they are identified in $\overline{(\Matx,
    \theta^g_x)}$.
\end{proof}

\subsubsection{The Existence Proof}\label{sec:limit-volume-sing}

We now wish to globalize Corollary \ref{cor:10} to characterize
$d$-Cauchy sequences, using Proposition \ref{prop:20} to reduce
questions about $d$ to questions about the simpler metric $\Theta_M$.

\begin{lem}\label{lem:35}
  Let $\{g_k\}$ be a Cauchy sequence in $\M$.  By passing to a
  subsequence if necessary, we can assume that
  \begin{equation*}
    \sum_{k=1}^\infty d(g_k, g_{k+1}) < \infty.
  \end{equation*}

  Then the following holds:
  \begin{equation*}
    \sum_{k=1}^\infty \Theta_M (g_k, g_{k+1}) < \infty.
  \end{equation*}
  Furthermore, define functions $\Omega$ and $\Omega_N$ for each $N
  \in \N$ by
  \begin{equation*}
    \Omega_N := \sum_{k=1}^N \theta^g_x(g_k(x), g_{k+1}(x)),
    \quad \Omega := \sum_{k=1}^\infty \theta^g_x(g_k(x), g_{k+1}(x)).
  \end{equation*}
  Then $\Omega$ is a.e.~finite, $\Omega \in L^1(M, g)$ and $\Omega_N
  \overarrow{L^1} \Omega$.  Furthermore, by definition, $\Omega_N$
  converges to $\Omega$ pointwise.
\end{lem}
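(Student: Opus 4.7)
The plan is to first establish the summability of $\Theta_M(g_k, g_{k+1})$ using Proposition \ref{prop:20}, and then to obtain the statements about $\Omega$ by a straightforward application of monotone convergence (plus a tail estimate for the $L^1$-convergence).

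For the first step, the key input is Proposition \ref{prop:20}, which gives
\begin{equation*}
\Theta_M(g_k, g_{k+1}) \leq d(g_k, g_{k+1}) \left( \sqrt{n}\, d(g_k, g_{k+1}) + 2 \sqrt{\Vol(M, g_k)} \right).
\end{equation*}
To exploit this I need a uniform bound on $\sqrt{\Vol(M, g_k)}$. Here Lemma \ref{lem:13} applied to $Y = M$ yields
\begin{equation*}
\sqrt{\Vol(M, g_k)} \leq \sqrt{\Vol(M, g_1)} + \frac{\sqrt{n}}{4} \sum_{j=1}^{k-1} d(g_j, g_{j+1}) \leq V
\end{equation*}
for some constant $V < \infty$, using our assumption that $\sum d(g_k, g_{k+1}) < \infty$. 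Summability of $d(g_k, g_{k+1})$ also implies summability of $d(g_k, g_{k+1})^2$ (since the terms are bounded), so adding up the estimate from Proposition \ref{prop:20} gives $\sum_{k=1}^\infty \Theta_M(g_k, g_{k+1}) < \infty$.

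For the second step, I rewrite the partial sums using the definition of $\Theta_M$:
\begin{equation*}
\integral{M}{}{\Omega_N}{\mu_g} = \sum_{k=1}^N \integral{M}{}{\theta^g_x(g_k(x), g_{k+1}(x))}{\mu_g(x)} = \sum_{k=1}^N \Theta_M(g_k, g_{k+1}).
\end{equation*}
Since $\Omega_N$ is a monotonically increasing sequence of nonnegative measurable functions converging pointwise to $\Omega$, the monotone convergence theorem gives $\Omega$ measurable with
\begin{equation*}
\integral{M}{}{\Omega}{\mu_g} = \sum_{k=1}^\infty \Theta_M(g_k, g_{k+1}) < \infty,
\end{equation*}
so $\Omega \in L^1(M, g)$ and in particular $\Omega(x) < \infty$ for a.e.\ $x \in M$. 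For the $L^1$-convergence $\Omega_N \to \Omega$, I just note that $\Omega - \Omega_N \geq 0$ and
\begin{equation*}
\integral{M}{}{(\Omega - \Omega_N)}{\mu_g} = \sum_{k=N+1}^\infty \Theta_M(g_k, g_{k+1}) \longrightarrow 0
\end{equation*}
as $N \to \infty$, since the full sum converges. The pointwise convergence is immediate from the definition of $\Omega$ as the sum whose partial sums are $\Omega_N$.

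The only substantive obstacle is the uniform bound on $\sqrt{\Vol(M, g_k)}$, but this is handed to us by Lemma \ref{lem:13}; everything else is bookkeeping and monotone convergence.
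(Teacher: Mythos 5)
Your proof is correct and follows essentially the same route as the paper: Proposition \ref{prop:20} plus the uniform volume bound from Lemma \ref{lem:13} gives summability of $\Theta_M(g_k,g_{k+1})$, and monotone convergence handles the statements about $\Omega$. The only cosmetic difference is at the end: the paper invokes a cited theorem (a.e.\ convergence together with convergence of norms implies $L^p$ convergence), whereas you exploit the monotonicity $\Omega_N \leq \Omega$ to compute $\integral{M}{}{(\Omega-\Omega_N)}{\mu_g}$ as a tail sum directly, which is a slightly more self-contained way to get the same conclusion.
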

\begin{proof}
  The first statement is clear, as is the statement that $\Omega_N
  \rightarrow \Omega$ pointwise.  So we move on to the other
  statements.

  Lemma \ref{lem:13} implies that $\sqrt{\Vol(M, g_k)}$ is a Cauchy
  sequence in $\R$.  Therefore it is bounded, and we can find a
  constant $V$ such that $\sqrt{\Vol(M, g_k)} \leq V$ for all $k$.
  Thus, by Proposition \ref{prop:20},
  \begin{equation*}
    \Theta_M(g_k, g_{k+1}) \leq 2 d(g_k, g_{k+1})
    \left( \frac{\sqrt{n}}{2} d(g_k, g_{k+1}) + V \right).
  \end{equation*}
  But for large $k$, since $\{g_k\}$ is Cauchy, we must have $d(g_k,
  g_{k+1}) \leq 1$, so
  \begin{equation*}
    \Theta_M(g_k, g_{k+1}) \leq \sqrt{n} \, d(g_k,
    g_{k+1})^2 + V d(g_k, g_{k+1}) \leq (\sqrt{n} + V) d(g_k, g_{k+1}).
  \end{equation*}
  The first statement is now immediate.

  We can then compute
  \begin{equation*}
    \integral{M}{}{\Omega}{\mu_g} = \integral{M}{}{\left( \sum_{k =
          1}^\infty \theta^g_x(g_k, g_{k+1}) \right)}{\mu_g} = 
    \sum_{k=1}^\infty \integral{M}{}{\theta^g_x(g_k, g_{k+1})}{\mu_g} = \sum_{k=1}^\infty \Theta_M(g_k, g_{k+1}) < \infty,
  \end{equation*}
  where we have used the monotone convergence theorem of Lebesgue and
  Levi \cite[Thm.~2.8.2]{bogachev07:_measur_theor} to exchange the
  infinite sum and the integral.  Finiteness follows from the first
  part of the lemma.  This proves that $\Omega$ is a.e.~finite and
  $\Omega \in L^1(M,g)$.  It remains to show that $\Omega_N
  \overarrow{L^1} \Omega$.  But this is now immediate from
  \cite[Thm.~8.5.1]{rana02:_introd_to_measur_and_integ}, which states
  that if $1 \leq p < \infty$, $f_i \rightarrow f$ a.e.~and
  $\norm{f_i}_p \rightarrow \norm{f}_p$, then $f_i \overarrow{L^p} f$.
\end{proof}

Using this lemma, we can
globalize Corollary \ref{cor:10}.

\begin{prop}\label{prop:15}
  Let $\{g_k\}$ be a Cauchy sequence in $\M$ such that
  \begin{equation*}
    \sum_{k=1}^\infty d(g_k, g_{k+1}) < \infty.
  \end{equation*}
  Then for a.e.~$x \in M$, $\{g_k(x)\}$ is $\theta^g_x$-Cauchy and
  either:
  \begin{enumerate}
  \item \label{item:12} $\det G_{t_k}(x) \rightarrow 0$ for $k \rightarrow \infty$, or
  \item \label{item:13} $g_k(x)$ is a convergent sequence in $\Matx$.
  \end{enumerate}
  Furthermore, \eqref{item:12} holds for a.e.~$x \in D_{\{g_k\}}$, and
  \eqref{item:13} holds for a.e.~$x \in M \setminus D_{\{g_k\}}$.
\end{prop}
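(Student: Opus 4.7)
The plan is to reduce this global statement to a pointwise one by invoking Lemma \ref{lem:35} and then applying Corollary \ref{cor:10} fiberwise over $M$.

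First, I would appeal directly to Lemma \ref{lem:35}: since $\sum_k d(g_k,g_{k+1})<\infty$, the function
\begin{equation*}
  \Omega(x) = \sum_{k=1}^\infty \theta^g_x(g_k(x), g_{k+1}(x))
\end{equation*}
lies in $L^1(M,g)$ and is therefore finite almost everywhere. On the full-measure set $\{x \in M \mid \Omega(x) < \infty\}$, the partial sums $\Omega_N(x)$ form a Cauchy sequence in $\R$, so by the triangle inequality for $\theta^g_x$ the sequence $\{g_k(x)\}$ is itself $\theta^g_x$-Cauchy in $\Matx$. Corollary \ref{cor:10} then gives, at each such $x$, the alternative: either $\det G_k(x) \to 0$, or $g_k(x)$ converges in $\Matx$ to some limit $g_\infty(x)$ in the manifold topology of $\satx$.

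It remains to align this dichotomy with membership in $D_{\{g_k\}}$. For $x \notin D_{\{g_k\}}$, the negation of the defining condition of the deflated set produces some $\delta(x) > 0$ with $\det G_k(x) \geq \delta(x)$ for all $k$; this rules out alternative \eqref{item:12}, so alternative \eqref{item:13} must hold. Conversely, for $x \in D_{\{g_k\}}$, suppose alternative \eqref{item:13} held at such an $x$ (restricted to the full-measure Cauchy set above). Then $g_k(x)$ would converge in $\satx$ to a positive definite tensor $g_\infty(x) \in \Matx$, and by continuity of the determinant (Proposition \ref{prop:7} and Convention \ref{cvt:3}), $\det G_k(x) \to \det G_\infty(x) > 0$, contradicting the defining property of $D_{\{g_k\}}$. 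Hence alternative \eqref{item:12} holds for a.e.\ $x \in D_{\{g_k\}}$.

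The only nontrivial step is Lemma \ref{lem:35}, which has already been done; the remainder is a mechanical pointwise translation via Corollary \ref{cor:10}. The mildly delicate point is the logical cleanup with the a.e.\ qualifiers: the statement in \eqref{item:12}--\eqref{item:13} and its localization to $D_{\{g_k\}}$ versus $M \setminus D_{\{g_k\}}$ all hold only on the intersection of full-measure sets (where $\Omega$ is finite), but taking this intersection preserves a.e.\ validity and gives the claim as stated.
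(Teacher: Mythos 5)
Your proposal is correct and follows essentially the same route as the paper: invoke Lemma \ref{lem:35} to get that $\Omega$ is a.e.~finite, deduce that $\{g_k(x)\}$ is $\theta^g_x$-Cauchy a.e., and apply Corollary \ref{cor:10} pointwise. The paper compresses the final alignment of the dichotomy with $D_{\{g_k\}}$ into ``the remaining results follow from Corollary \ref{cor:10}''; your explicit argument for that step is a valid unpacking of the same idea.
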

\begin{proof}
  By our assumption, all the conclusions of Lemma \ref{lem:35} hold.
  In particular, $\Omega_N \rightarrow \Omega$ pointwise and $\Omega$
  is a.e.~finite.  Therefore, for a.e.~$x \in M$,
  \begin{equation}\label{eq:69}
    \sum_{k=1}^\infty \theta^g_x(g_k(x), g_{k+1}(x)) = \Omega(x) < \infty.
  \end{equation}
  From this, it is immediate that $\{g_k(x)\}$ is $\theta^g_x$-Cauchy.
  The remaining results follow from Corollary \ref{cor:10}.
\end{proof}

This proposition essentially delivers us the proof of the existence
result.

\begin{thm}\label{thm:39}
  For every Cauchy sequence $\{g_k\}$, there exists an element
  $[g_\infty] \in \Mmhat$ and a subsequence $\{g_{k_l}\}$ such that
  $\{g_{k_l}\}$ $\omega$-converges to $[g_\infty]$.

  Explicitly, $[g_\infty]$ is the unique equivalence class containing
  the element $g_\infty \in \Mm$ defined as follows.  At points $x \in
  M$ where $\{g_{k_l}(x)\}$ is $\theta^g_x$-Cauchy,
  \begin{enumerate}
  \item $g_\infty(x) := 0$ for $x \in D_{\{g_{k_l}\}}$ and
  \item $g_\infty(x) := \lim g_{k_l}(x)$ for $x \in M \setminus
    D_{\{g_{k_l}\}}$.
  \end{enumerate}
  At points $x \in M$ where $\{g_{k_l}(x)\}$ is not
  $\theta^g_x$-Cauchy, we set $g_\infty(x) := 0$.
\end{thm}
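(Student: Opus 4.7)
The plan is to use Proposition \ref{prop:15} as the principal engine, with existence essentially reducing to a careful matching of the pointwise dichotomy there with the four conditions of Definition \ref{dfn:13}. First I would extract a subsequence $\{g_{k_l}\}$ with $d(g_{k_l}, g_{k_{l+1}}) \leq 2^{-l}$, which secures condition \eqref{item:7} of $\omega$-convergence; condition \eqref{item:4} is automatic since a subsequence of a Cauchy sequence is Cauchy.

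Next I would apply Proposition \ref{prop:15} to $\{g_{k_l}\}$. This yields that for a.e.~$x \in M$ the pointwise sequence $\{g_{k_l}(x)\}$ is $\theta^g_x$-Cauchy, and it splits $M$ (up to a nullset) into $D_{\{g_{k_l}\}}$, where $\det G_{k_l}(x) \to 0$, and its complement, where $g_{k_l}(x)$ converges to some element of $\Matx$. I then define $g_\infty$ exactly as prescribed in the statement.

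I would then verify that $g_\infty \in \Mm$. The set $D_{\{g_{k_l}\}} = \bigcap_{n \in \N} \bigcup_k \{x \mid \det G_k(x) < 1/n\}$ is Borel by continuity of the $\det G_k$, so its characteristic function is measurable; on its complement $g_\infty$ is an a.e.~pointwise limit of measurable sections, hence measurable; and on the remaining nullset we have set it to zero. Positive semidefiniteness at each $x$ is immediate because $g_\infty(x)$ is either $0$ or a limit in $\satx$ of elements of $\Matx$, and the positive semidefinite cone is closed in $\satx$.

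It remains to verify the two geometric conditions of Definition \ref{dfn:13}. Condition \eqref{item:6} is simply the statement that $g_{k_l}(x) \to g_\infty(x)$ for a.e.~$x \in M \setminus D_{\{g_{k_l}\}}$, which is the second alternative in Proposition \ref{prop:15}. For condition \eqref{item:5}, the inclusion $D_{\{g_{k_l}\}} \subseteq X_{g_\infty}$ is immediate from the definition of $g_\infty$; conversely, for a.e.~$x \in M \setminus D_{\{g_{k_l}\}}$, Proposition \ref{prop:15} gives $g_\infty(x) \in \Matx$, so $X_{g_\infty} \setminus D_{\{g_{k_l}\}}$ is contained in the exceptional nullset coming from Proposition \ref{prop:15}. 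Uniqueness of the equivalence class follows from Definition \ref{dfn:7}. The main substantive obstacle in the argument has already been dispatched in Proposition \ref{prop:15}, whose proof required the $L^1$-summability of the functions $\theta^g_x(g_k(\cdot), g_{k+1}(\cdot))$ obtained from the bound in Proposition \ref{prop:20} together with a monotone-convergence step; the only work left here is the bookkeeping above, with the mild subtlety of confirming measurability of $D_{\{g_{k_l}\}}$ and consistency of the definition of $g_\infty$ on the exceptional nullset.
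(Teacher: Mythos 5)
Your proposal is correct and follows essentially the same route as the paper: pass to a subsequence with summable consecutive distances, invoke Proposition \ref{prop:15} for the a.e.\ pointwise dichotomy, define $g_\infty$ as prescribed, and check measurability of $D_{\{g_{k_l}\}}$ and of $g_\infty$ together with conditions \eqref{item:5} and \eqref{item:6}. Your explicit Borel description of the deflated set and the closedness of the positive semidefinite cone are just slightly more detailed versions of the paper's own remarks.
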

\begin{proof}
  Let $\{g_{k_l}\}$ be a subsequence of $\{g_k\}$ such that
  \begin{equation*}
    \sum_{l=1}^\infty d(g_{k_l}, g_{k_l+1}) < \infty.
  \end{equation*}
  Then $\{g_{k_l}\}$ satisfies properties (\ref{item:4}) and
  (\ref{item:7}) of Definition \ref{dfn:13}, as well as the hypotheses
  of Corollary \ref{prop:15}.  Thus $\{g_{k_l}\}$ is
  a.e.~$\theta^g_x$-Cauchy, and so $g_\infty$ is defined a.e.~by the
  two conditions given above.  From this, it is immediate that
  $\{g_{k_l}\}$ together with $g_\infty$ also satisfies properties
  (\ref{item:5}) and (\ref{item:6}) of Definition \ref{dfn:13}.  Thus,
  $\{g_{k_l}\}$ $\omega$-converges to $g_\infty$, and by Lemma
  \ref{lem:29} it therefore $\omega$-converges to
  $[g_\infty]$---provided we can show that $g_\infty \in \Mm$.  But
  $g_\infty$ is clearly a semimetric, and it is the pointwise limit of
  the measurable semimetrics $\chi(M \setminus D_{\{g_{k_l}\}})
  g_{k_l}$ (one can easily construct the set $D_{\{g_{k_l}\}}$ from
  countable unions and intersections of open sets, so it is
  measurable).  Therefore $g_\infty$ itself is measurable.
\end{proof}

Knowing now that the $\omega$-limit of a Cauchy sequence of $\M$
exists (after passing to a subsequence), we go further into the
properties of $\omega$-convergence.

\subsection{$\omega$-Convergence and the Concept of
  Volume}\label{sec:ad-conv-conc}

In this brief subsection, we wish to prove that the volumes of
measurable subsets behave well under $\omega$-convergence.
Specifically, we want to show that if $\{g_k\}$ $\omega$-converges to
$[g_\infty]$ and $Y \subseteq M$ is measurable, then for any
representative $g_\infty \in [g_\infty]$,
\begin{equation}\label{eq:139}
  \Vol(Y, g_k) \rightarrow \Vol(Y, g_\infty).
\end{equation}
To see that the above expression is well-defined,
note that given any two
representatives $g^0_\infty, g^1_\infty \in [g_\infty]$, we have that
$\mu_{g^0_\infty} = \mu_{g^1_\infty}$ as measures---it is clear from
Definition \ref{dfn:7} that $\mu_{g^0_\infty}$ and $\mu_{g^1_\infty}$
can differ at most on a nullset.  Thus $\Vol(Y, g^0_\infty) = \Vol(Y,
g^1_\infty)$.

The proof of \eqref{eq:139} is achieved via the Lebesgue dominated
convergence theorem with the help of the next two lemmas.

\begin{lem}\label{lem:52}
  Let $\{g_k\}$ $\omega$-converge to $g_\infty \in \Mm$.  Then
  \begin{equation*}
    \left( \frac{\mu_{g_k}}{\mu_g} \right) \overarrow{\textnormal{a.e.}} \left(
      \frac{\mu_{g_\infty}}{\mu_g} \right).
  \end{equation*}
\end{lem}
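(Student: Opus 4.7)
The plan is to recognize that in local coordinates $(\mu_{g_k}/\mu_g) = \sqrt{\det G_k}$ (and similarly for $g_\infty$), so the claim reduces to showing $\sqrt{\det G_k(x)} \to \sqrt{\det G_\infty(x)}$ for a.e.~$x \in M$. I would split $M$ into $D_{\{g_k\}}$ and $M \setminus D_{\{g_k\}}$ and treat each piece using the ingredients already in place from the definition of $\omega$-convergence together with Proposition \ref{prop:15}.

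Step 1: Since $\{g_k\} \overarrow{\omega} g_\infty$, condition (\ref{item:7}) of Definition \ref{dfn:13} gives $\sum d(g_k,g_{k+1}) < \infty$, which is precisely the hypothesis of Proposition \ref{prop:15}. That proposition then yields two facts simultaneously: for a.e.~$x \in M \setminus D_{\{g_k\}}$, $g_k(x)$ converges in $\Matx$; and for a.e.~$x \in D_{\{g_k\}}$, $\det G_k(x) \to 0$.

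Step 2 (on $M \setminus D_{\{g_k\}}$): By condition (\ref{item:6}) of $\omega$-convergence, $g_k(x) \to g_\infty(x)$ almost everywhere on this set. Since the determinant is a continuous polynomial in the matrix entries, $\det G_k(x) \to \det G_\infty(x)$, hence $\sqrt{\det G_k(x)} \to \sqrt{\det G_\infty(x)}$ pointwise a.e.~on $M \setminus D_{\{g_k\}}$.

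Step 3 (on $D_{\{g_k\}}$): Proposition \ref{prop:15} gives $\sqrt{\det G_k(x)} \to 0$ a.e.~here. On the other side, condition (\ref{item:5}) of $\omega$-convergence asserts that $X_{g_\infty}$ and $D_{\{g_k\}}$ differ only by a nullset; hence a.e.~$x \in D_{\{g_k\}}$ lies in $X_{g_\infty}$, so $g_\infty(x)$ fails to be positive definite, and Proposition \ref{prop:7} forces $\det G_\infty(x) = 0$. Therefore $\sqrt{\det G_k(x)} \to 0 = \sqrt{\det G_\infty(x)}$ a.e.~on $D_{\{g_k\}}$.

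Step 4: The two subsets cover $M$ up to a nullset, so combining Steps 2 and 3 yields $\sqrt{\det G_k} \to \sqrt{\det G_\infty}$ almost everywhere, i.e., $(\mu_{g_k}/\mu_g) \overarrow{\textnormal{a.e.}} (\mu_{g_\infty}/\mu_g)$. There is no real obstacle here since the technical work has been absorbed into Proposition \ref{prop:15}; the only care required is nullset bookkeeping, namely tracking that the exceptional sets from conditions (\ref{item:5}), (\ref{item:6}) of Definition \ref{dfn:13} and from Proposition \ref{prop:15} are all $\mu_g$-null and thus their union is still a nullset.
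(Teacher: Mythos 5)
Your proof is correct and follows essentially the same route as the paper: split $M$ into $D_{\{g_k\}}$ and its complement, use Proposition \ref{prop:15} (via condition (\ref{item:7}) of Definition \ref{dfn:13}) to get $\det G_k \to 0$ a.e.~on the deflated set, and use condition (\ref{item:6}) plus continuity of the determinant on the complement. Your Step 3, which justifies $\det G_\infty = 0$ a.e.~on $D_{\{g_k\}}$ via condition (\ref{item:5}) and Proposition \ref{prop:7}, is actually spelled out more carefully than in the paper, which asserts this equality without comment.
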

\begin{proof}
  We first prove that for a.e.~$x \in D_{\{g_k\}}$,
  \begin{equation*}
    \left( \frac{\mu_{g_k}}{\mu_g} \right) = \det G_k(x) \rightarrow 0 = \det G_\infty = \left( \frac{\mu_{g_\infty}}{\mu_g} \right)
  \end{equation*}
  as $k \rightarrow \infty$.  By the definition of the
  deflated set, for every $x \in D_{\{g_k\}}$ and $\epsilon > 0$,
  there exists $k \in \N$ such that $\det G_k(x) < \epsilon$.
  But we also know from Proposition \ref{prop:15} and property
  (\ref{item:7}) of Definition \ref{dfn:13} that $\{g_k(x)\}$ is
  $\theta^g_x$-Cauchy for a.e.~$x \in M$.  Hence, by Lemma
  \ref{lem:34}, $\left\{\sqrt{\det G_k(x)}\right\}$ is a Cauchy
  sequence in $\R$ at such points.  Therefore it has a limit, and this
  limit must be $0$.

  Now, for a.e.~$x \in M \setminus D_{\{g_k\}}$, $g_k(x) \rightarrow
  g_\infty(x)$.  Since the determinant is a continuous map from the
  space of $n \times n$ matrices into $\R$, this immediately implies
  that $\det G_k(x) \rightarrow \det G_\infty(x)$ for a.e.~$x \in M
  \setminus D_{\{g_k\}}$.
\end{proof}

Our next task is to find an $L^1$ function that dominates
$(\mu_{g_k} / \mu_g)$.

\begin{lem}\label{lem:36}
  Let $\{g_k\}$ be a Cauchy sequence such that
  \begin{equation*}
    \sum_{k=1}^\infty d(g_k, g_{k+1}) < \infty,
  \end{equation*}
  and let $\Omega$ be the function of Lemma \ref{lem:35}.  Then
  \begin{equation*}
    \left( \frac{\mu_{g_k}}{\mu_g} \right)(x) \leq \frac{\sqrt{n}}{2}
    \Omega(x) + \left( \frac{\mu_{g_1}}{\mu_g} \right)(x)
  \end{equation*}
  for a.e.~$x \in M$ and all $k \in \N$.
\end{lem}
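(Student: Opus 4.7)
The plan is to combine Lemma \ref{lem:34}, which provides pointwise Lipschitz control of $\sqrt{\det \tilde{G}(x)}$ in terms of $\theta^g_x$, with a telescoping argument across the sequence $\{g_k\}$. Since the Radon--Nikodym derivative satisfies $(\mu_{\tilde{g}}/\mu_g)(x) = \sqrt{\det \tilde{G}(x)}$, the claimed inequality is equivalent to
\[
\sqrt{\det G_k(x)} \leq \sqrt{\det G_1(x)} + \frac{\sqrt{n}}{2}\Omega(x) \quad \text{for a.e. } x \in M.
\]

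First I would fix a point $x \in M$ at which $\Omega(x) < \infty$; this holds on a full-measure set by Lemma \ref{lem:35}. At such a point I use the telescoping identity
\[
\sqrt{\det G_k(x)} - \sqrt{\det G_1(x)} = \sum_{j=1}^{k-1} \left( \sqrt{\det G_{j+1}(x)} - \sqrt{\det G_j(x)} \right)
\]
and apply the triangle inequality together with Lemma \ref{lem:34} to each summand to obtain
\[
\sqrt{\det G_k(x)} - \sqrt{\det G_1(x)} \leq \sum_{j=1}^{k-1}\left|\sqrt{\det G_{j+1}(x)} - \sqrt{\det G_j(x)}\right| \leq \frac{\sqrt{n}}{2}\sum_{j=1}^{k-1}\theta^g_x(g_j(x), g_{j+1}(x)).
\]
Since every term on the right is nonnegative, the partial sum is dominated by the full series, which by definition equals $\Omega(x)$. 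Rearranging yields exactly the claimed inequality.

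There is essentially no obstacle here; the argument is a straightforward a.e.~pointwise estimate. The only care needed is to note that the telescoping is only meaningful where $\Omega(x)$ is finite, which is precisely the a.e.~domain on which the conclusion must be established, and this is guaranteed by the hypothesis $\sum_k d(g_k, g_{k+1}) < \infty$ via Lemma \ref{lem:35}.
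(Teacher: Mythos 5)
Your proof is correct and is essentially the paper's argument: both reduce the claim to $\bigl|\sqrt{\det G_k} - \sqrt{\det G_1}\bigr| \leq \tfrac{\sqrt{n}}{2}\sum_{m=1}^{k-1}\theta^g_x(g_m, g_{m+1}) \leq \tfrac{\sqrt{n}}{2}\Omega(x)$ via Lemma \ref{lem:34}. The only (immaterial) difference is that you telescope the determinant differences and then apply the Lipschitz bound termwise, whereas the paper applies Lemma \ref{lem:34} once to $\theta^g_x(g_k, g_1)$ and then uses the triangle inequality for $\theta^g_x$.
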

\begin{proof}
  Fix some $k$ for the moment.  By Proposition \ref{prop:15},
  $\{g_k(x)\}$ is $\theta_x^g$-Cauchy for a.e.~$x \in M$.  Let $x \in
  M$ be a point where this holds.  Then by Lemma \ref{lem:34}, the
  triangle inequality, and the definitions of $\Omega_N$ and $\Omega$,
  we have
  \begin{align*}
    \left| \sqrt{\det G_k} - \sqrt{\det G_1} \right| &\leq
    \frac{\sqrt{n}}{2} \theta^g_x(g_k, g_1) \leq
    \frac{\sqrt{n}}{2} \sum_{m=1}^{k-1} \theta^g_x(g_m,
    g_{m+1}) \\
    &= \frac{\sqrt{n}}{2} \Omega_{k-1}(x) \leq \frac{\sqrt{n}}{2}
    \Omega(x).
  \end{align*}
  The result is now immediate.
\end{proof}

Now, since $\mu_{g_1}$ is smooth, it has finite volume, implying that
$(\mu_{g_1} / \mu_g) \in L^1(M, g)$.  We have already seen in Lemma
\ref{lem:35} that $\Omega \in L^1(M, g)$.  Therefore Lemmas
\ref{lem:52} and \ref{lem:36} allow us to apply the Lebesgue dominated
convergence theorem to obtain:

\begin{thm}\label{thm:19}
  Let $\{g_k\}$ $\omega$-converge to $g_\infty \in \Mm$, and let $Y
  \subseteq M$ be any measurable subset.  Then $\Vol(Y, g_k)
  \rightarrow \Vol(Y, g_\infty)$.
\end{thm}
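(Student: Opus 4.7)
The plan is to identify this as a direct application of Lebesgue's dominated convergence theorem, since the preceding two lemmas have set up exactly the required hypotheses. I would start by rewriting both volumes as integrals of Radon--Nikodym derivatives against the fixed reference measure $\mu_g$: namely,
\begin{equation*}
  \Vol(Y, g_k) = \integral{Y}{}{\left( \frac{\mu_{g_k}}{\mu_g} \right)}{\mu_g}, \qquad \Vol(Y, g_\infty) = \integral{Y}{}{\left( \frac{\mu_{g_\infty}}{\mu_g} \right)}{\mu_g}.
\end{equation*}
This reduces the claim to showing that the sequence of integrands converges in $L^1(Y, g)$.

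Next I would invoke Lemma \ref{lem:52} to conclude that $(\mu_{g_k}/\mu_g) \to (\mu_{g_\infty}/\mu_g)$ almost everywhere on $M$ (and hence on $Y$). For the domination, I would use the fact that condition (\ref{item:7}) in the definition of $\omega$-convergence already guarantees $\sum_{k=1}^\infty d(g_k, g_{k+1}) < \infty$, so there is no need to pass to a subsequence and the hypotheses of Lemma \ref{lem:36} are automatically satisfied. That lemma then yields the pointwise bound
\begin{equation*}
  \left( \frac{\mu_{g_k}}{\mu_g} \right)(x) \leq \frac{\sqrt{n}}{2} \Omega(x) + \left( \frac{\mu_{g_1}}{\mu_g} \right)(x) \qquad \textnormal{for a.e.}\ x \in M,
\end{equation*}
uniformly in $k$. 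The dominating function lies in $L^1(M, g)$: $\Omega \in L^1(M, g)$ by Lemma \ref{lem:35}, while $(\mu_{g_1}/\mu_g) \in L^1(M, g)$ because $g_1$ is a smooth metric on the compact manifold $M$ and hence has finite volume.

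With a.e.\ convergence and an integrable majorant in hand, Lebesgue's dominated convergence theorem gives $\int_Y (\mu_{g_k}/\mu_g)\, d\mu_g \to \int_Y (\mu_{g_\infty}/\mu_g)\, d\mu_g$, which is exactly the desired conclusion. The only subtle point worth flagging is that the statement involves an equivalence class $[g_\infty]$ only implicitly: as noted just before Lemma \ref{lem:52}, any two representatives of $[g_\infty]$ differ on a nullset and hence produce the same measure $\mu_{g_\infty}$, so the limit $\Vol(Y, g_\infty)$ is unambiguous. There is no real obstacle here---all of the genuine work (the pointwise convergence of determinants under $\omega$-convergence and the construction of the $L^1$ bound via the auxiliary function $\Omega$) is already carried out in Lemmas \ref{lem:52} and \ref{lem:36}.
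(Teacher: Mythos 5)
Your proposal is correct and follows exactly the paper's argument: the paper likewise obtains Theorem \ref{thm:19} by applying the Lebesgue dominated convergence theorem to the Radon--Nikodym derivatives $(\mu_{g_k}/\mu_g)$, using Lemma \ref{lem:52} for the a.e.\ convergence and Lemma \ref{lem:36} (together with $\Omega \in L^1(M,g)$ from Lemma \ref{lem:35} and the smoothness of $g_1$) for the integrable majorant. Your observation that condition (\ref{item:7}) of $\omega$-convergence makes the hypotheses of Lemma \ref{lem:36} hold without passing to a subsequence is also consistent with the paper's setup.
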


An immediate corollary of this theorem and Lemma \ref{lem:13} is that
the total volume of the $\omega$-limit is finite:

\begin{cor}\label{cor:15}
  If $g_\infty$ is the $\omega$-limit of a sequence $\{g_k\}$ in $\M$, then
  $\Vol(M, g_\infty) < \infty$.  That is, $g_\infty \in \M_f$.
\end{cor}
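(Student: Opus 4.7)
The plan is to combine Theorem \ref{thm:19} (applied to $Y = M$) with Lemma \ref{lem:13} to produce a finite upper bound on $\Vol(M, g_\infty)$. The key point is that the map $\tilde g \mapsto \sqrt{\Vol(M, \tilde g)}$ is Lipschitz with respect to $d$, so it maps $d$-Cauchy sequences to Cauchy sequences in $\R$, which are necessarily bounded.

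Concretely, I would first observe that since $\{g_k\}$ is $d$-Cauchy (by clause (\ref{item:4}) of Definition \ref{dfn:13}), Lemma \ref{lem:13} applied with $Y = M$ gives
\begin{equation*}
  \left| \sqrt{\Vol(M, g_k)} - \sqrt{\Vol(M, g_l)} \right| \leq \frac{\sqrt{n}}{4} d(g_k, g_l),
\end{equation*}
so $\{\sqrt{\Vol(M, g_k)}\}$ is a Cauchy sequence in $\R$. In particular, there exists a finite constant $V$ such that $\Vol(M, g_k) \leq V^2$ for every $k$.

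Next, I would apply Theorem \ref{thm:19} with $Y = M$ to any representative $g_\infty \in [g_\infty]$ of the $\omega$-limit, giving
\begin{equation*}
  \Vol(M, g_k) \longrightarrow \Vol(M, g_\infty).
\end{equation*}
Combined with the uniform bound $\Vol(M, g_k) \leq V^2$, this forces $\Vol(M, g_\infty) \leq V^2 < \infty$. Hence $g_\infty$ is a measurable semimetric of finite total volume, i.e.\ $g_\infty \in \Mf$ by the definition of $\Mf$ stated in the main theorem of the introduction.

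There is no real obstacle here, as both ingredients have already been established. The only thing to be slightly careful about is that the statement of Corollary \ref{cor:15} is made at the level of the equivalence class $[g_\infty]$, but the discussion just before Lemma \ref{lem:52} notes that $\mu_{g_\infty}$ is independent of the chosen representative, so $\Vol(M, g_\infty)$ is well-defined on $[g_\infty]$ and the argument above applies to any representative.
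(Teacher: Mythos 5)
Your proposal is correct and follows exactly the route the paper intends: the paper presents Corollary \ref{cor:15} as an immediate consequence of Theorem \ref{thm:19} and Lemma \ref{lem:13}, which is precisely the combination you spell out (boundedness of $\Vol(M,g_k)$ from the Lipschitz property, then passage to the limit). Your added remark about well-definedness on the equivalence class is a nice touch but matches the discussion the paper already gives before Lemma \ref{lem:52}.
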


Furthermore, as we might have suspected from the beginning, the volume
of the deflated set $D_{\{g_k\}}$ of an $\omega$-convergent sequence
vanishes in the limit.  This is because $\Vol(D_{\{g_k\}}, g_\infty) =
0$.

\begin{cor}\label{cor:17}
  Let $\{g_k\}$ $\omega$-converge to $g_\infty \in \M_f$.  Then
  $\Vol(D_{\{g_k\}}, g_k) \rightarrow 0$.
\end{cor}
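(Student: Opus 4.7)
The plan is to apply Theorem \ref{thm:19} directly, taking $Y = D_{\{g_k\}}$, and show that the limit volume $\Vol(D_{\{g_k\}}, g_\infty)$ equals $0$. Then $\Vol(D_{\{g_k\}}, g_k) \to \Vol(D_{\{g_k\}}, g_\infty) = 0$ is precisely the conclusion.

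First I would note that $D_{\{g_k\}}$ is measurable, as was already observed in the proof of Theorem \ref{thm:39} (it can be written as a countable union of countable intersections of open sets in terms of the condition $\det G_k < \delta$). This legitimizes applying Theorem \ref{thm:19} with $Y = D_{\{g_k\}}$.

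Next I would show $\Vol(D_{\{g_k\}}, g_\infty) = 0$. By property (\ref{item:5}) of Definition \ref{dfn:13}, the deflated set $X_{g_\infty}$ of the semimetric $g_\infty$ differs from $D_{\{g_k\}}$ at most by a nullset; so it suffices to show $\Vol(X_{g_\infty}, g_\infty) = 0$. For every $x \in X_{g_\infty}$, $g_\infty(x)$ is positive semidefinite but not positive definite, so by Proposition \ref{prop:7} we have $\det g_\infty(x) = 0$, and hence the density $(\mu_{g_\infty}/\mu_g)(x) = \det G_\infty(x) = 0$. Therefore
\begin{equation*}
  \Vol(X_{g_\infty}, g_\infty) = \integral{X_{g_\infty}}{}{\det G_\infty}{\mu_g} = 0.
\end{equation*}

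Combining the two, $\Vol(D_{\{g_k\}}, g_\infty) = 0$, and Theorem \ref{thm:19} yields $\Vol(D_{\{g_k\}}, g_k) \to 0$. There is no real obstacle here; everything has been set up by the preceding machinery, and the argument is essentially a bookkeeping consequence of Theorem \ref{thm:19} together with the observation that the $\omega$-limit carries no volume on its deflated set.
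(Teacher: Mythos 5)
Your proof is correct and follows essentially the same route as the paper: the corollary is stated there as an immediate consequence of Theorem \ref{thm:19} applied with $Y = D_{\{g_k\}}$, together with the observation that $\Vol(D_{\{g_k\}}, g_\infty) = 0$ since $D_{\{g_k\}}$ agrees with $X_{g_\infty}$ up to a nullset and the density of $\mu_{g_\infty}$ vanishes there. Your write-up merely makes explicit the measurability of $D_{\{g_k\}}$ and the appeal to property (\ref{item:5}) of Definition \ref{dfn:13}, which the paper leaves implicit.
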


\fussy Since we now know the volume of an $\omega$-limit is finite, we can
refine Theorem \ref{thm:39}:

\begin{thm}\label{thm:40}
  For every Cauchy sequence $\{g_k\}$, there exists an element
  $[g_\infty] \in \Mfhat$ such that $\{g_k\}$ $\omega$-subconverges to
  $[g_\infty]$.
\end{thm}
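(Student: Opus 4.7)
The plan is essentially to combine the two results immediately preceding this theorem, since they have already done all of the substantive work. Theorem \ref{thm:39} supplies, for any Cauchy sequence $\{g_k\}$, a subsequence $\{g_{k_l}\}$ and an equivalence class $[g_\infty] \in \Mmhat$ such that $\{g_{k_l}\}$ $\omega$-converges to $[g_\infty]$. Since $\omega$-subconvergence of $\{g_k\}$ to $[g_\infty]$ means precisely that some subsequence $\omega$-converges to $[g_\infty]$ (cf.\ Definition \ref{dfn:13}), the only remaining task is to upgrade the target space from $\Mmhat$ to $\Mfhat$.

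That upgrade is exactly Corollary \ref{cor:15}: pick any representative $g_\infty \in [g_\infty]$, and the corollary gives $\Vol(M, g_\infty) < \infty$, so $g_\infty \in \M_f$. Since this finiteness is a property of the equivalence class (two representatives agree off a nullset, so their induced volume measures agree), we conclude $[g_\infty] \in \Mfhat$, and the theorem follows.

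There is no real obstacle at this stage — the theorem is a direct packaging of Theorem \ref{thm:39} with Corollary \ref{cor:15}. The genuine content was carried out earlier: the pointwise dichotomy of Proposition \ref{prop:14} globalized via the $L^1$-summability of $\Omega$ from Lemma \ref{lem:35} to produce the $\omega$-limit, and the Lebesgue dominated convergence argument built on Lemmas \ref{lem:52} and \ref{lem:36} to control volumes under $\omega$-convergence.
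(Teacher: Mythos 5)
Your proposal is correct and matches the paper's own (implicit) argument exactly: the paper states this theorem without a separate proof, presenting it as the immediate combination of Theorem \ref{thm:39} with Corollary \ref{cor:15}, which is precisely what you do. The observation that finiteness of the volume is a property of the whole equivalence class is a sensible (if routine) extra remark.
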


\subsection{Uniqueness of the $\omega$-Limit}\label{sec:uniqueness-ad-limit}

The goal of this section is to prove the uniqueness of the
$\omega$-limit in the sense mentioned in the introduction to the
chapter: we will show that two $\omega$-convergent Cauchy sequences in
$\M$ are equivalent if and only if they have the same $\omega$-limit.
We prove each direction in a separate subsection.

\subsubsection{First Uniqueness Result}\label{sec:first-uniq-result}

We first prove the statement that if two $\omega$-con\-ver\-gent
Cauchy sequences are equivalent, then their $\omega$-limits agree.  To
do so, we will extend the pseudometric $\Theta_Y$ (cf.~Definition
\ref{dfn:16}) to the precompletion of $\M$.  For this, we need an easy
lemma.

\begin{lem}\label{lem:10}
  Let $Y \subseteq M$ be measurable.  If $\{g_k\}$ is a $d$-Cauchy
  sequence, then it is also $\Theta_Y$-Cauchy.
\end{lem}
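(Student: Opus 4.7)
The proof is a direct application of the estimates already in hand. The plan is to invoke Proposition \ref{prop:20} to bound $\Theta_Y$ by a product involving $d$ and $\sqrt{\Vol(M, \cdot)}$, then observe that the $d$-Cauchy hypothesis kills the first factor while Lemma \ref{lem:13} keeps the second factor bounded.

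Concretely, first I would apply Proposition \ref{prop:20} with the pair $(g_k, g_l)$ and the subset $Y$, yielding
\begin{equation*}
  \Theta_Y(g_k, g_l) \leq d(g_k, g_l)\bigl(\sqrt{n}\, d(g_k, g_l) + 2\sqrt{\Vol(M, g_k)}\bigr).
\end{equation*}
Next I would show that $\sqrt{\Vol(M, g_k)}$ is bounded uniformly in $k$: by Lemma \ref{lem:13} applied with $Y = M$, the sequence $\{\sqrt{\Vol(M, g_k)}\}$ is a Cauchy sequence in $\R$ (since $\{g_k\}$ is $d$-Cauchy), hence bounded by some constant $V < \infty$.

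Finally, given $\varepsilon > 0$, I choose $N$ so large that $k, l \geq N$ forces $d(g_k, g_l) \leq 1$ and $d(g_k, g_l)(\sqrt{n} + 2V) < \varepsilon$. Combining these bounds in the displayed inequality above gives $\Theta_Y(g_k, g_l) < \varepsilon$, proving $\{g_k\}$ is $\Theta_Y$-Cauchy. There is no real obstacle here; the only thing to be careful about is that the bound in Proposition \ref{prop:20} depends on $\Vol(M, g_0)$ (and not on $Y$), which is exactly why the Lipschitz estimate for the square root of the volume from Lemma \ref{lem:13} is needed to make the bound uniform across the sequence.
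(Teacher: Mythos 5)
Your proof is correct and follows exactly the paper's argument: both invoke Lemma \ref{lem:13} to deduce that $\sqrt{\Vol(M, g_k)}$ is Cauchy in $\R$ and hence bounded, and then apply Proposition \ref{prop:20} to conclude that $\Theta_Y(g_k, g_l) \to 0$ as $k, l \to \infty$. No issues.
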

\begin{proof}
  As noted in the proof of Lemma \ref{lem:35}, since $\{g_k\}$ is
  $d$-Cauchy, the sequence $\sqrt{\Vol(M, g_k)}$ in $\R$ is bounded,
  so Proposition \ref{prop:20} gives the result easily.
\end{proof}

Now we give the extension of $\Theta_Y$ mentioned above.

\begin{prop}\label{prop:21}
  Let $Y \subseteq M$ be measurable.  Then the pseudometric $\Theta_Y$
  on $\M$ can be extended to a pseudometric on
  $\overline{\M}^{\textnormal{pre}}$, the precompletion of $\M$, via
  \begin{equation}\label{eq:14}
    \Theta_Y(\{g^0_k\}, \{g^1_k\}) := \lim_{k \rightarrow \infty}
    \Theta_Y(g^0_k, g^1_k)
  \end{equation}
  This pseudometric is weaker than $d$ in the sense that $d(\{g^0_k\},
  \{g^1_k\}) = 0$ implies $\Theta_Y(\{g^0_k\}, \{g^1_k\}) = 0$ for any
  Cauchy sequences $\{g^0_k\}$ and $\{g^1_k\}$.  More precisely, we
  have
  \begin{equation}\label{eq:91}
    \Theta_Y (\{g^0_k\}, \{g^1_k\}) \leq d(\{g^0_k\}, \{g^1_k\}) \left( \sqrt{n}\, d(\{g^0_k\},
      \{g^1_k\}) + 2 \sqrt{\Vol(M, g_0)} \right),
  \end{equation}
  where $g_0$ is any element of $\Mf$ with $g^0_k \overarrow{\omega} [g_0]$.

  Furthermore, if $\{g^0_k\}$ and $\{g^1_k\}$ are sequences in $\MV$
  that $\omega$-converge to $g_0$ and $g_1$, respectively, then the formula
  \begin{equation}\label{eq:92}
    \Theta_Y(\{g^0_k\}, \{g^1_k\}) = \integral{Y}{}{\theta_x^g(g_0(x), g_1(x))}{\mu_g(x)}
  \end{equation}
  holds for all $g_0, g_1 \in \MV$.
\end{prop}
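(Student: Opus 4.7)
The plan is to establish the three parts of the proposition in sequence: existence of the limit (and verification of the pseudometric axioms for the extension), the upper bound \eqref{eq:91}, and finally the integral formula \eqref{eq:92}.

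First I would verify that the limit in \eqref{eq:14} exists. By Lemma \ref{lem:10}, any $d$-Cauchy sequence is $\Theta_Y$-Cauchy, so the reverse triangle inequality gives
\begin{equation*}
  \left| \Theta_Y(g^0_k, g^1_k) - \Theta_Y(g^0_l, g^1_l) \right| \leq \Theta_Y(g^0_k, g^0_l) + \Theta_Y(g^1_k, g^1_l),
\end{equation*}
so $\{\Theta_Y(g^0_k, g^1_k)\}$ is Cauchy in $\R$ and thus convergent. Symmetry, the triangle inequality, nonnegativity, and vanishing on the diagonal all pass to the limit from the corresponding properties of $\Theta_Y$ on $\M$, giving a pseudometric on $\overline{\M}^{\textnormal{pre}}$.

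For \eqref{eq:91} I would apply Proposition \ref{prop:20} to each pair $(g^0_k, g^1_k)$ and pass to the limit. The $d$-terms converge to $d(\{g^0_k\}, \{g^1_k\})$ by definition of the distance on the precompletion. For the volume term, pass to an $\omega$-convergent subsequence of $\{g^0_k\}$ via Theorem \ref{thm:40} and apply Theorem \ref{thm:19} with $Y = M$ to obtain $\Vol(M, g^0_k) \to \Vol(M, g_0)$; as noted before Theorem \ref{thm:19}, this value is independent of the representative chosen in $[g_0]$, so the inequality holds as stated.

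The substance of the proposition is \eqref{eq:92}, which I would prove by Lebesgue dominated convergence applied to the integrands $\theta^g_x(g^0_k(x), g^1_k(x))$. For the pointwise assertion, pass to subsequences satisfying the summability condition and apply Proposition \ref{prop:15} to each; both $\{g^0_k(x)\}$ and $\{g^1_k(x)\}$ are then $\theta^g_x$-Cauchy for a.e.~$x$. On the complement $M \setminus (D_{\{g^0_k\}} \cup D_{\{g^1_k\}})$, they converge in $\Matx$ to $g_0(x)$ and $g_1(x)$ by the definition of $\omega$-convergence, and continuity of $\theta^g_x$ on $\Matx \times \Matx$ gives convergence of the integrand. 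On the deflated set, Proposition \ref{prop:25} forces $\theta^g_x(g^0_k(x), g^1_k(x)) \to 0$, matching the value $\theta^g_x(g_0(x), g_1(x)) = 0$ under the identification of $\partial \Matx$ to a single point provided by Theorem \ref{thm:35}. For domination, the triangle inequality gives
\begin{equation*}
  \theta^g_x(g^0_k(x), g^1_k(x)) \leq \theta^g_x(g^0_1(x), g^1_1(x)) + \Omega^0(x) + \Omega^1(x),
\end{equation*}
where $\Omega^0, \Omega^1$ are the functions from Lemma \ref{lem:35} associated to each sequence, and the first term is integrable since $\Theta_M(g^0_1, g^1_1) < \infty$.

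The main obstacle will be making the pointwise convergence rigorous on the deflated set, where neither sequence converges in $\Matx$ and the ``limit'' $\theta^g_x(g_0(x), g_1(x))$ is defined only via the completion of Theorem \ref{thm:35}. Proposition \ref{prop:25} is precisely the quantitative tool needed: it bounds $\theta^g_x$ by the square roots of the two determinants, so that once both sequences degenerate we obtain simultaneous vanishing of the integrand and of the conventional limit value, giving a consistent interpretation on this set and closing the dominated convergence argument.
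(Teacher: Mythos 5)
Your overall architecture matches the paper's: existence of the limit via Lemma \ref{lem:10}, the bound \eqref{eq:91} by passing Proposition \ref{prop:20} to the limit with Theorem \ref{thm:19}, and \eqref{eq:92} by dominated convergence applied to $f_k(x) = \theta^g_x(g^0_k(x), g^1_k(x))$. Your dominating function $\theta^g_x(g^0_1(x), g^1_1(x)) + \Omega^0 + \Omega^1$ is in fact a small improvement over the paper's, which dominates by $\Omega^0 + f + \Omega^1$ with $f$ the limit function and therefore needs a preliminary Fatou argument to establish that $f$ is integrable; your choice makes integrability of the dominating function immediate from $\Theta_M(g^0_1, g^1_1) < \infty$ and Lemma \ref{lem:35}.

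There is, however, one concretely wrong step in your pointwise-convergence argument. You treat the deflated set $D_{\{g^0_k\}} \cup D_{\{g^1_k\}}$ as a single case and assert that Proposition \ref{prop:25} forces $\theta^g_x(g^0_k(x), g^1_k(x)) \to 0$ there, matching a limit value of $0$. This is only true on the \emph{intersection} of the two deflated sets. The proposition makes no assumption that $[g_0] = [g_1]$ --- indeed it is later used precisely to separate inequivalent limits --- so $X_{g_0}$ and $X_{g_1}$ may differ on a set of positive measure. At a point $x$ where, say, only $\{g^0_k(x)\}$ deflates while $g^1_k(x) \to g_1(x) \in \Matx$, Proposition \ref{prop:25} gives no decay (one of the two determinants stays bounded away from zero), and the integrand converges to the strictly positive distance in the completion of $(\Matx, \theta^g_x)$ from the collapsed boundary point to $g_1(x)$; the correct limit value $\theta^g_x(g_0(x), g_1(x))$ is likewise nonzero there, so your computation would assign the wrong value to the integral on that set. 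The repair uses only tools you already cite: at a.e.~$x$ both sequences are $\theta^g_x$-Cauchy by Proposition \ref{prop:15}, hence each represents a point of the completion described in Theorem \ref{thm:35}, and $\theta^g_x(g^0_k(x), g^1_k(x))$ converges to the completed distance between those two points in every case --- this single observation replaces the three-way case analysis and is how the paper argues.
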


\begin{rmk}\label{rmk:26}
  In \eqref{eq:91}, we choose any $\omega$-limit of $\{g^0_k\}$.  The
  existence of such a limit has already been proved, but not its
  uniqueness.  On the other hand, if $\tilde{g}_0$ is a different
  $\omega$-limit of $\{g^0_k\}$, Theorem \ref{thm:19} guarantees that
  $\Vol(M, \tilde{g}_0) = \Vol(M, g_0)$.  Therefore, the estimate
  \eqref{eq:91} is independent of the choice of $\omega$-limit.
\end{rmk}

\begin{proof}[Proof of Proposition \ref{prop:21}]
  The construction of a pseudometric on the precompletion of a metric
  space can be carried over to the case where we begin with a
  pseudometric space.  Therefore, the limit in \eqref{eq:14} is
  well-defined due to the fact that $\{g^0_k\}$ and $\{g^1_k\}$ are
  Cauchy sequences with respect to $\Theta_Y$, and \eqref{eq:14}
  indeed defines a pseudometric.
  
  The inequality \eqref{eq:91} is proved via the following simple
  computation, which uses \eqref{eq:14}, Proposition \ref{prop:20},
  and Theorem \ref{thm:19}:
  \begin{align*}
    \Theta_Y(\{g^0_k\}, \{g^1_k\}) &= \lim_{k \rightarrow \infty}
    \Theta_Y(g^0_k, g^1_k) \leq \lim_{k \rightarrow \infty} d(g^0_k, g^1_k) \left(
      \sqrt{n}\, d(g^0_k,
      g^1_k) + 2 \sqrt{\Vol(M, g^0_k)} \right) \\
    &= d(\{g^0_k\}, \{g^1_k\}) \left( \sqrt{n}\, d(\{g^0_k\},
      \{g^1_k\}) + 2 \sqrt{\Vol(M, g_0)} \right).
  \end{align*}

  As for the last statement, note first that $\theta^g_x(g_0(x),
  g_1(x))$ is well-defined by Theorem \ref{thm:35}, since $g_0$ and
  $g_1$ are positive semidefinite tensors at each point $x \in M$.  To
  prove \eqref{eq:92}, we will first use Fatou's Lemma to show that
  $\theta^g_x(g_0(x), g_1(x))$ is integrable.  We will then use this
  to apply the Lebesgue dominated convergence theorem.

  By Proposition \ref{prop:15}, for a.e.~$x \in M$, $\{g^0_k(x)\}$ and
  $\{g^1_k(x)\}$ are $\theta^g_x$-Cauchy.  At such points, by
  definition,
  \begin{equation}\label{eq:89}
    \theta^g_x(g_0(x), g_1(x)) = \lim_{k \rightarrow \infty}
    \theta^g_x(g^0_k(x), g^1_k(x)).
  \end{equation}
  So defining 
  \begin{equation*}
    f_k(x) := \theta^g_x(g^0_k(x), g^1_k(x)), \quad f(x) := \theta^g_x(g_0(x),
    g_1(x)),
  \end{equation*}
  we have $f_k \rightarrow f$ a.e.

  Now, note that
  \begin{equation*}
    \Theta_Y(g^0_k, g^1_k) = \integral{Y}{}{f_k(x)}{\mu_g(x)}.
  \end{equation*}
  We have already seen that $\lim_{k \rightarrow \infty}
  \Theta_Y(g^0_k, g^1_k)$ exists, so $\{\Theta_Y(g^0_k, g^1_k)\}$ is
  in particular a bounded sequence of real numbers.  Thus
  \begin{equation*}
    \sup_k \integral{Y}{}{f_k(x)}{\mu_g(x)} = \sup_k \Theta_Y(g^0_k,
    g^1_k) < \infty,
  \end{equation*}
  where we have used Fatou's lemma.

  Now we wish to verify the assumptions of the Lebesgue dominated
  convergence theorem for $f_k$ and $f$.  We note that for each $l >
  k$, the triangle inequality gives
  \begin{equation*}
    f_k(x)     \leq \sum_{m=k}^{l-1}
    \theta^g_x(g^0_m(x), g^0_{m+1}(x)) + \theta^g_x(g^0_l(x),
    g^1_l(x)) + \sum_{m=k}^{l-1} \theta^g_x(g^1_m(x), g^1_{m+1}(x)) 
  \end{equation*}
  Starting the sum above at $m = 1$ instead of $m = k$ and taking the
  limit $l \rightarrow \infty$ gives, for a.e.~$x \in M$,
  \begin{equation*}
    f_k(x) \leq \sum_{m=1}^\infty
    \theta^g_x(g^0_m(x), g^0_{m+1}(x)) + f(x) + \sum_{m=1}^\infty \theta^g_x(g^1_m(x), g^1_{m+1}(x)),
  \end{equation*}
  where we have used \eqref{eq:89}.  Now we claim that the right-hand
  side of the above inequality is $L^1$-integrable.  We already showed
  $f$ is integrable using Fatou's Lemma.  As for the two infinite
  sums, they are each also integrable by Lemma \ref{lem:35} and
  $\omega$-convergence of $g^i_k$, $i=0,1$ (specifically, property
  (\ref{item:7}) of Definition \ref{dfn:13} and Lemma \ref{lem:35}).
  Thus each $f_k$ is bounded a.e.~by an $L^1$ function not depending
  on $k$.
  
  Knowing all of this, we can apply the Lebesgue dominated convergence
  theorem to show
  \begin{equation*}
    \Theta_Y(\{g^0_k\}, \{g^1_k\}) = \lim_{k \rightarrow \infty} \Theta_Y(g^0_k, g^1_k) = \lim_{k
      \rightarrow \infty} \integral{Y}{}{f_k}{\mu_g} =
    \integral{Y}{}{f}{\mu_g} = \integral{Y}{}{\theta^g_x(g_0(x),
      g_1(x))}{\mu_g(x)},
  \end{equation*}
  which completes the proof.
\end{proof}

With this proposition, proving the first uniqueness result becomes a
relatively simple matter.

\begin{thm}\label{thm:20}
  Let two $\omega$-convergent sequences $\{g^0_k\}$ and $\{g^1_k\}$,
  with $\omega$-limits $[g_0]$ and $[g_1]$, respectively, be given.
  If $g^0_k$ and $g^1_k$ are equivalent, i.e., if
  \begin{equation*}
    \lim_{k \rightarrow \infty} d(g^0_k, g^1_k) = 0,
  \end{equation*}
  then $[g_0] = [g_1]$.
\end{thm}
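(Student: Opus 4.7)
The plan is to combine the extension of $\Theta_Y$ to the precompletion (Proposition \ref{prop:21}) with the pointwise characterization of the completion of $\Matx$ (Theorem \ref{thm:35}) to translate equivalence in $\overline{\M}^{\textnormal{pre}}$ into pointwise a.e.\ agreement of the $\omega$-limits.

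First, I would apply the inequality \eqref{eq:91} of Proposition \ref{prop:21}. The hypothesis $d(g^0_k, g^1_k) \to 0$ means $d(\{g^0_k\},\{g^1_k\}) = 0$ in the precompletion, and by Corollary \ref{cor:15} a representative $g_0$ of the $\omega$-limit has finite volume, so the right-hand side of \eqref{eq:91} vanishes. Thus $\Theta_Y(\{g^0_k\}, \{g^1_k\}) = 0$ for every measurable $Y \subseteq M$. Next, by the integral formula \eqref{eq:92},
\begin{equation*}
  \integral{Y}{}{\theta^g_x(g_0(x), g_1(x))}{\mu_g(x)} = 0
  \qquad \textnormal{for every measurable } Y \subseteq M.
\end{equation*}
Since the nonnegative integrand has integral zero over every measurable set, a standard measure-theoretic argument (take $Y = \{x \mid \theta^g_x(g_0(x),g_1(x)) > 1/n\}$ and let $n \to \infty$) gives $\theta^g_x(g_0(x), g_1(x)) = 0$ for a.e.\ $x \in M$.

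Now I would invoke Theorem \ref{thm:35} pointwise. That theorem says the completion of $(\Matx, \theta^g_x)$ is obtained from $\cl(\Matx)$ by collapsing the degenerate boundary $\partial \Matx$ to a single point. Hence $\theta^g_x(g_0(x), g_1(x)) = 0$ forces one of two alternatives at each such $x$: either $g_0(x), g_1(x) \in \Matx$ and $g_0(x) = g_1(x)$, or both $g_0(x), g_1(x) \in \partial \Matx$ (i.e.\ both are positive semidefinite but not positive definite). Translating this dichotomy via Definition \ref{dfn:23} says precisely that $X_{g_0}$ and $X_{g_1}$ coincide up to a nullset, and $g_0(x) = g_1(x)$ for a.e.\ $x \in M \setminus (X_{g_0} \cup X_{g_1})$. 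By Definition \ref{dfn:7} this is exactly the relation $g_0 \sim g_1$ in $\Mm$, so $[g_0] = [g_1]$ in $\Mmhat$.

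I expect the main subtlety to be the pointwise step: one must be careful that $\theta^g_x(g_0(x), g_1(x))$ is even well-defined when the limits are degenerate, but Theorem \ref{thm:35} takes care of this by extending $\theta^g_x$ continuously to $\cl(\Matx)$ with $\partial \Matx$ collapsed. Once that is in hand, every other step is an essentially formal application of Proposition \ref{prop:21} and the definition of the equivalence relation on $\Mm$.
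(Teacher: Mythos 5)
Your proof is correct, and it rests on the same central tool as the paper's --- Proposition \ref{prop:21} --- but it organizes the argument differently. The paper argues by contradiction and splits into two cases: (i) the deflated sets $X_{g_0}$ and $X_{g_1}$ differ on a set of positive measure, which it rules out with the volume estimates (Lemma \ref{lem:13} together with Theorem \ref{thm:19}); and (ii) the deflated sets agree up to a nullset but $g_0 \neq g_1$ on a positive-measure set where both are positive definite, which it rules out exactly as you do, via \eqref{eq:92} and \eqref{eq:91}. You instead run the argument forward: $\lim_k d(g^0_k,g^1_k) = 0$ forces $\Theta_M(\{g^0_k\},\{g^1_k\}) = 0$ by \eqref{eq:91} (with Corollary \ref{cor:15} supplying the finiteness of $\Vol(M,g_0)$), hence $\theta^g_x(g_0(x),g_1(x)) = 0$ for a.e.\ $x$ by \eqref{eq:92}, and then Theorem \ref{thm:35} --- which identifies the completion of $(\Matx,\theta^g_x)$ with $\cl(\Matx)$ with $\partial\Matx$ collapsed to a single point --- converts the pointwise vanishing into precisely the dichotomy defining $\sim$ on $\Mm$. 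This unification is genuinely cleaner: the case where one limit is degenerate at $x$ and the other is not is absorbed into the observation that distinct points of the quotient have positive $\theta^g_x$-distance, so the volume-Lipschitz argument of case (i) is never needed. The only point worth making explicit is that $x \mapsto \theta^g_x(g_0(x),g_1(x))$ is measurable (it is the a.e.\ limit of the measurable functions $\theta^g_x(g^0_k(x),g^1_k(x))$, as shown in the proof of Proposition \ref{prop:21}), so that the passage from vanishing integral to a.e.\ vanishing is legitimate; for this, taking $Y = M$ alone already suffices.
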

\begin{proof}
  Suppose the contrary; then for any representatives $g_0 \in [g_0]$
  and $g_1 \in [g_1]$, one of two possibilities holds:
  \begin{enumerate}
  \item \label{item:14} $X_{g_0}$ and $X_{g_1}$ differ by a set of positive measure,
    or
  \item \label{item:15} $X_{g_0} = X_{g_1}$, up to a nullset, but $g_0$ and $g_1$
    differ on a set $E$ with $E \cap (X_{g_0} \cup X_{g_1}) =
    \emptyset$ and $\Vol(E, g) > 0$, where $g$ is our fixed metric.
  \end{enumerate}
  We will show that neither of these possibilities can actually occur.

  To rule out \eqref{item:14}, let $X_i := D_{\{g^i_k\}}$ denote the
  deflated set of the sequence $\{g_k^i\}$ for $i=0,1$.  Then we claim
  $X_0 = X_1$, up to a nullset.  If this is not true, then by swapping
  the two sequences if necessary, we see that $Y := (X_0 \setminus
  X_1)$ has positive volume with respect to $g_1$ and zero volume with
  respect to $g_0$.  ($Y$ is simply the set on which $\{g^0_k\}$
  deflates and $\{g^1_k\}$ doesn't.)  But then by Lemma \ref{lem:13},
  \begin{equation*}
    \lim_{k \rightarrow \infty} d(g^0_k, g^1_k) \geq \lim_{k
      \rightarrow \infty} \sqrt{\Vol(Y, g^1_k)} = \sqrt{\Vol(Y, g_1)}
    > 0,
  \end{equation*}
  where we have used Theorem \ref{thm:19}.  This contradicts the
  assumptions of the theorem, so in fact $X_0 = X_1$ up to a nullset.
  Since by property (\ref{item:5}) of Definition \ref{dfn:13} $X_{g_i}
  = D_{\{g^i_k\}}$ up to a nullset as well, \eqref{item:14} cannot
  hold.

  So suppose that \eqref{item:15} holds.  Note that on $E$, $g_0$ and
  $g_1$ are both positive definite.  Since $E$ has positive
  $g$-volume, we can conclude from Proposition \ref{prop:21}
  (specifically \eqref{eq:92}) that $\Theta_E(\{g^0_k\}, \{g^1_k\}) >
  0$.  But then this and \eqref{eq:91} also imply that
  \begin{equation*}
    \lim_{k \rightarrow \infty} d(g^0_k, g^1_k) = d(\{g^0_k\},
    \{g^1_k\}) > 0.
  \end{equation*}
  This contradicts the assumptions of the theorem, and so
  \eqref{item:15} cannot hold either.
\end{proof}

\subsubsection{Second Uniqueness Result}\label{sec:second-uniq-result}

Our goal in this subsection is to prove the following statement: up to
equivalence, there is only one $d$-Cauchy sequence $\omega$-converging
to a given element of $\Mfhat$.  That is, if we have two sequences
$\{g^0_k\}, \{g^1_k\}$ that both $\omega$-converge to the same
$[g_\infty] \in \Mfhat$, then
\begin{equation*}
  d(\{g^0_k\}, \{g^1_k\}) = \lim_{k \to \infty} d(g^0_k, g^1_k) = 0.
\end{equation*}

We will first prove the above statement for sequences that remain
within a given amenable subset $\U$, and will then use this to extend
the proof to arbitrary sequences.

\begin{prop}\label{prop:1}
  Let $\U$ be an amenable subset, and let $\U^0$ be the
  $L^2$-completion of $\U$.  If two sequences $\{g^0_k\}$ and
  $\{g^1_k\}$ in $\U$ both $\omega$-converge to $[g_\infty] \in \Mfhat$,
  then $\{g^0_k\}$ and $\{g^1_k\}$ are equivalent.  That is,
  \begin{equation*}
    \lim_{k \to \infty} d(g^0_k, g^1_k) = 0.
  \end{equation*}
  Furthermore, up to differences on a nullset, $[g_\infty]$ only
  contains one representative, $g_\infty$, and $\{g^0_k\}$ and
  $\{g^1_k\}$ both $L^2$-converge to $g_\infty$.  In particular,
  $g_\infty \in \U^0$.
\end{prop}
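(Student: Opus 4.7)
The plan is to exploit the rigidity of amenable subsets, namely the uniform positive lower bound on $\lambda^{\tilde{G}}_{\textnormal{min}}$ and the uniform upper bound on coefficients. The first step is to observe that no sequence in $\U$ can deflate: since $\det \tilde{G}(x) \geq \delta^n$ for every $\tilde{g} \in \U$ and every $x \in M$, we have $D_{\{g^i_k\}} = \emptyset$ for $i = 0,1$. By property \eqref{item:5} of Definition \ref{dfn:13}, this forces $X_{g_\infty}$ to be a nullset for any representative $g_\infty \in [g_\infty]$. Consequently, any two representatives of $[g_\infty]$ agree almost everywhere, so up to nullsets there is only one representative $g_\infty$.

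Next, I would use property \eqref{item:6} of Definition \ref{dfn:13} to conclude that $g^i_k(x) \to g_\infty(x)$ for a.e.~$x \in M$, both for $i=0$ and $i=1$. The key ingredient is that amenability supplies a uniform bound $|(g^i_k)_{ij}(x)| \leq C$ (and hence on $g_\infty$ as the pointwise limit), so the pointwise convergence is dominated by a constant. Since $M$ is compact and $\mu_g$ is finite, the dominated convergence theorem upgrades this to convergence in $L^2(M, g)$:
\begin{equation*}
  \norm{g^i_k - g_\infty}_g \longrightarrow 0 \quad \text{as } k \to \infty.
\end{equation*}
In particular $g_\infty$ lies in the $L^2$-closure $\U^0$ of $\U$.

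Finally, to obtain equivalence of the two Cauchy sequences, I would invoke part (1) of Theorem \ref{thm:5}, which furnishes a constant $K$ (depending only on $\U$) such that
\begin{equation*}
  d(g^0_k, g^1_k) \leq K \norm{g^1_k - g^0_k}_g \leq K \left( \norm{g^0_k - g_\infty}_g + \norm{g^1_k - g_\infty}_g \right) \longrightarrow 0.
\end{equation*}
This gives $\lim_{k \to \infty} d(g^0_k, g^1_k) = 0$, completing the proposition.

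There is no real obstacle here: everything follows almost mechanically once one notices that amenability forces $D_{\{g^i_k\}} = \emptyset$, at which point pointwise a.e.~convergence plus uniform boundedness pushes us into the $L^2$-world, where Theorem \ref{thm:5} does the rest. The only subtle point is the bookkeeping around Definition \ref{dfn:7}: one must explicitly check that $X_{g_\infty}$ being a nullset collapses the equivalence class $[g_\infty]$ to a single a.e.-class, but this is an immediate consequence of the definition.
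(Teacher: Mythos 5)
Your proof is correct and follows essentially the same route as the paper: amenability forces the deflated sets to be empty, a.e.\ pointwise convergence from property (3) of Definition \ref{dfn:13} plus the uniform coefficient bounds upgrade to $L^2$-convergence, and Theorem \ref{thm:5}(1) converts $L^2$-closeness into $d$-closeness. The only (harmless) difference is that you invoke the dominated convergence theorem with a constant dominating function where the paper uses an equicontinuity/Vitali-type convergence theorem; your version is, if anything, slightly more elementary.
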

\begin{proof}
  Note that Definition \ref{dfn:2} of an amenable subset implies that
  the deflated sets of $\{g^0_k\}$ and $\{g^1_k\}$ are empty.
  Therefore, all representatives of $[g_\infty]$ differ at most by a
  nullset, and property (\ref{item:6}) of Definition \ref{dfn:13}
  implies that $g^0_k, g^1_k \overarrow{\textnormal{a.e.}} g_\infty$.
  
  Since all $g^0_k$ and $g^1_k$ satisfy the same bounds a.e.~in each
  coordinate chart, it is easy to see that the set
  \begin{equation*}
    \{ | (g_l^k)_{ij} |^2 \mid 1 \leq i,j \leq n,\ k \in \N \}
  \end{equation*}
  is equicontinuous at $\emptyset$
  \cite[Dfn.~8.5.2]{rana02:_introd_to_measur_and_integ} in each
  coordinate chart for both $l = 0$ and $l = 1$.  Therefore,
  we can conclude from \cite[Thm.~8.5.14,
  Thm.~8.3.3]{rana02:_introd_to_measur_and_integ} that $\{g^0_k\}$ and
  $\{g^1_k\}$ converge in $L^2$ to $g_\infty$, proving the second
  statement.  This also implies that $\lim_{k \to \infty} \norm{g^1_k
    - g^0_k}_{g} = 0$.
  But now, invoking Theorem \ref{thm:5} gives $\lim_{k \to \infty}
  d(g^0_k, g^1_k) = 0$.
\end{proof}

The next lemma establishes the strong correspondence between $L^2$-
and $\omega$-convergence within amenable subsets.

\begin{lem}\label{lem:43}
  Let $\U \subset \M$ be amenable, and let $\tilde{g} \in \U^0$.  Then
  for any sequence $\{ g_k \}$ in $\U$ that $L^2$-converges to
  $\tilde{g}$, there exists a subsequence $\{ g_{k_l} \}$ that
  $\omega$-converges to $\tilde{g}$.

  In particular, for any element $\tilde{g} \in \U^0$, we can always
  find a sequence in $\U$ that both $L^2$- and $\omega$-converges to
  $\tilde{g}$.
\end{lem}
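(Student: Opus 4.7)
The plan is to combine Theorem \ref{thm:5}, the uniform pointwise control coming from amenability, and the classical fact that an $L^2$-convergent sequence admits an a.e.-convergent subsequence. The bulk of the work is simply matching these ingredients against the four conditions of Definition \ref{dfn:13}.

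First, since $\{g_k\} \subset \U$ is $L^2$-convergent, it is $L^2$-Cauchy, so Theorem \ref{thm:5}(1) immediately gives that it is $d$-Cauchy; passing to a further subsequence if necessary, I may also arrange $\sum_l d(g_{k_l}, g_{k_{l+1}}) < \infty$. This takes care of conditions (\ref{item:4}) and (\ref{item:7}) of Definition \ref{dfn:13} for free. Next, I would apply componentwise in each chart of the fixed amenable atlas the standard measure-theoretic fact (e.g.~\cite[Thm.~8.3.3]{rana02:_introd_to_measur_and_integ}) that $L^2$-convergence in a chart implies a.e.-convergence along a subsequence. A diagonal argument over the finitely many charts and the $n^2$ components then lets me refine $\{g_{k_l}\}$ once more so that $g_{k_l}(x) \to \tilde{g}(x)$ entrywise for a.e.\ $x \in M$.

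The amenability hypothesis now supplies exactly what is needed to handle conditions (\ref{item:5}) and (\ref{item:6}). Because $\lambda^{G_k}_{\min}(x) \geq \delta$ uniformly in $k$ and $x$, we have $\det G_k(x) \geq \delta^n$ everywhere, so $D_{\{g_{k_l}\}} = \emptyset$. Taking the pointwise limit on the full-measure set where convergence holds, the uniform bounds $|\tilde{g}_{ij}(x)| \leq C$ and $\lambda^{\tilde{G}}_{\min}(x) \geq \delta$ pass to $\tilde{g}$ a.e., so $\tilde{g}$ is positive definite a.e.\ and $X_{\tilde{g}}$ is a nullset. Hence $X_{\tilde{g}}$ and $D_{\{g_{k_l}\}}$ differ only on a nullset, giving (\ref{item:5}), and a.e.-convergence on $M = M \setminus D_{\{g_{k_l}\}}$ gives (\ref{item:6}). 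By Lemma \ref{lem:29}, this is enough to conclude $g_{k_l} \overarrow{\omega} [\tilde{g}]$.

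The ``in particular'' statement is then a one-line corollary: by definition of $\U^0$, there exists \emph{some} sequence in $\U$ that $L^2$-converges to any given $\tilde{g} \in \U^0$, and the first part of the lemma extracts a subsequence that additionally $\omega$-converges to $\tilde{g}$. There is no real obstacle here; the only point requiring care is the passage from ``$L^2$-convergence of tensor sections'' to ``a.e.-convergence of coefficient functions'', which is handled cleanly by working in the fixed amenable atlas and invoking the uniform coefficient bounds of Definition \ref{dfn:2}.
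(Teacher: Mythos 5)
Your proof is correct and follows essentially the same route as the paper's: Theorem \ref{thm:5} gives $d$-Cauchyness (hence conditions (\ref{item:4}) and, after passing to a subsequence, (\ref{item:7})), the uniform eigenvalue bound of Definition \ref{dfn:2} makes the deflated set empty, and the standard extraction of an a.e.-convergent subsequence from an $L^2$-convergent one handles condition (\ref{item:6}). If anything you are slightly more careful than the paper on condition (\ref{item:5}), since you explicitly note that the uniform lower bound passes to the a.e.\ limit so that $X_{\tilde{g}}$ is a nullset, a point the paper dismisses as ``empty.''
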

\begin{proof}
  Let $\{ g_k \}$ be any sequence $L^2$-converging to $\tilde{g} \in
  \U^0$.  Then $\tilde{g}$ together with any subsequence of $\{ g_k
  \}$ already satisfies properties (\ref{item:4}) and (\ref{item:5})
  of Definition \ref{dfn:13}.  This is clear from Theorem \ref{thm:5}
  and Definition \ref{dfn:2} of an amenable subset.  (Property
  (\ref{item:5}) is empty here, as $\{g_k\}$ has empty deflated set
  by the definition of an amenable subset.)  Since $\{ g_k \}$ is
  $d$-Cauchy by Theorem \ref{thm:5}, it is also easy to see that there
  is a subsequence $\{ g_{k_m} \}$ of $\{ g_k \}$ satisfying property
  (\ref{item:7}) of $\omega$-convergence.

  To verify property (\ref{item:6}), note that $L^2$-convergence of
  $\{ g_{k_m} \}$ implies that there exists a subsequence
  $\{g_{k_l}\}$ of $\{g_{k_m}\}$ that converges to $\tilde{g}$
  a.e. \cite[Thm.~8.5.14,
  Thm.~8.3.6]{rana02:_introd_to_measur_and_integ}.
\end{proof}

Given the results that we have so far, we can give an alternative
description of the completion of an amenable set using $\omega$-convergence
instead of $L^2$-convergence.

\begin{prop}\label{prop:26}
  Let $\U \subset \M$ be an amenable subset.  Then the completion
  $\overline{\U}$ of $\U$ as a metric subspace of $\M$ can be
  identified with $\U^0$, the $L^2$ completion of $\U$, using
  $\omega$-convergence.  That is, there is a natural bijection between
  $\overline{\U}$ and $\U^0$ given by identifying each equivalence
  class of Cauchy sequences $[\{g_k\}]$ with the unique element of
  $\U^0$ that they $\omega$-subconverge to.
\end{prop}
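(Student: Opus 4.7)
The plan is to assemble the bijection out of the pieces already established, using Theorem \ref{thm:7} (which gives a homeomorphism $\overline{\U} \cong \U^0$ via $L^2$-convergence) only implicitly as a sanity check, and otherwise constructing the $\omega$-convergence version from scratch. Concretely, I would define a map $\Phi : \overline{\U} \to \U^0$ as follows. Given a $d$-Cauchy sequence $\{g_k\} \subset \U$, invoke Theorem \ref{thm:39} to extract an $\omega$-convergent subsequence $\{g_{k_l}\}$ with $\omega$-limit some $[g_\infty] \in \Mmhat$. Because $\U$ is amenable, the deflated set $D_{\{g_{k_l}\}}$ is empty, so $[g_\infty]$ contains (up to a nullset) a unique representative $g_\infty$, and Proposition \ref{prop:1} tells us moreover that $g_\infty \in \U^0$. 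Set $\Phi([\{g_k\}]) := g_\infty$.

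The first thing to check is that $\Phi$ is well defined. Two potential ambiguities arise: the choice of subsequence, and the choice of representative in the equivalence class $[\{g_k\}]$. Both are handled by uniqueness-type results already proved. If $\{g_{k_l}\}$ and $\{g_{k'_l}\}$ are two $\omega$-convergent subsequences of the same Cauchy sequence, they are trivially equivalent as Cauchy sequences, so Theorem \ref{thm:20} forces their $\omega$-limits to coincide as elements of $\Mfhat$. The same argument, applied across two equivalent Cauchy sequences in $\U$, shows that $\Phi$ descends to the quotient $\overline{\U}$.

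For injectivity, suppose $\Phi([\{g^0_k\}]) = \Phi([\{g^1_k\}]) = g_\infty$. By passing to $\omega$-subconvergent subsequences (which does not change the class in $\overline{\U}$), we may assume both sequences $\omega$-converge to $g_\infty$. Proposition \ref{prop:1} then gives $\lim_{k\to\infty} d(g^0_k, g^1_k) = 0$, so $[\{g^0_k\}] = [\{g^1_k\}]$ in $\overline{\U}$. For surjectivity, given $\tilde{g} \in \U^0$, pick any sequence $\{g_k\} \subset \U$ that $L^2$-converges to $\tilde{g}$; by Lemma \ref{lem:43} a subsequence $\omega$-converges to $\tilde{g}$, and by Theorem \ref{thm:5} this subsequence is also $d$-Cauchy, hence defines an element of $\overline{\U}$ mapping to $\tilde{g}$.

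There is essentially no hard step left—each of the four requirements (well-definedness on subsequences, well-definedness on equivalence classes, injectivity, surjectivity) reduces immediately to one of Theorems \ref{thm:39}, \ref{thm:20}, \ref{thm:5}, Lemma \ref{lem:43}, or Proposition \ref{prop:1}. The only point worth being careful about is that within an amenable subset the deflated set of any Cauchy sequence is automatically empty, so the a priori mild ambiguity in the $\omega$-limit (which is only defined modulo nullsets on the deflated set) collapses to the honest uniqueness of a representative $g_\infty$. This is what makes the identification a genuine bijection with $\U^0$ rather than with a quotient of it, and it is exactly what the statement of the proposition claims.
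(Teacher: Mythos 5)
Your proposal is correct and follows essentially the same route as the paper: the map is built from the existence result (Theorem \ref{thm:39}), well-definedness and injectivity come from the first uniqueness result (Theorem \ref{thm:20}) together with Proposition \ref{prop:1}, and surjectivity comes from Lemma \ref{lem:43}. The only difference is that you spell out the well-definedness checks and the collapse of the $\omega$-limit class to a single representative more explicitly than the paper does, which is a welcome clarification rather than a deviation.
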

\begin{proof}
  The existence result (Theorem \ref{thm:39}) the first uniqueness
  result (Theorem \ref{thm:20}) and Proposition \ref{prop:1} together
  imply that for every equivalence class $[\{g_k\}]$ of $d$-Cauchy
  sequences in $\U$, there is a unique $L^2$ metric $g_\infty \in
  \U^0$ such that every representative of $[\{g_k\}]$
  $\omega$-subconverges to $g_\infty$, and that the representatives of
  a different equivalence class cannot also $\omega$-subconverge to
  $g_\infty$.  This gives us the map from $\overline{\U}$ to $\U^0$
  and shows that it is injective.  Furthermore, by Lemma \ref{lem:43},
  there is a sequence in $\U$ $\omega$-subconverging to every element
  of $\U^0$.  Thus, this map is also surjective.
\end{proof}

With this identification, we can define a metric on $\U^0$ by
declaring the bijection of the previous proposition to be an
isometry.  The result is the following:

\begin{dfn}\label{dfn:14}
  Let $\U$ be an amenable subset.  By $d_\U$, we denote the metric on
  the completion of $\U$, which we identify with the $L^2$-completion
  $\U^0$ via Proposition \ref{prop:26}.  Thus, for $g_0, g_1 \in \U^0$
  and any sequences $g^0_k \overarrow{\omega} g_0$, $g^1_k \overarrow{\omega}
  g_1$, we have
  \begin{equation*}
    d_\U(g_0, g_1) = \lim_{k \rightarrow \infty} d(g^0_k, g^1_k).
  \end{equation*}
  Note that by the preceding results, we can equivalently define
  $d_\U$ by assuming that $\{g^0_k\}$ and $\{g^1_k\}$ $L^2$-converge
  to $g_0$ and $g_1$, respectively.
\end{dfn}

The next lemma, the proof of which is immediate, shows that the metric
$d_\U$ is nicely compatible with the metric $d$.

\begin{lem}\label{lem:42}
  Let $\U \subset \M$ be amenable, and suppose $g_0, g_1 \in \U$ and
  $g_2 \in \U^0$.  Then
  \begin{enumerate}
  \item \label{item:16} $d(g_0, g_1) = d_\U(g_0, g_1)$, and 
  \item \label{item:3} $d(g_0, g_1) \leq d_\U(g_0, g_2) + d_\U(g_2, g_1)$.
  \end{enumerate}
\end{lem}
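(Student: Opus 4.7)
The plan is to reduce both statements to Definition \ref{dfn:14} of $d_\U$ by choosing particularly simple $\omega$-convergent representatives, together with one application of Lemma \ref{lem:43} for the second part.

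For part (\ref{item:16}), I would first observe that the constant sequence $g_k \equiv g_0$ trivially $\omega$-converges to $g_0 \in \U$: condition (\ref{item:4}) of Definition \ref{dfn:13} is immediate, the deflated set $D_{\{g_k\}}$ is empty (matching $X_{g_0} = \emptyset$, since $g_0 \in \M$) so (\ref{item:5}) holds, pointwise convergence is trivial so (\ref{item:6}) holds, and the sum in (\ref{item:7}) vanishes. Applying the same observation to $g_1$ and then plugging these two constant sequences into Definition \ref{dfn:14} yields $d_\U(g_0, g_1) = \lim_k d(g_0, g_1) = d(g_0, g_1)$.

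For part (\ref{item:3}), I would invoke Lemma \ref{lem:43} to produce a sequence $\{g^2_k\} \subset \U$ with $g^2_k \overarrow{\omega} g_2$. The ordinary triangle inequality for $d$ on $\M$ gives
\begin{equation*}
d(g_0, g_1) \leq d(g_0, g^2_k) + d(g^2_k, g_1)
\end{equation*}
for every $k$. Taking $k \to \infty$ and using Definition \ref{dfn:14} (again with constant sequences at $g_0$ and $g_1$, as in part (\ref{item:16})) identifies the right-hand limits with $d_\U(g_0, g_2)$ and $d_\U(g_2, g_1)$, which is the claim.

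There is essentially no obstacle here: the entire content is that constant sequences qualify as $\omega$-convergent representatives of points of $\U$, so that the extended metric $d_\U$ restricts to $d$ on $\U \times \U$. Once that observation is in place, (\ref{item:16}) is a tautology and (\ref{item:3}) is simply the triangle inequality of $d$ passed to the limit along any $\omega$-approximation of $g_2$, whose existence is guaranteed by Lemma \ref{lem:43}.
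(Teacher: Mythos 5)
Your proof is correct; the paper itself omits the argument entirely, stating only that it is immediate, and the route you take---observing that constant sequences are valid $\omega$-convergent representatives of points of $\U$, so that $d_\U$ restricts to $d$ on $\U\times\U$, and then passing the triangle inequality for $d$ to the limit along a sequence from Lemma \ref{lem:43}---is exactly the intended one.
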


With a little bit of effort, we can use previous results to extend
Proposition \ref{prop:18}, a statement about $\M$, to the completion
of an amenable subset.  We first prove a very special case in a lemma,
followed by the full result.

\begin{lem}\label{lem:51}
  Let $\U$ be any amenable subset and $g^0, g^1 \in \U$.
  Let $C(n)$ be the constant of Proposition \ref{prop:18}, and let $E
  \subseteq M$ be measurable.  Then
  \begin{equation*}
    d_\U(g^0, \chi(M \setminus E) g^0 + \chi(E) g^1)
    \leq C(n) \left( \sqrt{\Vol(E, g^0)} + \sqrt{\Vol(E,
        g^1)} \right)
  \end{equation*}
\end{lem}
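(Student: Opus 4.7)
The plan is to approximate $g^* := \chi(M\setminus E) g^0 + \chi(E) g^1$ by smooth metrics inside $\U$, apply Proposition~\ref{prop:18} to each approximant paired with $g^0$, and pass to the limit using Definition~\ref{dfn:14}. The core observation is that cutting off between $g^0$ and $g^1$ with a smooth bump produces a genuine smooth metric whose carrier difference from $g^0$ is concentrated near $E$.

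The construction is as follows. Using regularity of Lebesgue measure (as in the proof of Proposition~\ref{prop:18}), choose a compact $F_k \subseteq E$ and an open $U_k \supseteq E$ with $\Vol(U_k \setminus F_k, g) \leq 1/k$. Pick $f_k \in C^\infty(M)$ with $0 \leq f_k \leq 1$, $f_k \equiv 1$ on $F_k$, and $f_k \equiv 0$ on $M \setminus U_k$. Define
\begin{equation*}
  \hat{g}_k := (1 - f_k) g^0 + f_k g^1.
\end{equation*}
Since $\U$ is convex and $0 \leq f_k \leq 1$, we have $\hat{g}_k \in \U \subset \M$. Moreover $\hat g_k - g^0 = f_k (g^1 - g^0)$, so $\carr(\hat g_k - g^0) \subseteq U_k$, and Proposition~\ref{prop:18} yields
\begin{equation*}
  d(g^0, \hat g_k) \leq C(n) \left( \sqrt{\Vol(U_k, g^0)} + \sqrt{\Vol(U_k, \hat g_k)} \right).
\end{equation*}

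Next I would take $k \to \infty$ in the bound. Because $\U$ is amenable, Lemma~\ref{lem:49} gives a uniform upper bound on $(\mu_{\tilde g}/\mu_g)$ for all $\tilde g \in \U$, so $\Vol(U_k \setminus F_k, g^0)$ and $\Vol(U_k \setminus F_k, \hat g_k)$ are each at most a constant times $\Vol(U_k \setminus F_k, g) \leq 1/k$, hence tend to zero. Since $\hat g_k = g^1$ on $F_k$ and $F_k \nearrow E$ (up to nullsets), this gives $\Vol(U_k, g^0) \to \Vol(E, g^0)$ and $\Vol(U_k, \hat g_k) \to \Vol(E, g^1)$.

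To interpret $\lim_k d(g^0, \hat g_k)$ as $d_\U(g^0, g^*)$, I would verify that $\hat g_k \to g^*$ in $L^2$: on $F_k$ and on $M \setminus U_k$ the two coincide, while on $U_k \setminus F_k$ the pointwise difference is bounded uniformly by the amenability constants and the set has $g$-volume at most $1/k$. Thus $g^* \in \U^0$, and by Lemma~\ref{lem:43} a subsequence of $\{\hat g_k\}$ also $\omega$-converges to $g^*$. Applying Definition~\ref{dfn:14} with the constant sequence $g^0$ and this subsequence gives
\begin{equation*}
  d_\U(g^0, g^*) = \lim_{k \to \infty} d(g^0, \hat g_k) \leq C(n)\left( \sqrt{\Vol(E, g^0)} + \sqrt{\Vol(E, g^1)} \right),
\end{equation*}
as desired. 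The main subtlety is the simultaneous convergence of the two volume terms under the uniform amenability estimates; everything else is routine assembly of earlier results.
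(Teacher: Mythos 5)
Your proof is correct and follows essentially the same route as the paper's: the same bump-function interpolation $\hat g_k=(1-f_k)g^0+f_kg^1$, the same application of Proposition~\ref{prop:18} on $U_k$, the same use of Lemma~\ref{lem:49} to kill the $U_k\setminus F_k$ contribution, and the same passage to the limit via $L^2$-convergence and Definition~\ref{dfn:14}. The only nitpick is that convexity of $\U$ alone does not literally put the \emph{pointwise} convex combination $\hat g_k$ in $\U$; as in the paper, one should note that the bounds of Definition~\ref{dfn:2} are pointwise convex and enlarge $\U$ to an amenable subset containing all the $\hat g_k$, after which your uniform estimates go through unchanged.
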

\begin{proof}
  For each $k \in \N$, choose closed subsets $F_k$ and open subsets
  $U_k$ such that $F_k \subseteq E \subseteq U_k$ and $\Vol(U_k, g) -
  \Vol(F_k, g) \leq 1/k$.  Furthermore, choose functions $f_k \in
  C^\infty(M)$ satisfying
  \begin{enumerate}
  \item $0 \leq f_k(x) \leq 1$ for all $x \in M$,
  \item $f_k(x) = 1$ for $x \in F_k$ and
  \item $f_k(x) = 0$ for $x \not\in U_k$.
  \end{enumerate}
  Then it is not hard to see that the sequence defined by
  \begin{equation*}
    g_k := (1 - f_k) g^0 + f_k g^1
  \end{equation*}
  $L^2$-converges to $\chi(M \setminus E) g^0 + \chi(E)
  g^1$, so in particular
  \begin{equation}\label{eq:87}
    d_\U(g^0, \chi(M \setminus E) g^0 + \chi(E) g^1) =
    \lim_{k \rightarrow \infty} d(g^0, g_k).
  \end{equation}
  Furthermore, since $g^0$ and all $g_k$ are smooth, Proposition
  \ref{prop:18} gives
  \begin{equation}\label{eq:74}
    d(g^0, g_k) \leq C(n) \left( \sqrt{\Vol(U_k, g^0)} +
      \sqrt{\Vol(U_k, g_k)} \right).
  \end{equation}
  By our assumptions on the sets $U_k$, it is clear that $\Vol(U_k,
  g^0) \rightarrow \Vol(E, g^0)$.  So if we can show that
  $\Vol(U_k, g_k) \rightarrow \Vol(E, g^1)$, then \eqref{eq:87}
  and \eqref{eq:74} combine to give the desired result.

  Now, because $g_k = g^1$ on $F_k$, we have
  \begin{equation*}
    \Vol(U_k, g_k) = \integral{F_k}{}{}{\mu_{g^1}} +
    \integral{U_k \setminus F_k}{}{}{\mu_{g_k}}.
  \end{equation*}
  The first term converges to $\Vol(E, g^1)$ for $k \rightarrow
  \infty$ by the definition of $F_k$.  We claim that the second term
  converges to zero.  Note that since the bounds of Definition
  \ref{dfn:2} are pointwise convex, we can enlarge $\U$ to an amenable
  subset containing $g_k$ for each $k \in \N$.  (By the definition,
  each $g_k$ is, at each point $x \in M$, a sum $(1-s) g^0(x) + s
  g^1(x)$ with $0 \leq s \leq 1$.)  Therefore, by Lemma \ref{lem:49},
  there exists a constant $K$ such that
  \begin{equation*}
    \left( \frac{\mu_{g_k}}{\mu_g} \right) \leq K.
  \end{equation*}
  But using this, our claim is clear from the assumptions on $U_k$ and
  $F_k$.
\end{proof}

\begin{thm}\label{thm:15}
  Let $\U$ be any amenable subset with $L^2$-completion $\U^0$.
  Suppose that $g_0, g_1 \in \U^0$, and let $E := \carr (g_1 - g_0) =
  \{ x \in M \mid g_0(x) \neq g_1(x) \}$.  Then there exists a
  constant $C(n)$ depending only on $n = \dim M$ such that
  \begin{equation*}
    d_\U (g_0, g_1) \leq C(n) \left( \sqrt{\Vol(E, g_0)} +
      \sqrt{\Vol(E,g_1)} \right).
  \end{equation*}
  In particular, we have
  \begin{equation*}
    \diam_\U \left( \{ \tilde{g} \in \U^0 \mid \Vol(M, \tilde{g}) \leq
      \delta \} \right) \leq 2 C(n) \sqrt{\delta}.
  \end{equation*}
\end{thm}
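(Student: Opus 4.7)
The plan is to deduce Theorem \ref{thm:15} from Lemma \ref{lem:51}, which already provides the analogous estimate for pairs consisting of a smooth metric $g^0 \in \U$ and the ``patched'' semimetric $\chi(M \setminus E) g^0 + \chi(E) g^1$. The strategy is to approximate $g_0, g_1 \in \U^0$ by smooth metrics in $\U$, apply Lemma \ref{lem:51} to these approximations, and pass to the limit. The diameter bound follows immediately from the main estimate: if $\Vol(M, \tilde g_0), \Vol(M, \tilde g_1) \leq \delta$ and $E = \carr(\tilde g_1 - \tilde g_0)$, then $\Vol(E, \tilde g_i) \leq \delta$, giving $d_\U(\tilde g_0, \tilde g_1) \leq 2 C(n) \sqrt{\delta}$.

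For the main inequality, use Lemma \ref{lem:43} to choose sequences $\{g^0_k\}, \{g^1_k\} \subset \U$ that both $L^2$- and $\omega$-converge to $g_0, g_1$, respectively, and set
\[
  \hat g_k := \chi(M \setminus E)\, g^0_k + \chi(E)\, g^1_k.
\]
This is a measurable semimetric which still satisfies the amenability bounds of Definition \ref{dfn:2}, and the smoothing construction used in the proof of Lemma \ref{lem:51} produces a sequence in $\U$ that $L^2$-converges to $\hat g_k$, so $\hat g_k \in \U^0$. Lemma \ref{lem:51} applied with $g^0 = g^0_k$, $g^1 = g^1_k$, and our set $E$ then yields
\[
  d_\U(g^0_k, \hat g_k) \leq C(n) \bigl( \sqrt{\Vol(E, g^0_k)} + \sqrt{\Vol(E, g^1_k)} \bigr),
\]
and the triangle inequality in $(\U^0, d_\U)$ gives
\[
  d_\U(g_0, g_1) \leq d_\U(g_0, g^0_k) + d_\U(g^0_k, \hat g_k) + d_\U(\hat g_k, g_1).
\]

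It remains to control the three terms as $k \to \infty$. The Lipschitz estimate of Theorem \ref{thm:5}(1) extends from $\U \times \U$ to $\U^0 \times \U^0$ via a routine density argument using Definition \ref{dfn:14}, so $d_\U(\tilde a, \tilde b) \leq K \|\tilde b - \tilde a\|_g$ for all $\tilde a, \tilde b \in \U^0$. Since $g^0_k \to g_0$ in $L^2$, the first term tends to zero. Because $g_0 = g_1$ almost everywhere on $M \setminus E$, a direct computation shows $\hat g_k \to g_1$ in $L^2$, and hence the third term also vanishes. Finally, $\omega$-convergence $g^i_k \overarrow{\omega} g_i$ together with Theorem \ref{thm:19} yields $\Vol(E, g^i_k) \to \Vol(E, g_i)$ for $i = 0, 1$. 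Combining these three limits produces the desired inequality. The main obstacle I anticipate is the bookkeeping needed to show that $\hat g_k$ genuinely lies in $\U^0$ and that $L^2$-convergence there is strong enough to imply $d_\U$-convergence; both are settled by the extension of Theorem \ref{thm:5} to the completion, which is standard but essential for the limit arguments to close up cleanly.
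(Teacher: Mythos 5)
Your proposal is correct and follows essentially the same route as the paper: approximate $g_0, g_1$ by sequences in $\U$ via Lemma \ref{lem:43}, insert the patched semimetric $\chi(M \setminus E)\, g^0_k + \chi(E)\, g^1_k$, apply Lemma \ref{lem:51} together with Theorem \ref{thm:19} for the volume limits, and close the remaining terms using the $L^2$-to-$d_\U$ continuity from Theorem \ref{thm:5} and Definition \ref{dfn:14}. The only difference is cosmetic bookkeeping in how the triangle inequality is arranged.
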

\begin{proof}
  Using Lemma \ref{lem:43}, choose any two sequences $\{g^0_k\}$ and
  $\{g^1_k\}$ in $\U$ that both $L^2$- and $\omega$-converge to $g_0$
  and $g_1$, respectively.  Then by the triangle inequality and Lemma
  \ref{lem:42}\eqref{item:16}, for each $k \in \N$,
  \begin{equation}\label{eq:68}
    d_\U(g_0, g_1) \leq d_\U(g_0, g^0_k) + d(g^0_k, g^1_k) + d_\U(g^1_k, g_1).
  \end{equation}
  By Theorem \ref{thm:5}, the first and last terms above approach zero
  as $k \rightarrow \infty$.  Furthermore, we claim that the middle
  term satisfies
  \begin{equation*}
    \lim_{k \rightarrow \infty} d(g^0_k, g^1_k) \leq C(n) \left(
      \sqrt{\Vol(E, g_0)} + \sqrt{\Vol(E, g_1)} \right),
  \end{equation*}
  which would complete the proof.

  By the triangle inequality (\ref{item:3}) of Lemma \ref{lem:42}, we
  have
  \begin{equation}\label{eq:136}
    d(g^0_k, g^1_k) \leq d_\U(g^0_k, \chi(M \setminus E) g^0_k +
    \chi(E) g^1_k) + d_\U(\chi(M \setminus E) g^0_k +
    \chi(E) g^1_k, g^1_k).
  \end{equation}
  By Lemma \ref{lem:51}
  and Theorem \ref{thm:19}, we can conclude
  \begin{equation*}
    \lim_{k \rightarrow \infty} d_\U(g^0_k, \chi(M \setminus E) g^0_k
    + \chi(E) g^1_k) \leq C(n) \left( \sqrt{\Vol(E, g_0)} +
      \sqrt{\Vol(E, g_1)} \right).
  \end{equation*}
  Therefore, if we can show that the second term of \eqref{eq:136}
  converges to zero as $k \rightarrow \infty$, then we will have the
  desired result.  But $\{g^0_k\}$ $L^2$-converges to $g_0$ and
  $\{g^1_k\}$ $L^2$-converges to $g_1$.  Additionally, $\chi(M
  \setminus E) g_0 = \chi(M \setminus E) g_1$.  Therefore, Definition
  \ref{dfn:14} implies that
  \begin{equation*}
    \lim_{k\ \rightarrow \infty} d_\U(\chi(M \setminus E) g^0_k +
    \chi(E) g^1_k, g^1_k) = 0, 
  \end{equation*}
  which is what was to be shown.
\end{proof}

Next, we need another technical result that will help us in extending
the second uniqueness result from amenable subsets to all of $\M$.

\begin{prop}\label{prop:19}
  Say $g_0 \in \M$ and $h \in \s$, and let $E \subseteq M$ be any open
  set.  Define an $L^2$ tensor $g_1 \in \s^0$ by $g_1 := g_0 + h^0$,
  where $h^0 := \chi(E) h$.  Assume that we can find an amenable
  subset $\U$ such that $g_1 \in \U^0$.  Finally, define a path $g_t$
  of $L^2$ metrics by $g_t := g_0 + t h^0$, $t \in [0,1]$.

  Then without loss of generality (by enlarging $\U$ if necessary),
  $g_t \in \U^0$ for all $t$, so in particular $d_\U(g_0, g_1)$ is
  well-defined.  Furthermore,
  \begin{equation}\label{eq:47}
    d_\U(g_0, g_1) \leq L(g_t) := \integral{0}{1}{\norm{h^0}_{g_t}}{d t},
  \end{equation}
  i.e., the length of $g_t$, when measured in the naive way, bounds
  $d_\U(g_0, g_1)$ from above.

  Lastly, suppose that on $E$, the metrics $g_t$, $t \in [0, 1]$, all
  satisfy the bounds
  \begin{equation*}
    | (g_t)_{ij}(x) | \leq C \quad \textnormal{and} \quad
    \lambda^{G_t}_{\textnormal{min}}(x) \geq \delta
  \end{equation*}
  for some $C, \delta > 0$, all $1 \leq i,j \leq n$ and a.e.~$x \in
  E$.  (That this is satisfied for some $C$ and $\delta$ is guaranteed
  by $g_t \in \U^0$.)  Then there is a constant $K = K(C, \delta)$
  such that
  \begin{equation*}
    d_\U(g_0, g_1) \leq K \norm{h^0}_g.
  \end{equation*}
\end{prop}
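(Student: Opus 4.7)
The plan is to prove the three assertions in order, approximating $g_1$ (and hence the entire path $g_t$) by smooth metrics in a suitably enlarged amenable subset.

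For the first assertion, I would start by fixing constants $C_0, \delta_0$ that bound the coefficients and minimum eigenvalue of $g_0$ in every chart of the amenable atlas (Lemma \ref{lem:47}). Letting $C, \delta$ denote the amenable constants of $\U$, the set $\U'$ of smooth metrics satisfying $|\tilde{g}_{ij}| \le \max(C, C_0)$ and $\lambda^{\tilde{G}}_{\min} \ge \min(\delta, \delta_0)$ is then an amenable subset containing both $\U$ and $g_0$; convexity follows from Remark \ref{rmk:1}\eqref{item:1}. Next, pick a sequence $\{g^1_k\} \subset \U$ with $g^1_k \to g_1$ in $L^2$ and, after passing to a subsequence, almost everywhere. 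Because $g_1 = g_0$ on $M \setminus E$, the smooth metrics $\tilde{g}^k_t := (1-t) g_0 + t g^1_k \in \U'$ satisfy $\tilde{g}^k_t \to (1-t) g_0 + t g_1 = g_0 + t h^0 = g_t$ in $L^2$ for every fixed $t$. Hence $g_t \in (\U')^0$ for all $t \in [0,1]$, and we may rename $\U'$ back to $\U$.

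To prove the length estimate \eqref{eq:47}, I would start from the inequality
\begin{equation*}
d(g_0, g^1_k) \le L(\tilde{g}^k_t) = \int_0^1 \norm{g^1_k - g_0}_{\tilde{g}^k_t}\, dt,
\end{equation*}
which holds in $\M$ because $\tilde{g}^k_t$ is the smooth straight-line path from $g_0$ to $g^1_k$, and then let $k \to \infty$. The left side tends to $d_\U(g_0, g_1)$ by Definition \ref{dfn:14}. For the right side I would invoke Lebesgue dominated convergence in two stages. At each fixed $t$, the integrand representing $\norm{g^1_k - g_0}_{\tilde{g}^k_t}^2$ against $\mu_g$ is $\tr_{\tilde{g}^k_t}((g^1_k - g_0)^2)\cdot(\mu_{\tilde{g}^k_t}/\mu_g)$, which converges almost everywhere to $\tr_{g_t}((h^0)^2)\cdot(\mu_{g_t}/\mu_g)$ using the a.e.~convergence of both $g^1_k$ and $\tilde{g}^k_t$; the uniform pointwise bounds of $\U$ together with Lemma \ref{lem:49} supply an $L^\infty$ dominating function independent of $k$, so $\norm{g^1_k - g_0}_{\tilde{g}^k_t} \to \norm{h^0}_{g_t}$ for each $t$. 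Lemma \ref{lem:18} then bounds $\norm{g^1_k - g_0}_{\tilde{g}^k_t} \le K \norm{g^1_k - g_0}_g$ uniformly in $k$ and $t$, and the right side is bounded in $k$ because $g^1_k \to g_1$ in $L^2$; a second dominated convergence in $t$ gives $L(\tilde{g}^k_t) \to \int_0^1 \norm{h^0}_{g_t}\, dt$, establishing \eqref{eq:47}.

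For the final estimate I would note that the hypotheses $|(g_t)_{ij}| \le C$ and $\lambda^{G_t}_{\min} \ge \delta$ almost everywhere on $E$, combined with the analogous bounds for $g_0$ on $M \setminus E$ (where $h^0$ vanishes anyway), are exactly what the proof of Lemma \ref{lem:18} uses. Rerunning that argument yields a constant $K = K(C, \delta)$ with $\norm{h^0}_{g_t} \le K \norm{h^0}_g$ for every $t \in [0,1]$; integrating in $t$ and combining with the length bound just proved gives $d_\U(g_0, g_1) \le K \norm{h^0}_g$. The main obstacle is the two-layer dominated convergence argument for \eqref{eq:47}: coordinating the pointwise-in-$t$ limit (which needs a uniform $L^\infty$ dominator in the $x$-integral) with the subsequent limit in $t$ (which needs a bound uniform in both $k$ and $t$) is where the amenable structure of $\U$, through Lemmas \ref{lem:49} and \ref{lem:18}, does the real work.
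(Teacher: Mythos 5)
Your proposal is correct, but your route to the key inequality \eqref{eq:47} is genuinely different from the paper's. The paper keeps the smooth tensor $h$ and instead mollifies the characteristic function: it chooses closed sets $F_k \subseteq E$ and open sets $U_k \supseteq E$ with $\Vol(U_k,g) - \Vol(F_k,g) < 1/k$, smooth cutoffs $f_k$ interpolating between them, and works with the smooth paths $g_0 + t f_k h$; the length of each such path is compared to $L(g_t)$ by splitting the norm integral over $F_k$ and $U_k \setminus F_k$, and both the discrepancy there and the endpoint error $d_\U(g_0 + f_k h, g_1)$ (controlled via Theorem \ref{thm:5}) are absorbed into an $\epsilon$ that is sent to zero at the end. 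You instead approximate $g_1$ directly by a sequence $\{g^1_k\} \subset \U$ supplied by the definition of $\U^0$, use the straight-line paths from $g_0$ to $g^1_k$, and pass to the limit with two applications of dominated convergence, after which $d_\U(g_0,g_1) = \lim_k d(g_0, g^1_k)$ is immediate from Definition \ref{dfn:14}. Your argument avoids the $\epsilon$-bookkeeping and never uses the openness of $E$ or the specific form $h^0 = \chi(E)h$ --- only $g_1 \in \U^0$ and the pointwise convexity of the bounds of Definition \ref{dfn:2} --- so it in fact establishes \eqref{eq:47} for any $g_1 \in \U^0$ joined to $g_0$ by a straight line; the price is that the interchange of limits must be justified carefully, which you do correctly, using the uniform coefficient and eigenvalue bounds of the enlarged amenable set together with Lemma \ref{lem:49} to dominate the inner $x$-integral and Lemma \ref{lem:18} to dominate the outer $t$-integral. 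Your treatment of the enlargement of $\U$ and of the final estimate $\norm{h^0}_{g_t} \leq K(C,\delta)\norm{h^0}_g$ coincides with the paper's.
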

\begin{proof}
  The existence of the enlarged amenable subset $\U$ is clear from the
  construction of $g_t$.  So we turn to the proof of \eqref{eq:47}.
  
  Let any $\epsilon > 0$ be given.  By Theorem \ref{thm:5}, we can
  choose $\delta > 0$ such that for any $\tilde{g}_0, \tilde{g}_1 \in
  \U$, $\norm{\tilde{g}_1 - \tilde{g}_0}_g < \delta$ implies
  $d(\tilde{g}_0, \tilde{g}_1) < \epsilon$.

  Next, for each $k \in \N$, we choose closed sets $F_k \subseteq E$
  and open sets $U_k \supseteq E$ with the property that $\Vol(U_k, g)
  - \Vol(F_k, g) < 1/k$.  Given this, let's even restrict ourselves to
  $k$ large enough that
  \begin{equation}\label{eq:138}
    \norm{\chi(U_k \setminus F_k) h}_g < \min \{\delta, \epsilon\}.
  \end{equation}

  We then choose $f_k \in C^\infty(M)$ satisfying
  \begin{enumerate}
  \item $f_k(x) = 1$ if $x \in F_k$,
  \item $f_k(x) = 0$ if $x \not\in U_k$ and
  \item $0 \leq f_k(x) \leq 1$ for all $x \in M$,
  \end{enumerate}

  The first consequence of our assumptions above is
  \begin{equation}\label{eq:77}
    \norm{g_1 - (g_0 + f_k h)}_g \leq \norm{\chi(U_k \setminus F_k) h)}_g < \delta.
  \end{equation}
  The second inequality is \eqref{eq:138}, and the first inequality
  holds for two reasons.  First, on both $F_k$ and $M \setminus U_k$,
  $g_0 + f_k h = g_0 + \chi(F_k) h = g_1$.  Second, on $U_k \setminus
  F_k$, $g_1 - (g_0 + f_k h) = (1 - f_k) h$, and by our third
  assumption on $f_k$, $0 \leq 1 - f_k \leq 1$.  Now, inequality
  \eqref{eq:77} allows us to conclude, by our assumption on $\delta$,
  that
  \begin{equation}\label{eq:78}
    d_\U(g_0 + f_k h, g_1) < \epsilon.
  \end{equation}
  Since by the triangle inequality
  \begin{equation*}
    d_\U(g_0, g_1) \leq d_\U(g_0, g_0 + f_k h) + d_\U(g_0 + f_k h, g_1)
    < d_\U(g_0, g_0 + f_k h) + \epsilon,
  \end{equation*}
  we must now get some estimates on $d_\U(g_0, g_0 + f_k h)$ to prove
  \eqref{eq:47}.
  
  To do this, define a path $g_t^k$ in $\M$, for $t \in [0,1]$, by
  $g_t^k := g_0 + t f_k h$.  Then we have, as is easy to see,
  \begin{equation}\label{eq:79}
    d(g_0, g_0 + f_k h) \leq L(g_t^k) = \integral{0}{1}{\norm{f_k
        h}_{g_t^k}}{d t}
  \end{equation}
  This is almost what we want, but we first have to replace $f_k h$
  with $h^0 = \chi(E) h$.  Also note that the $L^2$ norm in
  \eqref{eq:79} is that of $g_t^k$.  To put this in a form useful for
  proving \eqref{eq:47}, we therefore also have to to replace $g^k_t$
  with $g_t$.

  Using the facts that on $F_k$, $f_k h = h^0$ and
  $g^k_t = g_t$, as well as that $f_k = 0$ on $M \setminus
  U_k$, we can write
  \begin{equation}\label{eq:80}
      \norm{f_k h}_{g_t^k}^2       = \integral{F_k}{}{\tr_{g_t}
        \left( ( h^0 )^2 \right)}{\mu_{g_t}} + \integral{U_k
        \setminus F_k}{}{\tr_{g_t^k} \left( (f_k h)^2 \right)}{\mu_{g_t^k}}.
  \end{equation}

  For the first term above, we clearly have
  \begin{equation}\label{eq:81}
    \integral{F_k}{}{\tr_{g_t} \left( ( h^0 )^2 \right)}{\mu_{g_t}} \leq \norm{h^0}_{g_t}^2.
  \end{equation}

  As for the second term, it can be rewritten and estimated by
  \begin{equation*}
    \integral{U_k \setminus F_k}{}{\tr_{g_t^k} \left(
        (f_k h)^2 \right)}{\mu_{g_t^k}} = \norm{\chi(U_k
    \setminus F_k) f_k h}_{g_t^k} \leq \norm{\chi(U_k
    \setminus F_k) h}_{g_t^k},
  \end{equation*}
  where the inequality follows from our third assumption on $f_k$
  above.  Now, recall that $g_t$ is contained within an amenable
  subset $\U$.  It is possible to enlarge $\U$, without changing the
  property of being amenable, so that $\U$ contains $g_t^k$ for all $t
  \in [0,1]$ and all $k \in \N$.
  Therefore, by Lemma \ref{lem:18}, there exists a
  constant $K' = K'(\U, g_0, g_1)$---i.e., $K'$ does not depend on
  $k$---such that
  \begin{equation*}
    \norm{\chi(U_k \setminus F_k) h}_{g_t^k} \leq K' \norm{\chi(U_k
    \setminus F_k) h}_g.
  \end{equation*}
  But by \eqref{eq:138}, we have that $\norm{\chi(U_k \setminus F_k)
    h}_g < \epsilon$.  Combining this with \eqref{eq:80} and
  \eqref{eq:81}, we therefore get
  \begin{equation*}
    \norm{f_k h}_{g_t^k}^2 \leq \norm{h^0}_{g_t} + K' \epsilon.
  \end{equation*}

  The above inequality, substituted into \eqref{eq:79}, gives
  \begin{equation*}
    d(g_0, g_1^k) \leq \integral{0}{1}{\left( \norm{h^0}_{g_t} +
        K' \epsilon \right)}{d t} =
    L(g_t) + K' \epsilon.
  \end{equation*}
  The final step in the proof is then to estimate, using the above
  inequality and \eqref{eq:78}, that
  \begin{equation*}
    d_\U(g_0, g_1) \leq d(g_0, g_1^k) + d_\U(g_1^k, g_1) < L(g_t)
    + (1 + K') \epsilon.
  \end{equation*}
  Since $\epsilon$ was arbitrary and $K'$ is independent of $k$, we
  are finished with the proof of \eqref{eq:47}.

  Finally, the third statement follows from the following estimate,
  which is proved in exactly the same way as Lemma \ref{lem:18}:
  \begin{equation*}
    \norm{h^0}_{g_t} = \left( \integral{E}{}{\tr_{g_t}\left( h^2
        \right)}{\mu_{g_t}} \right)^{1/2} \leq K(C, \delta) \norm{h^0}_g.
  \end{equation*}
\end{proof}

With Theorem \ref{thm:15} and Proposition \ref{prop:19} as part of our
toolbox, we are now ready to take on the proof of the second
uniqueness result in its full generality.

So let two $d$-Cauchy sequences $\{g^0_k\}$ and $\{g^1_k\}$, as well
as some $g_\infty \in \M_f$, be given.  Suppose further that
$\{g^0_k\}$ and $\{g^1_k\}$ both $\omega$-converge to $g_\infty$ for $k
\rightarrow \infty$.  We will prove that
\begin{equation}\label{eq:70}
  \lim_{k \rightarrow \infty} d(g^0_k, g^1_k) = 0.
\end{equation}
The heuristic idea of our proof is very simple, which is belied by the
rather technical nature of the rigorous proof.  The point, though, is
essentially that for all $l \in \N$, we break $M$ up into two sets,
$E_l$ and $M \setminus E_l$.  The set $E_l$ has positive volume with
respect to $g_\infty$, but $\{g^0_k\}$ and $\{g^1_k\}$ $L^2$-converge
to $g_\infty$ on $E_l$, so the contribution of $E_l$ to $d(g^0_k,
g^1_k)$ vanishes in the limit $k \rightarrow \infty$.  The set $M
\setminus E_l$ contains the deflated sets of $\{g^0_k\}$ and
$\{g^1_k\}$, so the sequences need not converge on $M \setminus E_l$.
However, we choose things such that $\Vol(M \setminus E_l, g_\infty)$
vanishes in the limit $l \rightarrow \infty$, so that Proposition
\ref{prop:18} implies that the contribution of $M \setminus E_l$ to
$d(g^0_k, g^1_k)$ vanishes after taking the limits $k \rightarrow
\infty$ and $l \rightarrow \infty$ in succession.

The rigorous proof is achieved in three basic steps, which we will
describe after some brief preparation.

For each $l \in \N$, let
\begin{equation}\label{eq:85}
  E_l :=
  \left\{
    x \in M \midmid \det g^i_k(x) > \frac{1}{l},\ \abs{(g^i_k)_{rs}(x)}
    < l\ \forall i=0,1;\ k \in \N;\ 1 \leq r,s \leq n
  \right\},
\end{equation}
where these local notions are of course defined with respect to our
fixed amenable atlas (cf.~Convention \ref{cvt:5}), and the
inequalities in the definition should hold in each chart containing
the point $x$ in question.  Thus, $E_l$ is a set over which the
sequences $g_i^k$ neither deflate nor become unbounded.  We first note
that for each $k \in \N$, there exists an amenable subset $\U_k$ such
that the metrics
\begin{equation*}
  g^0_k,\ g^1_k\ \textnormal{and}\  g^0_k + \chi(E_l)(g^1_k - g^0_k)
\end{equation*}
are contained in $\U_k^0$.  This is possible due to smoothness of
$g^0_k$ and $g^1_k$, as well as pointwise convexity of the bounds of
Definition \ref{dfn:2}.

The steps in our proof are
the following.  We will show first that
\begin{equation}\label{eq:82}
 \lim_{k \rightarrow \infty} d_{\U_k}(g^0_k, g^0_k + \chi(E_l) (g^1_k -
  g^0_k)) = 0
\end{equation}
for all fixed $l \in \N$.  Second,
\begin{equation}\label{eq:83}
\lim_{k \rightarrow \infty} d_{\U_k}(g^0_k + \chi(E_l) (g^1_k - g^0_k),
g^1_k) \leq 2 C(n)
  \sqrt{\Vol(M \setminus E_l, g_\infty)}  
\end{equation}
for all fixed $k \in \N$ (where $C(n)$ is the constant from Theorem
\ref{thm:15}).  And third,
\begin{equation}\label{eq:84}
\lim_{l \rightarrow \infty} \Vol(E_l, g_\infty) = \Vol(M, g_\infty).
\end{equation}
Since the triangle inequality of Lemma \ref{lem:42}(2) implies that
\begin{equation*}
  d(g^0_k, g^1_k) \leq d_{\U_k}(g^0_k, g^0_k + \chi(E_l) (g^1_k -
  g^0_k)) + d_{\U_k}(g^0_k + \chi(E_l) (g^1_k -
  g^0_k), g^1_k)
\end{equation*}
for all $l \in \N$, taking the limits $k \rightarrow \infty$ followed
by $l \rightarrow \infty$ of both sides then gives \eqref{eq:70}.

We now prove each of \eqref{eq:82}, \eqref{eq:83} and \eqref{eq:84} in
its own lemma.

\begin{lem}\label{lem:38}
  \begin{equation*}
 \lim_{k \rightarrow \infty} d_{\U_k}(g^0_k, g^0_k + \chi(E_l) (g^1_k -
  g^0_k)) = 0
\end{equation*}
\end{lem}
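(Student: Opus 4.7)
The plan is to establish the inequality
\[
d_{\U_k}\bigl(g_k^0,\, g_k^0 + \chi(E_l)(g_k^1 - g_k^0)\bigr) \leq K(l)\, \norm{\chi(E_l)(g_k^1 - g_k^0)}_g
\]
with a constant $K(l)$ depending only on $l$ and $n$ (crucially, not on $k$), and then to show that the right-hand side tends to zero as $k \to \infty$. The first inequality should come from an application of Proposition \ref{prop:19} to the perturbation $\chi(E_l)(g_k^1 - g_k^0)$ of $g_k^0$, while the second will follow from Lebesgue's dominated convergence theorem combined with the pointwise a.e.\ convergence forced by $\omega$-convergence.

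For the $L^2$-decay, I would argue as follows. By the definition of $E_l$, the tensors $g_k^0$ and $g_k^1$ are uniformly bounded in $k$ on $E_l$ in each chart of the fixed amenable atlas, and since $\det g_k^i > 1/l$ on $E_l$, we have $E_l \cap D_{\{g_k^i\}} = \emptyset$ for $i = 0,1$. Property (\ref{item:6}) of Definition \ref{dfn:13} then yields $g_k^i(x) \to g_\infty(x)$ for almost every $x \in E_l$, so the pointwise integrand of $\norm{\chi(E_l)(g_k^1 - g_k^0)}_g^2$ tends to zero a.e.\ and is dominated by a constant depending only on $l$ and $g$; dominated convergence gives the claim. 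For the distance estimate, note that on $E_l$ the straight-line path $g_t = g_k^0 + t\chi(E_l)(g_k^1 - g_k^0)$ coincides with the convex combination $(1-t) g_k^0 + t g_k^1$, which by pointwise convexity of the bounds defining $E_l$ (cf.\ Remark \ref{rmk:1}) lies within those same bounds uniformly in $k$ and $t$. Thus the hypotheses of Proposition \ref{prop:19} are met with constants $C$ and $\delta$ depending only on $l$, so the constant $K(l)$ it produces is independent of $k$.

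The main obstacle will be that $E_l$ is defined as an intersection over $k \in \N$ of open conditions and is only $G_\delta$, whereas Proposition \ref{prop:19} is stated for open perturbation sets. Inspection of its proof suggests that openness is not actually essential---the construction of closed $F_k \subseteq E$ and open $U_k \supseteq E$ with small $g$-volume difference goes through for any measurable $E$ by regularity of Lebesgue measure, and the final pointwise estimate $\norm{h^0}_{g_t} \leq K(C,\delta) \norm{h^0}_g$ is purely local. If one prefers to apply Proposition \ref{prop:19} as stated, I would instead approximate $E_l$ from the inside by a closed set $F_\varepsilon \subseteq E_l$ with $\Vol(E_l \setminus F_\varepsilon, g) < \varepsilon$, apply the proposition on a thin open neighborhood of $F_\varepsilon$ with a smooth cutoff equal to $1$ on $F_\varepsilon$, and control the residual discrepancy on $E_l \setminus F_\varepsilon$ via Theorem \ref{thm:15}: the uniform volume-density bound on $E_l$ (which follows from the coefficient bound $l$) ensures $\Vol(E_l \setminus F_\varepsilon, g_k^i) \leq K_1(l)\, \varepsilon$ uniformly in $k$ and $i$, so this error is bounded by $2C(n)\sqrt{K_1(l)\,\varepsilon}$ and vanishes as $\varepsilon \to 0$.
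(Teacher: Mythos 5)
Your proposal is correct and follows essentially the same route as the paper: apply Proposition \ref{prop:19} to get $d_{\U_k}(g^0_k, g^0_k + \chi(E_l)(g^1_k - g^0_k)) \leq K_l \norm{\chi(E_l)(g^1_k - g^0_k)}_g$ with $K_l$ independent of $k$ (because the bounds defining $E_l$ depend only on $l$), and then show the $L^2$ norm tends to zero from the uniform coefficient bound $2l$ together with the a.e.\ convergence $g^i_k \to g_\infty$ on $E_l$ forced by property (\ref{item:6}) of $\omega$-convergence --- your use of dominated convergence here is interchangeable with the paper's equicontinuity/Vitali argument. Your observation that $E_l$ is merely measurable while Proposition \ref{prop:19} is stated for open $E$ is a point the paper passes over in silence, and your diagnosis (openness is never actually used in its proof, only inner/outer regularity of Lebesgue measure) is accurate.
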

\begin{proof}
  We know that
  \begin{equation*}
    g^0_k, g^0_k + \chi(E_l) (g^1_k - g^0_k) \in \U_k^0,
  \end{equation*}
  where $\U_k$ is an amenable subset.  Therefore, for each fixed $k
  \in \N$, Proposition \ref{prop:19} applies to give
  \begin{equation}\label{eq:86}
    d_{\U_k}(g^0_k, g^0_k + \chi(E_l) (g^1_k - g^0_k)) \leq K_l \norm{\chi(E_l)
    (g^1_k - g^0_k)}_g,
  \end{equation}
  where $K_l$ is some constant depending only on $l$.  (That the
  constant only depends on $l$ is the result of the fact that $g^0_k$
  and $g^1_k$ satisfy the bounds given in \eqref{eq:85} on $E_l$,
  which only depend on $l$.)  

  Now, recalling the definition \eqref{eq:85} of $E_l$, we note that
  for all $1 \leq i,j \leq n$ and all $k \in \N$, we have $|
  (g^1_k)_{ij}(x) - (g^0_k)_{ij}(x) |^2 \leq 4 l^2$ for $x \in E_l$,
  and hence the family of (local) functions
  \begin{equation*}
    \{ \chi(E_l) ((g^1_k)_{ij} - (g^0_k)_{ij}) \mid 1 \leq i,j
    \leq n,\ k \in \N \}
  \end{equation*}
  is equicontinuous at $\emptyset$.  Furthermore, since property
  (\ref{item:6}) of Definition \ref{dfn:13} implies that $\chi(E_l)
  g_a^k \rightarrow \chi(E_l) g_\infty$ a.e.~for $a=0,1$, we have that
  $\chi(E_l) (g^1_k - g^0_k) \rightarrow 0$ a.e.  Therefore, as in the
  proof of Proposition \ref{prop:1}, we have that
  \begin{equation*}
    \norm{\chi(E_l) ( g^1_k - g^0_k )}_g
    \rightarrow 0
  \end{equation*}
  for $k \rightarrow \infty$.  Together with \eqref{eq:86}, this
  implies the result immediately.
\end{proof}

\begin{lem}\label{lem:40}
  \begin{equation*}
    \lim_{k \rightarrow \infty} d_{\U_k}(g^0_k + \chi(E_l) (g^1_k - g^0_k),
    g^1_k) \leq 2 C(n) \sqrt{\Vol(M \setminus E_l, g_\infty)}
  \end{equation*}
\end{lem}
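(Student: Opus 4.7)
The plan is to observe that the two metrics being compared agree on $E_l$, so their ``carrier of difference'' lies in $M \setminus E_l$, and then to apply the just-proved extension of Proposition \ref{prop:18} to amenable completions (Theorem \ref{thm:15}), followed by a passage to the limit via Theorem \ref{thm:19}.

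More concretely, let $\tilde g_k := g^0_k + \chi(E_l)(g^1_k - g^0_k)$ and set $E := \{ x \in M \mid \tilde g_k(x) \neq g^1_k(x) \}$. On $E_l$ we have $\tilde g_k = g^0_k + (g^1_k - g^0_k) = g^1_k$, so $E \subseteq M \setminus E_l$. Both $\tilde g_k$ and $g^1_k$ lie in $\U_k^0$ by the choice of the amenable subset $\U_k$ made just before the lemma, so Theorem \ref{thm:15} applies and gives
\begin{equation*}
  d_{\U_k}(\tilde g_k, g^1_k) \leq C(n) \bigl( \sqrt{\Vol(E, \tilde g_k)} + \sqrt{\Vol(E, g^1_k)} \bigr).
\end{equation*}
Since $E \subseteq M \setminus E_l$ and since on $M \setminus E_l$ one has $\tilde g_k = g^0_k$, the volume $\Vol(E, \tilde g_k)$ is bounded by $\Vol(M \setminus E_l, g^0_k)$, and similarly $\Vol(E, g^1_k) \leq \Vol(M \setminus E_l, g^1_k)$. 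Hence
\begin{equation*}
  d_{\U_k}(\tilde g_k, g^1_k) \leq C(n) \bigl( \sqrt{\Vol(M \setminus E_l, g^0_k)} + \sqrt{\Vol(M \setminus E_l, g^1_k)} \bigr).
\end{equation*}

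Finally, since by assumption both $\{g^0_k\}$ and $\{g^1_k\}$ $\omega$-converge to $g_\infty$, Theorem \ref{thm:19} applied to the measurable subset $M \setminus E_l$ yields $\Vol(M \setminus E_l, g^i_k) \to \Vol(M \setminus E_l, g_\infty)$ as $k \to \infty$ for both $i = 0, 1$. Letting $k \to \infty$ in the preceding inequality gives
\begin{equation*}
  \lim_{k \to \infty} d_{\U_k}(\tilde g_k, g^1_k) \leq 2 C(n) \sqrt{\Vol(M \setminus E_l, g_\infty)},
\end{equation*}
which is exactly the claim. There is no real obstacle here: the only thing to check with any care is that the carrier $E$ is indeed contained in $M \setminus E_l$ (a direct calculation) and that both tensors lie in the $L^2$-completion of a common amenable subset so that Theorem \ref{thm:15} and the $d_{\U_k}$ notation are meaningful, which is precisely why the amenable subsets $\U_k$ were introduced in the setup before the lemma.
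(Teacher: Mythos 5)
Your proof is correct and follows essentially the same route as the paper's: note that the two tensors agree on $E_l$, apply Theorem \ref{thm:15}, and pass to the limit with Theorem \ref{thm:19}. The only difference is that you spell out the intermediate step bounding $\Vol(E,\tilde g_k)$ by $\Vol(M\setminus E_l, g^0_k)$ (using $\tilde g_k = g^0_k$ off $E_l$), which the paper leaves implicit.
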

\begin{proof}
  First note that $g^1_k = g^0_k + \chi(E_l) (g^1_k - g^0_k)$ on
  $E_l$.  Therefore, by Theorem \ref{thm:15},
  \begin{equation*}
    d_{\U_k}(g^0_k + \chi(E_l) (g^1_k - g^0_k),
    g^1_k) \leq C(n) \left( \sqrt{\Vol(M \setminus E_l, g^0_k)} +
      \sqrt{\Vol(M \setminus E_l, g^1_k)} \right).
  \end{equation*}
  But now the result follows immediately from Theorem \ref{thm:19},
  since $\Vol(M \setminus E_l, g_i^k) \rightarrow \Vol(M \setminus
  E_l, g_\infty)$ for $i = 0,1$.
\end{proof}

\begin{lem}\label{lem:39}
  \begin{equation*}
    \lim_{l \rightarrow \infty} \Vol(E_l, g_\infty) = \Vol(M, g_\infty).
  \end{equation*}
\end{lem}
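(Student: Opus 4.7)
The plan is to reduce the statement to continuity of measure from below, by showing that $\{E_l\}_{l \in \N}$ is an increasing family of measurable sets whose union fills $M$ up to a $g_\infty$-nullset.

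Monotonicity is immediate from the definition \eqref{eq:85}: if $x \in E_l$, then the strict inequalities $\det g^i_k(x) > 1/l$ and $|(g^i_k)_{rs}(x)| < l$ certainly imply $\det g^i_k(x) > 1/(l+1)$ and $|(g^i_k)_{rs}(x)| < l+1$, so $E_l \subseteq E_{l+1}$. Continuity of measure from below then gives
\begin{equation*}
  \lim_{l \to \infty} \Vol(E_l, g_\infty) = \Vol\left( \bigcup_{l=1}^\infty E_l,\ g_\infty \right).
\end{equation*}

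Next I would argue that $M \setminus \bigcup_{l} E_l \subseteq D_{\{g^0_k\}} \cup D_{\{g^1_k\}}$, up to a nullset. By property (\ref{item:7}) of Definition \ref{dfn:13} applied to both $\omega$-convergent sequences, Proposition \ref{prop:15} ensures that for a.e.~$x \in M$ both $\{g^0_k(x)\}$ and $\{g^1_k(x)\}$ are $\theta^g_x$-Cauchy in $\Matx$. Fix such an $x$ and assume also $x \notin \bigcup_l E_l$. Then for each $l$, at least one of the four conditions
\begin{equation*}
  \exists k:\ \det g^i_k(x) \leq 1/l \quad \text{or} \quad \exists k,r,s:\ |(g^i_k)_{rs}(x)| \geq l \qquad (i = 0,1)
\end{equation*}
must hold. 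By pigeonhole, one of the four holds for infinitely many $l$, and hence for some $i \in \{0,1\}$ we have either $\inf_k \det g^i_k(x) = 0$ (whence $x \in D_{\{g^i_k\}}$ directly, recalling $\det g^i_k(x)$ and $\det G^i_k(x)$ differ by the positive smooth factor $\det g(x)$) or the coordinates $(g^i_k)_{rs}(x)$ are unbounded in $k, r, s$. In the second case, the dichotomy of Proposition \ref{prop:14} applied to the $\theta^g_x$-Cauchy sequence $\{g^i_k(x)\}$ excludes its non-deflating alternative (which would furnish uniform coordinate bounds), forcing $\det G^i_k(x) \to 0$ and so again $x \in D_{\{g^i_k\}}$.

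To conclude, by property (\ref{item:5}) of Definition \ref{dfn:13} the deflated sets $D_{\{g^i_k\}}$ coincide with $X_{g_\infty}$ up to nullsets for $i = 0, 1$. Since $g_\infty$ is not positive definite on $X_{g_\infty}$, Proposition \ref{prop:7} gives $\det g_\infty = 0$ there, so $\mu_{g_\infty}$ vanishes on $X_{g_\infty}$ and $\Vol\bigl(M \setminus \bigcup_l E_l,\ g_\infty\bigr) = 0$. Combined with the displayed equation this gives $\lim_{l \to \infty} \Vol(E_l, g_\infty) = \Vol(M, g_\infty)$. No step is genuinely difficult; the only delicate moment is turning the failure of the coordinate bounds at $x$ into deflation, which the dichotomy of Proposition \ref{prop:14} handles cleanly.
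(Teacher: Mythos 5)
Your proof is correct and follows essentially the same route as the paper: both reduce the claim to the fact that, up to a $g_\infty$-nullset, the sets $E_l$ increase to $M \setminus X_{g_\infty}$, and then pass to the limit (the paper via dominated convergence applied to $\chi(E_l)$, you via continuity of measure from below, which is the same thing for an increasing family). The only difference is one of detail: the paper simply asserts that $\chi(E_l) \rightarrow \chi(M \setminus X_{g_\infty})$ a.e., whereas you supply the justification via the pigeonhole argument and the dichotomy of Proposition \ref{prop:14}; that justification is sound.
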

\begin{proof}
  Recall that $X_{g_\infty} \subseteq M$ denotes the deflated set of
  $g_\infty$, i.e., the set where $g_\infty$ is not positive definite.
  This set has volume zero w.r.t.~$g_\infty$, since $\mu_{g_\infty} =
  0$ a.e.~on $X_{g_\infty}$.  Therefore $\Vol(M, g_\infty) = \Vol(M
  \setminus X_{g_\infty}, g_\infty)$.

  We note that $\chi(E_l)$ converges a.e.~to $\chi(M \setminus
  X_{g_\infty})$ and that $\chi(E_l)(x) \leq 1$ for all $x \in M$.
  Since $g_\infty$ has finite volume, the constant function 1 is
  integrable w.r.t.~$\mu_{g_\infty}$, and therefore the Lebesgue
  dominated convergence theorem implies that
  \begin{equation*}
    \lim_{l \rightarrow \infty} \Vol(E_l, g_\infty) = \lim_{l
      \rightarrow \infty} \integral{M}{}{\chi(E_l)}{\mu_{g_\infty}}
    = \integral{M}{}{\chi(M \setminus X_{g_\infty})}{\mu_{g_\infty}}
    = \Vol(M \setminus X_{g_\infty}, g_\infty).
  \end{equation*}
\end{proof}

As already noted, Lemmas \ref{lem:38}, \ref{lem:40} and \ref{lem:39}
combine to give the desired result.  We summarize what we have just
proved in a theorem.

\begin{thm}\label{thm:17}
  Let $[g_\infty] \in \Mfhat$.  Suppose we have two sequences
  $\{g^0_k\}$ and $\{g^1_k\}$ with $g^0_k, g^1_k \overarrow{\omega}
  [g_\infty]$ for $k \rightarrow \infty$.  Then
  \begin{equation*}
    \lim_{k \rightarrow \infty} d(g^0_k, g^1_k) = 0,
  \end{equation*}
  that is, $\{g^0_k\}$ and $\{g^1_k\}$ are equivalent in the precompletion
  $\overline{\M}^{\mathrm{pre}}$ of $\M$.
\end{thm}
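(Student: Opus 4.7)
The plan is to follow exactly the three-step decomposition that has been telegraphed in the discussion preceding the theorem. For each $l \in \N$, I would split $M$ into the ``good'' set $E_l$ of \eqref{eq:85} on which both sequences are uniformly bounded above (by $l$) and uniformly bounded below away from degeneracy (determinant $> 1/l$), together with its complement $M \setminus E_l$, which contains the deflated set of each sequence. On $E_l$ the two sequences behave like sequences inside an amenable subset, so I expect their contribution to $d(g^0_k, g^1_k)$ to vanish as $k \to \infty$; on $M \setminus E_l$ I have no pointwise control, but the volume based estimate of Theorem \ref{thm:15} will bound the contribution by volumes alone, and those volumes tend to zero as $l \to \infty$ because both sequences have $\omega$-limit $g_\infty \in \Mf$.

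More concretely, I would first observe that for each fixed $k$ the three tensors $g^0_k$, $g^1_k$, and the hybrid $g^0_k + \chi(E_l)(g^1_k - g^0_k)$ lie in $\U_k^0$ for some amenable $\U_k$ (using smoothness of the two smooth sequences and pointwise convexity of the amenable bounds). The triangle inequality of Lemma \ref{lem:42}\eqref{item:3} then gives
\begin{equation*}
d(g^0_k, g^1_k) \leq d_{\U_k}\bigl(g^0_k,\, g^0_k + \chi(E_l)(g^1_k - g^0_k)\bigr) + d_{\U_k}\bigl(g^0_k + \chi(E_l)(g^1_k - g^0_k),\, g^1_k\bigr).
\end{equation*}
For the first summand I would apply Proposition \ref{prop:19} to $h^0 := \chi(E_l)(g^1_k - g^0_k)$: the bounds defining $E_l$ depend only on $l$, hence yield a constant $K_l$ with $d_{\U_k}(\cdot,\cdot) \leq K_l \,\norm{\chi(E_l)(g^1_k - g^0_k)}_g$, and then an equicontinuity/bounded convergence argument (just as in Proposition \ref{prop:1}) using property (\ref{item:6}) of Definition \ref{dfn:13} shows $\chi(E_l)(g^1_k - g^0_k) \to 0$ in $L^2$ as $k \to \infty$. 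For the second summand, noting that $g^1_k$ and $g^0_k + \chi(E_l)(g^1_k - g^0_k)$ differ exactly on $M \setminus E_l$, Theorem \ref{thm:15} bounds it by $C(n)(\sqrt{\Vol(M \setminus E_l, g^0_k)} + \sqrt{\Vol(M \setminus E_l, g^1_k)})$, and Theorem \ref{thm:19} lets me pass to $2 C(n) \sqrt{\Vol(M \setminus E_l, g_\infty)}$ as $k \to \infty$.

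The last piece is to see that $\Vol(E_l, g_\infty) \to \Vol(M, g_\infty)$ as $l \to \infty$: this is immediate from dominated convergence, since $\chi(E_l) \to \chi(M \setminus X_{g_\infty})$ a.e.\ (points outside $X_{g_\infty}$ eventually satisfy the bounds because $g_k^i(x) \to g_\infty(x)$ there by Proposition \ref{prop:15}), and $\Vol(X_{g_\infty}, g_\infty) = 0$. Putting the three lemmas together, I take $\limsup_{k \to \infty}$ first and then let $l \to \infty$ to conclude $\lim_{k \to \infty} d(g^0_k, g^1_k) = 0$.

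The main obstacle I expect is bookkeeping rather than a genuine conceptual hurdle: one must be careful that the amenable subset $\U_k$ depends on $k$, so the $d_{\U_k}$ terms cannot be estimated by a fixed amenable constant; the proof of the first lemma is safe only because the constant $K_l$ produced by Proposition \ref{prop:19} depends on $l$ (i.e.\ on the $E_l$-bounds) and not on $\U_k$, and similarly the constant $C(n)$ in Theorem \ref{thm:15} is universal. Verifying this independence is the delicate point that makes the two successive limits $k \to \infty$ and then $l \to \infty$ legitimate.
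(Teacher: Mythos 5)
Your proposal is correct and follows essentially the same route as the paper: the same decomposition via the sets $E_l$ of \eqref{eq:85}, the same three limits (Lemmas \ref{lem:38}, \ref{lem:40}, \ref{lem:39}), and the same reliance on Proposition \ref{prop:19}, Theorem \ref{thm:15}, Theorem \ref{thm:19}, and dominated convergence. Your closing remark about the constants $K_l$ and $C(n)$ being independent of the $k$-dependent amenable subsets $\U_k$ is exactly the point the paper's argument hinges on.
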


As we have already discussed, combining this theorem with the
existence result (Theorem \ref{thm:40}) and the first uniqueness
result (Theorem \ref{thm:20}) gives us an identification of
$\overline{\M}$ with a subset of $\Mfhat$:

\begin{dfn}\label{dfn:3}
  Denote by $\Omega : \overline{\M} \rightarrow \Mfhat$ the map
  sending an equivalence class of Cauchy sequences to the unique
  element of $\Mfhat$ that all of its representatives
  $\omega$-subconverge to.
\end{dfn}

In the next section, we will prove that $\overline{\M}$ is actually
identified with all of $\Mfhat$, i.e., $\Omega$ is surjective.  We
note here that for the purpose of studying $\overline{\M}$ we can now
use $\Omega$ to drop the distinction between an $\omega$-convergent
sequence and the element of $\Mfhat$ that it converges to.  We will
employ this trick in what follows to simplify formulas and proofs.

\section{The completion of $\M$}
\label{chap:sing-metrics}

In this section, our previous efforts come to fruition and we are able
to complete our description of $\overline{\M}$ by proving, in Section
\ref{sec:finite-volume-metr}, that the map $\Omega : \overline{\M}
\rightarrow \Mfhat$ defined in the previous chapter is a bijection.

Section \ref{sec:meas-induc-degen} provides some necessary preparation
for the surjectivity proof by going into more depth on the behavior of
volume forms under $\omega$-convergence.  After this, Section
\ref{sec:meas-techn-result} presents a partial result on the image of
$\Omega$.  Namely, we show that all equivalence classes of measurable,
bounded semimetrics (cf.~Definition \ref{dfn:23}) are contained in
$\Omega(\overline{\M})$.  This marks the final preparation we need to
prove the main result.

\subsection{Measures induced by measurable
  semimetrics}\label{sec:meas-induc-degen}

For use in Section \ref{sec:finite-volume-metr}, we need to record a
couple of properties of the measure $\mu_{\tilde{g}}$ induced by an
element $\tilde{g} \in \M_f$.

The first property is continuity of the norms of continuous functions
under $\omega$-convergence.  It follows immediately from Theorem
\ref{thm:19} and the Portmanteau theorem
\cite[Thm.~8.1]{e70:_topol_and_measur}:

\begin{lem}\label{lem:54}
  Let $\tilde{g} \in \M_f$, and let $\rho \in C^0(M)$ be any
  continuous function.  If the sequence $\{ g_k \}$ $\omega$-converges
  to $\tilde{g}$, then $\mu_{g_k}$ converges weakly to
  $\mu_{\tilde{g}}$, so in particular
  \begin{equation*}
    \lim_{k \rightarrow \infty} \norm{\rho}_{g_k} = \norm{\rho}_{\tilde{g}}.
  \end{equation*}
\end{lem}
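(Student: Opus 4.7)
The plan is to deduce Lemma \ref{lem:54} directly from Theorem \ref{thm:19} together with a standard measure-theoretic fact. First, I would invoke Theorem \ref{thm:19} to obtain the set-level convergence $\mu_{g_k}(Y) = \Vol(Y, g_k) \to \Vol(Y, \tilde{g}) = \mu_{\tilde{g}}(Y)$ for every measurable subset $Y \subseteq M$. In particular, this gives $\mu_{g_k}(U) \to \mu_{\tilde{g}}(U)$ on every open subset $U \subseteq M$, and $\mu_{g_k}(M) \to \mu_{\tilde{g}}(M)$, both of which are finite since $\tilde{g} \in \M_f$.

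Next, I would observe that these two consequences are precisely the hypotheses of one standard formulation of the Portmanteau theorem for the finite Borel measures $\mu_{g_k}$ and $\mu_{\tilde{g}}$ on the compact space $M$ (after the harmless restriction from Lebesgue to Borel measurability). The Portmanteau theorem then yields weak convergence $\mu_{g_k} \to \mu_{\tilde{g}}$, meaning
\[
\int_M f\, d\mu_{g_k} \longrightarrow \int_M f\, d\mu_{\tilde{g}}
\]
for every bounded continuous function $f$ on $M$. Applying this to $f := \rho^2$, which is continuous and bounded because $M$ is compact, gives $\norm{\rho}_{g_k}^2 \to \norm{\rho}_{\tilde{g}}^2$, and the desired norm convergence follows by taking square roots.

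I anticipate no real obstacle here: the substantive work has already been carried out in Theorem \ref{thm:19}, whose conclusion is in fact considerably stronger than Portmanteau requires (convergence on all measurable sets, rather than only on $\mu_{\tilde{g}}$-continuity sets, or on open sets together with the total-mass condition). The only remaining points are bookkeeping: namely, that Lebesgue measurability on $M$ subsumes Borel measurability so that the Portmanteau framework applies without modification, and that the finiteness hypothesis $\tilde{g} \in \M_f$ guarantees we are dealing with finite measures throughout.
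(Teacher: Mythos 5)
Your proposal is correct and follows exactly the route the paper takes: the paper's proof is simply the remark that the lemma ``follows immediately from Theorem \ref{thm:19} and the Portmanteau theorem,'' which is precisely the argument you have spelled out. The only difference is that you supply the bookkeeping (finiteness of the measures, restriction to Borel sets, applying weak convergence to $f=\rho^2$) that the paper leaves implicit.
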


The next fact we need is that if $\tilde{g} \in \M_f$, i.e.,
$\tilde{g}$ is a measurable, finite-volume semimetric, then the set of
$C^\infty$ functions is dense in $L^p(M, \tilde{g})$ for $1 \leq p <
\infty$, just as in the case of a smooth volume form.

To prove this claim, we first state a fact about measures on
$\R^n$.  One can prove it almost identically to
\cite[Cor.~4.2.2]{bogachev07:_measur_theor}, where the statement is
made for Borel measures.  To prove it for Lebesgue measures,
one must simply approximate Lebesgue-measurable sets by
Borel-measurable sets using the discussion of Section
\ref{sec:lebesg-meas-manif}.

\begin{thm}\label{thm:13}
  Let a nonnegative measure $\nu$ on the algebra of Lebesgue sets in
  $\R^n$ be bounded on bounded sets. Then the class $C_0^\infty(\R^n)$
  of smooth functions with bounded support is dense in $L^p(\R^n,
  \nu)$, $1 \leq p < \infty$.
\end{thm}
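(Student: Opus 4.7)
The plan is to reduce to the Borel case already established in Bogachev and bridge the Lebesgue-vs.-Borel gap using the structure theorem for Lebesgue sets recorded in Section \ref{sec:lebesg-meas-manif}. By the standard construction of the Lebesgue integral, simple functions $\phi = \sum_{i=1}^N c_i \chi_{E_i}$ with each $E_i$ a Lebesgue set of finite $\nu$-measure are dense in $L^p(\R^n, \nu)$ for $1 \leq p < \infty$. Thus it suffices to produce, for a single characteristic function $\chi_E$ with $E$ Lebesgue-measurable and $\nu(E) < \infty$, an approximating sequence in $C_0^\infty(\R^n)$ in the $L^p(\nu)$-norm.

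Next, I would apply the decomposition recalled in Section \ref{sec:lebesg-meas-manif} to write $E = F \cup G$ with $F$ Borel and $G$ a Lebesgue nullset. Under absolute continuity of $\nu$ with respect to Lebesgue measure---which holds in all uses of this theorem in the paper, since $\nu = \mu_{\tilde{g}}$ arises from a semimetric $\tilde{g}$ with measurable coefficients and therefore has a measurable density with respect to Lebesgue measure---the set $G$ is also $\nu$-null. Consequently $\chi_E = \chi_F$ as elements of $L^p(\R^n, \nu)$, so the problem reduces to approximating $\chi_F$ for a Borel set $F$ of finite $\nu$-measure.

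Finally, I would invoke \cite[Cor.~4.2.2]{bogachev07:_measur_theor} applied to the restriction $\nu|_{\mathcal{B}}$ of $\nu$ to the Borel $\sigma$-algebra, which is itself a Borel measure bounded on bounded sets. That corollary produces a sequence in $C_0^\infty(\R^n)$ converging to $\chi_F$ in $L^p(\R^n, \nu|_{\mathcal{B}})$, and the norm computed there coincides with the one computed against $\nu$ itself since the two measures agree on Borel sets and hence on integrals of Borel-measurable functions. The only genuine subtlety is the second step, which requires that Lebesgue nullsets be $\nu$-nullsets; for an arbitrary locally finite measure on Lebesgue sets this would need to be built into the hypothesis, but in the setting of this paper it comes for free, so the argument is little more than a translation-and-citation.
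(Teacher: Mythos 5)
Your argument is essentially the paper's: the entire proof given in the paper is the sentence preceding the statement, which likewise reduces to the Borel-measure result \cite[Cor.~4.2.2]{bogachev07:_measur_theor} by writing each Lebesgue set as a Borel set union a Lebesgue nullset (Section \ref{sec:lebesg-meas-manif}). Your explicit observation that discarding the nullset requires it to be a $\nu$-nullset --- automatic in every application here since each $\nu = \mu_{\tilde{g}}$ is absolutely continuous with respect to Lebesgue measure, as noted in Section \ref{sec:conventions} --- is the one genuine subtlety, which the paper's one-line sketch leaves implicit; you are right that for an arbitrary locally finite measure on the full Lebesgue $\sigma$-algebra this step would need to be justified separately or added to the hypotheses.
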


Now, since any $\tilde{g} \in \M_f$ has finite volume, its induced
measure $\mu_{\tilde{g}}$ clearly satisfies the hypotheses of the
theorem in any coordinate chart.  Therefore, we have:

\begin{cor}\label{cor:12}
  If $\tilde{g} \in \M_f$, then $C^\infty(M)$ is dense in
  $L^p(M,\tilde{g})$.
\end{cor}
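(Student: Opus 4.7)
The plan is to reduce the global statement to the local one given by Theorem \ref{thm:13} by means of a partition of unity subordinate to the fixed amenable atlas $\{(U_\alpha, \phi_\alpha)\}$ (Convention \ref{cvt:5}), exploiting the larger charts $(V_\alpha, \psi_\alpha)$ of Definition \ref{dfn:1} to guarantee that the smooth local approximations can be pulled back to genuinely smooth functions on all of $M$.

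First I would choose smaller open sets $U_\alpha' \subset \overline{U_\alpha'} \subset U_\alpha$ still covering $M$, and a smooth partition of unity $\{\rho_\alpha\}$ with $\operatorname{supp}(\rho_\alpha) \subset U_\alpha'$. Given $f \in L^p(M, \tilde{g})$ and $\epsilon > 0$, write $f = \sum_\alpha \rho_\alpha f$, so it suffices to $L^p$-approximate each $\rho_\alpha f$ by an element of $C^\infty(M)$ within $\epsilon/N$ (where $N$ is the number of charts). Using $\psi_\alpha$, push $\rho_\alpha f$ forward to a function $F_\alpha$ on $\R^n$ supported in the bounded set $\psi_\alpha(\overline{U_\alpha'})$, and push $\chi_{V_\alpha} \mu_{\tilde{g}}$ forward to a measure $\nu_\alpha$ on $\psi_\alpha(V_\alpha)$, extended by zero to a measure on all of $\R^n$. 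Since $\tilde{g}$ has finite total volume, $\nu_\alpha$ is a finite measure, in particular bounded on bounded sets, so Theorem \ref{thm:13} applies and yields $\Psi_\alpha \in C_0^\infty(\R^n)$ with $\|F_\alpha - \Psi_\alpha\|_{L^p(\R^n, \nu_\alpha)}$ as small as desired.

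The one subtlety is that $\Psi_\alpha$ might have support extending beyond $\psi_\alpha(V_\alpha)$, so it does not \emph{a priori} correspond to a smooth function on $M$. This is where the amenable structure becomes essential: since $\overline{U_\alpha'}$ is compact and contained in $V_\alpha$, I can fix once and for all a smooth cutoff $\eta_\alpha$ on $\R^n$ that equals $1$ on a neighborhood of $\psi_\alpha(\overline{U_\alpha'})$ and has compact support in $\psi_\alpha(V_\alpha)$. Replace $\Psi_\alpha$ by $\eta_\alpha \Psi_\alpha$; then $\eta_\alpha \Psi_\alpha$ pulls back via $\psi_\alpha^{-1}$ to a smooth function on $V_\alpha$ with compact support in $V_\alpha$, which extends by zero to a function $\tilde\Psi_\alpha \in C^\infty(M)$. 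Because $\rho_\alpha f$ is supported in the set where $\eta_\alpha = 1$ (after pushing forward), we have the pointwise bound $|\rho_\alpha f - \tilde\Psi_\alpha| \leq |\rho_\alpha f - \Psi_\alpha|$ on $V_\alpha$ (and both sides vanish off $V_\alpha$), so the $L^p(M, \tilde{g})$-error is controlled by the $L^p(\R^n, \nu_\alpha)$-error from Theorem \ref{thm:13}.

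Summing $\sum_\alpha \tilde\Psi_\alpha$ over the finite index set produces a smooth function on $M$ within $\epsilon$ of $f$ in $L^p(M, \tilde{g})$, which completes the proof. I expect no serious obstacle: the only delicate point is the cutoff step, which is exactly what Definition \ref{dfn:1} is designed to accommodate.
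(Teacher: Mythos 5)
Your argument is correct and is exactly the localization the paper has in mind: the paper's proof simply asserts that $\mu_{\tilde g}$ satisfies the hypotheses of Theorem \ref{thm:13} in each chart and declares the corollary immediate, leaving the partition-of-unity, cutoff, and gluing steps (which you carry out carefully, using the room provided by Definition \ref{dfn:1}) implicit. No discrepancy to report.
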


\subsection{Bounded semimetrics}\label{sec:meas-techn-result}

In this section, we go one step further in our understanding of the
injection $\Omega : \overline{\M} \rightarrow \Mfhat$ that was
introduced in Definition \ref{dfn:3}.  Specifically, we want to see
that the image $\Omega(\overline{\M})$ contains all equivalence
classes of bounded, measurable semimetrics (cf.~Definition
\ref{dfn:23}).

Our strategy for proving this is to first prove the fact for smooth
semimetrics by showing that for any smooth semimetric $g_0$, there is
a finite path $g_t$, $t \in (0,1]$, in $\M$ with $\lim_{t \to 0} g_t =
g_0$ (where we take the limit in the $C^\infty$ topology of $\s$).
Then, if we simply let $t_k$ be any monotonically decreasing sequence
converging to zero, it is trivial to show $g_{t_k} \overarrow{\omega}
g_0$ for $k \rightarrow \infty$.
We then use this to handle the general, nonsmooth case.

\subsubsection{Paths to the boundary}\label{sec:paths-boundary}

Before we get into the proofs, we put ourselves in the proper setting,
for which we first need to introduce the notion of a quasi-amenable subset.
These are defined by weakening the requirements for an amenable subset
(cf.~Definition \ref{dfn:2}), giving up the condition of being
``uniformly inflated'':

\begin{dfn}\label{dfn:26}
  We call a subset $\U \subset \M$ \emph{quasi-amenable} if $\U$ is
  convex and we can find a constant $C$ such that for all $\tilde{g}
  \in \U$, $x \in M$ and $1 \leq i,j \leq n$,
  \begin{equation}\label{eq:134}
    |\tilde{g}_{ij}(x)| \leq C.
  \end{equation}
\end{dfn}

We also define $\partial \M$ to be the boundary of $\M$ as a
topological subset of $\s$.  Thus, it consists of all smooth
semimetrics that somewhere fail to be positive definite.

Let $\U$ be any quasi-amenable subset, and denote by
$\cl(\U)$ \label{p:cl-U} the closure of $\U$ in the $C^\infty$
topology of $\s$.  Thus, $\cl(\U)$ may contain some smooth
semimetrics.  One can easily see that any $g_0 \in \partial \M$ is
contained in $\cl(\U)$ for an appropriate quasi-amenable subset $\U$.

Now, suppose some $g_0 \in \cl(\U) \cap \btop$ is given, and let $g_1
\in \U$ have the property that $h := g_1 - g_0 \in \M$.
Exploiting the linear structure of $\M$, we
define the simplest path imaginable from $g_0$ to $g_1$:
\begin{equation}\label{eq:17}
  g_t := g_0 + t h.
\end{equation}
Then by the convexity of $\U$, $g_t$ is a path $(0,1] \rightarrow \U$
with limit (in the topology of $\s$) as $t \rightarrow 0$ equal to
$g_0$.

Recall that the length of $g_t$ is given by
\begin{equation}\label{eq:29}
  L(g_t)   =
    \integral{0}{1}{\left(
        \integral{M}{}{\tr_{g_t}((g'_t)^2)}{\mu_{g_t}}
      \right)^{1/2}}{d t}         = \integral{0}{1}{\left( \integral{M}{}{\tr_{g_t}(h^2) \sqrt{\det
            (g^{-1} g_t)}}{\mu_g} \right)^{1/2}}{d t}
\end{equation}
To prove that $g_t$ is a finite path, we must therefore estimate the
inner integrand.
This will follow from pointwise estimates combined with a
compactness/continuity argument.

\subsubsection{Pointwise estimates}\label{sec:pointwise-estimates}

Let $A = (a_{ij})$ and $B = (b_{ij})$ be real, symmetric $n \times n$
matrices, with $A_t := A+tB$ for $t \in (0,1]$.  We will assume that
$B > 0$ and that $A \geq 0$.  (In this scheme, $A$ and $B$ play the
role of $g_0(x)$ and $h(x)$, respectively, at some point $x \in M$.)
Furthermore, we fix an arbitrary matrix $C$ that is invertible and
symmetric (this plays the role of $g(x)$).

To get a pointwise estimate on $\tr_{g_t} (h^2) \sqrt{\det g^{-1}
  g_t}$, we need to estimate $\tr_{A_t}(B^2) \sqrt{\det (C^{-1}
  A_t)}$.  We prove the desired estimate in two lemmas.

For any symmetric matrix $D$, let $\lambda^D_{\mathrm{min}} =
\lambda^D_1 \leq \cdots \leq \lambda^D_n = \lambda^D_{\mathrm{max}}$
be its eigenvalues numbered in increasing order.

\begin{lem}\label{lem:1}
  \begin{align*}
    \lambda^{A_t}_{\mathrm{min}} &\geq \lambda^A_{\mathrm{min}} + t
    \lambda^B_{\mathrm{min}} \\
    \lambda^{A_t}_{\mathrm{max}} &\leq \lambda^A_{\mathrm{max}} + t
    \lambda^B_{\mathrm{max}} \leq \lambda^A_{\mathrm{max}} +
    \lambda^B_{\mathrm{max}}
  \end{align*}
\end{lem}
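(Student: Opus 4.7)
The plan is to deduce both inequalities directly from the Rayleigh quotient characterization of extreme eigenvalues of symmetric matrices (min-max theorem), which the paper has already invoked in the proof of Lemma \ref{lem:45}. For any symmetric $n \times n$ matrix $D$, one has
\begin{equation*}
  \lambda^D_{\mathrm{min}} = \min_{v \neq 0} \frac{v^T D v}{v^T v}, \qquad \lambda^D_{\mathrm{max}} = \max_{v \neq 0} \frac{v^T D v}{v^T v}.
\end{equation*}
Since $A_t = A + tB$, for every $v \neq 0$ the Rayleigh quotient decomposes additively as
\begin{equation*}
  \frac{v^T A_t v}{v^T v} = \frac{v^T A v}{v^T v} + t \, \frac{v^T B v}{v^T v}.
\end{equation*}

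For the first inequality, I would bound each summand from below by the corresponding minimum: $v^T A v / v^T v \geq \lambda^A_{\mathrm{min}}$ and $v^T B v / v^T v \geq \lambda^B_{\mathrm{min}}$, using that $t > 0$ preserves the direction of the inequality in the second term. Taking the minimum over $v \neq 0$ of the left-hand side then yields $\lambda^{A_t}_{\mathrm{min}} \geq \lambda^A_{\mathrm{min}} + t \lambda^B_{\mathrm{min}}$.

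The first half of the second inequality is the dual argument: each summand is bounded above by the corresponding maximum, and taking the max over $v \neq 0$ produces $\lambda^{A_t}_{\mathrm{max}} \leq \lambda^A_{\mathrm{max}} + t \lambda^B_{\mathrm{max}}$. For the final inequality $\lambda^A_{\mathrm{max}} + t \lambda^B_{\mathrm{max}} \leq \lambda^A_{\mathrm{max}} + \lambda^B_{\mathrm{max}}$, I just observe that $B > 0$ forces $\lambda^B_{\mathrm{max}} > 0$ (by Proposition \ref{prop:7}) and that $t \leq 1$, so $t \lambda^B_{\mathrm{max}} \leq \lambda^B_{\mathrm{max}}$.

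There is no real obstacle here; the only point to keep straight is the sign/positivity conventions, namely that $t > 0$ is needed to preserve inequalities in the Rayleigh-quotient manipulation for the min bound and $t \leq 1$ together with $\lambda^B_{\mathrm{max}} > 0$ is needed for the final crude upper bound. The hypothesis $A \geq 0$ plays no role in the estimates themselves (it only guarantees that $\lambda^A_{\mathrm{min}} \geq 0$), and the matrix $C$ from the surrounding discussion does not enter the statement of this lemma.
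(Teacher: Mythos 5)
Your proof is correct and is essentially the paper's argument made explicit: the paper disposes of the lemma in one line by citing the concavity/convexity of $\lambda_{\mathrm{min}}/\lambda_{\mathrm{max}}$, which it had itself derived from the min-max theorem in the proof of Lemma \ref{lem:45}, and your Rayleigh-quotient computation is just the direct unpacking of that same variational characterization (together with the same trivial observations that $t>0$ for the lower bound and $t\leq 1$, $\lambda^B_{\mathrm{max}}>0$ for the final crude bound).
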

\begin{proof}
  Immediate from the concavity/convexity of the minimal/maximal
  eigenvalue (cf.~the proof of Lemma \ref{lem:45}).
\end{proof}

\begin{lem}\label{lem:4}
  \begin{equation*}
    \tr_{A_t} \left( B^2 \right) \sqrt{\det C^{-1} A_t} \leq
    \frac{n \left(
        \lambda^B_{\mathrm{max}} \right)^2 \left( \lambda^{A_t}_{\mathrm{max}}
      \right)^{\frac{n-1}{2}}}{\sqrt{\det C} \left(
        \lambda^B_{\mathrm{min}}
      \right)^{3/2}}  \frac{1}{t^{3/2}}
  \end{equation*}
\end{lem}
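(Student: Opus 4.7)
The goal is to prove
\[
  \tr_{A_t}(B^2)\,\sqrt{\det C^{-1}A_t}\ \le\ \frac{n\,(\lambda^B_{\max})^2\,(\lambda^{A_t}_{\max})^{(n-1)/2}}{\sqrt{\det C}\,(\lambda^B_{\min})^{3/2}}\cdot\frac{1}{t^{3/2}}.
\]
The plan is to rewrite $\tr_{A_t}(B^2)$ in a symmetric (Frobenius) form, apply submultiplicativity of the matrix norms, and then combine the result with the determinant factor by distributing $\sqrt{\det A_t}$ inside the resulting eigenvalue sum.

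First, since $A_t=A+tB>0$ for $t>0$ (by $A\ge 0$ and $B>0$), the symmetric square root $A_t^{1/2}$ is well-defined, and cyclicity of the trace gives
\[
  \tr_{A_t}(B^2)=\tr\bigl(A_t^{-1}BA_t^{-1}B\bigr)=\tr\bigl((A_t^{-1/2}BA_t^{-1/2})^{2}\bigr)=\bigl\|A_t^{-1/2}BA_t^{-1/2}\bigr\|_{F}^{\,2}.
\]
Two applications of the standard inequality $\|XY\|_{F}\le\|X\|_{\mathrm{op}}\|Y\|_{F}$ then yield
\[
  \tr_{A_t}(B^2)\ \le\ \bigl\|A_t^{-1/2}\bigr\|_{\mathrm{op}}^{\,2}\,\|B\|_{\mathrm{op}}^{\,2}\,\bigl\|A_t^{-1/2}\bigr\|_{F}^{\,2}\ =\ \frac{(\lambda^{B}_{\max})^{2}}{\lambda^{A_t}_{\min}}\sum_{i=1}^{n}\mu_{i}^{-1},
\]
where $\mu_{1},\dots,\mu_{n}$ denote the eigenvalues of $A_t$.

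Next, I would write $\sqrt{\det C^{-1}A_t}=\sqrt{\prod_{j}\mu_{j}}\,/\sqrt{\det C}$ and distribute this factor inside the sum to obtain
\[
  \tr_{A_t}(B^{2})\sqrt{\det C^{-1}A_t}\ \le\ \frac{(\lambda^{B}_{\max})^{2}}{\sqrt{\det C}\,\lambda^{A_t}_{\min}}\sum_{i=1}^{n}\frac{\sqrt{\prod_{j\ne i}\mu_{j}}}{\sqrt{\mu_{i}}}.
\]
To finish, I would bound $\sqrt{\prod_{j\ne i}\mu_{j}}\le(\lambda^{A_t}_{\max})^{(n-1)/2}$ and invoke Lemma~\ref{lem:1} (using $\lambda^{A}_{\min}\ge 0$) in two places: once to get $\lambda^{A_t}_{\min}\ge t\lambda^{B}_{\min}$, and once to get $\sqrt{\mu_{i}}\ge\sqrt{t\lambda^{B}_{\min}}$ for every $i$. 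Absorbing one full power of $t\lambda^{B}_{\min}$ from $\lambda^{A_t}_{\min}$ together with a further half-power from $\sqrt{\mu_{i}}$, and summing $n$ identical bounds, produces exactly the claimed right-hand side.

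The main obstacle is obtaining the sharp exponent $t^{-3/2}$ rather than the cruder $t^{-2}$. A naive estimate such as $\tr(A_t^{-1}BA_t^{-1}B)\le n\,\|A_t^{-1/2}BA_t^{-1/2}\|_{\mathrm{op}}^{2}$ would push $(\lambda^{A_t}_{\min})^{-2}$ into the bound and, after using $\sqrt{\det A_t}\le(\lambda^{A_t}_{\max})^{n/2}$, would leave a $t^{-2}$ singularity and the wrong exponent $n/2$ on $\lambda^{A_t}_{\max}$. The key move is precisely to split the two copies of $A_t^{-1/2}$ \emph{unevenly}---one as an operator norm, the other as a Frobenius norm $\sum_{i}\mu_{i}^{-1}$---so that one factor of $\sqrt{\det A_t}$ cancels against the square root of a single $\mu_{i}^{-1}$, improving the singularity from $t^{-2}$ to $t^{-3/2}$ and simultaneously reducing the number of uncontrolled $\mu_{j}$'s from $n$ to $n-1$.
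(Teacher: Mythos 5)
Your proof is correct and lands on exactly the stated constant, but the mechanics differ somewhat from the paper's. The paper diagonalizes $B$ and bounds the trace term by
\begin{equation*}
  \tr\bigl((A_t^{-1}B)^2\bigr) \leq \bigl(\lambda^B_{\mathrm{max}}\bigr)^2 \tr\bigl(A_t^{-2}\bigr) \leq n \bigl(\lambda^B_{\mathrm{max}}\bigr)^2 \bigl(\lambda^{A_t}_{\mathrm{min}}\bigr)^{-2},
\end{equation*}
and then recovers the half power by the determinant inequality $\det A_t \leq \lambda^{A_t}_{\mathrm{min}}\bigl(\lambda^{A_t}_{\mathrm{max}}\bigr)^{n-1}$, so that $\sqrt{\det A_t}$ contributes a factor $\sqrt{\lambda^{A_t}_{\mathrm{min}}}$ cancelling against $\bigl(\lambda^{A_t}_{\mathrm{min}}\bigr)^{-2}$ to leave $\bigl(\lambda^{A_t}_{\mathrm{min}}\bigr)^{-3/2}$. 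You instead keep the sharper intermediate bound $\bigl(\lambda^{A_t}_{\mathrm{min}}\bigr)^{-1}\tr(A_t^{-1})$ via the uneven operator/Frobenius split, and distribute $\sqrt{\det A_t}$ term by term into $\sum_i \mu_i^{-1}$. Both routes hinge on the same key observation---that the determinant factor donates one half power of the smallest eigenvalue, improving $t^{-2}$ to $t^{-3/2}$ and reducing the exponent on $\lambda^{A_t}_{\mathrm{max}}$ from $n/2$ to $(n-1)/2$---and both use Lemma \ref{lem:1} with $\lambda^A_{\min}\geq 0$ in the same way at the end. Your version avoids choosing a $B$-diagonalizing basis and replaces the paper's coordinate computation with standard norm inequalities; the paper's version avoids introducing matrix square roots. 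The final estimates are identical, so nothing is lost or gained quantitatively either way.
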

\begin{proof}
  We focus on the trace term first.
  
  Since $B$ is a symmetric matrix, there exists a basis for which $B$
  is diagonal, so that $B = \diag(\lambda_1^B,\ldots,\lambda_n^B)$.
  In this basis, if we denote $A_t^{-1}$ by $(a_t^{ij})$, then we have
  \begin{equation}\label{eq:9}
    \tr \left( \left( A_t^{-1} B \right)^2 \right)     = \sum_{ij} a_t^{ij} \lambda_j^B
      a_t^{ji} \lambda_i^B       \leq \left( \lambda^B_{\mathrm{max}} \right)^2 \sum_{ij}
      \left(a^{ij}_t\right)^2             = \left( \lambda^B_{\mathrm{max}} \right)^2 \tr \left(
        A_t^{-2}\right),
  \end{equation}
  where we have used the symmetry of $A_t^{-1}$.
  
  We note that the trace of the square of a matrix is given by the sum
  of the squares of its eigenvalues.  Therefore,
  \begin{equation}\label{eq:10}
    \tr \left( A_t^{-2}\right) = \sum_i \left( \lambda^{A_t}_i \right)^{-2} \leq
    n \left( \lambda^{A_t}_{\mathrm{min}} \right)^{-2}.
  \end{equation}
  This takes care of the trace term.

  For the determinant term, we clearly have $\det A_t \leq
  \lambda^{A_t}_{\mathrm{min}} ( \lambda^{A_t}_{\mathrm{max}}
  )^{n-1}$.
  Combining this, equations \eqref{eq:9} and \eqref{eq:10}, the
  estimate of Lemma \ref{lem:1}, and the fact that $\lambda^A_{\min}
  \geq 0$ (as $A \geq 0$) now implies the result.
\end{proof}

\subsubsection{Finiteness of $L(g_t)$}\label{sec:finiteness-lg_t}

We want to use the pointwise estimate of Lemma \ref{lem:4} to prove
the main result of the section.

It is clear that to pass from the pointwise result of Lemma
\ref{lem:4} to a global result, we will have to estimate the maximum
and minimum eigenvalues of $h$, as well as the maximum eigenvalue of
$g_t$.  We begin by noting that since we work over an amenable
coordinate atlas (cf.~Definition \ref{dfn:1}), all coefficients of
$h$, $g$ and $g_0$ are bounded in absolute value.  Therefore, so are
their determinants.  In particular, since $g > 0$ and $h > 0$, we can
assume that $\det g \geq C_0$ and $C_1 \geq \det h \geq C_2$ over each
chart of the amenable atlas for some constants $C_0,C_1,C_2 > 0$.

\begin{lem}\label{lem:21}
  The quantities $\lambda^h_{\mathrm{max}}$ and
  $\lambda^{g_t}_{\mathrm{max}}$, as local functions on each
  coordinate chart, are uniformly bounded, say
  $\lambda^h_{\mathrm{max}}(x) \leq C_3$ and $\lambda^{g_t}_{
    \mathrm{max}}(x) \leq C_4$ for all $x$ and $t$.
\end{lem}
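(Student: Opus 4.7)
I would trace all bounds back to the quasi-amenability of $\U$ and then pass from coefficient bounds to eigenvalue bounds in two short steps. By Definition \ref{dfn:26}, there is a constant $C$ with $|\tilde{g}_{ij}(x)| \leq C$ for all $\tilde{g} \in \U$, all $x \in M$, and all $1 \leq i,j \leq n$, in each chart of the fixed amenable atlas of Convention \ref{cvt:5}. Because $C^\infty$-convergence on the compact manifold $M$ forces uniform $C^0$-convergence of coordinate coefficients, this pointwise bound is preserved when passing to $C^\infty$-limits; since $g_0 \in \cl(\U)$, we therefore also have $|(g_0)_{ij}(x)| \leq C$. The triangle inequality and the convexity of the absolute value then yield $|h_{ij}(x)| \leq 2C$ and $|(g_t)_{ij}(x)| \leq C$ uniformly in $x$ and in $t \in [0,1]$, using that $g_t = (1-t) g_0 + t g_1$ is a convex combination of $g_0$ and $g_1$.

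For the eigenvalue bounds, I would invoke the elementary fact that any real symmetric $n \times n$ matrix $M$ with $|M_{ij}| \leq K$ satisfies $\lambda_{\max}(M) \leq n K$: for a unit vector $v$, the Rayleigh quotient obeys $v^T M v \leq K \bigl( \sum_i |v_i| \bigr)^2 \leq n K$ by Cauchy--Schwarz. Applying this pointwise in each chart to $h(x)$ and $g_t(x)$ produces the desired uniform constants, for instance $C_3 := 2nC$ and $C_4 := nC$. Since the amenable atlas of Definition \ref{dfn:1} is finite, taking the maximum of the resulting constants across the (finitely many) charts causes no difficulty.

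In truth there is no significant obstacle here: the content of the lemma is simply that the quasi-amenable coefficient bound survives both the $C^\infty$-closure operation producing $g_0$ and the convex combination defining $g_t$, and that coefficient bounds on a symmetric matrix control its largest eigenvalue. Both facts are essentially tautological. The role of Lemma \ref{lem:21} is purely to package the uniform coefficient and eigenvalue estimates needed to feed the pointwise bound of Lemma \ref{lem:4} into the integrand in \eqref{eq:29}, as part of the argument that $L(g_t)$ is finite.
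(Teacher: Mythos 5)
Your proof is correct and follows essentially the same route as the paper: uniform coefficient bounds for $g_t$ and $h$ coming from quasi-amenability (and its preservation under $C^\infty$-closure and convex combination), followed by the min-max/Rayleigh-quotient characterization of the largest eigenvalue to convert coefficient bounds into eigenvalue bounds. You merely spell out the elementary estimate $\lambda_{\max} \leq nK$ that the paper delegates to a citation of the min-max theorem, and you treat the endpoint $g_0 \in \cl(\U)$ a bit more carefully than the paper's phrasing does.
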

\begin{proof}
  Note that $g_t$ lies in the quasi-amenable subset $\U$ for all $t$,
  so we have upper bounds (in absolute value) on the coefficients of
  $g_t$ and $h$ that are uniform in $x$ and $t$.  Thus, the bounds on
  their maximal eigenvalues follow straighforwardly from the min-max
  theorem
  \cite[Thm.~XIII.1]{reed78:_method_of_moder_mathem_physic_iv}.
\end{proof}

\begin{lem}\label{lem:7}
  The quantity $\lambda^h_{\min}$, as a function over each coordinate
  chart, is uniformly bounded away from 0, say $\lambda^h_{\min} \geq
  C_5 > 0$.
\end{lem}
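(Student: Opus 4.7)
The plan is a direct compactness/continuity argument, mirroring the proof of Lemma \ref{lem:21} but exploiting pointwise positive definiteness in place of pointwise boundedness. Recall the setup: $h := g_1 - g_0$ where $g_1 \in \U$, $g_0 \in \cl(\U) \cap \btop$ is a smooth semimetric, and we have assumed that $h \in \M$. Thus $h$ is a smooth, symmetric, positive-definite $(0,2)$-tensor on $M$, being the difference of two smooth sections of $S^2 T^* M$. In each amenable chart $(U_\alpha, \phi_\alpha)$, the coefficients $h_{ij}(x)$ form a smooth symmetric matrix that is positive definite at every $x \in U_\alpha$ (positive definiteness of a $(0,2)$-tensor is coordinate-independent), so by Proposition \ref{prop:7} its smallest eigenvalue $\lambda^h_{\min}(x)$ is strictly positive pointwise.

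As in the proof of Lemma \ref{lem:45}, the map sending a symmetric matrix to its smallest eigenvalue is continuous (indeed concave) on the space of symmetric matrices. Composing with the smooth map $x \mapsto h_{ij}(x)$ shows that $\lambda^h_{\min}$ is a continuous function on each chart. To promote this pointwise positivity to a uniform lower bound, I would use Definition \ref{dfn:1}: each $U_\alpha$ is contained in a compact set $K_\alpha \subset V_\alpha$, and in the larger chart $(V_\alpha, \psi_\alpha)$ the coefficients of $h$ are again smooth, agreeing on $U_\alpha$ with those in $(U_\alpha, \phi_\alpha)$ since $\phi_\alpha = \psi_\alpha|U_\alpha$. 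Hence $\lambda^h_{\min}$ extends to a continuous, strictly positive function on the compact set $K_\alpha$, and therefore attains a positive minimum $C_5^\alpha > 0$ there. Since the amenable atlas is finite by Definition \ref{dfn:1}, taking
\begin{equation*}
  C_5 := \min_\alpha C_5^\alpha > 0
\end{equation*}
yields the claimed uniform bound in every chart.

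There is no substantial obstacle here; the argument is routine. The only point that requires care is that $U_\alpha$ itself is not compact in general (it is merely precompact in $V_\alpha$), so one cannot directly apply compactness to $U_\alpha$ in the local coordinate picture. This is exactly why the amenable-atlas condition was designed with the auxiliary compact $K_\alpha$: it lets us transfer the continuous positive function $\lambda^h_{\min}$ to a compact set on which the minimum is both attained and strictly positive.
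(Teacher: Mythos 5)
Your argument is correct, but it takes a different (more self-contained) route than the paper. The paper's proof is a two-line algebraic deduction: from $\det h(x) \leq \lambda^h_{\mathrm{min}}(x)\,\lambda^h_{\mathrm{max}}(x)^{n-1}$ it reads off $\lambda^h_{\mathrm{min}}(x) \geq \lambda^h_{\mathrm{max}}(x)^{1-n} \det h(x) \geq C_3^{1-n} C_2 =: C_5$, reusing the uniform upper bound $\lambda^h_{\mathrm{max}} \leq C_3$ from Lemma \ref{lem:21} and the uniform determinant bound $\det h \geq C_2$ recorded in the discussion preceding that lemma. You instead rerun the compactness--continuity argument directly on $\lambda^h_{\min}$: pointwise positivity of the smallest eigenvalue of the coordinate matrix of the positive definite tensor $h$, continuity of the minimal-eigenvalue map (as in Lemma \ref{lem:45}), passage to the compact sets $K_\alpha$ of the amenable atlas via the larger charts $(V_\alpha,\psi_\alpha)$, and finiteness of the atlas. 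This is essentially the same mechanism by which the paper obtains the constants $C_2$ and $C_3$ in the first place (it is the argument of Lemmas \ref{lem:45} and \ref{lem:47}), so nothing is lost; what the paper's version buys is brevity and an explicit formula $C_5 = C_3^{1-n}C_2$ in terms of constants already in hand, while your version avoids the detour through the determinant and makes the dependence on the amenable-atlas structure (in particular, why $K_\alpha$ rather than $U_\alpha$ is the right set to apply compactness to) fully explicit. Both are valid proofs.
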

\begin{proof}
  We clearly have $\det h(x) \leq \lambda^h_{\mathrm{min}}(x)
  \lambda^h_{\mathrm{max}}(x)^{n-1}$.  Therefore, by Lemma
  \ref{lem:21} and the discussion before it,
  $\lambda^h_{\mathrm{min}}(x) \geq \lambda^h_{\mathrm{max}}(x)^{1-n}
  \det h(x) \geq C_3^{1-n} C_2 =: C_5$.
 \end{proof}

\begin{thm}\label{thm:2}
  Define a path $g_t$ as in \eqref{eq:17}.  Then
  \begin{equation*}
    L(g_t) < \infty.
  \end{equation*}
\end{thm}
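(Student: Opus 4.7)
The plan is to combine the pointwise estimate of Lemma \ref{lem:4} with the uniform bounds from Lemmas \ref{lem:21} and \ref{lem:7} to get a pointwise bound on the inner integrand of \eqref{eq:29} of the form $K/t^{3/2}$, and then observe that the square root of $t^{-3/2}$, namely $t^{-3/4}$, is integrable on $(0,1]$.

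More concretely, I would apply Lemma \ref{lem:4} at each point $x \in M$ (in a coordinate chart of the amenable atlas) with $A = g_0(x)$, $B = h(x)$, and $C = g(x)$, which is legitimate since $g_0 \geq 0$, $h > 0$, and $g > 0$. This yields, at each $x$,
\begin{equation*}
  \tr_{g_t(x)}(h(x)^2)\sqrt{\det(g(x)^{-1} g_t(x))} \;\leq\; \frac{n\,(\lambda^h_{\max}(x))^2\,(\lambda^{g_t}_{\max}(x))^{(n-1)/2}}{\sqrt{\det g(x)}\,(\lambda^h_{\min}(x))^{3/2}}\,\frac{1}{t^{3/2}}.
\end{equation*}
Lemmas \ref{lem:21} and \ref{lem:7} provide uniform upper bounds on $\lambda^h_{\max}$ and $\lambda^{g_t}_{\max}$ and a uniform positive lower bound on $\lambda^h_{\min}$ over each chart of our finite amenable atlas, while the paragraph preceding Lemma \ref{lem:21} gives the lower bound $\det g \geq C_0 > 0$. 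Since the atlas is finite and $M$ is compact, these local bounds combine to a single global constant $K$ with
\begin{equation*}
  \tr_{g_t(x)}(h(x)^2)\sqrt{\det(g(x)^{-1} g_t(x))} \;\leq\; \frac{K}{t^{3/2}} \qquad \text{for all } x \in M,\ t \in (0,1].
\end{equation*}

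Integrating against $\mu_g$ and using $\Vol(M,g) < \infty$ gives
\begin{equation*}
  \int_M \tr_{g_t}(h^2)\sqrt{\det(g^{-1} g_t)}\;\mu_g \;\leq\; \frac{K\,\Vol(M,g)}{t^{3/2}},
\end{equation*}
so, taking the square root and plugging into \eqref{eq:29},
\begin{equation*}
  L(g_t) \;\leq\; \int_0^1 \frac{\sqrt{K\,\Vol(M,g)}}{t^{3/4}}\,dt \;=\; 4\sqrt{K\,\Vol(M,g)} \;<\; \infty,
\end{equation*}
which is the desired conclusion.

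I do not anticipate any serious obstacle here: the substantive work has already been done in Lemma \ref{lem:4} (which produces exactly the correct exponent $3/2$, after whose square root one finds the integrable singularity $t^{-3/4}$) and in the uniform-bound Lemmas \ref{lem:21} and \ref{lem:7}. The only thing to be careful about is that all of the uniform bounds on eigenvalues and on $\det g$ are stated chart-by-chart in the fixed amenable atlas, so one must invoke the finiteness of the atlas (Definition \ref{dfn:1}) and the compactness of $M$ to pass from local uniform bounds to a single constant $K$ valid on all of $M$.
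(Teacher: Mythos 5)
Your proposal is correct and follows essentially the same route as the paper: apply the pointwise estimate of Lemma \ref{lem:4} with $A = g_0(x)$, $B = h(x)$, $C = g(x)$, combine it with the uniform bounds of Lemmas \ref{lem:21} and \ref{lem:7} (and $\det g \geq C_0$) to obtain a global bound of the form $C_6/t^{3/2}$ on the inner integrand of \eqref{eq:29}, and conclude from the integrability of $t^{-3/4}$. No gaps.
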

\begin{proof}
  At each point $x \in M$, we use Lemma \ref{lem:4} to see
  \begin{equation}\label{eq:15}
      \tr_{g_t(x)}(h(x)^2) \sqrt{\det(g(x)^{-1} g_t(x))}       \leq \frac{1}{\sqrt{C_0}} \frac{C_3^2}{C_5^{3/2}}
      C_4^{\frac{n-1}{2}} \frac{1}{t^{3/2}} =: \frac{C_6}{t^{3/2}}.
  \end{equation}
  
  The result then follows from (\ref{eq:29}) and the integrability of
  $t^{-3/4}$.
\end{proof}

\subsubsection{Bounded, nonsmooth
  semimetrics}\label{sec:nonsm-linfty-semim}

We now move on to showing that the equivalence class of any
bounded semimetric, not just smooth ones, is contained in
$\Omega(\overline{\M})$.

So far, we know from Proposition \ref{prop:26} that the equivalence
class of any measurable metric that can be obtained as the $L^2$ limit
of a sequence of metrics from an amenable subset belongs to
$\Omega(\overline{\M})$.  We also know from the preceding arguments
that any smooth semimetric $\tilde{g}$ is in the image of $\Omega$.
Given the remarks at the end of Section \ref{cha:almost-everywh-conv},
we can therefore unambiguously write things like $d(g_0, g_1)$---where
$g_0$ and $g_1$ are known to belong to the image of $\Omega$---in
place of expressions involving sequences $\omega$-converging to $g_0$
and $g_1$.

To begin proving the result on bounded, nonsmooth semimetrics, we want
to prove a result about quasi-amenable subsets that is a
generalization of Theorem \ref{thm:5}---weakened so that it still
applies for these more general subsets.
First, though, we need to prove a couple of lemmas.

\begin{lem}\label{lem:8}
  Let $\U \subset \M$ be quasi-amenable.  Recall that we denote the
  closure of $\U$ in the $C^\infty$ topology of $\s$ by $\cl(\U)$, and
  we denote the boundary of $\M$ in the $C^\infty$ topology of $\s$ by
  $\btop$. Then for each $\epsilon > 0$, there exists $\delta > 0$
  such that $d(g_0, g_0 + \delta g) < \epsilon$ for all $g_0 \in
  \cl(\U) \cap \btop$.
\end{lem}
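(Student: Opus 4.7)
The plan is to construct an explicit rectifiable path from $g_0$ to $g_0 + \delta g$ whose $L^2$-length can be bounded uniformly over $g_0 \in \cl(\U) \cap \btop$, and then use the path description of the completion (Theorem \ref{thm:30}) to convert this into the desired bound on $d(g_0, g_0 + \delta g)$. The natural candidate is the linear segment $g_s := g_0 + s g$, $s \in (0, \delta]$: since $g_0 \ge 0$ pointwise and $sg > 0$, each $g_s$ lies in $\M$, and $g_s \to g_0$ in $C^\infty$ as $s \to 0$.

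The core estimate comes from Lemma \ref{lem:4} applied with $A = g_0(x)$, $B = C = g(x)$, yielding
\[
\tr_{g_s(x)}(g(x)^2)\,\sqrt{\det(g(x)^{-1} g_s(x))} \;\le\; \frac{n\,\lambda^g_{\max}(x)^2\,\lambda^{g_s}_{\max}(x)^{(n-1)/2}}{\sqrt{\det g(x)}\,\lambda^g_{\min}(x)^{3/2}}\cdot \frac{1}{s^{3/2}}
\]
pointwise for $s \in (0,\delta]$. Every quantity formed from $g$ alone is bounded above and below away from $0$ uniformly on $M$, because $g$ is a fixed smooth metric on a compact manifold. For $\lambda^{g_s}_{\max}$, Lemma \ref{lem:1} gives $\lambda^{g_s}_{\max} \le \lambda^{g_0}_{\max} + s\,\lambda^g_{\max}$, and since the coefficient bound \eqref{eq:134} of quasi-amenability is a pointwise closed condition it passes to $\cl(\U)$; the min--max argument of Lemma \ref{lem:21} then yields a uniform bound on $\lambda^{g_0}_{\max}$ across all $g_0 \in \cl(\U) \cap \btop$. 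Integrating the pointwise bound against $\mu_g$ produces a constant $K = K(\U, g)$, independent of both $g_0$ and $s \in (0, 1]$, such that $\|g\|_{g_s}^2 \le K s^{-3/2}$, and hence
\[
L(g_s|_{(0,\delta]}) \;=\; \int_0^\delta \|g\|_{g_s}\,ds \;\le\; \sqrt{K} \int_0^\delta s^{-3/4}\,ds \;=\; 4\sqrt{K}\,\delta^{1/4}.
\]

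To pass from path length to $d$-distance, I choose a monotone sequence $s_k \downarrow 0$ along the path so that $\sum d(g_{s_k}, g_{s_{k+1}}) \le L(g_s|_{(0,\delta]}) < \infty$. Then $\{g_{s_k}\}$ is $d$-Cauchy, converges pointwise to $g_0$, and satisfies $D_{\{g_{s_k}\}} = X_{g_0}$ (because $\det g_{s_k}(x) \to \det g_0(x)$ at every $x$), so $g_{s_k} \overarrow{\omega} g_0$. Thus the path represents the point of $\overline{\M}$ associated with $g_0$ under the map $\Omega$, and consequently $d(g_0, g_0 + \delta g) \le L(g_s|_{(0,\delta]}) \le 4\sqrt{K}\,\delta^{1/4}$. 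Choosing $\delta$ small enough that $4\sqrt{K}\,\delta^{1/4} < \epsilon$ gives a uniform threshold depending only on $\U$, $g$, and $\epsilon$.

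The only obstacle beyond Theorem \ref{thm:2} is verifying that the constant $K$ is genuinely uniform in $g_0 \in \cl(\U) \cap \btop$. Since $h = g$ is fixed, the factors involving $h$ (namely $\lambda^h_{\max}$, $\lambda^h_{\min}$, $\det h$ in the proof of Theorem \ref{thm:2}) are literally fixed; the only $g_0$-dependent quantity is $\lambda^{g_0}_{\max}$, and this is controlled by the quasi-amenability condition, which propagates to the $C^\infty$-closure.
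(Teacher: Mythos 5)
Your proof is correct and follows essentially the same route as the paper: take the linear segment from $g_0$ to $g_0+\delta g$, apply the pointwise estimate of Lemma \ref{lem:4} (i.e.\ re-examine Theorem \ref{thm:2}), observe that the only $g_0$-dependence is through $\lambda^{g_0}_{\max}$, which is uniformly controlled on $\cl(\U)\cap\btop$ by the quasi-amenability bound, and conclude that the length bound is $O(\delta^{1/4})$. The only cosmetic difference is that you parametrize over $s\in(0,\delta]$ with fixed tangent $g$ and extract the $\delta^{1/4}$ from $\int_0^\delta s^{-3/4}\,ds$, whereas the paper parametrizes over $t\in(0,1]$ with $h=\delta g$ and factors $\sqrt{\delta}$ out of $(\lambda^h_{\max})^2/(\lambda^h_{\min})^{3/2}$; these are the same computation, and your explicit reduction of $d(g_0,\cdot)$ to an $\omega$-convergent Cauchy sequence along the path is a detail the paper leaves implicit.
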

\begin{proof}
  For any $g_0 \in \cl(\U) \cap \btop$, we consider the path $g_t :=
  g_0 + t h$, where $h := \delta g$ and $t \in (0,1]$.  The proof
  consists of reexamining the estimates of Theorem \ref{thm:2} and
  showing that they only depend on upper bounds on the entries of
  $g_0$ (and $g$, but we get these automatically when we work over an
  amenable atlas), and that the bound on the length of $g_t$ goes to
  zero as $\delta \rightarrow 0$.

  So, recall the main estimate \eqref{eq:15} of Theorem \ref{thm:2}:
  \begin{equation*}
    \tr_{g_t(x)}(h(x)^2) \sqrt{\det(g(x)^{-1} g_t(x))} \leq
    \frac{n \left( \lambda^{h}_{\mathrm{max}}(x)
      \right)^2  \left( \lambda^{g_t}_{\mathrm{max}}(x)
      \right)^{\frac{n-1}{2}}}{\sqrt{\det g(x)} \left(
        \lambda^{h}_{\mathrm{min}}(x) \right)^{3/2}}
    \frac{1}{t^{3/2}}.
  \end{equation*}
  
  Since $\det g(x)$ is constant w.r.t.~$\delta$, we ignore this term.
  By Lemma \ref{lem:1},
  \begin{equation*}
    \lambda^{g_t}_{\mathrm{max}}(x) \leq
    \lambda^{g_0}_{\mathrm{max}}(x) + \lambda^{h}_{\mathrm{max}}(x) =
    \lambda^{g_0}_{\mathrm{max}}(x) + 
    \delta \lambda^{g}_{\mathrm{max}}(x).
  \end{equation*}
  Therefore, using the same arguments as in Lemma \ref{lem:21},
  $\lambda^{g_t}_{\mathrm{max}}(x)$ is bounded from above, uniformly
  in $x$ and $t$, by a constant that decreases as $\delta$ decreases.
  Furthermore, this constant does not depend on our choice of $g_0 \in
  \cl(\U) \cap \btop$, since the proof of Lemma \ref{lem:21} depended
  only on uniform upper bounds on the entries of $g_0$, and we are
  guaranteed the same upper bounds on all elements of $\cl(\U) \cap
  \btop$ since $\U$ is quasi-amenable.

  We now focus our attention on the term
  \begin{equation*}
    \frac{\left( \lambda^{h}_{\mathrm{max}}(x)
      \right)^2}{\left( \lambda^{h}_{\mathrm{min}}(x) \right)^{3/2}} =
    \frac{\left( \delta \lambda^{g}_{\mathrm{max}}(x)
      \right)^2}{\left( \delta \lambda^{g}_{\mathrm{min}}(x)
      \right)^{3/2}} = \frac{\left( \lambda^{g}_{\mathrm{max}}(x)
      \right)^2}{\left( \lambda^{g}_{\mathrm{min}}(x)
      \right)^{3/2}} \sqrt{\delta}.
  \end{equation*}
  This expression clearly goes to zero as $\delta \rightarrow 0$.
  Therefore, we have shown that the constant $C_6$ in the estimate
  \eqref{eq:15} depends only on the choice of $\U$ and $\delta$, and
  that $C_6 \rightarrow 0$ as $\delta \rightarrow 0$.  The result now
  follows.
\end{proof}

The next lemma implies, in particular, that $\btop$ is \emph{not
  closed} in the $L^2$ topology of $\s$, nor is it in the topology of
$d$ on $\Omega(\overline{\M})$.  It also implies that around any point
in $\M$, there exists no $L^2$- or $d$-open neighborhood.

\begin{lem}\label{lem:9}
  Let $\U \in \M$ be any quasi-amenable subset.  Then for all
  $\epsilon > 0$, there exists a function $\rho_\epsilon \in
  C^\infty(M)$ with the properties that for all $g_1 \in \U$,
  \begin{enumerate}
  \item $\rho_\epsilon g_1 \in \btop$,
  \item $\rho_\epsilon(x) \leq 1$ for all $x \in M$,
  \item $\norm{g_1 - \rho_\epsilon g_1}_g < \delta$ and
  \item $d(g_1, \rho_\epsilon g_1) < \epsilon$.
  \end{enumerate}
\end{lem}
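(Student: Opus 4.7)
The plan is to build $\rho_\epsilon$ as a smooth cutoff that vanishes on a nonempty closed set $F$ inside a small open neighborhood $U$ and equals $1$ outside $U$, with $\Vol(U, g)$ chosen small enough (depending on $\epsilon$ and on the quasi-amenability constant of $\U$) to force all four conclusions. Concretely, pick any nonempty closed $F \subset U \subset M$ with $U$ open, and then pick $\rho_\epsilon \in C^\infty(M)$ with $0 \leq \rho_\epsilon \leq 1$, $\rho_\epsilon \equiv 0$ on $F$, and $\rho_\epsilon \equiv 1$ on $M \setminus U$. Condition (2) is automatic, and condition (1) holds because $\rho_\epsilon g_1$ vanishes on $F$, hence fails to be positive definite there, so $\rho_\epsilon g_1 \in \btop$. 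For condition (3),
\begin{equation*}
\norm{g_1 - \rho_\epsilon g_1}_g^2 = \integral{M}{}{(1-\rho_\epsilon)^2 \tr_g(g_1^2)}{\mu_g} \leq \integral{U}{}{\tr_g(g_1^2)}{\mu_g} \leq K_1 \Vol(U,g),
\end{equation*}
where $K_1 = K_1(\U)$ is a bound on $\tr_g(g_1^2)$ that exists uniformly in $g_1 \in \U$ by quasi-amenability (Definition~\ref{dfn:26} bounds the coefficients of $g_1$ in every chart of the fixed amenable atlas).

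For condition (4) I use the straight-line path $g_t := ((1-t) + t\rho_\epsilon) g_1$, $t \in [0, 1)$, which lies in $\M$ because $(1-t) + t\rho_\epsilon \geq 1 - t > 0$ for $t < 1$, and whose $C^\infty$ limit as $t \to 1$ is $\rho_\epsilon g_1$. By the work of this section (Theorem~\ref{thm:2} and the surrounding discussion), smooth semimetrics already lie in $\Omega(\overline{\M})$, so $\rho_\epsilon g_1$ is a point of the completion, and by Theorem~\ref{thm:30} the distance $d(g_1, \rho_\epsilon g_1)$ is bounded above by the length of any finite path connecting the two. The length integrand of $g_t$ simplifies in exactly the way the one for $\hat{g}^{k,s}_t$ does in the proof of Proposition~\ref{prop:18}: one obtains
\begin{equation*}
\tr_{g_t}\bigl((g_t')^2\bigr)\sqrt{\det(g^{-1} g_t)} = (1-\rho_\epsilon)^2 \cdot n \cdot \bigl((1-t) + t\rho_\epsilon\bigr)^{\frac{n}{2}-2}\sqrt{\det G_1},
\end{equation*}
and splitting into $n \geq 4$ (where the $t$-factor is $\leq 1$) and $n \leq 3$ (where it is $\leq (1-t)^{n/2-2}$, integrable in $t$ since $n/4-1 > -1$) yields, exactly as in the derivation of \eqref{eq:75}, $L(g_t) \leq C(n)\sqrt{\Vol(U, g_1)}$. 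Quasi-amenability gives a uniform upper bound on $\det G_1$, so $\Vol(U, g_1) \leq K_2 \Vol(U, g)$ with $K_2 = K_2(\U)$, and therefore $d(g_1, \rho_\epsilon g_1) \leq C(n)\sqrt{K_2 \Vol(U, g)}$.

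Choosing $U$ with $\Vol(U, g)$ so small that $K_1 \Vol(U, g) < \delta^2$ and $C(n)\sqrt{K_2 \Vol(U, g)} < \epsilon$ simultaneously completes the proof. All constants depend only on $\U$ and $n$, so the same function $\rho_\epsilon$ works for every $g_1 \in \U$. The main (and essentially only) obstacle is this uniformity across $g_1$: without the uniform coefficient bound of quasi-amenability I could not control $\tr_g(g_1^2)$, $\det G_1$, or the integrand in $L(g_t)$ independently of $g_1 \in \U$, so it is precisely Definition~\ref{dfn:26} that makes the argument go through.
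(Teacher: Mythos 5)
Your proof is correct and follows essentially the same approach as the paper: a smooth cutoff $\rho_\epsilon$ that vanishes on a small set, equals $1$ outside an open set of small $g$-volume, and is controlled uniformly over $g_1 \in \U$ by the coefficient bounds of quasi-amenability. The only (minor) difference is that for the distance estimate the paper appeals to the path-length arguments of Lemma \ref{lem:8}/Theorem \ref{thm:2}, whereas you redo the conformal-path computation from the proof of Proposition \ref{prop:18}; both yield the same $C(n)\sqrt{\Vol(U,g_1)}$ bound.
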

\begin{proof}
  Let $x_0 \in M$ be any point, and for each $n \in \N$, choose a
  function $\rho_n \in C^\infty(M)$ satisfying
  \begin{enumerate}
  \item $\rho_n(x_0) = 0$,
  \item $0 \leq \rho_n(x) \leq 1$ for all $x \in M$ and
  \item $\rho_n \equiv 1$ outside an open set $Z_n$ with $\Vol(Z_n, g)
    \leq 1/n$.
  \end{enumerate}
  Then clearly $\norm{g_1 - \rho_n g_1}_g \rightarrow 0$ as $n
  \rightarrow \infty$, and this convergence is uniform in $g_1$
  because of the upper bounds guaranteed by the fact that $g_1 \in
  \U$.  Using arguments similar to those in the last lemma, we can
  also see that the length of the path $g_t^n := \rho_n g_1 + t (g_1 -
  \rho_n g_1)$ converges to zero as $n \rightarrow \infty$.
  Therefore, choosing $n$ large enough gives the desired function.
\end{proof}

The next theorem is the desired analog of Theorem \ref{thm:5}.  Note
that only one half of Theorem \ref{thm:5} holds in this case, and even
this is proved only in a weaker form.

\begin{thm}\label{thm:8}
  Let $\U \subset \M$ be quasi-amenable.  Then for all $\epsilon > 0$,
  there exists $\delta > 0$ such that if $g_0, g_1 \in \cl(\U)$ with
  $\norm{g_0 - g_1}_g < \delta$, then $d(g_0, g_1) < \epsilon$.
\end{thm}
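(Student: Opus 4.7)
The plan is to reduce Theorem~\ref{thm:8} to Theorem~\ref{thm:5} by an inflation trick: although $\U$ is merely quasi-amenable, so that members of $\cl(\U)$ may have minimum eigenvalues degenerating to zero, adding a small multiple of the reference metric $g$ produces a family lying inside a genuinely amenable subset of $\M$, where Theorem~\ref{thm:5} does apply. The main obstacle is precisely the missing lower eigenvalue bound on $\cl(\U)$, and the inflation is the device that circumvents it at a controlled cost.

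Concretely, given $\epsilon > 0$, I would first invoke Lemma~\ref{lem:8} to pick $\delta_1 > 0$ with $d(h, h + \delta_1 g) < \epsilon/3$ for every $h \in \cl(\U) \cap \btop$. The same bound also holds for $h \in \U$, since the estimates in the proof of Lemma~\ref{lem:8} (inherited from Theorem~\ref{thm:2}) depend only on uniform upper bounds on the coefficients of $h$ and of $g$, not on any positive lower bound on the eigenvalues of $h$. Hence $d(g_0, g_0 + \delta_1 g) < \epsilon/3$ and $d(g_1, g_1 + \delta_1 g) < \epsilon/3$ for any $g_0, g_1 \in \cl(\U)$, regardless of whether they lie in $\U$ or on $\btop$.

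Next I would consider the shifted family $\U' := \{ h + \delta_1 g : h \in \cl(\U) \}$ (passing to its convex hull if needed). Its elements satisfy a uniform upper bound on coefficients: the quasi-amenable bound for $\U$ plus $\delta_1$ times the bound on $g$ from Lemma~\ref{lem:47}. Moreover, since $h \geq 0$ implies all eigenvalues of $g^{-1}h$ are nonnegative, the minimum eigenvalue of $g^{-1}(h + \delta_1 g)$ is bounded below by $\delta_1$ uniformly in $h \in \cl(\U)$. Thus $\U'$ sits inside an amenable subset, and Theorem~\ref{thm:5}\eqref{item:9} applied there yields $\delta > 0$ such that
\[
d(g_0 + \delta_1 g, g_1 + \delta_1 g) < \epsilon/3
\]
whenever $\norm{(g_0 + \delta_1 g) - (g_1 + \delta_1 g)}_g = \norm{g_0 - g_1}_g < \delta$; the crucial observation is that the translation cancels in the norm.

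Combining these three estimates via the triangle inequality in $\overline{\M}$---valid because every smooth semimetric in $\cl(\U)$ lies in $\Omega(\overline{\M})$ by Theorem~\ref{thm:2}, so $d$ is well-defined at all four points involved---one obtains
\[
d(g_0, g_1) \leq d(g_0, g_0 + \delta_1 g) + d(g_0 + \delta_1 g, g_1 + \delta_1 g) + d(g_1 + \delta_1 g, g_1) < \epsilon,
\]
which is the desired conclusion with the $\delta$ supplied by Theorem~\ref{thm:5}.
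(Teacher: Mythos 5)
Your proof is correct and rests on the same central device as the paper's: inflate by $\delta_1 g$ so that the shifted family lies in an amenable subset, control the cost of the inflation by Lemma~\ref{lem:8}, and apply the second statement of Theorem~\ref{thm:5} to the shifted set. The difference is structural. The paper argues in two stages: it first proves the claim for $g_0, g_1 \in \cl(\U) \cap \btop$ (exactly the case covered by the literal statement of Lemma~\ref{lem:8}), and then reduces the general case to this boundary case by invoking Lemma~\ref{lem:9} to multiply each $g_i$ by a cutoff $\rho \leq 1$ vanishing at a point, which pushes $g_i$ onto $\btop$ at $d$-cost less than $\epsilon/3$ without increasing the $L^2$ distance. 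You instead observe that the estimates underlying Lemma~\ref{lem:8}---Lemma~\ref{lem:4} applied with $A = g_0(x) \geq 0$ and $B = \delta_1 g(x) > 0$, together with the uniform bounds of Lemmas~\ref{lem:21} and~\ref{lem:7}, which here concern only the fixed metric $g$ and upper bounds on the coefficients of $g_0$---never use that $g_0$ is degenerate, only that it is positive semidefinite with coefficients bounded by the quasi-amenability constant; hence the inflation estimate $d(g_0, g_0 + \delta_1 g) < \epsilon/3$ holds uniformly over all of $\cl(\U)$, and a single triangle inequality finishes the proof. (Your verification that the shifted set is amenable is also right: $g^{-1}(h + \delta_1 g) = H + \delta_1 I$ has minimal eigenvalue at least $\delta_1$ since $H \geq 0$, and convexity is inherited from $\cl(\U)$.) What your route buys is a shorter argument that dispenses with Lemma~\ref{lem:9} and the case split; the price is that you must reopen the proof of Lemma~\ref{lem:8} rather than cite its statement. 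Both routes are sound.
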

\begin{proof}
  First, we enlarge $\U$ if necessary to include \emph{all} metrics
  satisfying the bound given in Definition \ref{dfn:26}.  This
  enlarged $\U$ is then clearly convex by the triangle inequality for
  the absolute value, and hence it is still a quasi-amenable subset.

  Now, let $\epsilon > 0$ be given.  We prove the statement first for
  $g_0, g_1 \in \cl(\U) \cap \btop$, then use this to
  prove the general case.

  By Lemma \ref{lem:8}, we can choose $\delta_1 > 0$ such that
  $d(\hat{g}, \hat{g} + \delta_1 g) < \epsilon/3$ for all $\hat{g} \in
  \cl(\U) \cap \btop$.  We define an amenable subset of $\M$ by
  \begin{equation*}
    \U' :=
    \left\{
      \hat{g} + \delta_1 g \mid \hat{g} \in \U
    \right\}.
  \end{equation*}
  Lemma \ref{lem:1} implies that this set is indeed amenable.
  Now, by Theorem \ref{thm:5}, there exists $\delta > 0$ such that if
  $\tilde{g}_0, \tilde{g}_1 \in \U'$ with $\norm{\tilde{g}_0 -
  \tilde{g}_1}_g < \delta$, then $d(\tilde{g}_0, \tilde{g}_1) <
  \epsilon/3$.  Let $g_0, g_1 \in \cl(\U) \cap \btop$ be such that $\norm{g_0 - g_1}_g < \delta$.  If we define $\tilde{g}_i := g_i +
  \delta_1 g$ for $i = 1,2$, then it is clear that $\norm{\tilde{g}_0 -
  \tilde{g}_1}_g = \norm{g_0 - g_1}_g < \delta$.  Given this and the
  definition of $\delta_1$, we have
  \begin{equation*}
    d(g_0, g_1) \leq d(g_0, \tilde{g}_0) +
    d(\tilde{g}_0, \tilde{g}_1) + d(\tilde{g}_1, g_1) < \epsilon.
  \end{equation*}

  Now we prove the general case.  Let $\epsilon > 0$ be given.  By the
  special case we just proved, we can choose $\delta > 0$ such that if
  $\tilde{g}_0, \tilde{g}_1 \in \cl(\U) \cap \btop$ with $\norm{\tilde{g}_0 - \tilde{g}_1}_g < \delta$, then $d(\tilde{g}_0,
  \tilde{g}_1) < \epsilon/3$.  Let $g_0, g_1 \in \U$ be any elements
  with $\norm{g_0 - g_1}_g < \delta$.  By Lemma \ref{lem:9} and our
  enlargement of $\U$, we can choose a function $\rho \in C^\infty(M)$
  such that for $i = 0,1$,
  \begin{enumerate}
  \item $\rho g_i \in \cl(\U) \cap \btop$,
  \item $\rho(x) \leq 1$ for all $x \in M$, and
  \item $d(g_i, \rho g_i) < \epsilon/3$.
  \end{enumerate}
  (If $g_i \in \cl(\U) \cap \btop$ for both $i=1$ and $2$, we might as
  well just choose $\rho \equiv 1$.)  In particular, the second
  property of $\rho$ implies that
  \begin{equation*}
    \norm{\rho g_1 - \rho g_0}_g \leq \norm{g_1 - g_0}_g < \delta.
  \end{equation*}
  
  Then we immediately get
  \begin{equation*}
    d(g_0, g_1) \leq d(g_0, \rho g_0) + d(\rho g_0, \rho g_1) +
    d(\rho g_1, g_1) < \epsilon.
  \end{equation*}
  This proves the general case and thus the theorem.
\end{proof}

Using the relationship between $d$ and $\normdot_g$ determined in
Theorem \ref{thm:8}, we can prove the following.

\begin{prop}\label{prop:27}
  Let $[\tilde{g}] \in \Mfhat$ be an equivalence class of bounded,
  measurable semimetrics.  Then for any representative $\tilde{g} \in
  [\tilde{g}]$, there exists a sequence $\{ g_k \}$ in $\M$ that both
  $L^2$- and $\omega$-converges to $\tilde{g}$.  Thus $[\tilde{g}] \in
  \Omega(\overline{\M})$.

  Moreover, suppose $\tilde{g} \in \U^0$ for some quasi-amenable
  subset $\U \subset \M$.  Then for any sequence $\{g_l\}$ in $\U$
  that $L^2$-converges to $\tilde{g}$, $\{g_l\}$ is $d$-Cauchy and
  there exists a subsequence $\{g_k\}$ that also $\omega$-converges to
  $\tilde{g}$.
\end{prop}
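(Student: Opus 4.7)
My plan is to establish the conditional second statement first, then derive the first (unconditional) statement by constructing an explicit quasi-amenable $\U \subset \M$ such that $\tilde{g} \in \U^0$, thereby reducing it to the second statement applied to a sequence in $\U$.

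For the second statement, let $\{g_l\} \subset \U$ with $g_l \rightarrow \tilde{g}$ in $L^2$. First, $\{g_l\}$ is $L^2$-Cauchy, so Theorem \ref{thm:8} (applied to pairs from $\cl(\U)$) immediately gives that $\{g_l\}$ is $d$-Cauchy. Next, by standard $L^2$-theory (cf.~\cite[Thm.~8.5.14, Thm.~8.3.6]{rana02:_introd_to_measur_and_integ}), pass to a subsequence $\{g_{l_m}\}$ with $g_{l_m}(x) \rightarrow \tilde{g}(x)$ for a.e.~$x \in M$, and then thin to a further subsequence $\{g_k\}$ with $\sum_k d(g_k, g_{k+1}) < \infty$. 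Properties (\ref{item:4}) and (\ref{item:7}) of $\omega$-convergence to $\tilde{g}$ now hold by construction, and (\ref{item:6}) is immediate from a.e.~convergence. For property (\ref{item:5}), continuity of $\det$ combined with a.e.~convergence yields $\det G_k(x) \rightarrow \det \tilde{G}(x)$ a.e.; on $M \setminus X_{\tilde{g}}$, the limit $\det \tilde{G}$ is strictly positive, so $\det G_k$ stays bounded below eventually, excluding such points from $D_{\{g_k\}}$, while on $X_{\tilde{g}}$ the limit is zero, placing such points in $D_{\{g_k\}}$. Thus $X_{\tilde{g}}$ and $D_{\{g_k\}}$ agree up to a nullset, verifying $g_k \overarrow{\omega} \tilde{g}$ by Lemma \ref{lem:29}.

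For the first statement, I construct the required $\U$ via standard mollification. Since $\tilde{g}$ is bounded by some $C$, mollify $\tilde{g}$ chart-by-chart through a partition of unity subordinate to the amenable atlas. Convolution of a positive semidefinite matrix-valued function with a nonnegative scalar kernel preserves positive semidefiniteness, so the resulting $h_n$ are smooth, positive semidefinite, and uniformly bounded in each chart by some $C' = C'(C, \text{atlas})$, with $h_n \rightarrow \tilde{g}$ in $L^2$. Set $g_n := h_n + (1/n)g \in \M$; these are smooth metrics (positive definite since $h_n \geq 0$ and $(1/n)g > 0$), uniformly bounded, and $L^2$-convergent to $\tilde{g}$. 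Defining $\U$ as the convex (hence quasi-amenable) set of smooth metrics with components bounded by $C' + 1$ yields $\tilde{g} \in \U^0$, and applying the second statement produces the desired sequence.

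The main obstacle is the verification of property (\ref{item:5}), since the deflated set $D_{\{g_k\}}$ is defined via existence of a subsequence making the determinant small rather than via convergence of the full sequence. The essential leverage is that $L^2$-convergence upgrades to a.e.~convergence along a subsequence, which then transfers to a.e.~convergence of determinants and allows us to identify $D_{\{g_k\}}$ with $X_{\tilde{g}}$ modulo nullsets. Everything else assembles transparently from Theorem \ref{thm:8} together with standard subsequence extraction.
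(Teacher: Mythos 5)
Your proposal is correct and follows essentially the same route as the paper's proof: $d$-Cauchyness from Theorem \ref{thm:8}, subsequence extraction for a.e.~convergence and summability of consecutive distances, and continuity of the determinant to identify $D_{\{g_k\}}$ with $X_{\tilde{g}}$ up to a nullset. The only difference is that you explicitly carry out the mollification construction of a quasi-amenable subset containing $\tilde{g}$ in its $L^2$-completion, a step the paper simply declares to be clear; your added detail is sound.
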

\begin{proof}
  It is clear that for every bounded
  representative $\tilde{g} \in [\tilde{g}]$, we can find a
  quasi-amenable subset $\U \subset \M$ such that $\tilde{g} \in
  \U^0$.
  Thus, there
  exists a sequence $\{ g_l \}$ that $L^2$-converges to $\tilde{g}$.
  It is $d$-Cauchy by Theorem \ref{thm:8}.  We wish to show that it
  contains a subsequence $\{ g_k \}$ that also $\omega$-converges to
  $\tilde{g}$, so we still need to verify properties
  (\ref{item:5})--(\ref{item:7}) of Definition \ref{dfn:13}.

  By passing to a subsequence, we can assume that property
  (\ref{item:7}) is satisfied for $\{g_l\}$.  Property (\ref{item:6})
  is verified in the same way as in the proof of Lemma \ref{lem:43}.
  That is, $L^2$-convergence of $\{ g_l \}$ implies that there exists
  a subsequence $\{ g_k \}$ of $\{ g_l \}$ that converges to
  $\tilde{g}$ a.e.  Finally, a.e.-convergence of $\{g_k\}$ to
  $\tilde{g}$ and continuity of the determinant function imply that
  property \eqref{item:5} holds.
\end{proof}

Thus, like we did for more restricted types of metrics before, this
proposition allows us to cease to distinguish between bounded
semimetrics and sequences $\omega$-converging to them.

\subsection{Unbounded metrics and the proof of the main
  result}\label{sec:finite-volume-metr}

Up to this point, we have an injection $\Omega : \overline{\M}
\rightarrow \Mfhat$, and we have determined that the image
$\Omega(\overline{\M})$ contains all equivalence classes containing
bounded semimetrics.  In this section, we prove that $\Omega$ is
surjective.

\begin{thm}\label{thm:12}
  Let any $[\tilde{g}] \in \Mfhat$ be given.  Then there exists a
  sequence $\{ g_k \}$ in $\M$ such that
  \begin{equation*}
    g_k \overarrow{\omega} [\tilde{g}].
  \end{equation*}
\end{thm}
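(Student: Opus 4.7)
The strategy is to reduce the general case to the bounded-semimetric case already handled in Proposition \ref{prop:27}, using the completeness of $\overline{\M}$. Fix a representative $\tilde{g} \in [\tilde{g}]$. Since the coefficients $\tilde{g}_{ij}$ in each chart of our amenable atlas are measurable real-valued functions, they are finite almost everywhere, so the measurable sets
$$F_N := \bigl\{ x \in M : |\tilde{g}_{ij}(x)| \leq N \text{ in every chart containing } x, \text{ for all } i,j \bigr\}$$
exhaust $M$ up to a $\mu_g$-nullset. Define the truncations $\tilde{g}_N := \chi(F_N) \tilde{g}$; each is a bounded measurable semimetric with $\Vol(M, \tilde{g}_N) \leq \Vol(M, \tilde{g}) < \infty$, so $[\tilde{g}_N] \in \Mfhat$. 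By Proposition \ref{prop:27} there exist sequences $\{g_k^N\}_k \subset \M$ with $g_k^N \overarrow{\omega} \tilde{g}_N$, yielding points $h_N := \Omega^{-1}([\tilde{g}_N]) \in \overline{\M}$.

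The key step is to prove that $\{h_N\}$ is $d$-Cauchy in $\overline{\M}$. For $N_1 < N_2$, the truncations $\tilde{g}_{N_1}$ and $\tilde{g}_{N_2}$ agree off $A := F_{N_2} \setminus F_{N_1}$, where $\tilde{g}_{N_1} = 0$ and $\tilde{g}_{N_2} = \tilde{g}$. The goal is the estimate
$$d(h_{N_1}, h_{N_2}) \leq C(n) \sqrt{\Vol(A, \tilde{g})},$$
a direct analogue of Proposition \ref{prop:18}. I would prove it by adapting the three-piece path construction from the proof of Proposition \ref{prop:18}: starting with smooth approximants of $\tilde{g}_{N_2}$, build smooth paths that pinch down by a factor $s \to 0$ on a shrinking open neighborhood of $A$ while remaining stationary off a slightly larger neighborhood, then concatenate with analogous paths involving approximants of $\tilde{g}_{N_1}$. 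Taking $s \to 0$, then $k \to \infty$, and applying Theorem \ref{thm:19} to convert smooth volumes into $\Vol(A, \tilde{g})$ delivers the estimate. Absolute continuity $\mu_{\tilde{g}} = \sqrt{\det \tilde{g}}\, \mu_g \ll \mu_g$ combined with $\mu_g\bigl(M \setminus \bigcup_N F_N\bigr) = 0$ then gives $\Vol(A, \tilde{g}) \to 0$, establishing Cauchy-ness.

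Since $\overline{\M}$ is complete, $h_N$ converges to some $h_\infty \in \overline{\M}$, and by Theorem \ref{thm:40} we may choose a representative $\{g_k\} \subset \M$ of $h_\infty$ which $\omega$-subconverges to some $[\hat{g}] \in \Mfhat$. To identify $[\hat{g}] = [\tilde{g}]$, I would realize $\{g_k\}$ concretely as a diagonal $g_k := g_{m(k)}^{N(k)}$ with $N(k) \to \infty$ and $m(k)$ chosen rapidly enough that (a) the diagonal is $d$-close to $h_{N(k)}$ in $\overline{\M}$, and (b) the $\mu_g$-measure of $\{ x \in F_{N(k)} : |g_{m(k)}^{N(k)}(x) - \tilde{g}(x)| > 2^{-k}\}$ is less than $2^{-k}$. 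A Borel--Cantelli argument then yields $g_k(x) \to \tilde{g}(x)$ for a.e.~$x$ that eventually lies in $F_N$, which is a.e.~$x \not\in X_{\tilde{g}}$. Combined with property (\ref{item:5}) of Definition \ref{dfn:13}---applied to $\tilde{g}$ and the diagonal---this forces $[\hat{g}] = [\tilde{g}]$ and completes the proof.

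Main obstacle: the volume estimate $d(h_{N_1}, h_{N_2}) \leq C(n) \sqrt{\Vol(A, \tilde{g})}$. The $\omega$-approximating sequences for $\tilde{g}_{N_1}$ and $\tilde{g}_{N_2}$ from Proposition \ref{prop:27} disagree everywhere a priori, so Proposition \ref{prop:18} cannot be invoked directly. The path construction must be engineered so that the essential variation is confined to a shrinking open neighborhood of $A$, and the pinch factor used to allow the smooth approximants to deflate toward the zero-valued truncation on $A$ has to be coordinated carefully with the $k \to \infty$ limit, in the spirit of the proof of Theorem \ref{thm:15}.
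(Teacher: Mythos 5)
Your route is genuinely different from the paper's. The paper also reduces to the bounded case via Proposition \ref{prop:27}, but instead of truncating by characteristic functions it rescales conformally: it writes $\tilde{g} = \rho g_0$ with $g_0 = \xi\tilde{g}$ bounded and $\rho = \xi^{-1} \in L^{n/2}(M,g_0)$, parametrizes the conformal orbit by the explicit exponential map $\psi(\sigma) = (1+\tfrac{n}{4}\sigma)^{4/n}g_0$, proves the Lipschitz bound $d(\psi(\sigma),\psi(\tau)) \leq \sqrt{n}\,\norm{\tau-\sigma}_{g_0}$, and then approximates the single function $\lambda$ with $\psi(\lambda)=\tilde{g}$ by smooth functions in $L^2(M,g_0)$, using density (Corollary \ref{cor:12}) and a final diagonal argument. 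That trick converts the unboundedness of $\tilde{g}$ into an $L^2$ statement about a scalar function and entirely avoids the need for a volume estimate between degenerate semimetrics. Your truncation scheme is conceptually natural and the Cauchy argument ($\Vol(M\setminus F_{N},\tilde{g}) \to 0$ by dominated convergence and absolute continuity) is fine, but it buys you nothing for free: the whole difficulty is displaced into the estimate $d(h_{N_1},h_{N_2}) \leq C(n)\sqrt{\Vol(A,\tilde{g})}$.

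That estimate is the first genuine gap. You correctly flag it, but note that the paper's versions of this inequality (Lemma \ref{lem:51}, Theorem \ref{thm:15}) are proved only inside the completion of an \emph{amenable} subset, whereas your truncations $\tilde{g}_N$ are degenerate (identically zero off $F_N$, and possibly degenerate on parts of $F_N$) and live only in quasi-amenable completions, where the paper deliberately proves much weaker statements (Theorem \ref{thm:8}). A workable fix is to build the approximants of $\tilde{g}_{N_1}$ \emph{from} those of $\tilde{g}_{N_2}$ --- e.g.\ $g^{N_1}_k := f_k\, g^{N_2}_k + (1-f_k)\epsilon_k g$ with $f_k$ a smooth cutoff squeezed between a closed subset of $F_{N_1}$ and an open neighborhood --- so that Proposition \ref{prop:18} applies to genuinely smooth pairs differing only near $M\setminus F_{N_1}$; but this is a real lemma you would have to write out, including a check that the volume of the interpolating metrics on the transition region is controlled (superadditivity of the determinant means the volume of a convex combination is \emph{not} automatically bounded by the endpoint volumes, so you need $\epsilon_k\to 0$ to enter the estimate).

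The second gap is in the identification $[\hat{g}]=[\tilde{g}]$ and is not acknowledged. Your conditions (a) and (b) give $g_k(x)\to\tilde{g}(x)$ a.e.\ off $X_{\tilde{g}}$, hence $D_{\{g_k\}}\subseteq X_{\tilde{g}}$ and agreement of the limits there; but equivalence in $\Mfhat$ (Definition \ref{dfn:7}) also requires the reverse inclusion $X_{\tilde{g}}\subseteq D_{\{g_k\}}$ up to a nullset, i.e.\ that the diagonal sequence actually deflates on $X_{\tilde{g}}$. Nothing in (a)--(b) forces this: a priori $m(k)$ could be chosen so that $\det G^{N(k)}_{m(k)}$ stays bounded below on part of $X_{\tilde{g}}$, in which case the $\omega$-limit of the diagonal would be positive definite there and inequivalent to $\tilde{g}$. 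This is fixable --- add a third diagonal condition, e.g.\ $\Vol(X_{\tilde{g}}, g^{N(k)}_{m(k)}) < 2^{-k}$ (available since $\Vol(X_{\tilde{g}},\tilde{g}_{N})=0$ and Theorem \ref{thm:19} applies to each fixed $N$), and then Theorem \ref{thm:19} applied to the diagonal forces $\Vol(X_{\tilde{g}},\hat{g})=0$, hence $X_{\tilde{g}}\subseteq X_{\hat{g}}$ up to a nullset --- but as written the argument does not close.
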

\begin{proof}
  In view of Proposition \ref{prop:27}, it remains only to prove this
  for the equivalence class of a measurable, unbounded semimetric
  $\tilde{g} \in \Mf$.

  Given any element $\hat{g} \in \M_f$, we can define $\exp_{\hat{g}}$
  on tensors of the form $\sigma \hat{g}$, where $\sigma$ is any
  function, purely algebraically.  We simply set
  \begin{equation}\label{eq:60}
    \exp_{\hat{g}}(\sigma \hat{g}) := \left( 1 + \frac{n}{4} \sigma
    \right)^{4/n} \hat{g},
  \end{equation}
  so that the expression coincides with the usual one if $\hat{g} \in
  \M$ and $\sigma \in C^\infty(M)$ with $\sigma > -\frac{4}{n}$
  (cf. Theorem \ref{prop:5}).  If $\sigma$ is additionally measurable,
  then $\exp_{\hat{g}}(\sigma \hat{g})$ will also be measurable.

  Now, let $\tilde{g} \in \Mf$.  Then we can find a measurable,
  positive function $\xi$ on $M$ such that $g_0 := \xi \tilde{g}$ is a
  bounded semimetric.  A simple calculation using the finite volume of
  $\tilde{g}$ shows that $\rho := \xi^{-1} \in L^{n/2}(M, g_0)$.

  Define the map $\psi$ by $\psi(\sigma) := \exp_{g_0}(\sigma
  g_0)$, and let
  \begin{equation}\label{eq:140}
    \lambda := \frac{4}{n} \left( \rho^{n/4} - 1 \right).
  \end{equation}
  Then clearly $\psi(\lambda) = \rho g_0 = \tilde{g}$.  Moreover, we
  claim that $\lambda \in L^2(M, g_0)$ and hence, by Corollary
  \ref{cor:12}, we can find a sequence $\{\lambda_k\}$ of smooth
  functions that converge in $L^2(M, g_0)$ to $\lambda$ .  That
  $\lambda \in L^2(M, g_0)$ follows from two facts.  First, $\rho \in
  L^{n/2}(M, g_0)$, implying that $\rho^{n/4} \in L^2(M, g_0)$.
  Second, finite volume of $g_0$ implies that the constant function $1
  \in L^2(M, g_0)$ as well.

  Since $\lambda_k \rightarrow \lambda$ in $L^2(M, g_0)$, as in the
  proof of Lemma \ref{lem:43} we can pass to a subsequence and assume
  that $\lambda_k \rightarrow \lambda$ pointwise a.e., where we note
  that here, ``almost everywhere'' means with respect to $\mu_{g_0}$.
  With respect to the fixed, smooth, strictly positive volume form
  $\mu_g$, this actually means that $\lambda_k(x) \rightarrow
  \lambda(x)$ for a.e.~$x \in M \setminus X_{g_0}$, since $X_{g_0}$ is
  a nullset with respect to $\mu_{g_0}$.  Note also that $X_{g_0} =
  X_{\tilde{g}}$, since we assumed that the function $\xi$ is
  positive.  Therefore $\lambda_k(x) \rightarrow \lambda(x)$ for
  a.e.~$x \in M \setminus X_{\tilde{g}}$.

  Furthermore, since from \eqref{eq:140} and positivity of $\xi$ it is
  clear that $\lambda > - \frac{4}{n}$, we can choose the sequence $\{
  \lambda_k \}$ such that $\lambda_k > - \frac{4}{n}$ for all $k \in
  \N$.  This implies, in particular, that $X_{\psi(\lambda_k)} =
  X_{g_0} = X_{\tilde{g}}$, which is easily seen from \eqref{eq:60}.
  
  We make one last assumption on the sequence $\{ \lambda_k \}$.
  Namely, by passing to a subsequence, we can assume that
  \begin{equation}\label{eq:141}
    \sum_{k = 1}^\infty \norm{\lambda_{k+1} - \lambda_k}_{g_0} < \infty.
  \end{equation}
  
        Now, we claim that
  \begin{equation}\label{eq:51}
    d( \psi(\sigma), \psi(\tau) ) \leq \sqrt{n} \norm{\tau - \sigma}_{g_0}
  \end{equation}
  for all $\sigma, \tau \in C^\infty (M)$
  with $\sigma, \tau > -\frac{4}{n}$.  We delay the proof of this
  statement to Lemma \ref{lem:33} below and first finish the proof of
  the theorem.
  
  We wish to construct a sequence that $\omega$-converges to
  $\tilde{g}$ using the sequence $\{ \psi(\lambda_k) \}$.  We can't
  use $\{ \psi(\lambda_k) \}$ directly, since it is a sequence in
  $\Omega(\overline{\M})$, not $\M$ itself.  So we first verify the
  properties of $\omega$-convergence for $\{ \psi(\lambda_k) \}$ and
  then construct a sequence in $\M$ that approximates $\{
  \psi(\lambda_k) \}$ well enough that it still satisfies all the
  conditions for $\omega$-convergence.
  
  Since the sequence $\{\lambda_k\}$ is convergent in $L^2(M, g_0)$,
  it is also Cauchy in $L^2(M, g_0)$.  Using the inequality
  \eqref{eq:51}, it is then immediate that $\{\psi(\lambda_k)\}$ is a
  Cauchy sequence in $(\Omega(\overline{\M}), d)$.  This verifies
  property (\ref{item:4}) of $\omega$-convergence (cf.~Definition
  \ref{dfn:13}).
  
  We next verify property (\ref{item:6}).  Note that $X_{\tilde{g}}
  \subseteq D_{\{ \psi(\lambda_k) \}}$, since we have already shown
  that $X_{\psi(\lambda_k)} = X_{\tilde{g}}$.  (Keep in mind here the
  subtle point that $X_{\psi(\lambda_k)}$ is the deflated set of the
  \emph{individual} semimetric $\psi(\lambda_k)$, while $D_{\{
    \psi(\lambda_k) \}}$ is the deflated set of the \emph{sequence}
  $\{\psi(\lambda_k)\}$.  Refer to Definitions \ref{dfn:23} and
  \ref{dfn:25} for details.)  The inclusion implies that
  \begin{equation*}
    M \setminus D_{\{ \psi(\lambda_k) \}} \subseteq M \setminus X_{\tilde{g}},
  \end{equation*}
  so it suffices to show that $\psi(\lambda_k)(x) \rightarrow
  \tilde{g}(x)$ for a.e.~$x \in M \setminus X_{\tilde{g}}$.  But this
  is clear from
  the fact, proved above,
  that $\lambda_k(x) \rightarrow \lambda(x)$ for a.e.~$x \in M
  \setminus X_{\tilde{g}}$.
  
  To verify property (\ref{item:5}), we claim that $D_{\{
    \psi(\lambda_k) \}} = X_{\tilde{g}}$, up to a nullset.  In the
  previous paragraph, we already showed that $X_{\tilde{g}} \subseteq
  D_{\{\psi(\lambda_k)\}}$. Furthermore, for a.e.~$x \in M \setminus
  X_{\tilde{g}}$, $\{ \psi(\lambda_k)(x) \}$ converges to
  $\tilde{g}(x)$, which is positive definite, so for a.e.~$x \in M
  \setminus X_{\tilde{g}}$, $\lim \det \psi(\lambda_k) > 0$.  This
  immediately implies that $D_{\{\psi(\lambda_k)\}} \subseteq
  X_{\tilde{g}} $, up to a nullset.
  
  The last property to verify is (\ref{item:7}).  But this is
  immediate from \eqref{eq:141} and \eqref{eq:51}.

  So we have shown that $\{\psi(\lambda_k)\}$ satisfies the properties
  of $\omega$-convergence, save that it is a sequence of measurable
  semimetrics, rather than a sequence of smooth metrics as required.
  To get a sequence in $\M$ that $\omega$-converges to $\tilde{g}$,
  recall that each of the functions $\lambda_k$ is smooth and
  therefore bounded, and also that $g_0$ is a bounded, measurable
  semimetric.  Therefore, for each fixed $k \in \N$, $\psi(\lambda_k)$
  is also a bounded, measurable semimetric, and so by Proposition
  \ref{prop:27} we can find a sequence $\{ g^k_l \}$ in $\M$ that
  $\omega$-converges to $\psi(\lambda_k)$ for $l \rightarrow \infty$.
  By a standard diagonal argument, it is then possible to select $l_k
  \in \N$ for each $k \in \N$ such that the sequence $\{ g^k_{l_k} \}$
  $\omega$-converges to $\tilde{g}$ for $k \rightarrow \infty$.  Thus
  we have found the desired sequence.

  It still remains to prove \eqref{eq:51}.  The following two lemmas
  do this and thus complete the proof of the theorem.
\end{proof}

\begin{lem}\label{lem:2}
  Let $\tilde{g} \in \M$.  If $\sigma, \tau \in C^\infty(M)$ satisfy
  $\sigma, \tau > - \frac{4}{n}$, then
  \begin{equation*}
    d(\exp_{\tilde{g}}(\sigma \tilde{g}), \exp_{\tilde{g}}(\tau
    \tilde{g})) \leq \sqrt{n} \norm{\tau - \sigma}_{\tilde{g}}.
  \end{equation*}
\end{lem}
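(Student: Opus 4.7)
The plan is to construct an explicit, simple path in $\M$ between the two endpoints and compute its length directly, using the explicit form of the conformal exponential map from Proposition \ref{prop:5}.

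First, I would define the linear interpolation $\rho_t := (1-t)\sigma + t\tau$ for $t \in [0,1]$. Since $\sigma, \tau > -4/n$ pointwise and this bound is preserved under convex combinations, we have $\rho_t > -4/n$ for all $t$, so Proposition \ref{prop:5} guarantees that
\begin{equation*}
  g_t := \exp_{\tilde{g}}(\rho_t \tilde{g}) = \left( 1 + \frac{n}{4} \rho_t \right)^{4/n} \tilde{g}
\end{equation*}
is a smooth path in $\M$ joining $\exp_{\tilde{g}}(\sigma \tilde{g})$ to $\exp_{\tilde{g}}(\tau \tilde{g})$. Note that $g_t$ is \emph{not} a geodesic (it is not of the form \eqref{eq:3} with $\rho$ independent of $t$), but it has the virtue that its length can be computed essentially explicitly.

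Next, I would compute $L(g_t)$. Writing $\phi_t := 1 + \frac{n}{4} \rho_t$, we have $g_t = \phi_t^{4/n} \tilde{g}$, so $g_t^{-1} = \phi_t^{-4/n} \tilde{g}^{-1}$ and $\mu_{g_t} = \phi_t^2\, \mu_{\tilde{g}}$. Differentiating, $g'_t = \phi_t^{4/n - 1}(\tau - \sigma)\,\tilde{g}$, hence
\begin{equation*}
  g_t^{-1} g'_t = \phi_t^{-1}(\tau - \sigma)\,I,
\end{equation*}
so $\tr_{g_t}\!\bigl((g'_t)^2\bigr) = n\, \phi_t^{-2}(\tau - \sigma)^2$. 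Multiplying by $\mu_{g_t} = \phi_t^2\, \mu_{\tilde{g}}$ makes the factors of $\phi_t$ cancel, giving
\begin{equation*}
  \norm{g'_t}_{g_t}^2 = \integral{M}{}{n\,(\tau - \sigma)^2}{\mu_{\tilde{g}}} = n\, \norm{\tau - \sigma}_{\tilde{g}}^2,
\end{equation*}
which is independent of $t$. Integrating in $t$ yields $L(g_t) = \sqrt{n}\, \norm{\tau - \sigma}_{\tilde{g}}$.

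Finally, since $d$ is defined as the infimum of lengths of paths in $\M$ between its endpoints, the bound $d(\exp_{\tilde{g}}(\sigma \tilde{g}), \exp_{\tilde{g}}(\tau \tilde{g})) \leq L(g_t)$ gives the conclusion. There is no substantial obstacle here: the proof is essentially a one-line computation once one realizes that the linear interpolation in the conformal factor $\rho_t$ produces a path whose pointwise kinetic energy density is the constant $n(\tau - \sigma)^2\, \mu_{\tilde{g}}$ thanks to the cancellations between $\phi_t^{-2}$ in the trace and $\phi_t^2$ in the volume form.
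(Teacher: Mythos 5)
Your proof is correct and is essentially the paper's argument: the path $g_t = \bigl(1+\tfrac{n}{4}\rho_t\bigr)^{4/n}\tilde g$ you construct is exactly the radial geodesic in $\pos\cdot\tilde g$ issuing from $\exp_{\tilde g}(\sigma\tilde g)$ with initial velocity $\tfrac{\tau-\sigma}{1+\frac{n}{4}\sigma}\,\exp_{\tilde g}(\sigma\tilde g)$, which is the curve whose length the paper computes via Proposition \ref{prop:5}. Your parenthetical claim that $g_t$ is \emph{not} a geodesic is therefore false (the constant speed you discover is precisely the symptom that it is one), but since your argument only uses $d\le L(g_t)$ and never relies on that aside, the proof stands as written.
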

\begin{proof}
  Let $\hat{g} := \exp_{\tilde{g}}(\sigma \tilde{g})$.  We first note
  that $\pos \cdot \tilde{g} = \pos \cdot \hat{g}$.  Therefore, by Proposition
  \ref{prop:5}, there is a neighborhood $V \in C^\infty(M)$ such that
  $\exp_{\hat{g}} : V \cdot \hat{g} \rightarrow \pos \cdot
  \hat{g}$ is a diffeomorphism.

  Now
  \begin{equation}\label{eq:2}
    d(\exp_{\tilde{g}}(\sigma \tilde{g}), \exp_{\tilde{g}}(\tau
    \tilde{g})) \leq \norm{\exp_{\hat{g}}^{-1} \exp_{\tilde{g}}(\tau
      \tilde{g})}_{\tilde{g}},
  \end{equation}
  since the right-hand side is the length of a radial geodesic
  emanating from $\exp_{\tilde{g}}(\sigma \tilde{g})$ and ending at
  $\exp_{\tilde{g}}(\tau \tilde{g})$.
  Showing that the right-hand side of \eqref{eq:2} is equal to
  $\sqrt{n} \norm{\tau - \sigma}$ is a straightforward computation
  using \eqref{eq:3}.
\end{proof}

\begin{lem}\label{lem:33}
  Let $g_0$ and $\psi$ be as in the proof of Theorem \ref{thm:12}.  If
  $\sigma, \tau \in C^\infty(M)$ satisfy $\sigma, \tau > -
  \frac{4}{n}$, then
  \begin{equation*}
    d(\psi(\sigma), \psi(\tau)) \leq \sqrt{n} \norm{\tau - \sigma}_{g_0}.
  \end{equation*}
\end{lem}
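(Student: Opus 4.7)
The plan is to approximate the bounded measurable semimetric $g_0$ by smooth Riemannian metrics, invoke Lemma \ref{lem:2} at each approximation, and pass to the limit. Set $\rho := (1 + \tfrac{n}{4}\sigma)^{4/n}$ and $\eta := (1 + \tfrac{n}{4}\tau)^{4/n}$; since $\sigma, \tau \in C^\infty(M)$ satisfy $\sigma, \tau > -4/n$ on the compact $M$, the functions $\rho$ and $\eta$ are smooth, strictly positive, and bounded above and away from zero. Recalling that $g_0$ is bounded by construction in the proof of Theorem \ref{thm:12}, so are $\psi(\sigma) = \rho g_0$ and $\psi(\tau) = \eta g_0$, and hence Proposition \ref{prop:27} yields a sequence $\{g^k\} \subset \M$ lying in some quasi-amenable subset $\U$ that both $L^2$- and $\omega$-converges to $g_0$. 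Define the smooth approximants $\psi^k(\sigma) := \exp_{g^k}(\sigma g^k) = \rho g^k \in \M$ and $\psi^k(\tau) := \eta g^k \in \M$.

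Applying Lemma \ref{lem:2} at the smooth metric $g^k$ gives
\begin{equation*}
  d(\psi^k(\sigma), \psi^k(\tau)) \leq \sqrt{n} \norm{\tau - \sigma}_{g^k}
\end{equation*}
for every $k$, and I would pass to the limit $k \to \infty$ on both sides separately. The right-hand side converges to $\sqrt{n}\,\norm{\tau - \sigma}_{g_0}$ by Lemma \ref{lem:54} applied to the continuous function $\tau - \sigma$. For the left-hand side, since $\rho$ and $\eta$ are bounded above, the rescaled sequences $\{\rho g^k\}$ and $\{\eta g^k\}$ still lie in a (possibly enlarged) quasi-amenable subset, and they $L^2$-converge to $\rho g_0 = \psi(\sigma)$ and $\eta g_0 = \psi(\tau)$, respectively. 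Proposition \ref{prop:27} then ensures that both sequences are $d$-Cauchy and admit a common subsequence simultaneously $\omega$-converging to $\psi(\sigma)$ and $\psi(\tau)$. Via the identification $\Omega$ of Definition \ref{dfn:3} (with the choice of subsequence rendered immaterial by the first uniqueness result, Theorem \ref{thm:20}), the definition of the distance on the precompletion yields
\begin{equation*}
  d(\psi(\sigma), \psi(\tau)) = \lim_{k \to \infty} d(\psi^k(\sigma), \psi^k(\tau)) \leq \sqrt{n}\,\norm{\tau - \sigma}_{g_0},
\end{equation*}
which is the claim.

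The main obstacle is ensuring that $\{\psi^k(\sigma)\}$ and $\{\psi^k(\tau)\}$ genuinely represent the bounded semimetrics $\psi(\sigma)$ and $\psi(\tau)$ in the completion $\overline{\M}$, so that the limit of the $d$-distances in $\M$ really computes $d(\psi(\sigma), \psi(\tau))$. This is precisely what Proposition \ref{prop:27} supplies in the quasi-amenable setting, combined with Theorem \ref{thm:20} to eliminate dependence on the chosen subsequence. Once this identification is in hand, the transition of the right-hand side via Lemma \ref{lem:54} and the reduction to the smooth-base-metric case handled by Lemma \ref{lem:2} are essentially routine.
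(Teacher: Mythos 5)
Your proof is correct and follows essentially the same route as the paper's: approximate $g_0$ by a sequence of smooth metrics in a quasi-amenable subset via Proposition \ref{prop:27}, apply Lemma \ref{lem:2} at each smooth approximant, and pass to the limit using Lemma \ref{lem:54} on the right-hand side and the $L^2$-convergence of $\rho g^k \to \rho g_0$ (again via Proposition \ref{prop:27}) on the left. The only cosmetic difference is that the paper phrases the left-hand limit through an explicit triangle inequality $d(\psi(\sigma),\psi(\tau)) \leq d(\psi(\sigma),\psi_k(\sigma)) + d(\psi_k(\sigma),\psi_k(\tau)) + d(\psi_k(\tau),\psi(\tau))$ rather than through the definition of the extended distance, which amounts to the same thing.
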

\begin{proof}
  Since $g_0$ is bounded, we can find a quasi-amenable subset $\U$
  such that $g_0 \in \U^0$, i.e., such that $g_0$ belongs to the
  completion of $\U$ with respect to $\normdot_g$.  Using
  Proposition \ref{prop:27}, choose a sequence $\{ g_k \}$ in $\U$
  that both $L^2$- and $\omega$-converges to $g_0$.  For each $k \in
  \N$, define a map $\psi_k$ by $\psi_k(\kappa) := \exp_{g_k}(\kappa
  g_k)$.

  By the triangle inequality, we have
  \begin{equation}\label{eq:59}
    d(\psi(\sigma), \psi(\tau)) \leq d(\psi(\sigma), \psi_k(\sigma)) +
    d(\psi_k(\sigma), \psi_k(\tau)) + d(\psi_k(\tau), \psi(\tau))
  \end{equation}
  for each $k$.  But since $g_k \in \M$, Lemma \ref{lem:2} applies to
  give
  \begin{equation}\label{eq:58}
    d(\psi_k(\sigma), \psi_k(\tau)) \leq \sqrt{n} \norm{\tau - \sigma}_{g_k} \overarrow{k \rightarrow \infty} \sqrt{n} \norm{\tau - \sigma}_{g_0},
  \end{equation}
  where the convergence follows from Lemma \ref{lem:54}.
  By \eqref{eq:59} and \eqref{eq:58}, if we can
  show that
  \begin{equation}\label{eq:4}
    d(\psi(\sigma), \psi_k(\sigma)) \rightarrow 0 \quad
    \textnormal{and} \quad d(\psi_k(\tau), \psi(\tau)) \rightarrow 0,
  \end{equation}
  then we are finished.
  But it is not hard to show that $\psi_k(\sigma)$ $L^2$-converges to
  $\psi(\sigma)$, which then implies \eqref{eq:4} by Proposition
  \ref{prop:27}.
\end{proof}

From the results of Section \ref{cha:almost-everywh-conv}, we already
know that the map $\Omega : \overline{\M} \rightarrow \Mfhat$ is an
injection.  Theorem \ref{thm:12} now states that this map is a
surjection as well.  Thus, we have already proved the main result of
this paper, which we state again here in full detail.

\begin{thm}\label{thm:41}
  There is a natural identification of $\overline{\M}$, the completion
  of $\M$ with respect to the $L^2$ metric, with $\Mfhat$, the set of
  measurable semimetrics with finite volume on $M$ modulo the
  equivalence given in Definition \ref{dfn:7}.

  This identification is given by a bijection $\Omega : \overline{\M}
  \rightarrow \Mfhat$, where we map an equivalence class $[\{g_k\}]$
  of $d$-Cauchy sequences to the unique element of $\Mfhat$ that all
  of its members $\omega$-subconverge to.  This map is an isometry if
  we give $\Mfhat$ the metric $\bar{d}$ defined by
  \begin{equation*}
    \bar{d}([g_0], [g_1]) := \lim_{k \rightarrow \infty} d(g^0_k, g^1_k)
  \end{equation*}
  where $\{g^0_k\}$ and $\{g^1_k\}$ are any sequences in $\M$
  $\omega$-subconverging to $[g_0]$ and $[g_1]$, respectively.
\end{thm}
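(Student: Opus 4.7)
The plan is to assemble Theorems \ref{thm:40}, \ref{thm:20}, \ref{thm:17}, and \ref{thm:12} to verify that $\Omega$ is the claimed bijection and an isometry. Most of the content of the theorem has already been discussed in connection with Definition \ref{dfn:3}; the only genuinely new point is the verification that $\bar{d}$ is a well-defined metric on $\Mfhat$ and that $\Omega$ preserves distance under it.

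First, I would record the bijectivity of $\Omega$. Given a $d$-Cauchy sequence $\{g_k\}$, Theorem \ref{thm:40} produces some $[g_\infty] \in \Mfhat$ and a subsequence $\omega$-converging to it. Theorem \ref{thm:20} shows that this $\omega$-limit depends only on the equivalence class of $\{g_k\}$ in $\overline{\M}^{\mathrm{pre}}$, so $\Omega$ is well-defined on $\overline{\M}$; it is also injective, since two sequences with the same $\omega$-limit are equivalent in $\overline{\M}^{\mathrm{pre}}$ by Theorem \ref{thm:17}. Surjectivity is exactly Theorem \ref{thm:12}.

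Next, I would verify that $\bar{d}$ is a well-defined metric on $\Mfhat$. Given $[g_0], [g_1] \in \Mfhat$ and sequences $\{g^0_k\}, \{g^1_k\}$ in $\M$ with $g^i_k \overarrow{\omega} [g_i]$ (after passing to subsequences, which is harmless for the value of $\bar{d}$), both sequences are $d$-Cauchy, so $\{d(g^0_k, g^1_k)\}$ is a Cauchy sequence of reals and therefore converges. Independence of the choice of representative sequences follows from Theorem \ref{thm:17}: if $\{\tilde{g}^i_k\}$ is another pair with $\tilde{g}^i_k \overarrow{\omega} [g_i]$, then $\lim_k d(g^i_k, \tilde{g}^i_k) = 0$, so the triangle inequality in $\M$ gives $|d(g^0_k, g^1_k) - d(\tilde{g}^0_k, \tilde{g}^1_k)| \le d(g^0_k, \tilde{g}^0_k) + d(g^1_k, \tilde{g}^1_k) \to 0$. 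Symmetry and the triangle inequality for $\bar{d}$ are inherited directly from $d$; positive definiteness follows from injectivity of $\Omega$ combined with the fact that $d$ is a metric on $\overline{\M}$ (Theorem \ref{thm:6} together with the standard construction of the completion).

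Finally, the isometry property is essentially built into the definitions. For classes $[\{g^0_k\}], [\{g^1_k\}] \in \overline{\M}$, the completion metric is $d([\{g^0_k\}], [\{g^1_k\}]) = \lim_k d(g^0_k, g^1_k)$; after passing to $\omega$-subconvergent subsequences with limits $\Omega([\{g^0_k\}]) = [g_0]$ and $\Omega([\{g^1_k\}]) = [g_1]$, the right-hand side is, by definition, $\bar{d}([g_0], [g_1])$. Thus $\Omega$ is distance-preserving. Since every step reduces to applying a result that has already been proved, there is no serious obstacle remaining; the theorem is a synthesis rather than a new argument, and my role is simply to make sure the pieces fit together without circularity.
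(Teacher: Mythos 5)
Your proposal is correct and follows the same route as the paper: the paper itself treats Theorem \ref{thm:41} as a synthesis of the existence result (Theorem \ref{thm:40}), the two uniqueness results (Theorems \ref{thm:20} and \ref{thm:17}), and the surjectivity statement (Theorem \ref{thm:12}), with the isometry property being essentially definitional. Your additional verification that $\bar{d}$ is well-defined and independent of the choice of $\omega$-subconvergent representatives is a correct (and slightly more explicit) rendering of what the paper leaves implicit.
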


Here, we briefly note what geometric notions are well-defined for
elements of $\Mfhat$.  Given an equivalence class $[\tilde{g}] \in
\Mfhat$, the metric space structure of different representatives may
differ---e.g., if $M = T^2$, the torus, then the equivalence class of
the zero metric also contains a geometric circle, where only one
dimension has collapsed.  On the other hand, since representatives of
a given equivalence class in $\Mfhat$ all have equal induced measures,
things like $L^p$ spaces of functions are well-defined for an
equivalence class, as they are the same across all representatives.
But even more is true---$C^k$ and $H^s$ spaces of sections of fiber
bundles can be defined Therefore, an equivalence class doesn't induce
a well-defined scalar product on a vector bundle at any individual
point, but the integral of the scalar product does not depend on the
chosen representative.

To end this section, we remark that one might hope that Theorem
\ref{thm:41} would give some information on the completion of the
space $\M / \D$ of \emph{Riemannian structures} with respect to the
distance that the $L^2$ metric induces on it.  (Here, $\D$ is the
group of orientation-preserving diffeomorphisms of $M$ acting by
pull-back.)  $\M / \D$ is the moduli space of Riemannian metrics, and
hence is of great intrinsic interest to geometers.  The problem here
is that the proof of Theorem \ref{thm:41} does not indicate which
degenerations of Cauchy sequences of metrics arise from ``vertical''
degenerations---that is, sequences $\{\varphi_n^* \tilde{g}\}$, where
$\{\varphi_n\} \subset \D$ is a degenerating sequence of
diffeomorphisms---and ``horizontal'' degenerations---that is,
sequences of metrics that can be connected by horizontal paths.  (See
\cite[\S 3]{freed89:_basic_geomet_of_manif_of} for a discussion of
horizontal and vertical paths on $\M$.)  Of course, only horizontal
degenerations are relevant for the quotient.  So there is some work
remaining to do in order to understand the completion of $\M / \D$.
We hope to investigate these questions in a future paper.

\section{Application to Teichmüller Theory}\label{cha:appl-teichm-space}

In this section, we describe an application of our main theorem to the
theory of Teichmüller space.  Teichmüller space was historically
defined in the context of complex manifolds, but the work of Fischer
and Tromba translates this original picture into the context of
Riemannian geometry, using the manifold of metrics \cite{tromba-teichmueller}.
We outline this construction of Teichmüller space in the first
subsection, then define the much-studied Weil-Petersson metric.  In
the second subsection, we prove a result on the completions of a class
of metrics that generalize the Weil-Petersson metric.

\subsection{The Weil-Petersson Metric on Teichmüller
  Space}\label{sec:teichmuller-space}

\begin{cvt}\label{cvt:2}
  For the remainder of the paper, let our base manifold $M$ be a
  smooth, closed, oriented, two-dimensional manifold of genus $p \geq
  2$.
\end{cvt}

\begin{cvt}
  In this chapter, we abandon Convention \ref{cvt:3}.  That is, when
  we write $g$ for a metric in $\M$, we no longer assume that this is
  fixed, but allow $g$ to vary arbitrarily.
\end{cvt}

We have already noted that the group $\pos$ acts on $\M$ by pointwise
multiplication, and it turns out that the quotient $\M / \pos$ is a
smooth Fréchet manifold.  Let $\D$ be the Fréchet Lie group of
orientation-preserving diffeomorphisms of $M$, and let $\DO \subset
\D$ be the subgroup of diffeomorphisms homotopic to the identity.
Then both $\D$ and $\DO$ act on $\M$ and $\M / \pos$ by pull-back.
Let $\T$ denote the Teichmüller space of $M$, $\mathcal{R}$ the
Riemann moduli space of $M$, and $MCG := \D / \DO$ the \emph{mapping
  class group} of $M$.  Then there are identifications
\begin{equation*}
  \T \cong \grpquot{(\M / \pos)}{\DO}, \qquad \mathcal{R} \cong
  \grpquot{\T}{MCG} \cong \grpquot{(\M / \pos) }{\D},
\end{equation*}
where the first identification is a diffeomorphism.  Note that
Teichmüller space finite-dimensional.

By the Poincaré uniformization theorem, there exists a unique
hyperbolic metric (one with scalar curvature $-1$) in each conformal
class $[g] \in \M / \pos$.  Furthermore, one can show that the subset
$\Mhyp \subset \M$ of hyperbolic metrics is a smooth Fréchet
submanifold.  Therefore, $\Mhyp$ is the image of a smooth section of
the principal $\pos$-bundle $\M \rightarrow \M / \pos$.  It is easy to
see that $\Mhyp$ is $\D$-invariant, and therefore
\begin{equation*}
  \T \cong \grpquot{\Mhyp}{\DO}, \qquad \mathcal{R} \cong \grpquot{\Mhyp}{\D},
\end{equation*}
where the first identification is again a diffeomorphism.  We denote
by $\pi : \Mhyp \rightarrow \Mhyp / \DO$ the projection.

It is not hard to see that the $L^2$ metric $(\cdot, \cdot)$ on $\M$
is $\D$-invariant, so it descends to a $MCG$-invariant Riemannian
metric, also denoted $(\cdot, \cdot)$, on the quotient $\Mhyp / \DO$.
This metric is called the \emph{Weil-Petersson metric}.  (It differs
from the usual definition of the Weil-Petersson metric by a constant
scalar factor; cf. \cite[\S 2.6]{tromba-teichmueller}.)  With these
definitions, we see that $(\Mhyp, (\cdot, \cdot)) \rightarrow (\Mhyp /
\DO, (\cdot, \cdot))$ is a weak Riemannian principal
$\DO$-bundle---that is, at each point $g \in \Mhyp$, the differential
$D \pi(g)$ of the projection is an isometry when restricted to the
horizontal tangent space at $g$.

\subsection{Generalized Weil-Petersson Metrics}\label{sec:metrics-arising-from}

We now wish to generalize the construction of the Weil-Petersson
metric by selecting a different section of $\M \rightarrow \M / \pos$.
In fact, we will simultaneously consider all smooth sections
$\mathcal{N}$ with the property that they are $\D$-invariant, which we
require so that we still have diffeomorphisms $\T \cong \Ncal / \DO$
and $\riem \cong \Ncal / \D$.  This idea is directly inspired by
\cite{hj-riemannian} and
\cite{habermann98:_metric_rieman_surfac_and_geomet}, though our
metrics on Teichmüller space differ from theirs.

\begin{dfn}\label{dfn:5}
  We call a smooth, $\D$-invariant section of $\M \rightarrow \M /
  \pos$ a \emph{modular section}.  Given a modular section $\Ncal
  \subset \M$, we call the quotients $\Ncal / \DO$ and $\Ncal / \D$
  the \emph{$\Ncal$-model of Teichmüller space} and the
  \emph{$\Ncal$-model of moduli space}, respectively.
\end{dfn}

For the remainder of the talk, let $\Ncal$ be an arbitrary modular
section.  It is not hard to see that $\Ncal \cong \Mhyp$ via a
$\D$-equivariant diffeomorphism, so in fact we do have the desired
diffeomorphisms $\T \cong \Ncal / \DO$ and $\riem \cong \Ncal / \D$.

Modular sections other than $\Mhyp$ of course exist---for example,
there are the Bergman and Arakelov metrics (see \cite[\S
1]{habermann98:_metric_rieman_surfac_and_geomet} for details).  We
briefly describe the Bergman metric on a Riemann surface here.  Recall
that conformal structures (elements of $\M / \pos$) are in one-to-one
correspondence with complex structures on the surface, so we can work
with these instead.  Let $c$ be a complex structure on $M$.  Then the
space of holomorphic one-forms on $(M, c)$ has complex dimension $p$,
the genus of $M$ \cite[Prop.~III.2.7]{farkas92:_rieman_surfac}.  Let
$\theta_1, \dots, \theta_p$ be an $L^2$-orthonormal basis of this
space.
That is,
\begin{equation*}
  \frac{i}{2} \int_M \theta_j \wedge \overline{\theta_k} = \delta_{jk}.
\end{equation*}
The Bergman metric is defined by
\begin{equation*}
  g_B := \frac{1}{p} \sum_{i=1}^p \theta_i \bar{\theta}_j.
\end{equation*}
It is clear from this construction that the set of all Bergman metrics
is indeed a modular section.

As in the case of the section $\Mhyp$, the $L^2$ metric on $\M$
projects
to an $MCG$-invariant metric on $\Ncal / \DO$.  We call this metric
the \emph{generalized Weil-Petersson metric} on the $\Ncal$-model of
Teichmüller space.  As in the case of the bundle $\Mhyp \rightarrow
\Mhypd$, these metrics turn the bundle $\Ncal \rightarrow \Ncal / \DO$
into a weak Riemannian principal $\DO$-bundle.

\begin{thm}\label{thm:42}
  For any $C^1$ path $\gamma : [0,1] \rightarrow \Ncal / \DO$ and any
  $g \in \pi_\Ncal^{-1}(\gamma(0))$, there exists a unique horizontal
  lift $\tilde{\gamma} : [0,1] \rightarrow \Ncal$ with
  $\tilde{\gamma}(0) = g$.

  Furthermore, $L(\tilde{\gamma}) = L(\gamma)$ and $\tilde{\gamma}$
  has minimal length among the class of curves whose image projects to
  $\gamma$ under $\pi_\Ncal$.
\end{thm}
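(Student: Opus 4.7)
The plan is to adapt the standard horizontal-lift argument for finite-dimensional Riemannian submersions, taking due care in the Fréchet setting. Throughout, write $V_g := T_g(\DO \cdot g) = \{ L_X g \mid X \in \mathrm{Vect}(M)\} \subset T_g\Ncal$ for the vertical subspace and $H_g \subset T_g\Ncal$ for its $L^2$-orthogonal complement. Since $M$ has genus $p \geq 2$ no hyperbolic metric (hence no element of $\Ncal$) admits a nontrivial Killing field, so $X \mapsto L_X g$ is injective, and standard elliptic theory (as in Ebin) gives that the $L^2$-orthogonal projection $T_g\Ncal \to V_g$ is well-defined and depends smoothly on $g$.

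For existence, first pick any $C^1$ lift $\bar\gamma : [0,1] \to \Ncal$ of $\gamma$ with $\bar\gamma(0) = g$ (this exists because $\pi_\Ncal$ is a smooth surjective submersion). Decompose pointwise in $t$,
\begin{equation*}
\bar\gamma'(t) = h(t) + L_{X(t)}\bar\gamma(t),
\end{equation*}
where $h(t) \in H_{\bar\gamma(t)}$ and $X(t)$ is a time-dependent smooth vector field on $M$ depending continuously on $t$. Appealing to regularity of the Fréchet Lie group $\DO$, integrate the time-dependent vector field $-X(t)$ to obtain a $C^1$ path $\varphi_t \in \DO$ with $\varphi_0 = \mathrm{id}$, and set $\tilde\gamma(t) := \varphi_t^* \bar\gamma(t)$. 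A direct computation gives $\tilde\gamma'(t) = \varphi_t^*\bigl(\bar\gamma'(t) - L_{X(t)}\bar\gamma(t)\bigr) = \varphi_t^* h(t)$; since the $L^2$ metric is $\D$-invariant and $\D$-action sends verticals to verticals, it sends horizontals to horizontals, so $\tilde\gamma'(t) \in H_{\tilde\gamma(t)}$, and $\tilde\gamma$ is a horizontal lift of $\gamma$ with $\tilde\gamma(0) = g$.

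For uniqueness, suppose $\tilde\gamma_1, \tilde\gamma_2$ are two horizontal lifts with common starting point $g$. Since they project to the same path in $\Ncal/\DO$, there is a $C^1$ path $\psi_t \in \DO$ with $\psi_0 = \mathrm{id}$ and $\tilde\gamma_2(t) = \psi_t^* \tilde\gamma_1(t)$; let $Y(t)$ be its generating time-dependent vector field. Differentiating yields
\begin{equation*}
\tilde\gamma_2'(t) = \psi_t^*\bigl(\tilde\gamma_1'(t) + L_{Y(t)}\tilde\gamma_1(t)\bigr).
\end{equation*}
Both $\tilde\gamma_1'(t)$ and $(\psi_t^*)^{-1}\tilde\gamma_2'(t)$ are horizontal (again by $\D$-invariance), so $L_{Y(t)}\tilde\gamma_1(t)$ lies in $V_{\tilde\gamma_1(t)} \cap H_{\tilde\gamma_1(t)} = 0$, and the absence of Killing fields forces $Y(t) \equiv 0$, hence $\psi_t \equiv \mathrm{id}$.

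The equality $L(\tilde\gamma) = L(\gamma)$ is immediate from the weak-Riemannian-submersion property, since $D\pi_\Ncal$ is pointwise an isometry on horizontal vectors. For minimality, any other curve $\sigma : [0,1] \to \Ncal$ projecting to $\gamma$ decomposes as $\sigma'(t) = \sigma'_h(t) + \sigma'_v(t)$ with orthogonal summands, so
\begin{equation*}
\|\sigma'(t)\|_{\sigma(t)}^2 = \|\sigma'_h(t)\|_{\sigma(t)}^2 + \|\sigma'_v(t)\|_{\sigma(t)}^2 \geq \|\sigma'_h(t)\|_{\sigma(t)}^2 = \|\gamma'(t)\|_{\gamma(t)}^2,
\end{equation*}
and integrating gives $L(\sigma) \geq L(\gamma) = L(\tilde\gamma)$. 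The main obstacle is the construction in the existence step: one needs both the smoothness/continuity of the $L^2$-orthogonal projection onto $V_g$ (an elliptic-theory fact that relies on the absence of Killing fields in genus $\geq 2$) and the integrability of the resulting time-dependent vector field $-X(t)$ to an honest path in $\DO$, which invokes the regularity of $\DO$ as a Fréchet Lie group.
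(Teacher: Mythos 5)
Your proof is correct in outline, but it reaches the existence statement by a genuinely different route than the paper. The paper's argument is short: since $\Ncal / \DO$ is finite-dimensional, the horizontal distribution on $\Ncal$ has finite rank, and one can then invoke Omori's existence theorem for integral curves of such vector fields on Fr\'echet manifolds (the ODE is solved directly in $\Ncal$ along the horizontal distribution). You instead start from an arbitrary $C^1$ lift and gauge-fix it by integrating the time-dependent vector field $-X(t)$ on the compact surface $M$ itself, which replaces the Fr\'echet-space ODE by a classical finite-dimensional flow; this is closer in spirit to the ``alternative proof'' the paper cites from the author's thesis, the one that avoids ODE theory in Fr\'echet spaces. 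What your route buys is concreteness and independence from Omori's machinery; what it costs is that you must separately justify (i) the existence of \emph{some} $C^1$ lift $\bar\gamma$, which in the Fr\'echet category does not follow merely from $\pi_\Ncal$ being a surjective submersion but requires the local triviality of the principal $\DO$-bundle (Ebin's slice theorem), and (ii) the continuity in $t$ of the vertical projection, i.e.\ of the solution $X(t)$ of the elliptic system $\delta_{\bar\gamma(t)}\delta_{\bar\gamma(t)}^* X(t) = \delta_{\bar\gamma(t)}\bar\gamma'(t)$ --- you name both of these but do not prove them, and the same local-triviality input is silently used in your uniqueness step when you assert that $\psi_t$ is a $C^1$ path in $\DO$. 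Your uniqueness and minimality arguments coincide with the paper's (the latter is exactly the orthogonal-decomposition argument the paper alludes to via the weak Riemannian principal bundle property).
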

\begin{proof}
  The existence of horizontal lifts is not usually guaranteed on
  Fréchet manifolds, but since $\Ncal / \DO$ is finite-dimensional,
  the horizontal tangent space of $\Ncal$ is finite-dimensional at
  each point.  Therefore, integral curves of horizontal vector fields
  exist (cf.~\cite[Thm.~7.2 and
  Dfn.~5.6ff]{omori97:_infin_dimen_lie_group}).

  An alternative proof, one which does not rely on the existence
  theory of solutions to ODEs in Fréchet spaces, is given in
  \cite[Thm.~6.16]{clarked):_compl_of_manif_of_rieman}.

  Minimality of $\tilde{g}$ can be easily shown using the fact that
  $\Ncal \rightarrow \Ncal / \DO$ is a weak Riemannian principal
  bundle.
\end{proof}

The next theorem applies the paper's main to the completion of $\Ncal /
\DO$ with respect to a generalized Weil-Petersson metric.  In the
following, we denote the distance function of $(\Ncal, (\cdot,
\cdot))$ by $d_\Ncal$.

\begin{thm}\label{thm:43}
  Let $\{[g_k]\}$ be a Cauchy sequence in the $\Ncal$-model of
  Teichmüller space, $\Ncal / \DO$, with respect to the generalized
  Weil-Petersson metric.  Then there exist representatives
  $\tilde{g}_k \in [g_k]$ and an element $[g_\infty] \in \Mfhat$ such
  that $\{\tilde{g}_k\}$ is a $d_\Ncal$-Cauchy sequence that
  $\omega$-subconverges to $[g_\infty]$.

  Furthermore, if $\{[g^0_k]\}$ and $\{[g^1_k]\}$ are equivalent
  Cauchy sequences in $\Ncal / \DO$, then there exist representatives
  $\tilde{g}^0_k \in [g^0_k]$ and $\tilde{g}^1_k \in [g^1_k]$, as well
  as an element $[g_\infty] \in \Mfhat$, such that $\{\tilde{g}^0_k\}$
  and $\{\tilde{g}^1_k\}$ are $d_\Ncal$-Cauchy sequences that both
  $\omega$-subconverge to $[g_\infty]$.

  Finally, if $\{[g^0_k]\}$ and $\{[g^1_k]\}$ are inequivalent Cauchy
  sequences in $\Ncal / \DO$, then there exists no choice of
  representatives $\tilde{g}^0_k \in [g^0_k]$ and $\tilde{g}^1_k \in
  [g^1_k]$ such that $\{\tilde{g}^0_k\}$ and $\{\tilde{g}^1_k\}$
  $\omega$-subconverge to the same element of $\Mfhat$.
\end{thm}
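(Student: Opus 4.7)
The overall plan is to use horizontal lifts (Theorem \ref{thm:42}) to transfer Cauchy sequences between $\Ncal/\DO$ and $\Ncal\subset\M$, then invoke the main identification $\overline{\M}\cong\Mfhat$ (Theorem \ref{thm:41}) together with the uniqueness result Theorem \ref{thm:17}. For the first assertion, given a Cauchy sequence $\{[g_k]\}$ in $(\Ncal/\DO,d_{\Ncal/\DO})$, I first pass to a subsequence with $\sum_{k} d_{\Ncal/\DO}([g_k],[g_{k+1}])<\infty$ and choose for each $k$ a rectifiable path $\gamma_k$ from $[g_k]$ to $[g_{k+1}]$ of length at most $d_{\Ncal/\DO}([g_k],[g_{k+1}])+2^{-k}$. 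Starting from an arbitrary representative $\tilde g_1\in[g_1]$, I define $\tilde g_{k+1}\in[g_{k+1}]$ inductively as the endpoint of the horizontal lift $\tilde\gamma_k$ of $\gamma_k$ beginning at $\tilde g_k$. Since $L(\tilde\gamma_k)=L(\gamma_k)$ by Theorem \ref{thm:42}, the sequence $\{\tilde g_k\}$ is $d_\Ncal$-Cauchy; since $d\le d_\Ncal$ (every path in $\Ncal$ is a path in $\M$), it is also $d$-Cauchy, and Theorem \ref{thm:41} then supplies the required $[g_\infty]\in\Mfhat$ to which a subsequence $\omega$-converges.

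The second assertion follows by interleaving. Given equivalent Cauchy sequences $\{[g^0_k]\}$ and $\{[g^1_k]\}$ in $\Ncal/\DO$, I form $\{[h_k]\}$ by $[h_{2k-1}]:=[g^0_k]$ and $[h_{2k}]:=[g^1_k]$. Equivalence together with the Cauchy property of each constituent forces $\{[h_k]\}$ to be $d_{\Ncal/\DO}$-Cauchy. Applying the construction of the first assertion to $\{[h_k]\}$ produces representatives $\tilde h_k\in[h_k]$ whose sequence is $d$-Cauchy and $\omega$-subconverges to some $[g_\infty]\in\Mfhat$; setting $\tilde g^0_k:=\tilde h_{2k-1}$ and $\tilde g^1_k:=\tilde h_{2k}$ yields representatives of $[g^0_k]$ and $[g^1_k]$ whose joint $\omega$-subconvergence to the same $[g_\infty]$ is inherited from that of $\{\tilde h_k\}$.

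The third assertion is the contrapositive of a converse statement. Suppose, toward a contradiction, that representatives $\tilde g^0_k\in[g^0_k]$ and $\tilde g^1_k\in[g^1_k]$ both $\omega$-subconverge to a common $[g_\infty]\in\Mfhat$. After passing to joint subsequences, Theorem \ref{thm:17} yields $\lim_k d(\tilde g^0_k,\tilde g^1_k)=0$ in $(\M,d)$. The plan is then to conclude $\lim_k d_{\Ncal/\DO}([g^0_k],[g^1_k])=0$, contradicting inequivalence. To do so, I would exploit the principal $\pos$-bundle $\M\to\M/\pos\cong\Ncal$ together with the weak Riemannian submersion $\Ncal\to\Ncal/\DO$: project a short $L^2$-path in $\M$ joining $\tilde g^0_k$ to $\tilde g^1_k$ first to $\M/\pos$, use the explicit geodesic formula of Proposition \ref{prop:5} for the totally geodesic $\pos$-orbits to quantitatively control the length under this projection, identify the result with a path in $\Ncal$, and then project further to $\Ncal/\DO$ using that the latter projection is length-non-increasing.

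The main obstacle is precisely this last step. While horizontal lifts translate downstairs closeness into upstairs $d_\Ncal$-closeness with length preserved, the reverse comparison—$d$-closeness in $\M$ implying $d_{\Ncal/\DO}$-closeness of the projected classes—is not automatic, because $d\le d_\Ncal$ may be strict and an $L^2$-short path in $\M$ need not stay within (nor project isometrically to) $\Ncal$. Making the length control quantitative, uniformly in $k$, and using only the $\D$-equivariance of a general modular section $\Ncal$ together with the structure provided by Proposition \ref{prop:5}, is the technical heart of the theorem.
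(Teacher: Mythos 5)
For the first two assertions your argument is essentially the paper's: choose nearly length-minimizing paths $\gamma_k$ joining consecutive classes, lift them horizontally via Theorem \ref{thm:42} starting from an arbitrary representative of the first class, use $L(\tilde\gamma_k)=L(\gamma_k)$ to get a $d_\Ncal$-Cauchy (hence $d$-Cauchy) sequence of representatives, and invoke Theorem \ref{thm:41}. The paper treats the second assertion with the remark ``the proof is similar''; your interleaving trick is a clean way of making that precise, and the passage from $\omega$-subconvergence of the interleaved sequence to joint $\omega$-subconvergence of the two constituent subsequences is justified by Theorems \ref{thm:40} and \ref{thm:20}. (Minor point: if you pass to a subsequence of $\{[g_k]\}$ to get summability, you only produce representatives along that subsequence; the paper instead lifts along the full sequence and, per Remark \ref{rmk:3}, contractibility of Teichm\"uller space makes the resulting representative $\tilde g_l$ independent of the chain of paths used, so that $d_\Ncal(\tilde g_k,\tilde g_l)\le 2\delta([g_k],[g_l])$ and the full sequence is $d_\Ncal$-Cauchy.)

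The third assertion is where your proposal has a genuine gap, which you acknowledge but do not close. The paper's route is different from the one you sketch and much shorter: inequivalence gives $\lim_k\delta([g^0_k],[g^1_k])\ge\epsilon>0$; since $\pi_\Ncal$ is length-non-increasing and horizontal lifts are length-minimizing (Theorem \ref{thm:42}), \emph{any} choice of representatives satisfies $d_\Ncal(\tilde g^0_k,\tilde g^1_k)\ge\delta([g^0_k],[g^1_k])\ge\epsilon$; the paper then appeals directly to Theorem \ref{thm:41}. Note that this argument never passes through the $\pos$-bundle $\M\to\M/\pos$ or Proposition \ref{prop:5} at all, so the machinery you propose for ``projecting a short $L^2$-path in $\M$ down to $\Ncal/\DO$'' is not what is used. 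That said, you have correctly located the crux: since $d\le d_\Ncal$ and the inequality may be strict (a short path in $\M$ need not stay in, or project controllably to, $\Ncal$), a lower bound on $d_\Ncal$ does not by itself yield a lower bound on $d$, and it is $\lim_k d(\tilde g^0_k,\tilde g^1_k)>0$ that one needs in order to contradict common $\omega$-subconvergence via Theorem \ref{thm:17}. The paper's one-line appeal to Theorem \ref{thm:41} elides exactly this comparison. In any case, as submitted your proposal proves only the first two assertions; the third remains a programme, and the quantitative control of lengths under the composite projection $\M\to\M/\pos\cong\Ncal\to\Ncal/\DO$ that your plan requires is neither carried out by you nor supplied by anything in the paper.
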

\begin{proof}
  The first claim would follow from Theorem \ref{thm:41} if we could
  show that there are representatives $\tilde{g}_k \in [g_k]$ such that
  $\{\tilde{g}_k\}$ is a $d_\Ncal$-Cauchy sequence, since this implies that it
  is also a $d$-Cauchy sequence.  So this is what we will show.

  Let's denote the distance function induced by the generalized
  Weil-Petersson metric on $\Ncal / \DO$ by $\delta$.  For each $k \in
  \N$, let $\gamma_k : [0,1] \rightarrow \Ncal / \DO$ be any path from
  $[g_k]$ to $[g_{k+1}]$ such that
  \begin{equation*}
    L(\gamma_k) \leq 2 \delta([g_k], [g_{k+1}]).
  \end{equation*}

  For any $\tilde{g}_1 \in \pi_\Ncal^{-1}([g_1])$, let $\tilde{\gamma}_1$ be
  the horizontal lift of $\gamma_1$ to $\Ncal$ with
  $\tilde{\gamma}_1(0) = \tilde{g}_1$ which is guaranteed by Theorem
  \ref{thm:42}.  Then clearly $\tilde{g}_2 := \gamma_2(1) \in
  \pi_\Ncal^{-1}([g_2])$.  Furthermore,
  \begin{equation*}
    d_\Ncal(\tilde{g}_1, \tilde{g}_2)
    \leq L(\tilde{\gamma}_1) = L(\gamma_1) \leq 2 \delta([g_1], [g_2]).
  \end{equation*}

  We repeat this process, i.e., let $\tilde{\gamma}_2$ be the unique
  horizontal lift of $\gamma_2$ with $\tilde{\gamma}_2(0) = \tilde{g}_2$, and
  set $\tilde{g}_3 := \tilde{\gamma}_2(1)$, etc.
  In this way, we get a sequence of representatives $\tilde{g}_k \in [g_k]$
  such that for each $k \in \N$,
  \begin{equation*}
    d_\Ncal(\tilde{g}_k, \tilde{g}_{k+1}) \leq 2 \delta([g_k], [g_{k+1}]).
  \end{equation*}
  Thus, since $\{[g_k]\}$ is a Cauchy sequence, $\{\tilde{g}_k\}$ is a
  $d_\Ncal$-Cauchy sequence, as was to be shown.

  The proof of the second statement is similar.
  
  To prove the last statement, note that since $\{[g^0_k]\}$ and
  $\{[g^1_k]\}$ are inequivalent, we have
  \begin{equation*}
    \lim_{k \rightarrow \infty} \delta([g^0_k], [g^1_k]) > 0.
  \end{equation*}
  Thus by Theorem \ref{thm:42}, no matter what representatives
  $\tilde{g}^0_k \in [g^0_k]$ and $\tilde{g}^1_k \in [g^1_k]$ we
  choose,
  \begin{equation*}
    \lim_{k \rightarrow \infty} d_\Ncal(\tilde{g}^0_k, \tilde{g}^1_k)
    \geq \epsilon > 0.
  \end{equation*}
  So Theorem \ref{thm:41} implies the statement immediately.
\end{proof}

\begin{rmk}\label{rmk:3}
  We note that the choice of $[g_\infty]$ in Theorem \ref{thm:43} is
  in some sense unique up to $\DO$-equivalence.  Namely, say in the
  proof of the first statement we were to have chosen a different
  sequence of paths $\gamma^0_k$ connecting $[g_k]$ to $[g_{k+1}]$.
  Since Teichmüller space is known to be contractible, the horizontal
  lifts $\tilde{\gamma}_k$ and $\tilde{\gamma}^0_k$ have the same
  endpoints $\tilde{g}_{k+1} \in [g_{k+1}]$.  Thus, they determine the
  same $d_{\Ncal}$-Cauchy sequence, implying the only choice we made
  in the proof that matters was that of $\tilde{g}_1 \in [g_1]$.
\end{rmk}

Theorem \ref{thm:43} generalizes what is known about the completion of the
Weil-Petersson metric \cite{masur-extension}, which is completed by
adding in certain cusped hyperbolic surfaces---which in particular can
be viewed as elements of $\Mfhat$.  However, they are only degenerate
along a set of disjoint simple closed geodesics---connecting nicely
with the complex-analytic viewpoint of degeneration---whereas elements
of $\Mfhat$ can be degenerate over an arbitrary subset of $M$.  With
more investigation and perhaps appropriate conditions on the section
$\Ncal$, we expect that the statement can be considerably
improved.

Despite the shortcomings of the above result, we see it as quite
useful, as it gives relatively strong information about a new class of
metrics on Teichmüller space---namely, that their completions can
consist only of finite-volume metrics.  Furthermore, it illustrates
the potential for applications of our main theorem and provides a
starting point for further investigations.

\section*{Appendix.  Relations between Various Notions of Convergence
  and Cauchy Sequences}

In the following chart, we illustrate the relationships between the
different notions of Cauchy and convergent sequences on $\M$.  We let
$\{g_k\}$ be a sequence in $\M$ and $\tilde{g} \in \Mf$.  A double
arrow (``$\Longrightarrow$'') between two statements means that the
one implies the other.  A single arrow (``$\longrightarrow$'') means
that one statement implies the other, assuming the condition that is
listed below the chart.

\begin{equation*}
  \xymatrix@R=48pt{
    & & & *+[F-:<3pt>]{\txt{$\{\mu_{g_k}\}$ converges\\weakly to $\mu_{\tilde{g}}$}} \\
    *+[F-:<3pt>]{\txt{$\{g_k\}$ is a.e.\\$\theta^g_x$-Cauchy}} &
    *+[F-:<3pt>]{\txt{$\{g_k\}$ is\\$\Theta_M$-Cauchy}} \ar[l]_-1 &
    *+[F-:<3pt>]{\txt{$\{g_k\}$ is\\$d$-Cauchy}} \ar@{=>}[l]
    \ar@<1ex>[d]^-2 \ar@<1ex>[r]^-4
    & *+[F-:<3pt>]{\txt{$\{g_k\}$ $\omega$-converges\\to $\tilde{g}$}}
    \ar@<1ex>@{=>}[l] \ar@{=>}[d] \ar@{=>}[u] \\
    *\txt{\phantom{some space here}} & & *+[F-:<3pt>]{\txt{$\{g_k\}$
        $L^2$-converges\\to $\tilde{g}$}} \ar@<1ex>[u]^-3
    & *+[F-:<3pt>]{\txt{$\Vol(Y,g_k) \rightarrow
        \Vol(Y,\tilde{g})$\\for $Y$ measurable}}
  }
\end{equation*}

\begin{enumerate}
\item After passing to a subsequence
\item If there exists an amenable subset $\U$ such that $\{g_k\}
  \subset \U$, then there exists some $\tilde{g} \in \U^0$ such that
  the implication holds
\item If there exists a quasi-amenable subset $\U$ such that $\{g_k\}
  \subset \U$
\item After passing to a subsequence, there exists some $\tilde{g}
  \in \Mf$ such that the implication holds
\end{enumerate}

\bibliographystyle{hamsplain}
\bibliography{main}

\end{document}